\newtheorem{maintheorem}{Theorem}
\newtheorem{theorem}{Theorem}[section]
\newtheorem{proposition}[theorem]{Proposition}
\newtheorem{corollary}[theorem]{Corollary}
\newtheorem{lemma}[theorem]{Lemma}
\theoremstyle{definition}
\newtheorem{definition}[theorem]{Definition}
\newtheorem{remark}[theorem]{Remark}
\newtheorem{example}[theorem]{Example}
\newtheorem{assumption}[theorem]{Assumption}
\numberwithin{equation}{section}
\newcommand{\id}{\textup{id}}
\newcommand{\II}{\mathrm{II}}
\newtheorem*{AngleHypothesis}{Matching Angle Hypothesis}
\renewcommand{\mathcal}{\mathscr}
\newcommand{\R}{\mathbb{R}}
\newcommand{\C}{\mathbb{C}}
\newcommand{\tr}{\mathrm{tr}}
\keywords{Scalar curvature, initial data sets, polyhedra, holographic index theorem, spin geometry, embedding into Minkowski spacetime}
\subjclass[2020]{53C20, 53C21, 53C27, 58J20, 58J32}
\author[C.~B\"ar]{Christian B\"ar}
\address{Institut f\"ur Mathematik, Universit\"at Potsdam, 14476 Potsdam, Germany}
\thanks{C.B.\ was supported by DFG-SPP 2026 ``Geometry at Infinity'' and by the University of Augsburg.} 
\email{\href{mailto:cbaer@uni-potsdam.de}{cbaer@uni-potsdam.de}}
\author[S.~Brendle]{Simon Brendle}
\address{Department of Mathematics, Columbia University, New York NY 10027, USA}
\thanks{S.B.\ was supported by the National Science Foundation under grant DMS-2403981 and by the Simons Foundation.} 
\email{\href{simon.brendle@columbia.edu}{simon.brendle@columbia.edu}}
\author[T.-K.~A.~Chow]{Tsz-Kiu Aaron Chow}
\address{Department of Mathematics, Hong Kong University of Science and Technology, Hong Kong S.A.R., China}
\thanks{T.-K.A.C.\ was supported by the Croucher Foundation.} 
\email{\href{chowtka@ust.hk}{chowtka@ust.hk}}
\author[B.~Hanke]{Bernhard Hanke}
\address{Institut f\"ur Mathematik, Universit\"at Augsburg, 86135 Augsburg, Germany}
\thanks{B.H.\ was supported by DFG-SPP 2026 ``Geometry at Infinity'', by Columbia University,  by the NYU Courant Institute, by the Isaac Newton Institute and by the University of Potsdam} 
\email{\href{mailto:hanke@math.uni-augsburg.de}{hanke@math.uni-augsburg.de}}
\begin{document}

\title[Rigidity results for initial data sets]{Rigidity results for initial data sets satisfying the dominant energy condition}
\begin{abstract} Our work proves rigidity theorems for initial data sets associated with compact smooth spin manifolds with boundary and with compact convex polytopes, subject to the dominant energy condition.
For manifolds with smooth boundary, this is based on the solution of a boundary value problem for Dirac operators.
For convex polytopes we use approximations by manifolds with smooth boundary.
\end{abstract}

\maketitle

\section{Introduction}

An \emph{initial data set} is a triplet $(M,g,q)$, where $M$ is a manifold, $g$ is a Riemannian metric on $M$, and $q$ is a symmetric $(0,2)$-tensor on $M$. 
We denote the scalar curvature of $g$ by $R$.
We define 
\[\mu = \tfrac{1}{2} \, R + \tfrac{1}{2} \, \tr(q)^2 - \tfrac{1}{2} \, |q|^2\] 
and 
\[J = \text{\rm div}(q) - \nabla \tr(q).\] 
We say that $(M,g,q)$ satisfies the \emph{dominant energy condition} if $\mu\geq |J|$ at each point in $M$.

\begin{example} \label{ex:hypersurf}
Let $M$ be a spacelike hypersurface in the Minkowski spacetime $\R^{n,1}$. 
Let $g$ and $q$ denote the induced metric and second fundamental form on $M$, respectively. 
Then the initial data set $(M, g, q)$ satisfies $\mu=0$ and $J = 0$ by the Gauss and Codazzi-Mainardi equations, respectively. 
In  particular,  $(M,g,q)$ satisfies the dominant energy condition. 
\end{example}

The study of initial data sets under the dominant energy condition can be traced back to Schoen and Yau's work \cite{Schoen-Yau}, in which they proved the spacetime positive energy theorem using minimal surface techniques in dimension 3. 
Eichmair, Huang, Lee, and Schoen \cite{Eichmair-Huang-Lee-Schoen} later extended this work by proving a more general spacetime positive mass theorem up to dimension 7. 
Huang and Lee \cite{Huang-Lee} subsequently proved the rigidity statement in this theorem.
In a different approach, Witten \cite{Witten} used spinors to prove the spacetime positive mass theorem, a method that generalizes to all higher dimensions for spin manifolds. Parker and Taubes \cite{Parker-Taubes} gave a mathematically rigorous formulation of Witten's argument. 
Chru\'sciel and Maerten \cite{Chrusciel-Maerten} later provided a rigorous proof of the desired rigidity statement in all dimensions for spin manifolds, building on Beig and Chru\'sciel's proof \cite{Beig-Chrusciel} in dimension~3. 
Lee's book \cite{Lee} offers a comprehensive exposition on the positive mass theorem.

Instead of noncompact asymptotically flat manifolds, we consider compact manifolds with boundary. 
The condition of asymptotic flatness is replaced by suitable boundary conditions.
In the interior, we still demand the dominant energy condition.
First rigidity results have been obtained in this context by Eichmair, Galloway and Mendes in \cite{Eichmair-Galloway-Mendes} and by Gl\"ockle in \cite{Gloe}.

We consider the following class of manifolds with boundary, specializing Example~\ref{ex:hypersurf}.

\begin{example} \label{ex:model}
Let $K \subset \R^n$ be a compact convex domain with smooth boundary $\partial K$ and exterior normal $N \colon \partial K \to S^{n-1}$.
Note that $N$ has degree $1$.
Let  $f \colon K \to \R$ be a smooth map satisfying the following conditions:
\begin{enumerate}[(a)]
    \item $|df| < 1$ on $K$, 
    \item $f = 0$ along $\partial K$, 
    \item \label{boundary} At each point on $\partial K$, we have $df(N)=0$ or $dN=0$.
\end{enumerate}
Let $M := \{ (x , f(x)) \colon x \in K \} \subset \R^{n,1}$ denote the graph of $f$. 
Then $M$ is a compact smooth spacelike hypersurface  with boundary $\Sigma = \partial K \times \{0\} \subset \R^n \times \{0\} \subset \R^{n,1}$, see Figures~\ref{fig.1} and~\ref{fig.2}.
\begin{figure}[h]
\begin{overpic}[scale=.3,]{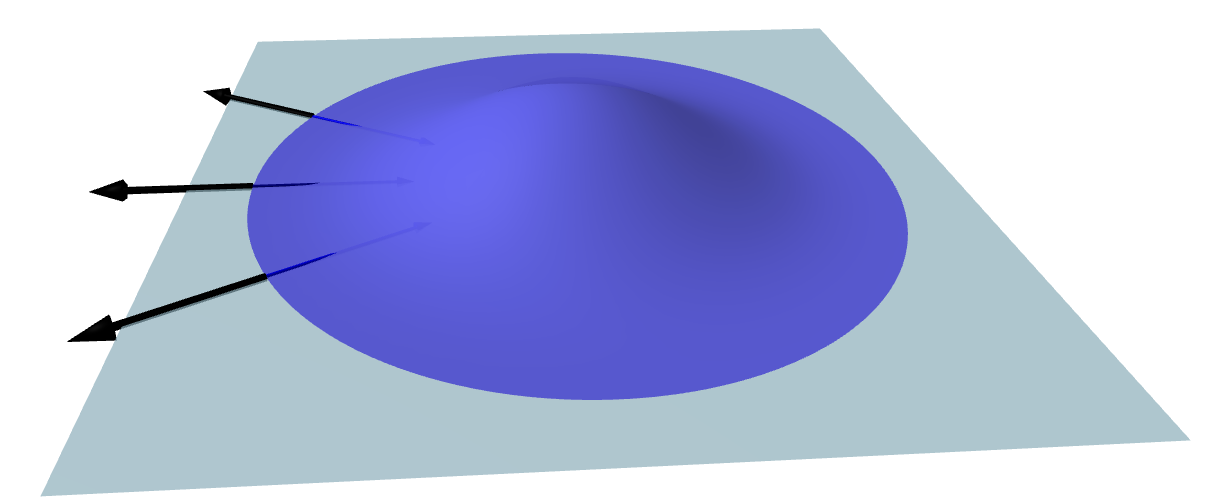}
\put(13,7){$\R^n$}
\put(60,20){\textcolor{white}{$M$}}
\put(4,24.5){$N$}
\end{overpic}
\caption{The case $df(N)=0$ and $dN\ne0$}
\label{fig.1}
\end{figure}
\begin{figure}[h]
\begin{overpic}[scale=.2,]{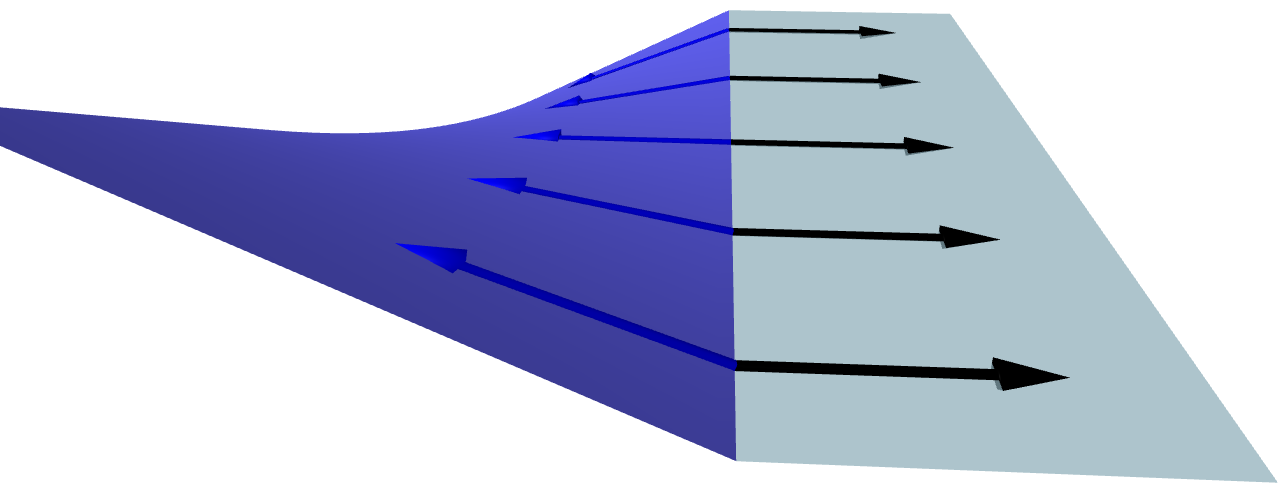}
\put(66,13){$\R^n$}
\put(17,22){\textcolor{white}{$M$}}
\put(74,24){$N$}
\end{overpic}
\caption{The case $df(N)\ne0$ and $dN=0$}
\label{fig.2}
\end{figure}
Let $\II$ denote the vector-valued second fundamental form of $\Sigma$, viewed as a submanifold of $\R^{n,1}$ of codimension $2$. Then 
\[\II (X,Y) = -\langle dN(X),Y \rangle \, N\] 
for all tangent vectors $X,Y \in T\Sigma$.

We denote by $T$ the future-oriented normal vector field to $M$ in $\R^{n,1}$, where $T$ is normalized so that $\langle T,T \rangle_{\R^{n,1}} = -1$. Let $g$ denote the induced Riemannian metric and let $q$ denote the second fundamental form of $M$ with respect to $T$. In other words, $q(X,Y) = \langle \bar{D}_X T,Y \rangle_{\R^{n,1}}$ for all $X,Y \in TM$, where $\bar{D}$ denotes the standard flat connection on $\R^{n,1}$. The initial data set $(M,g,q)$ satisfies the dominant energy condition, cf.\ Example~\ref{ex:hypersurf}. 
At each point on $\Sigma$, we denote by $\nu$ the outward-pointing unit normal vector to $\Sigma$ in $M$. Moreover, we denote by $h$ the second fundamental form of $\Sigma$, viewed as a hypersurface in $M$.
In other words, $h(X,Y) = \langle D_X \nu, Y \rangle_{\R^{n,1}}$ for $X, Y \in T\Sigma$, where $D$ denotes the Levi-Civita connection on $(M,g)$.
We will use these sign conventions for $q$ and $h$ throughout our paper.

With this understood, 
\[h(X,Y) \, \nu - q(X,Y) \, T = -\II (X,Y) = \langle dN(X),Y \rangle \, N\] 
for all tangent vectors $X,Y \in T\Sigma$. 
As $\langle \nu,\nu \rangle_{\R^{n,1}} = 1$, $\langle T,T \rangle_{\R^{n,1}} = -1$, and $\langle \nu,T \rangle_{\R^{n,1}} = 0$, it follows that  
\[h(X,Y) = \langle dN(X),Y \rangle \, \langle N,\nu \rangle_{\R^{n,1}}\] 
and 
\[q(X,Y) = \langle dN(X),Y \rangle \, \langle N,T \rangle_{\R^{n,1}}\] 
for all tangent vectors $X,Y \in T\Sigma$. 
Assumption~\eqref{boundary} implies that, at each point on $\Sigma$, we either have $dN=0$, or $\langle N,\nu \rangle_{\R^{n,1}} = 1$ and $\langle N,T \rangle_{\R^{n,1}} = 0$. 
Consequently, $h(X,Y) = \langle dN(X),Y \rangle$ and $q(X,Y) = 0$ for all tangent vectors $X,Y \in T\Sigma$. 
In particular, we have $H = \|dN\|_\tr$ and $\tr(q) - q(\nu,\nu) = 0$ at each point on $\Sigma$. 
Here, $H$ denotes the unnormalized mean curvature, i.e. the sum of the principal curvatures of $\Sigma$ in $M$. 
In other words, $H$ is equal to the trace of $h$, considered as an endomorphism field.
Moreover, $\|\cdot\|_\tr$ denotes the trace norm of a linear map, i.e. the sum of all singular values.
\end{example}

Our first result  identifies this class of examples  as an extremal case of initial data sets.

\begin{maintheorem}
\label{rigidity.of.smooth.domains}
Let $(M,g,q)$ be an initial data set.
Assume that $M$ is a compact connected spin manifold, has dimension $n \geq 2$ and nonempty boundary $\partial M = \Sigma$. 
Let $N \colon \Sigma \to S^{n-1}$ be a smooth map. 
We assume that the following conditions are satisfied:
\begin{itemize}
\item $(M,g,q)$ satisfies the dominant energy condition.
\item $H \geq \|dN\|_\tr + |\tr(q) - q(\nu,\nu)|$ holds along $\Sigma$.
\item The map $N \colon \Sigma \to S^{n-1}$ has positive degree.
\end{itemize}
Then $(M,g)$ is isometric to a manifold appearing in Example~\ref{ex:model} such that $q$ is the second fundamental form of $M$ in $\R^{n,1}$. Moreover, the map $N$ agrees, up to an element of $O(n)$, with the exterior normal $\partial K \to S^{n-1}$.
\end{maintheorem}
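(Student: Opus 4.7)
The plan is to adapt Witten's spinorial proof of the spacetime positive mass theorem to the setting of compact spin manifolds with boundary, using a chirality-type boundary condition built from $N$. First, one pulls back the spin bundle of Minkowski spacetime to $M$ along a formal future-pointing timelike direction $T$, producing a bundle $S$ whose sections admit Clifford multiplication by tangent vectors of $M$ and by $T$. On $S$ one introduces the Dirac--Witten connection
\[
\tilde\nabla_X \psi = \nabla_X \psi + \tfrac12 \sum_i q(X, e_i)\, e_i \cdot T \cdot \psi
\]
and its associated operator $\tilde D = \sum_i e_i \cdot \tilde\nabla_{e_i}$. The standard Weitzenb\"ock identity
\[
\tilde D^2 = \tilde\nabla^* \tilde\nabla + \mu + J \cdot T \cdot
\]
has a zero-order term that is pointwise nonnegative as a Hermitian endomorphism exactly when $\mu \geq |J|$, which is guaranteed by the dominant energy condition.

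The next step is to set up the boundary condition. Viewing $N \colon \Sigma \to S^{n-1}$ as a field of spacelike unit vectors in $\R^n \subset \R^{n,1}$, define on $S|_\Sigma$ the pointwise involution $\mathcal G \psi := \nu \cdot N \cdot T \cdot \psi$. Since $\nu, N, T$ are pairwise orthogonal unit vectors, $\mathcal G$ is self-adjoint, squares to $\id$, and produces a local, elliptic, and symmetric boundary condition $\mathcal G \psi = \psi$ for $\tilde D$. A relative index argument, comparing with the trivial case of a constant $N$ (for which the index vanishes), should identify the Fredholm index of this boundary problem with $\deg N$; positivity of $\deg N$ will then force the existence of a nontrivial solution $\psi$ satisfying $\tilde D \psi = 0$ and $\mathcal G \psi|_\Sigma = \psi$.

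I would then integrate $\langle \tilde D^2 \psi, \psi \rangle = 0$ by parts to obtain
\[
0 = \int_M |\tilde\nabla \psi|^2 + \int_M \langle (\mu + J \cdot T \cdot)\psi, \psi\rangle + \int_\Sigma B(\psi),
\]
in which both bulk integrands are pointwise nonnegative. After applying $\mathcal G \psi = \psi$, Clifford identities, and a diagonalization of $dN$ in an orthonormal frame of $T\Sigma$, the boundary integrand should satisfy the pointwise lower bound
\[
B(\psi) \geq \tfrac12 \bigl(H - |\tr(q) - q(\nu,\nu)| - \|dN\|_\tr\bigr)|\psi|^2,
\]
with the trace-norm term emerging because each eigendirection of $dN$ contributes its own singular value via the interaction with $\mathcal G$. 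The hypothesis then forces every term to vanish: $\psi$ is $\tilde\nabla$-parallel, $\mu = |J|$ pointwise on $M$, and on $\Sigma$ the boundary inequality is saturated together with the algebraic rigidity $q(\nu, X) = 0$ for $X \in T\Sigma$ and the alignment of $\nu, N, T$ prescribed by $\mathcal G \psi = \psi$.

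Finally, running the boundary value problem for every element in the fiber yields a full basis of $\tilde\nabla$-parallel spinors, and the Parker--Taubes construction produces from this basis a closed $\R^{n,1}$-valued $1$-form whose integral is an isometric immersion $F \colon M \to \R^{n,1}$ with second fundamental form $q$ relative to the future timelike normal. The saturated boundary data will force $F(\Sigma) \subset \R^n \times \{0\}$ and $F(M)$ to be the graph of a function $f$ over a compact region $K$; convexity of $K$ follows from the identity $h = \langle dN(\cdot),\cdot\rangle$ together with $dN \geq 0$ in the surviving eigendirections. The dichotomy $df(N) = 0$ or $dN = 0$ of Example~\ref{ex:model} corresponds precisely to the two possible saturations of the pointwise boundary inequality, and $N$ coincides with the exterior normal of $\partial K$ up to the $O(n)$-ambiguity in the parallel-spinor basis. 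The principal obstacle will be the sharp pointwise bound on $B(\psi)$: producing the exact coefficient $\|dN\|_\tr$ (rather than, say, $|\tr(dN)|$) demands careful bookkeeping of how each singular value of $dN$ couples into the Bochner calculation via $\mathcal G$, while the $|\tr(q) - q(\nu,\nu)|$ correction appears through the normal component of $q \cdot T \cdot$ inside $\tilde\nabla_\nu$; the relative index computation and the reconstruction of $F$ near the boundary are technically delicate but follow from by-now-standard spinorial machinery.
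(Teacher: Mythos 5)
Your outline---modify the Dirac connection to absorb $q$, pose a boundary condition built from $N$, equate the Fredholm index with $\deg N$, extract a parallel spinor, then reconstruct an isometric immersion into $\R^{n,1}$---is the right strategy and matches the paper in spirit. But the way you set up the boundary condition contains a genuine gap, and several later steps that you describe as ``standard'' in fact require substantial additional work.

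The central problem is the definition of $\mathcal G\psi = \nu\cdot N\cdot T\cdot\psi$. Here $\nu$ is a tangent vector to $M$, so $\nu\cdot$ is Clifford multiplication on a spinor bundle over $M$. But $N$ is a map $\Sigma\to S^{n-1}$ taking values in an \emph{abstract} $\R^n$ that is unrelated to $TM$ or to any ambient space; the conclusion of the theorem is precisely the existence of an embedding of $M$ into $\R^{n,1}$, so there is no a priori way to interpret $N(p)$ as a vector on which the spinor bundle of $M$ carries a Clifford action. Conversely, if you take the spinor module of $\R^{n,1}$ as a trivial bundle over $M$ so that $N\cdot$ is defined, then $\nu\cdot$ loses its meaning, since tangent vectors of $M$ are not identified with vectors in $\R^{n,1}$. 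You cannot have both act by Clifford multiplication on a single bundle without extra structure. The paper's resolution is the key technical device you are missing: it works on the twisted bundle $\mathcal E=\mathcal S\otimes S_0^*$, where $S_0$ is the spinor module of $\C\mathrm{l}(\R^n)$, and defines the boundary involution as $\chi=-\gamma(\nu)\otimes\gamma_0^*(N)$, so that $\nu$ and $N$ act on different tensor factors. Your modified connection and Weitzenb\"ock identity can be made rigorous (the paper's $\nabla^{\mathcal E,q}=\nabla^{\mathcal E}+\tfrac i2 Q$ is essentially your $\tilde\nabla$ after replacing $T\cdot$ by $i$), but the boundary value problem as you have written it is not well defined.

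Two further points. First, the index computation ``by comparison with constant $N$'' is only a heuristic: the actual argument needs a boundary index theorem relating the interior Fredholm index to a twisted Dirac index on $\Sigma$ (the paper uses an Atiyah--Singer computation together with a holographic index theorem), and this argument only works directly when $n$ is odd, so that $\Sigma$ is even-dimensional and the boundary operator is graded. When $n$ is even the paper runs the construction on $M\times S^1$, degenerates the $S^1$ factor, and extracts the desired parallel section by a compactness argument; your proposal does not address dimension parity at all. Second, the final step---showing that the immersion $F\colon M\to\R^{n,1}$ is a graph over a compact convex body---is considerably more delicate than you indicate: the paper passes through the construction of a Lorentzian Clifford module $\mathcal W=\mathcal S\oplus\mathcal S$ with $\gamma^{\mathcal W}(T)(u,v)=(-iv,iu)$, an analysis of the linear span of the decomposable $2$-forms $T(p)\wedge\nu(p)\in\Lambda^2\R^{n,1}$, and a separate appendix theorem (a global rigidity theorem for compact flat manifolds with positive semidefinite second fundamental form, via do Carmo--Lima and Cheeger--Gromoll). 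This is not by-now-standard spinorial machinery and needs to be carried out.
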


\begin{remark} We do not assume that $\Sigma$ is connected. 
If $\Sigma$ is not connected, the degree of $N \colon \Sigma \to S^{n-1}$ equals the sum of the mapping degrees of $N$ restricted to the components of $\Sigma$ with their orientations induced from that of $M$. 
\end{remark} 

\begin{remark} The quantity $H - |\tr(q) - q(\nu,\nu)|$ is related to the notion of marginally trapped surfaces in general relativity. 
Recall that a marginally outer trapped surface (MOTS) is a hypersurface in an initial data set $(M,g,q)$ satisfying $H + (\tr(q) - q(\nu,\nu)) = 0$; see \cite{Eichmair-Huang-Lee-Schoen}*{p.~89}.
\end{remark}

\begin{remark}
There is also a variant of Theorem~\ref{rigidity.of.smooth.domains} for $n=1$.
In this case, $M=[a,b]$ is a compact interval, $g=dt\otimes dt$ is the standard metric, and $q=\kappa g$ where $\kappa\colon [a,b]\to\R$ is a given smooth function.
We can choose $N \colon \partial M=\{a,b\}\to S^0=\{-1,1\}$ to be bijective.
Then all quantities $R$, $\mu$, $H$, $J$, $dN$, and $\tr(q) - q(\nu,\nu)$ vanish.
Thus the assumptions of Theorem~\ref{rigidity.of.smooth.domains} are trivially satisfied.

The conclusion is now that there is a spacelike curve $[a,b]\to \R^{1,1}$, para\-metrized by arc-length and with curvature $\kappa$.
Indeed, this can be shown directly by ODE methods.
The analogue for the Euclidean plane $\R^2$ instead of $\R^{1,1}$ is known as the fundamental theorem of planar curve theory.
\end{remark}

For $q = 0$, Theorem~\ref{rigidity.of.smooth.domains} gives the following statement:

\begin{corollary} Let $(M,g)$ be a compact connected Riemannian spin manifold of dimension $n \geq 2$ and nonempty boundary $\partial M = \Sigma$.
Let $N \colon \Sigma \to S^{n-1}$ be a smooth map.
We assume that the following conditions are satisfied:
\begin{itemize}
\item $R \geq 0$ holds on $M$.
\item $H \geq \|dN\|_\tr$ holds along $\Sigma$.
\item The map $N \colon \Sigma \to S^{n-1}$ has positive degree.
\end{itemize}
Then $M$ can be isometrically embedded in $\R^n$ as a compact convex domain with smooth boundary. Moreover, the map $N$ agrees, up to an element of $O(n)$, with the exterior normal $\partial K \to S^{n-1}$.
\end{corollary}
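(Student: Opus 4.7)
The plan is to deduce the corollary directly from Theorem~\ref{rigidity.of.smooth.domains} by specializing to $q=0$. In this case $\mu=\tfrac{1}{2} R$ and $J=0$, so the dominant energy condition $\mu\geq |J|$ reduces to $R\geq 0$; moreover $\tr(q)-q(\nu,\nu)=0$, so the boundary inequality of the theorem becomes exactly $H\geq \|dN\|_\tr$. The degree hypothesis is unchanged. Applying Theorem~\ref{rigidity.of.smooth.domains} therefore yields an isometry from $(M,g)$ onto a hypersurface $M_0$ of the form described in Example~\ref{ex:model}: the graph of a smooth function $f\colon K\to\R$ over a compact convex domain $K\subset\R^n$ with smooth boundary, satisfying $|df|<1$, $f|_{\partial K}=0$, and the boundary condition (c). Moreover, this isometry identifies $q$ with the second fundamental form of $M_0$ in $\R^{n,1}$, and identifies $N$ with the exterior normal of $\partial K$ up to an element of $O(n)$.

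The next step is to use $q=0$ to upgrade this conclusion by arguing that $M_0$ must lie in the spacelike hyperplane $\R^n\times\{0\}\subset\R^{n,1}$. Since $q$ is the second fundamental form of $M_0$ with respect to the future-directed unit normal $T$, the condition $q=0$ forces $M_0$ to be totally geodesic in $\R^{n,1}$, so $T$ is parallel along $M_0$ and hence constant. A short computation in the coordinates of Example~\ref{ex:model} gives $T=\frac{1}{\sqrt{1-|\nabla f|^2}}(\nabla f,1)$, so $\nabla f$ is constant on $K$ and $f$ is affine. Combined with $f|_{\partial K}=0$ and the fact that $K$ has nonempty interior, this forces $f\equiv 0$. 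Hence $M_0=K\times\{0\}$, and $(M,g)$ is isometric to the compact convex domain $K\subset\R^n$ equipped with the Euclidean metric.

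Finally, to match the exterior normal with $N$ exactly, I would compose this embedding with the element of $O(n)$ supplied by Theorem~\ref{rigidity.of.smooth.domains}; since orthogonal transformations of $\R^n$ preserve convexity, smoothness of boundary, and exterior normals, this produces the desired embedding of $M$ as a compact convex domain in $\R^n$ whose exterior normal equals $N$. No analytic obstacle arises in this deduction beyond those already handled in Theorem~\ref{rigidity.of.smooth.domains}; the only additional geometric observation needed is the elementary rigidity argument showing that $q=0$ together with $f|_{\partial K}=0$ forces $f\equiv 0$.
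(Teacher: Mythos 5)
Your proposal is correct and follows the same route the paper intends: the corollary is stated immediately after Theorem~\ref{rigidity.of.smooth.domains} with the remark that it is the $q=0$ case, and your additional observation that $q=0$ forces $T$ to be constant, hence $\nabla f$ constant and then $f\equiv 0$ (since an affine function vanishing on $\partial K$ with $K$ of nonempty interior must vanish), is exactly the elementary rigidity step needed to pass from the graph in $\R^{n,1}$ to a flat domain $K\subset\R^n\times\{0\}$. The paper leaves this step implicit; your write-up supplies it correctly.
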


The proof of Theorem~\ref{rigidity.of.smooth.domains} uses a boundary value problem for the Dirac operator. Boundary value problems for the Dirac operator have been extensively studied thanks to the work of H\"ormander \cite{Hormander}, and the first named author and Ballmann \cites{Baer-Ballmann,Baer-Ballmann13}.
In the second part of this paper, we consider initial data sets of the form $(\Omega, g, q)$, where $\Omega$ is a compact convex polytope in $\R^n$ with nonempty interior. 
When $q = 0$, the dominant energy condition simplifies to the nonnegativity of the scalar curvature $R \geq 0$, and the boundary condition reduces to mean convexity $H \geq 0$, allowing us to compare $(\Omega, g)$ with Euclidean polytopes. 
Gromov \cite{Gromov14} initiated the study of scalar curvature comparison for polytopes with nonnegative scalar curvature to explore the notion of scalar curvature lower bounds in low-regularity spaces, and this problem has been extensively investigated; see \cites{BW23,Gromov14, Gromov19, Gromov22, Li17, Li17Err, Li19, Wang-Xie-Yu}. 
For $q = g$, the dominant energy condition reduces to the inequality $R \geq -n(n-1)$. 
Li \cite{Li20} established a polyhedral comparison result for metrics with scalar curvature at least $-n(n-1)$. 
The case of general initial data sets was addressed by Tsang \cite{Tsang}, in the special case when $\Omega$ is a three-dimensional cube.

We will employ the strategy developed by the second named author in \cite{Brendle}, adapting it to initial data sets. We write the given polytope in the form $\Omega = \bigcap_{i \in I} \{ u_i \leq 0 \}$, where $I$ is a finite set and $u_i$ are linear functions on $\R^n$. For each $i \in I$, let $N_i \in S^{n-1}$ and $\nu_i$ denote the outward pointing unit normal vectors to the half-space $\{ u_i \leq 0 \}$ with respect to the Euclidean metric and the metric $g$, respectively. 
We make the following assumption:

\begin{AngleHypothesis} 
If $x\in\partial\Omega$ is a point on the boundary and $i_1, i_2\in I$ satisfy $u_{i_1}(x) = u_{i_2}(x) = 0$, then $\langle \nu_{i_1}(x), \nu_{i_2}(x)\rangle_g = \langle N_{i_1}, N_{i_2}\rangle$. 
Here the inner product $\langle \nu_{i_1}, \nu_{i_2}\rangle_g$ is computed with respect to the metric $g$ and $\langle N_{i_1}, N_{i_2}\rangle$ is the standard Euclidean inner product.
\end{AngleHypothesis}

\begin{maintheorem}
\label{rigidity.of.polytopes}
Assume that $n \geq 2$ is an integer. 
Let $\Omega$ be a compact convex polytope in $\R^n$ with nonempty interior. Let $g$ be a Riemannian metric which is defined on an open set containing $\Omega$. 
Let $q$ be a symmetric $(0,2)$-tensor, which is defined on an open set containing $\Omega$. 
We assume that the following conditions are satisfied:
\begin{itemize}
\item  $(\Omega ,g,q)$ satisfies the dominant energy condition.
\item $H\geq |\tr(q) - q(\nu,\nu)|$ on the boundary faces of $\Omega$.
\item The Matching Angle Hypothesis is satisfied.
\end{itemize}
Then $(\Omega,g,q)$ can be isometrically immersed as a spacelike hypersurface in the Minkowski spacetime $\R^{n,1}$ such that $q$ is the second fundamental form. Moreover, each boundary face of $\Omega$ is totally geodesic as a codimension 2 submanifold in the Minkowski spacetime $\R^{n,1}$.
\end{maintheorem}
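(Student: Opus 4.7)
The strategy is to approximate the polytope $\Omega$ by a family of smoothly bounded convex subdomains $\Omega_\epsilon \subset \Omega$ obtained by rounding each codimension-$\geq 2$ stratum of $\partial\Omega$ within an $\epsilon$-neighborhood, apply Theorem~\ref{rigidity.of.smooth.domains} to each $(\Omega_\epsilon, g|_{\Omega_\epsilon}, q|_{\Omega_\epsilon})$ equipped with an auxiliary map $N_\epsilon\colon \partial\Omega_\epsilon \to S^{n-1}$, and pass to the limit $\epsilon \to 0$. The map $N_\epsilon$ is defined by setting $N_\epsilon \equiv N_i$ on each flat portion $\partial\Omega_\epsilon \cap F_i$ and letting $N_\epsilon$ trace a great-circle arc from $N_i$ to $N_j$ in $\textup{span}(N_i,N_j) \cap S^{n-1}$ across the rounding near the edge shared by $F_i$ and $F_j$; higher-codimension strata are handled by iterating this spherical interpolation. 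Since $N_\epsilon$ is, up to diffeomorphism, the Euclidean Gauss map of a smoothed convex body, it has degree $+1$.

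The main technical task is to verify the boundary inequality $H \geq |\tr(q) - q(\nu,\nu)| + \|dN_\epsilon\|_\tr$ on $\partial\Omega_\epsilon$. On each flat portion $\partial\Omega_\epsilon \cap F_i$ one has $dN_\epsilon = 0$, so the inequality reduces to the given hypothesis on $F_i$. On the rounded regions we perform the smoothing intrinsically in $(\Omega,g)$ so that the $g$-unit normal $\nu_\epsilon$ sweeps out a smooth arc from $\nu_i$ to $\nu_j$ in $\textup{span}_g(\nu_i,\nu_j)$. The Matching Angle Hypothesis $\langle \nu_i,\nu_j\rangle_g = \langle N_i, N_j\rangle$ guarantees that the $g$-arc traced by $\nu_\epsilon$ and the Euclidean arc traced by $N_\epsilon$ span the same spherical angle; a careful choice of the rounding profile then matches the principal curvatures of $\partial\Omega_\epsilon$ in the rounding direction with the singular values of $dN_\epsilon$, while leaving enough \emph{surplus} curvature in the tangential directions to dominate the bounded quantity $|\tr(q) - q(\nu,\nu)|$. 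Higher-codimension corners are treated by iterated rounding, again using the matching of angles at each stratum. Verifying this inequality pointwise and uniformly in $\epsilon$ is the main obstacle and will require a delicate layered smoothing construction.

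Once the hypotheses are verified, Theorem~\ref{rigidity.of.smooth.domains} produces, for each $\epsilon$, an isometric embedding $\Phi_\epsilon \colon \Omega_\epsilon \hookrightarrow \R^{n,1}$ realizing $\Omega_\epsilon$ as the graph of a function over a smooth convex domain $K_\epsilon \subset \R^n$, with $q$ the induced second fundamental form and $N_\epsilon$ matching (up to an element of $O(n)$, which we normalize away by composing with an ambient isometry of $\R^{n,1}$) the exterior Euclidean normal of $\partial K_\epsilon$. Since each $\Phi_\epsilon$ is $1$-Lipschitz and $q$ is bounded, standard compactness yields a $C^{1,\alpha}$ subsequential limit $\Phi \colon \Omega \to \R^{n,1}$, which is an isometric immersion with second fundamental form $q$. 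Finally, on each face $F_i$ the flat portion of $\partial\Omega_\epsilon$ has constant normal $N_i$ and lies in an $(n-1)$-dimensional affine subspace of $\R^n \subset \R^{n,1}$; passing to the limit, $\Phi(F_i)$ lies in such an affine subspace as well, which is totally geodesic as a codimension-$2$ submanifold of $\R^{n,1}$.
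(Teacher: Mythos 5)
The proposal diverges from the paper's route in a way that creates a genuine gap. You propose to construct a smoothing $\Omega_\epsilon$ of $\Omega$ and a map $N_\epsilon$ on $\partial\Omega_\epsilon$ so that the \emph{pointwise} inequality $H \geq |\tr(q)-q(\nu,\nu)| + \|dN_\epsilon\|_\tr$ holds, and then to invoke Theorem~\ref{rigidity.of.smooth.domains} directly on each $\Omega_\epsilon$. This cannot work for general $q$. Already for $n=2$, $g$ Euclidean, a rounded right-angled corner illustrates the problem: the turning of $\nu$ over the rounding arc integrates to the opening angle $\theta$, and by the Matching Angle Hypothesis the turning of $N_\epsilon$ also integrates to $\theta$, so $\int k_g = \int |\dot N_\epsilon|$ with no slack. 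The pointwise inequality $k_g \geq |\dot N_\epsilon| + |\tr(q)-q(\nu,\nu)|$ would force $\int |\tr(q)-q(\nu,\nu)| = 0$ along the rounding arc, which fails for generic $q$. No choice of rounding profile can get around this angle-budget constraint; the same obstruction persists in all dimensions, compounded by the interior Hessian and $\nabla|\nabla u_i|_g$ terms in Proposition~\ref{formula.for.V}, which the Matching Angle Hypothesis (a condition only at points of $\partial\Omega$) does not control on the smoothed hypersurface $\Sigma_\lambda$ sitting strictly inside $\Omega$.

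What the paper actually does is weaker and more robust: it does \emph{not} try to reduce to Theorem~\ref{rigidity.of.smooth.domains} on each smooth approximant. Instead it uses the exponential smoothing $\Omega_\lambda = \{\sum_i e^{\lambda u_i}\leq 1\}$ with the weighted-average normal field $N$ from \eqref{defn:N}, accepts that $W_\lambda = V_\lambda - |\tr(q)-q(\nu,\nu)|$ (a lower bound for $H - \|dN\|_\tr - |\tr(q)-q(\nu,\nu)|$) can be negative, and shows that the negative part is small in a Morrey-type $L^\sigma$ sense (Propositions~\ref{negative.part.of.W.small.scale}, \ref{negative.part.of.W.large.scale}, Corollary~\ref{negative.part.of.W}). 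A Fefferman--Phong estimate (Proposition~\ref{est.W}/Corollary~\ref{est.W.spinor.field}) then converts this into a small boundary integral against $|s^{(l)}|^2$, where $s^{(l)}$ is the spinor produced by the Dirac index on $\Omega_{\lambda_l}$. Combined with the uniform Sobolev inequalities of Corollaries~\ref{sobolev.type.ineq.spinor.field} and~\ref{sobolev.trace.spinor.field}, this forces $\|\nabla^{\mathcal{E},q}s^{(l)}\|_{L^2}\to 0$ and yields, after passing to a weak limit, a genuine $\nabla^{\mathcal{E},q}$-parallel, invertible section $s$ on $\Omega$ satisfying the face boundary conditions $\gamma(\nu_i)\circ s = s\circ\gamma_0(N_i)$ (Proposition~\ref{prop:polytope}). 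Only then does the paper reuse the machinery of Subsection~\ref{immersion.into.Minkowski.space} to produce the immersion. Your proposal is missing this entire Morrey/Fefferman--Phong layer, and the pointwise step it relies on instead is false.

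Two smaller points: your treatment of vertices by iterated spherical interpolation needs justification, because the Matching Angle Hypothesis only constrains pairwise dihedral angles and does not fix the full solid-angle structure at a codimension-$k$ stratum; and the paper's conclusion is deliberately only an \emph{immersion}, with the stronger graph/embedding statement left as a Conjecture, so the final compactness step you sketch (passing from the embeddings $\Phi_\epsilon$ of Theorem~\ref{rigidity.of.smooth.domains} to a $C^{1,\alpha}$ limit) is not available once the pointwise inequality is abandoned.
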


\section{A boundary value problem for the Dirac operator in odd dimensions}

Throughout this section, we assume that $M$ is a compact connected spin manifold of dimension $n \geq 2$ with boundary $\partial M= \Sigma$. 
Let $g$ be a Riemannian metric on $M$. 
Suppose that $\nu$ is the unit outward normal field along $\partial M$ and that $N \colon \partial M \to S^{n-1}$ is a given smooth map. 

Let $m = 2^{[\frac{n}{2}]}$. 
Let $\mathcal{S}$ denote the spinor bundle over $M$. 
Note that $\mathcal{S}\to M$ is a complex vector bundle of rank $m$ equipped with a Hermitian inner product and compatible connection $\nabla^{\mathcal{S}}$. 

Let $S_0$ denote the spinor module of the complex Clifford algebra $\C \mathrm{l}(\R^n)$ of $\R^n$.
Both $S_0$ and its dual space $S_0^*$ are $m$-dimensional Hermitian vector spaces.

We define a complex vector bundle $\mathcal{E}$ over $M$ by $\mathcal{E} = \mathcal{S} \otimes S_0^*$.
The bundle $\mathcal{E}$ has rank $m^2$ and inherits a natural Hermitian inner product and compatible connection $\nabla^{\mathcal{E}}$.

\begin{remark}
The fiber of $\mathcal{E}\to M$ over $x\in M$ is given by 
\begin{equation*}
\mathcal{E}|_x = \mathcal{S}|_x \otimes S_0^* = \mathrm{Hom}(S_0, \mathcal{S}|_x).
\label{eq.Tensorproduct=Hom}
\end{equation*}
Thus, a section $s$ of $\mathcal{E}$ can be considered as a field of homomorphisms.
Given an element $\sigma \in S_0$ we can apply $s$ at each point $x\in M$ to $\sigma$ and obtain a section $s\sigma$ of $\mathcal{S}$.
\end{remark}

We denote by $\gamma \colon TM\to \mathrm{End}(\mathcal{S})$ and $\gamma_0 \colon \R^n \to \mathrm{End}(S_0)$ the Clifford multiplication of $(M,g)$ and the one of $\R^n$, respectively.
This gives an induced Clifford multiplication 
\[
\gamma_0^*\colon \R^n \to \mathrm{End}(S_0^*),
\]
where $\gamma_0^*(\xi)$ is defined as the adjoint of $\gamma_0(\xi)$. We define 
\begin{equation}
\chi\colon\mathcal{E}|_\Sigma \to \mathcal{E}|_\Sigma
\quad\mbox{ by }\quad
\chi_x := - \gamma(\nu(x)) \otimes \gamma_0^*(N(x)).
\label{boundary.chirality}
\end{equation}
Using the identity $\gamma(\nu)^2 = \gamma_0^*(N)^2=-\id$, we obtain $\chi^2=\id$. Since both $\gamma(\nu)$ and $\gamma_0^*(N)$ are skew-adjoint, we have $\chi^*=\chi$.

For each $X \in T\Sigma$, the linear map $\gamma(X) \otimes \id$ anticommutes with $\chi$ and is an isomorphism if $X\neq 0$. Therefore the $\pm 1$-eigenspaces of $\chi$ have the same dimension.

Let $\mathcal{D}$ denote the Dirac operator on $M$, acting on sections of $\mathcal{E}$. 
This Dirac operator is formally self\-adjoint. 
By \cite{Baer-Ballmann}*{Corollary~7.23}, the conditions $\chi s=s$ and $\chi s=-s$ both form elliptic boundary conditions for $\mathcal{D}$.
Since $\gamma(\nu) \otimes \id$ commutes with $\chi$, the boundary conditions $\chi s=s$ and $\chi s=-s$ are adjoint to each other, see \cite{Baer-Ballmann}*{Section~7.2}.

We denote by $L^2(M,\mathcal{E})$ the Hilbert space of square integrable sections of $\mathcal{E}$ and by $H^1(M,\mathcal{E})$ the Sobolev space of $L^2$-sections whose distributional derivatives are also square integrable.
If $\mathcal{F}\subset \mathcal{E}|_\Sigma$ is a subbundle of $\mathcal{E}$ along the boundary, we write 
\[
H^1(M,\mathcal{E},\mathcal{F}) := \{u\in H^1(M,\mathcal{E}) : u|_\Sigma \in L^2(\Sigma,\mathcal{F})\}.
\]
By the trace theorem, the restriction map extends uniquely to a bounded linear map 
\[
H^1(M,\mathcal{E})\to L^2(\Sigma,\mathcal{E}).
\]

We decompose the bundle 
\begin{equation}
\mathcal{E}|_\Sigma = \mathcal{F}^+ \oplus \mathcal{F}^-
\label{F+F-}
\end{equation}
where $\mathcal{F}^\pm$ are the eigensubbundles of $\chi$ corresponding to the eigenvalues $\pm1$.

As usual, we denote by $\nu$ the outward unit normal along $\Sigma$.
Let $H$ denote the unnormalized mean curvature.
The sign convention is such that the boundary of a Euclidean ball has positive mean curvature. We define an operator $\mathcal{D}^\Sigma$ acting on sections of $\mathcal{E}|_\Sigma = \mathcal{S}|_\Sigma \otimes S_0^*$ by 
\[\mathcal{D}^\Sigma = \sum_{j=1}^{n-1} (\gamma(\nu) \otimes \id)(\gamma(e_j) \otimes \id) \, \nabla_{e_j}^{\mathcal{E}} + \tfrac{1}{2} H,\] 
where $e_1,\hdots,e_{n-1}$ denotes a local orthonormal frame on $\Sigma$ and $\nabla^{\mathcal{E}}$ denotes the connection on $\mathcal{E} = \mathcal{S} \otimes S_0^*$ defined above. Moreover, we define an operator $\mathcal{A}$ acting on sections of $\mathcal{E}|_\Sigma = \mathcal{S}|_\Sigma \otimes S_0^*$ by 
\[\mathcal{A} = \mathcal{D}^\Sigma + \tfrac{1}{2} \, \chi \circ \Big ( \sum_{j=1}^{n-1} \gamma(e_j) \otimes \gamma_0^*(dN(e_j)) \Big ).\] 
Let $\mathcal{S}_0$ denote the trivial bundle over $S^{n-1}$ with fiber $S_0$, and let $\mathcal{S}_0^*$ denote the dual bundle over $S^{n-1}$ with fiber $S_0^*$. 
With this understood, we may write $\mathcal{E}|_\Sigma = \mathcal{S}|_\Sigma \otimes N^* \mathcal{S}_0^*$. 
By \cite{Baer96}*{Proposition~2.2}, $\mathcal{D}^\Sigma$ can be identified with a twisted Dirac operator acting on sections of 
\[
\mathcal{E}|_\Sigma = \mathcal{S}|_\Sigma \otimes N^* \mathcal{S}_0^*,
\]
where the twist bundle $N^*\mathcal{S}_0^*$ carries the flat connection. 

\begin{lemma}\label{lemma.Dchianticommutator}
Along $\Sigma$ we have
\[
\mathcal{D}^\Sigma \chi + \chi \mathcal{D}^\Sigma = -\sum_{j=1}^{n-1} \gamma(e_j) \otimes \gamma_0^*(dN(e_j)),
\]
where $e_1,\dots,e_{n-1}$ is a local orthonormal tangent frame to $\Sigma$.
\end{lemma}

\begin{proof}
Recall that $\mathcal{D}^\Sigma$ anticommutes with $\gamma(\nu) \otimes \id$. Hence, we compute for a smooth section $s$ of $\mathcal{E}|_\Sigma$: 
\begin{align*} 
\mathcal{D}^\Sigma\chi s 
&= -\mathcal{D}^\Sigma (\gamma(\nu) \otimes \id)(\id \otimes \gamma_0^*(N))s \\ 
&= (\gamma(\nu) \otimes \id) \mathcal{D}^\Sigma (\id \otimes \gamma_0^*(N))s \\ 
&= -\sum_{j=1}^{n-1} (\gamma(e_j) \otimes \id) \, \nabla_{e_j}^{\mathcal{E}}( (\id \otimes \gamma_0^*(N))s) + \tfrac{1}{2} H \, (\gamma(\nu) \otimes \id) (\id \otimes \gamma_0^*(N))s \\ 
&= -\sum_{j=1}^{n-1} (\gamma(e_j) \otimes \id) (\id \otimes \gamma_0^*(N)) \, \nabla_{e_j}^{\mathcal{E}} s + \tfrac{1}{2} H \, (\gamma(\nu) \otimes \id) (\id \otimes \gamma_0^*(N))s \\ 
&\quad - \sum_{j=1}^{n-1} (\gamma(e_j) \otimes \id) (\id \otimes \gamma_0^*(dN(e_j)))s \\ 
&= (\gamma(\nu) \otimes \id)(\id \otimes \gamma_0^*(N)) \, \mathcal{D}^\Sigma s - \sum_{j=1}^{n-1} (\gamma(e_j) \otimes \id) (\id \otimes \gamma_0^*(dN(e_j)))s \\ 
&= -\chi \mathcal{D}^\Sigma s - \sum_{j=1}^{n-1} (\gamma(e_j) \otimes \id) (\id \otimes \gamma_0^*(dN(e_j)))s.
\qedhere
\end{align*} 
\end{proof}

\begin{lemma}\label{lemma.Achianticommutator}
Along $\Sigma$ we have $\mathcal{A} \chi + \chi \mathcal{A} = 0$.
\end{lemma}

\begin{proof}
This follows from Lemma~\ref{lemma.Dchianticommutator}.
\end{proof}

The following index computation generalizes the results in \cite{Brendle} and plays a key role in our arguments.

\begin{proposition} 
\label{index}
Suppose that $M$ has odd dimension $n \geq 3$. 
Then the operator 
\[
\mathcal{D} \colon H^1(M,\mathcal{E},\mathcal{F}^+) \to L^2(M,\mathcal{E})
\]
is a Fredholm operator.
Its Fredholm index equals the degree of $N \colon \Sigma \to S^{n-1}$. 
\end{proposition}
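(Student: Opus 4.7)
The plan is to establish Fredholmness, use homotopy invariance to show that the index depends only on $\deg N$, and then identify the index via a base case together with a relative-index argument.

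Fredholmness of $\mathcal{D}\colon H^1(M,\mathcal{E},\mathcal{F}^+)\to L^2(M,\mathcal{E})$ is immediate from the ellipticity of the boundary condition $\chi s = s$ established above (via \cite{BB}*{Corollary~7.23}), combined with the general theory of Dirac operators subject to elliptic local boundary conditions \cite{BB13}. The Fredholm index is invariant under continuous deformations of $(g,N)$; since $\Sigma$ is closed and oriented of dimension $n-1$, the homotopy class of $N\colon\Sigma\to S^{n-1}$ is determined by its degree, so $\ind(\mathcal{D})$ is a function of $\deg(N)$ alone.

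For the base case $\deg(N)=0$, I take $N\equiv N_0$ constant. Then $\gamma_0^*(N_0)$ is a parallel endomorphism of $S_0^*$ with eigenvalues $\pm i$, yielding a parallel splitting $S_0^*=S_+\oplus S_-$ and a $\mathcal{D}$-invariant decomposition $\mathcal{E}=(\mathcal{S}\otimes S_+)\oplus(\mathcal{S}\otimes S_-)$ on which $\chi$ restricts to $\mp i\gamma(\nu)$. The boundary condition $\chi s=s$ thus splits into the pair of MIT-bag-type conditions $\gamma(\nu)s=\pm i s$ on the two summands. The adjoint condition $\chi s=-s$---which is adjoint to $\chi s=s$ by the commutation $[\gamma(\nu)\otimes 1,\chi]=0$ observed above---decomposes into the same pair of MIT-bag conditions with the summands interchanged, so $\ind(\chi=1)=\ind(\chi=-1)$. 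Together with the adjointness relation $\ind(\chi=1)=-\ind(\chi=-1)$, this forces $\ind(\mathcal{D})=0=\deg(N)$.

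For general $N$, a relative-index (excision) argument handles the passage from $\deg=0$ to arbitrary $\deg$. Given two maps $N_1,N_2$ agreeing outside a small coordinate disk $D\subset\Sigma$, the difference $\ind(\mathcal{D}_{N_1})-\ind(\mathcal{D}_{N_2})$ localizes to a neighborhood of $D$; after deforming to a standard flat metric there, the computation reduces to a half-ball model and yields the local degree of the modification. Iterating, $\ind(\mathcal{D}_{N_1})-\ind(\mathcal{D}_{N_2})=\deg(N_1)-\deg(N_2)$, and combined with the base case this gives $\ind(\mathcal{D}_N)=\deg(N)$. The main technical obstacle is the flat half-ball model computation itself: in that setting $\mathcal{E}$ becomes isomorphic to copies of the Clifford algebra $\mathbb{C}\mathrm{l}(\R^n)$, and the chirality boundary condition interacts with Clifford multiplication in a specific way that must be unpacked explicitly. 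The analogous computation for $n=3$ is carried out in \cite{Brendle}; the new content here is its extension to all odd $n\geq 3$, where the Clifford module $S_0$ has dimension $2^{(n-1)/2}$ and the bookkeeping is more involved but the structural ingredients carry over without essential change.
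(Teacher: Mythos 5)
Your general strategy—Fredholmness by ellipticity, homotopy invariance to reduce to a function of $\deg(N)$, a base case at degree zero, and a relative-index/excision argument to bridge to general degree—is a genuinely different route from the paper's. The paper instead identifies $\mathcal{F}^\pm$ explicitly through the decompositions $\mathcal{S}|_\Sigma = \mathcal{S}_\Sigma^+\oplus\mathcal{S}_\Sigma^-$ and $\mathcal{S}_0^* = \mathcal{S}_0^{*,+}\oplus\mathcal{S}_0^{*,-}$, assembles a boundary Dirac operator $\mathcal{D}^\Sigma$ on the closed manifold $\Sigma$ whose index is computed by Atiyah–Singer to be $2\deg(N)$, and then invokes the holographic index theorem from Appendix~B of \cite{BBHW23} to conclude $\ind(\mathcal{D},\mathcal{F}^+)=\tfrac12\ind(\mathcal{D}^\Sigma)=\deg(N)$. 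That gives the answer in one shot with no local model required.

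Your base case is correct and tidy: for constant $N$ the parallel splitting $S_0^*=S_+\oplus S_-$ decomposes $\chi s=s$ into MIT-bag conditions, whose indices cancel by the adjointness of $\chi=1$ and $\chi=-1$ (or equivalently by the adjointness of the two bag conditions on $\mathcal{S}$), so the index vanishes.

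However, the passage from degree zero to arbitrary degree has a genuine gap. Everything hinges on the assertion that modifying $N$ on a small disk changes the index by the local degree of the modification, and this is exactly the ``flat half-ball model computation'' which you name as the main technical obstacle and then do not carry out. Your appeal to \cite{Brendle} does not fill it: Brendle's index computation for $n=3$ in that paper does not proceed by excision on a half-ball model but rather by relating the chirality boundary condition to a twisted Dirac operator on the closed boundary surface and applying Atiyah–Singer there—i.e., essentially the approach this paper generalizes via the holographic index theorem. So the citation supports the boundary-reduction strategy, not the excision strategy. Without an actual computation of the local index contribution on the half-ball (which would require working out the kernel/cokernel of the model Dirac operator with the chirality boundary condition on $\mathbb{R}^{n-1}\times[0,\infty)$, or an equivalent relative-index localization lemma), the step $\ind(\mathcal{D}_{N_1})-\ind(\mathcal{D}_{N_2})=\deg(N_1)-\deg(N_2)$ is unjustified, and the proof is incomplete.
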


\begin{proof}
Note that $\mathcal{D}$ and $\mathcal{A}$ are formally self-adjoint, and $\mathcal{A}$ is an adapted boundary operator for $\mathcal{D}$. It follows from Lemma~\ref{lemma.Achianticommutator} that $\mathcal{A}$ is an odd operator in the sense that it interchanges the bundles $\mathcal{F}^+$ and $\mathcal{F}^-$. Moreover, the involution $i\gamma(\nu) \otimes \id$ preserves the splitting~\eqref{F+F-} and anticommutes with $\mathcal{A}$. Therefore, the holographic index theorem \cite{Baer-Brendle-Hanke-Wang}*{Theorem B.1} applies and tells us that the operator
\[
\mathcal{D} \colon H^1(M,\mathcal{E},\mathcal{F}^+) \to L^2(M,\mathcal{E})
\]
is Fredholm and 
\[
\mathsf{ind}(\mathcal{D} \colon H^1(M,\mathcal{E},\mathcal{F}^+) \to L^2(M,\mathcal{E}))
=
\tfrac12\, \mathsf{ind}(\mathcal{A} \colon H^1(\Sigma,\mathcal{F}^+) \to L^2(\Sigma,\mathcal{F}^-)).\] 
It remains to compute the index of the operator $\mathcal{A} \colon H^1(\Sigma,\mathcal{F}^+) \to L^2(\Sigma,\mathcal{F}^-)$. 
Since $n$ is odd, the restriction $\mathcal{S}|_\Sigma$ can be canonically identified with the spinor bundle of $\Sigma$. 
The field of involutions $i\gamma(\nu)$ gives a decomposition $\mathcal{S}|_\Sigma = \mathcal{S}_\Sigma^+ \oplus \mathcal{S}_\Sigma^-$, where $\mathcal{S}_\Sigma^\pm$ are the eigensubbundles corresponding to the eigenvalues $\pm 1$. Moreover, we may decompose the bundle $\mathcal{S}_0^*$ as 
\begin{equation}
\mathcal{S}_0^* = \mathcal{S}_0^{*,+} \oplus \mathcal{S}_0^{*,-},
\label{eq:S0splitting}
\end{equation}
where, for each point $v \in S^{n-1}$, $\mathcal{S}_0^{*,\pm}$ denote the eigensubbundles of $i\gamma(v)$ corresponding to the eigenvalues $\pm 1$. 
This gives a decomposition 
\begin{equation*} 
N^* \mathcal{S}_0^* = N^* \mathcal{S}_0^{*,+} \oplus N^* \mathcal{S}_0^{*,-}, 
\label{eq:S0splitting.pullback}
\end{equation*}
where $N^* \mathcal{S}_0^{*,\pm}$ are the eigensubbundles of $i \gamma_0^*(N)$ corresponding to the eigenvalues $\pm 1$. We then have
\begin{align*}
\mathcal{F}^+ 
&= 
(\mathcal{S}_\Sigma^+ \otimes N^*\mathcal{S}_0^{*,+}) \oplus (\mathcal{S}_\Sigma^- \otimes N^*\mathcal{S}_0^{*,-}), \\
\mathcal{F}^-
&= 
(\mathcal{S}_\Sigma^+ \otimes N^*\mathcal{S}_0^{*,-}) \oplus (\mathcal{S}_\Sigma^- \otimes N^*\mathcal{S}_0^{*,+}).
\end{align*}
Note that even though $\mathcal{S}_0^*$ is a trivial bundle, the two subbundles $\mathcal{S}_0^{*,+}$ and $\mathcal{S}_0^{*,-}$ are not because the decomposition depends on the base point. The splitting~\eqref{eq:S0splitting} is not parallel with respect to the canonical flat connection on $\mathcal{S}_0^*$, but the Levi-Civita connection of $S^{n-1}$ induces a connection on the dual of its spinor bundle for which it is. We equip $\mathcal{S}_0^{*,\pm}$ with these latter connections and $N^*\mathcal{S}_0^{*,\pm}$ with the corresponding pull-back connections. 

Recall that $\mathcal{A}$ maps sections of 
\[
\mathcal{F}^+ = (\mathcal{S}_\Sigma^+ \otimes N^*\mathcal{S}_0^{*,+}) \oplus (\mathcal{S}_\Sigma^- \otimes N^*\mathcal{S}_0^{*,-})
\]
to sections of $\mathcal{F}^- = (\mathcal{S}_\Sigma^+ \otimes N^*\mathcal{S}_0^{*,-}) \oplus (\mathcal{S}_\Sigma^- \otimes N^*\mathcal{S}_0^{*,+})$. Since the involution $i\gamma(\nu) \otimes \id$ preserves the splitting~\eqref{F+F-} and anticommutes with $\mathcal{A}$, it follows that $\mathcal{A}$ maps sections of $\mathcal{S}_\Sigma^+ \otimes N^*\mathcal{S}_0^{*,+}$ to sections of $\mathcal{S}_\Sigma^- \otimes N^*\mathcal{S}_0^{*,+}$, and $\mathcal{A}$ maps sections of $\mathcal{S}_\Sigma^- \otimes N^*\mathcal{S}_0^{*,-}$ to sections of $\mathcal{S}_\Sigma^+ \otimes N^*\mathcal{S}_0^{*,-}$. Therefore, the operator $\mathcal{A} \colon \mathcal{F}^+ \to \mathcal{F}^-$ can be written in the form 
\[\begin{bmatrix}
0 & \mathcal{D}^\Sigma_- \\
\mathcal{D}^\Sigma_+ & 0
\end{bmatrix},\]
where 
\begin{align*}
\mathcal{D}^\Sigma_+
&\colon 
H^1(\Sigma,\mathcal{S}_\Sigma^+ \otimes N^*\mathcal{S}_0^{*,+}) \to L^2(\Sigma,\mathcal{S}_\Sigma^- \otimes N^*\mathcal{S}_0^{*,+}) ,\\
\mathcal{D}^\Sigma_-
&\colon 
H^1(\Sigma,\mathcal{S}_\Sigma^- \otimes N^*\mathcal{S}_0^{*,-}) \to L^2(\Sigma,\mathcal{S}_\Sigma^+ \otimes N^*\mathcal{S}_0^{*,-})
\end{align*}
are Dirac-type operators. Thus, 
\[\mathsf{ind}(\mathcal{A} \colon H^1(\Sigma,\mathcal{F}^+) \to L^2(\Sigma,\mathcal{F}^-)) = \mathsf{ind}(\mathcal{D}^\Sigma_+) + \mathsf{ind}(\mathcal{D}^\Sigma_-).\] 
We may compute $\mathsf{ind}(\mathcal{D}^\Sigma_+)$ and $\mathsf{ind}(\mathcal{D}^\Sigma_-)$ using the Atiyah-Singer index theorem. To that end, we denote by $\hat{\mathsf{A}}(\Sigma)$ the $\hat{\mathsf{A}}$-form of the tangent bundle of $\Sigma$, and by $\mathsf{ch}(\mathcal{S}_0^{*,\pm})$ the Chern character form of the bundle $\mathcal{S}_0^{*,\pm}$.
The lower index $k$ in $\hat{\mathsf{A}}(\Sigma)_k$ indicates the homogeneous part of degree $k$. We use the analogous notation for the homogeneous parts of the Chern character.
The Atiyah-Singer index theorem gives 
\begin{align*}
\mathsf{ind}(\mathcal{D}^\Sigma_+)
&=
\int_\Sigma \hat{\mathsf{A}}(\Sigma) \wedge N^*\mathsf{ch}(\mathcal{S}_0^{*,+}) \\
&=
\int_\Sigma \hat{\mathsf{A}}(\Sigma)_0 \wedge N^*\mathsf{ch}(\mathcal{S}_0^{*,+})_{n-1} 
+ \int_\Sigma \hat{\mathsf{A}}(\Sigma)_{n-1} \wedge N^*\mathsf{ch}(\mathcal{S}_0^{*,+})_{0} \\
&=
\int_\Sigma  N^*\mathsf{ch}(\mathcal{S}_0^{*,+})_{n-1} 
+ \mathsf{rank}(\mathcal{S}_0^{*,+}) \int_\Sigma \hat{\mathsf{A}}(\Sigma)_{n-1} \\
&=
\mathsf{deg}(N)\int_{S^{n-1}}  \mathsf{ch}(\mathcal{S}_0^{*,+})_{n-1} 
+ \frac{m}{2} \int_\Sigma \hat{\mathsf{A}}(\Sigma)_{n-1} 
\end{align*}
and, similarly,
\begin{align*}
-\mathsf{ind}(\mathcal{D}^\Sigma_-)
&=
\mathsf{deg}(N)\int_{S^{n-1}}  \mathsf{ch}(\mathcal{S}_0^{*,-})_{n-1} + \frac{m}{2} \int_\Sigma \hat{\mathsf{A}}(\Sigma)_{n-1} .
\end{align*}
Therefore,
\begin{align*}
\mathsf{ind}(\mathcal{A} \colon & H^1(\Sigma,\mathcal{F}^+) \to L^2(\Sigma,\mathcal{F}^-)) \\ 
&=
\mathsf{ind}(\mathcal{D}^\Sigma_+) + \mathsf{ind}(\mathcal{D}^\Sigma_-) \\
&=
\mathsf{deg}(N)\int_{S^{n-1}}  (\mathsf{ch}(\mathcal{S}_0^{*,+}) - \mathsf{ch}(\mathcal{S}_0^{*,-}))_{n-1} \\
&=
2\,\mathsf{deg}(N).
\end{align*}
Putting these facts together, the assertion follows.
\end{proof}

\section{Spin geometry for initial data sets} 

Let $M$ be a compact spin manifold of dimension $n \geq 2$ with boundary $\partial M = \Sigma$. 
Let $g$ be a Riemannian metric on $M$, let $q$ be a symmetric $(0,2)$-tensor field on $M$ and let $N \colon \Sigma \to S^{n-1}$ be a smooth map. 
Let $\chi$ be defined as in \eqref{boundary.chirality}. 
We will denote the self-adjoint endomorphism field corresponding to $q$ also by $q\colon TM\to TM$, i.e.\ $q(X,Y) = g(q(X),Y)$.
Furthermore, we put
\begin{equation}
\mu := \tfrac{1}{2} \, R + \tfrac{1}{2} \, \tr(q)^2 - \tfrac{1}{2} \, |q|^2
\label{def.energy}
\end{equation}
and 
\begin{equation}
J := \text{\rm div}(q) - \nabla \tr(q)
\label{def.momentum}
\end{equation}
where $R$ is the scalar curvature of $g$.
In the first step, we use $q$ to define modified connections on $\mathcal{S}$ and on $\mathcal{E}$.

\begin{definition} 
We define a connection $\nabla^{\mathcal{S},q}$ on $\mathcal{S}$ by 
\begin{equation*} 
\label{modified.connection.on.S}
\nabla_X^{\mathcal{S},q} := \nabla_X^{\mathcal{S}} + \tfrac{i}{2} \gamma(q(X)) 
\end{equation*} 
on $\mathcal{S}$. We next define a homomorphism field $Q\colon \mathcal{E} \to T^*M\otimes\mathcal{E}$ by setting
\begin{equation*}
\label{def.Q}
(Qs)(X) := (\gamma(q(X)) \otimes \id)s
\end{equation*}
for every section $s$ of $\mathcal{E}$ and every tangent vector $X$.
With this understood, we define a connection $\nabla^{\mathcal{E},q}$ on $\mathcal{E}$ by
\begin{equation*}
\label{modified.connection.on.E}
\nabla_X^{\mathcal{E},q} s := \nabla_X^{\mathcal{E}} s + \tfrac{i}{2} (Qs)(X).
\end{equation*} 
Note that the connection $\nabla^{\mathcal{E},q}$ on $\mathcal{E}$ is the tensor product connection of $\nabla^{\mathcal{S},q}$ with the flat connection on the trivial bundle over $M$ with fiber $S_0^*$.
\end{definition}

In order to derive a modified Weitzenb\"ock formula involving $\nabla^{\mathcal{E},q}$ we need some preparation.

\begin{lemma}\label{lemma.Q}
Let $Q^*\colon T^*M\otimes\mathcal{E} \to \mathcal{E}$ denote the pointwise adjoint of $Q$. Then the following statements hold:
\begin{enumerate}[(a)]
\item \label{lemma.Q.1}
$Q^*Q = |q|^2$,
\item \label{lemma.Q.2}
$Q^*\nabla^{\mathcal{E}} - (\nabla^{\mathcal{E}})^*Q = \gamma(\text{\rm div}(q)) \otimes \id$.
\end{enumerate}
\end{lemma}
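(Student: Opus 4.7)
The plan is to verify both identities in a local orthonormal frame $e_1,\dots,e_n$ of $TM$ and work pointwise. The only algebraic facts needed are that $\gamma(v)$ is skew-adjoint with $\gamma(v)^2 = -|v|^2$, that $\gamma$ is parallel with respect to $\nabla^{\mathcal{S}}$, and that $q$ as a $(0,2)$-tensor and as the endomorphism $q\colon TM\to TM$ are related by $g(q(X),Y)=q(X,Y)$, so that $\sum_i|q(e_i)|^2=|q|^2$ and, at a point $p$ where $\nabla e_i(p)=0$, $\div(q)=\sum_i\nabla_{e_i}(q(e_i))$ viewed as a tangent vector (equivalently a $1$-form).

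For part \eqref{lemma.Q.1}, I first compute $Q^*$ explicitly. Writing a section of $T^*M\otimes \mathcal{E}$ as $\omega=\sum_i e^i\otimes s_i$, the pointwise identity
\[
\langle Qs,\omega\rangle
=\sum_i\langle (\gamma(q(e_i))\otimes 1)s,s_i\rangle
=-\sum_i\langle s,(\gamma(q(e_i))\otimes 1)s_i\rangle
\]
yields $Q^*\omega=-\sum_i(\gamma(q(e_i))\otimes 1)s_i$. Substituting $\omega=Qs$ and using $(\gamma(q(e_i)))^2=-|q(e_i)|^2$ gives
\[
Q^*Qs
=-\sum_i(\gamma(q(e_i))\otimes 1)^2 s
=\sum_i|q(e_i)|^2\, s
=|q|^2 s.
\]

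For part \eqref{lemma.Q.2}, I fix a point $p$ and choose the frame to be synchronous at $p$, so that $\nabla e_i(p)=0$. Using the standard formula $(\nabla^{\mathcal{E}})^*\eta=-\sum_i\nabla^{\mathcal{E}}_{e_i}\eta(e_i)$ valid at $p$ and the fact that $\gamma$ is parallel,
\[
(\nabla^{\mathcal{E}})^*(Qs)
=-\sum_i\nabla^{\mathcal{E}}_{e_i}\bigl((\gamma(q(e_i))\otimes 1)s\bigr)
=-\bigl(\gamma(\textstyle\sum_i\nabla_{e_i}(q(e_i)))\otimes 1\bigr)s
-\sum_i(\gamma(q(e_i))\otimes 1)\nabla^{\mathcal{E}}_{e_i}s.
\]
On the other hand, the formula for $Q^*$ just derived gives
\[
Q^*\nabla^{\mathcal{E}}s
=-\sum_i(\gamma(q(e_i))\otimes 1)\nabla^{\mathcal{E}}_{e_i}s.
\]
Subtracting, the first-order terms cancel, and recognizing $\sum_i\nabla_{e_i}(q(e_i))=\div(q)$ at $p$ produces $Q^*\nabla^{\mathcal{E}}s-(\nabla^{\mathcal{E}})^*Qs=(\gamma(\div(q))\otimes 1)s$, as required.

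There is really no hard obstacle; the only thing that needs attention is bookkeeping of conventions, namely distinguishing the pointwise (algebraic) adjoint $Q^*$ from the formal (integral) adjoint $(\nabla^{\mathcal{E}})^*$, and reconciling the two interpretations of $q$ and of $\div(q)$. Provided one works at a point with a synchronous frame so that covariant derivatives of frame elements vanish, the computation reduces to a direct expansion and a cancellation of the terms containing $\nabla^{\mathcal{E}}_{e_i}s$, which is what makes the identity clean.
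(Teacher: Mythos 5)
Your proof is correct and follows essentially the same route as the paper: compute $Q^*$ by pointwise duality, obtain part (a) from $\gamma(q(e_j))^2=-|q(e_j)|^2$, and obtain part (b) by working in a synchronous frame, applying $(\nabla^{\mathcal{E}})^*\psi=-\sum_j\nabla^{\mathcal{E}}_{e_j}\psi(e_j)$ to $Qs$, and identifying $\sum_j(\nabla_{e_j}q)(e_j)=\div(q)$. The only cosmetic difference is that you invoke the parallelism of $\gamma$ and the formula for $(\nabla^{\mathcal{E}})^*$ explicitly rather than leaving them implicit.
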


\begin{proof}
Let $p\in M$ and let $e_1,\dots,e_n$ be an orthonormal basis of $T_pM$.
We denote the dual basis of $T^*_pM$ by $e^1,\dots,e^n$.
For $s\in \mathcal{E}|_p$ and $\psi\in T^*_pM\otimes\mathcal{E}|_p$, we compute 
\begin{align*}
\langle Q^*\psi,s\rangle
&=
\langle \psi,Qs\rangle
=
\Big\langle \psi,\sum_j e^j\otimes (\gamma(q(e_j)) \otimes \id)s \Big\rangle \\
&=
\sum_j \langle \psi(e_j),(\gamma(q(e_j)) \otimes \id)s\rangle \\
&=
-\sum_j \langle (\gamma(q(e_j)) \otimes \id)\psi(e_j),s\rangle.
\end{align*}
Therefore,
\[
Q^*\psi = -\sum_j (\gamma(q(e_j)) \otimes \id)\psi(e_j).
\]
Consequently,
\begin{align*}
Q^*Qs
&=
Q^* \Big( \sum_j e^j\otimes (\gamma(q(e_j)) \otimes \id)s \Big) \\
&=
- \sum_j (\gamma(q(e_j)) \otimes \id)(\gamma(q(e_j)) \otimes \id)s \\
&=
\sum_j |q(e_j)|^2 s
=
|q|^2 s.
\end{align*}
This proves \eqref{lemma.Q.1}.
To show \eqref{lemma.Q.2}, we extend the orthonormal basis to a neighborhood of $p$ such that $\nabla e_j = 0$ at $p$ for $j=1,\hdots,n$.
Let $s$ be a smooth section of $\mathcal{E}$, defined near $p$.
Then we compute at $p$:
\begin{align*}
(\nabla^{\mathcal{E}})^*Qs
&= -\sum_j \nabla_{e_j}^{\mathcal{E}} \big ( (\gamma(q(e_j)) \otimes \id)s \big) \\
&=
-\sum_j (\gamma(q(e_j)) \otimes \id)\nabla_{e_j}^{\mathcal{E}} s - \sum_j (\gamma((D_{e_j}q)(e_j)) \otimes \id)s \\
&=
Q^*\nabla^{\mathcal{E}} s - (\gamma(\text{\rm div}(q)) \otimes \id)s.
\qedhere
\end{align*}
\end{proof}
The well-known Weitzenb\"ock formula gives $\mathcal{D}^2 = (\nabla^{\mathcal{E}})^* \nabla^{\mathcal{E}} + \tfrac{1}{4}R$.
We find a modification, taking $q$ into account.

\begin{proposition}[Modified Weitzenb\"ock formula]
We have the following operator identity:
\[
(\mathcal{D}+\tfrac{i}{2}\tr(q))(\mathcal{D}-\tfrac{i}{2}\tr(q))
=
(\nabla^{\mathcal{E},q})^*\nabla^{\mathcal{E},q} + \tfrac{1}{2}\mu + \tfrac{i}{2}\gamma(J) \otimes \id.
\]
\end{proposition}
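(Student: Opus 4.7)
The plan is to expand both sides and reduce to the standard Weitzenböck formula combined with the two identities in Lemma \ref{lemma.Q}. Writing $\tau:=\tr(q)$ for brevity, I would first expand the left-hand side via
\[
(\mathcal{D}+\tfrac{i}{2}\tau)(\mathcal{D}-\tfrac{i}{2}\tau)
= \mathcal{D}^2 + \tfrac{i}{2}[\tau,\mathcal{D}] + \tfrac{1}{4}\tau^2.
\]
Since $\mathcal{D}(\tau s) = \gamma(\nabla\tau)\otimes 1\,s + \tau\mathcal{D}s$, the commutator $[\tau,\mathcal{D}]$ equals $-\gamma(\nabla\tau)\otimes 1$. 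Applying the standard Weitzenböck formula $\mathcal{D}^2 = (\nabla^{\mathcal{E}})^*\nabla^{\mathcal{E}} + \tfrac{1}{4}R$ (which is just the tensor product of the classical Lichnerowicz identity on $\mathcal{S}$ with the identity on the trivial bundle $M\times S_0^*$) yields
\[
(\mathcal{D}+\tfrac{i}{2}\tau)(\mathcal{D}-\tfrac{i}{2}\tau)
= (\nabla^{\mathcal{E}})^*\nabla^{\mathcal{E}} + \tfrac{1}{4}R - \tfrac{i}{2}\gamma(\nabla\tau)\otimes 1 + \tfrac{1}{4}\tau^2.
\]

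Next, I would compute $(\nabla^{\mathcal{E},q})^*\nabla^{\mathcal{E},q}$ by splitting $\nabla^{\mathcal{E},q} = \nabla^{\mathcal{E}} + \tfrac{i}{2}Q$. The formal adjoint of the zeroth-order operator $\tfrac{i}{2}Q$ is the pointwise adjoint $-\tfrac{i}{2}Q^*$ (the minus sign coming from the complex linearity of the inner product in the first slot), so
\[
(\nabla^{\mathcal{E},q})^*\nabla^{\mathcal{E},q}
= (\nabla^{\mathcal{E}})^*\nabla^{\mathcal{E}}
+ \tfrac{i}{2}\bigl((\nabla^{\mathcal{E}})^*Q - Q^*\nabla^{\mathcal{E}}\bigr)
+ \tfrac{1}{4}Q^*Q.
\]
Lemma \ref{lemma.Q} identifies the second term as $-\tfrac{i}{2}\gamma(\div(q))\otimes 1$ and the third as $\tfrac{1}{4}|q|^2$.

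Finally, I would substitute the definitions \eqref{def.energy} and \eqref{def.momentum}: expanding $\tfrac{1}{2}\mu + \tfrac{i}{2}\gamma(J)\otimes 1$ gives exactly $\tfrac{1}{4}R + \tfrac{1}{4}\tau^2 - \tfrac{1}{4}|q|^2 + \tfrac{i}{2}\gamma(\div(q))\otimes 1 - \tfrac{i}{2}\gamma(\nabla\tau)\otimes 1$. Adding this to the expression for $(\nabla^{\mathcal{E},q})^*\nabla^{\mathcal{E},q}$ produces the same four terms that appeared on the left-hand side, completing the identity. The computation is entirely mechanical once Lemma \ref{lemma.Q} is in hand; the only subtlety worth flagging is the sign in $(iQ)^* = -iQ^*$, which is responsible for the cancellation $\tfrac{i}{2}\gamma(\div(q))\otimes 1$ appearing with the correct sign in $\tfrac{i}{2}\gamma(J)\otimes 1$.
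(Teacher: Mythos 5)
Your proposal is correct and follows essentially the same route as the paper: expand both factors on the left, expand $(\nabla^{\mathcal{E},q})^*\nabla^{\mathcal{E},q}$, invoke Lemma~\ref{lemma.Q} for $Q^*Q = |q|^2$ and $Q^*\nabla^{\mathcal{E}} - (\nabla^{\mathcal{E}})^*Q = \gamma(\div(q))\otimes 1$, apply the standard Weitzenb\"ock formula, and match terms against the definitions \eqref{def.energy} and \eqref{def.momentum}. The only cosmetic difference is that the paper collects the difference of the two sides and shows it equals $\tfrac12\mu + \tfrac{i}{2}\gamma(J)\otimes 1$, whereas you add everything up directly; also, your attribution of the sign in $(iQ)^*=-iQ^*$ to ``complex linearity in the first slot'' is a slightly odd phrasing (the sign is simply $\bar{i}=-i$, independent of which slot carries the conjugation), but the sign itself is right.
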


\begin{proof}
We compute, using Lemma~\ref{lemma.Q}:
\begin{align*}
(\nabla^{\mathcal{E},q})^*\nabla^{\mathcal{E},q}
&=
((\nabla^{\mathcal{E}})^* -\tfrac{i}{2}Q^*)(\nabla^{\mathcal{E}} + \tfrac{i}{2}Q) \\
&=
(\nabla^{\mathcal{E}})^*\nabla^{\mathcal{E}} -\tfrac{i}{2}(Q^*\nabla^{\mathcal{E}} - (\nabla^{\mathcal{E}})^*Q) + \tfrac{1}{4}Q^*Q \\
&=
(\nabla^{\mathcal{E}})^*\nabla^{\mathcal{E}} -\tfrac{i}{2}\gamma(\text{\rm div}(q)) \otimes \id + \tfrac{1}{4}|q|^2 .
\end{align*}
Furthermore,
\begin{align*}
(\mathcal{D}+\tfrac{i}{2}\tr(q))(\mathcal{D}-\tfrac{i}{2}\tr(q))
&=
\mathcal{D}^2 - \tfrac{i}{2}[\mathcal{D},\tr(q)] + \tfrac{1}{4}\tr(q)^2 \\
&=
\mathcal{D}^2 - \tfrac{i}{2}\gamma(\nabla\tr(q)) \otimes \id + \tfrac{1}{4}\tr(q)^2 .
\end{align*}
These two equations together with the standard Weitzenb\"ock formula yield
\begin{align*}
(\mathcal{D}+\tfrac{i}{2}\tr(q))(\mathcal{D}&-\tfrac{i}{2}\tr(q)) - (\nabla^{\mathcal{E},q})^*\nabla^{\mathcal{E},q} \\
&=
\tfrac{1}{4}R + \tfrac{i}{2}\gamma(\text{\rm div}(q)-\nabla\tr(q)) \otimes \id + \tfrac{1}{4}(\tr(q)^2-|q|^2) \\
&=\tfrac{1}{2}\mu + \tfrac{i}{2}\gamma(J) \otimes \id .
\qedhere
\end{align*}
\end{proof}

\begin{corollary}\label{cor.Weitzenboeck}
For all $s\in H^1(M,\mathcal{E})$ we have
\begin{align*}
-\int_M |(\mathcal{D}-\tfrac{i}{2}\tr(q))s|^2 &+ \int_M |\nabla^{\mathcal{E},q}s|^2 + \tfrac{1}{2}\int_M \langle (\mu+i\gamma(J) \otimes \id)s,s \rangle \\
&=
\int_\Sigma \langle (\mathcal{D}^\Sigma - \tfrac{1}{2}H +\tfrac{i}{2}(\gamma(q(\nu)-\tr(q)\nu) \otimes \id ))s,s \rangle .
\end{align*}
\end{corollary}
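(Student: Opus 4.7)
The plan is to pair the Modified Weitzenb\"ock formula with $s$, integrate over $M$, and then integrate by parts to move one order of derivative off each of the two principal operators, picking up the boundary terms that ultimately assemble into the right-hand side.

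\medskip

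First, I would write $A := \mathcal{D}-\tfrac{i}{2}\tr(q)$, so that $A^* = \mathcal{D}+\tfrac{i}{2}\tr(q)$ since $\mathcal{D}$ is formally self\-adjoint and $\tr(q)$ is real. Taking the $L^2$-inner product with $s$ on both sides of the Modified Weitzenb\"ock identity gives
\[
\int_M \langle A^*As,s\rangle = \int_M \langle (\nabla^{\mathcal{E},q})^*\nabla^{\mathcal{E},q}s,s\rangle + \tfrac{1}{2}\int_M \langle (\mu+i\gamma(J)\otimes 1)s,s\rangle.
\]
Next, I would apply Green's formula for $\mathcal{D}$, in the form $\int_M\langle\mathcal{D}\phi,s\rangle = \int_M\langle \phi,\mathcal{D}s\rangle + \int_\Sigma\langle \gamma(\nu)\phi,s\rangle$ (using skew-Hermiticity of $\gamma(\nu)$), applied with $\phi = As$ and remembering that the zeroth-order piece $\tfrac{i}{2}\tr(q)$ contributes no boundary term. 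This yields
\[
\int_M \langle A^*As,s\rangle = \int_M |As|^2 + \int_\Sigma \langle \gamma(\nu)As,s\rangle.
\]
In parallel, the standard Green's identity for the connection gives
\[
\int_M \langle (\nabla^{\mathcal{E},q})^*\nabla^{\mathcal{E},q}s,s\rangle = \int_M |\nabla^{\mathcal{E},q}s|^2 - \int_\Sigma \langle \nabla^{\mathcal{E},q}_\nu s,s\rangle.
\]
Subtracting and rearranging reduces the corollary to identifying
\[
\int_\Sigma \langle\gamma(\nu)As,s\rangle + \int_\Sigma \langle\nabla^{\mathcal{E},q}_\nu s,s\rangle
\]
with the claimed boundary expression.

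\medskip

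The remaining step is a purely algebraic manipulation on $\Sigma$. I would split $\mathcal{D}s = \gamma(\nu)\nabla^{\mathcal{E}}_\nu s + \sum_{j=1}^{n-1}\gamma(e_j)\nabla^{\mathcal{E}}_{e_j}s$ along a local frame $e_1,\dots,e_{n-1}$ of $\Sigma$ adapted to $\nu$, and use $\gamma(\nu)^2=-1$ to obtain
\[
\gamma(\nu)\mathcal{D}s = -\nabla^{\mathcal{E}}_\nu s + \sum_{j=1}^{n-1}\gamma(\nu)\gamma(e_j)\nabla^{\mathcal{E}}_{e_j}s.
\]
From the definition of $\mathcal{D}^\Sigma_0$, the tangential sum equals $-\mathcal{D}^\Sigma_0 s - \tfrac{1}{2}Hs$. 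Expanding $\nabla^{\mathcal{E},q}_\nu = \nabla^{\mathcal{E}}_\nu + \tfrac{i}{2}\gamma(q(\nu))\otimes 1$ and $\gamma(\nu)A = \gamma(\nu)\mathcal{D}-\tfrac{i}{2}\tr(q)\gamma(\nu)$, the two $\nabla^{\mathcal{E}}_\nu s$ terms cancel, and the remaining zeroth-order contributions combine to
\[
\tfrac{i}{2}\gamma(q(\nu))\otimes 1 \; -\; \tfrac{i}{2}\tr(q)\,\gamma(\nu)\otimes 1 = \tfrac{i}{2}\gamma(q(\nu)-\tr(q)\nu)\otimes 1,
\]
yielding exactly the boundary integrand in the statement.

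\medskip

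I do not expect a serious obstacle: the proof is essentially bookkeeping once the Modified Weitzenb\"ock formula is in hand. The only point requiring care is sign tracking in the two Green's formulas (the opposite signs for $\mathcal{D}$ and $\nabla^*\nabla$ are what allows the $\pm\nabla^{\mathcal{E}}_\nu s$ contributions to cancel), and ensuring that the tangential Dirac sum is identified with $-\mathcal{D}^\Sigma_0 - \tfrac{1}{2}H$ with the correct sign convention for $H$. Since the adjoints and signs have already been used consistently earlier in the excerpt, the computation goes through as outlined.
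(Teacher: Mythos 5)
Your proposal is correct and is essentially the paper's own proof: you pair the Modified Weitzenböck formula with $s$, integrate by parts on the $\mathcal{D}^2$-term and the $\nabla^*\nabla$-term separately using the usual Green's formulas, split $\gamma(\nu)\mathcal{D}$ into $-\nabla^{\mathcal{E}}_\nu - \mathcal{D}^\Sigma_0 - \tfrac12 H$ via the definition of $\mathcal{D}^\Sigma_0$, and observe that the two $\nabla^{\mathcal{E}}_\nu s$ boundary contributions cancel, leaving the zeroth-order piece $\tfrac{i}{2}\gamma(q(\nu)-\tr(q)\nu)\otimes 1$. The sign bookkeeping you describe (Green's formula for $\mathcal{D}$ with a $+$ boundary term, for $\nabla^*\nabla$ with a $-$) matches the paper's conventions exactly.
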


\begin{proof}
Note that 
\[
(\gamma(\nu) \otimes \id)\mathcal{D}s = \mathcal{D}^\Sigma s - \nabla_\nu^{\mathcal{E}} s - \tfrac{1}{2} H s 
\]
along $\Sigma$. Using this identity and integration by parts we find
\begin{align*}
\int_M &\langle(\mathcal{D}+\tfrac{i}{2}\tr(q))(\mathcal{D}-\tfrac{i}{2}\tr(q)) s , s \rangle \\
&=
\int_M |(\mathcal{D}-\tfrac{i}{2}\tr(q))s|^2 + \int_\Sigma \langle (\gamma(\nu) \otimes \id)(\mathcal{D}-\tfrac{i}{2}\tr(q)) s , s \rangle \\
&=
\int_M |(\mathcal{D}-\tfrac{i}{2}\tr(q))s|^2 + \int_\Sigma \big\langle \big( \mathcal{D}^\Sigma-\nabla_\nu^{\mathcal{E}} - \tfrac{1}{2}H-\tfrac{i}{2}\tr(q)(\gamma(\nu) \otimes \id)\big) s , s \big\rangle .
\end{align*}
Similarly, 
\begin{align*}
\int_M \langle (\nabla^{\mathcal{E},q})^*\nabla^{\mathcal{E},q} s,s\rangle
&=
\int_M |\nabla^{\mathcal{E},q}s|^2 - \int_\Sigma \langle \nabla_\nu^{\mathcal{E},q} s, s\rangle \\
&=
\int_M |\nabla^{\mathcal{E},q}s|^2 - \int_\Sigma \langle (\nabla_\nu^{\mathcal{E}} + \tfrac{i}{2}\gamma(q(\nu)) \otimes \id) s, s\rangle .
\end{align*}
Substituting these equations into the modified Weitzenb\"ock formula concludes the proof.
\end{proof}

\begin{lemma}\label{lemma.estimate.dN}
We have 
\[\Big|\Big\langle\sum_{j=1}^{n-1} (\gamma(e_j) \otimes \gamma_0^*(dN(e_j)))s,s\Big\rangle\Big| \leq \|dN\|_\tr \, |s|^2\] 
at each point on $\Sigma$. Here, $e_1,\dots,e_{n-1}$ denotes a local orthonormal frame on $\Sigma$.
\end{lemma}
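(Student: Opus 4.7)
The plan is to reduce this pointwise estimate to a diagonal form via the singular value decomposition of $dN_x \colon T_x\Sigma \to T_{N(x)}S^{n-1}$. First, I would note that the expression $\sum_{j=1}^{n-1}\gamma(e_j)\otimes\gamma_0^*(dN(e_j))$ is independent of the choice of orthonormal frame $e_1,\dots,e_{n-1}$ on $T_x\Sigma$: under an orthogonal change of frame $e'_i = \sum_j O_{ij} e_j$, the sum transforms by $O^\top O = I$ and is therefore unchanged. This freedom allows me to diagonalize $dN_x$: I would pick an orthonormal basis $e_1,\dots,e_{n-1}$ of $T_x\Sigma$ together with an orthonormal system $f_1,\dots,f_{n-1}$ in $T_{N(x)}S^{n-1} \subset \R^n$ satisfying $dN(e_j) = \sigma_j f_j$, where $\sigma_j \geq 0$ are the singular values of $dN_x$. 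By definition $\|dN_x\|_\tr = \sum_j \sigma_j$.

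With this choice the quantity becomes $\sum_j \sigma_j\,\langle A_j s, s\rangle$, where $A_j := \gamma(e_j)\otimes\gamma_0^*(f_j)$ acts on $\mathcal{E}|_x = \mathcal{S}|_x \otimes S_0^*$. The key observation is that each $A_j$ is a self-adjoint involution of operator norm one: since $\gamma(e_j)$ and $\gamma_0^*(f_j)$ are both skew-adjoint, their tensor product $A_j$ is self-adjoint; and since $|e_j|_g = |f_j|_{\R^n} = 1$, we obtain $A_j^2 = \gamma(e_j)^2 \otimes \gamma_0^*(f_j)^2 = (-1)\cdot(-1) = 1$. Hence $|\langle A_j s, s\rangle| \leq |s|^2$ pointwise, and the triangle inequality then gives
\[
\Big|\Big\langle\sum_{j=1}^{n-1}\sigma_j A_j\,s,s\Big\rangle\Big| \leq \sum_{j=1}^{n-1}\sigma_j\,|s|^2 = \|dN\|_\tr\,|s|^2.
\]

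I do not anticipate any real obstacle here: once the problem is posed in the diagonal frame, the whole content reduces to the spectral structure of $\gamma(e)\otimes\gamma_0^*(f)$ for unit vectors $e$ and $f$. The only subtle point worth checking carefully is the frame-invariance that unlocks the SVD reduction, but this is a one-line calculation with an orthogonal matrix.
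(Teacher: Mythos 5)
Your proof is correct and takes essentially the same approach as the paper: both diagonalize $dN$ via its singular value decomposition and then bound each term $\langle (\gamma(e_j)\otimes\gamma_0^*(e_j^0))s,s\rangle$ by $|s|^2$, the only cosmetic difference being that the paper invokes Cauchy--Schwarz together with the fact that $\gamma(e_j)\otimes\gamma_0^*(e_j^0)$ is an isometry, whereas you observe that it is a self-adjoint involution of operator norm one — two formulations of the same fact.
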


\begin{proof}
Let us fix a point $p \in \Sigma$. We choose orthonormal bases $e_1,\dots,e_{n-1}$ of $T_p\Sigma$ and $e^0_1,\dots,e^0_{n-1}$ of $T_{N(p)} S^{n-1}$ such that $dN(e_j)=\lambda_j e^0_j$, where $\lambda_j\geq 0$ denote the singular values of $dN$.
Then
\begin{align*}
\Big|\Big\langle\sum_{j=1}^{n-1} (\gamma(e_j) \otimes \gamma_0^*(dN(e_j)))s,s\Big\rangle\Big|  
&=
\Big| \sum_{j=1}^{n-1} \lambda_j \, \langle (\gamma(e_j) \otimes \gamma_0^*(e^0_j))s,s \rangle \Big|  \\
&\leq
\sum_{j=1}^{n-1}\lambda_j \, |(\gamma(e_j) \otimes \gamma_0^*(e^0_j))s| \, |s| \\ 
&=
\sum_{j=1}^{n-1}\lambda_j \, |s|^2. 
\qedhere
\end{align*}
\end{proof}

\begin{proposition}\label{integral.formula}
Suppose that $s\in H^1(M,\mathcal{E})$ satisfies the boundary condition $\chi s = \varepsilon s$ along $\Sigma$, where $\varepsilon \colon \Sigma\to\{-1,1\}$ is locally constant.
Then we have the estimate
\begin{align*}
-\int_M |(\mathcal{D}-\tfrac{i}{2}\tr(q))s|^2 + \int_M& |\nabla^{\mathcal{E},q}s|^2 + \tfrac{1}{2}\int_M \langle (\mu+i\gamma(J) \otimes \id)s,s \rangle \\
&\le
\tfrac{1}{2}\int_\Sigma (|q(\nu,\nu)-\tr(q)| + \|dN\|_\tr - H)|s|^2  .
\end{align*}
\end{proposition}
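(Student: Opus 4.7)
The plan is to apply the identity from Corollary~\ref{cor.Weitzenboeck}, which expresses the left-hand side as the boundary integral
\[
\int_\Sigma \Bigl\langle \Bigl(-\mathcal{D}^\Sigma_0 - \tfrac{1}{2}H +\tfrac{i}{2}\gamma(q(\nu)-\tr(q)\nu)\otimes 1\Bigr)s,s \Bigr\rangle,
\]
and then to bound each of the three pieces separately. The mean curvature piece contributes $-\tfrac12\int_\Sigma H|s|^2$ directly, so the work is in treating the Dirac piece and the Clifford multiplication piece.

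For the Dirac piece, I would use the selfadjointness of $\chi$ together with the reality and local constancy of $\varepsilon$ to write $\langle \mathcal{D}^\Sigma_0 s,s\rangle = \varepsilon\langle \chi\mathcal{D}^\Sigma_0 s,s\rangle$ pointwise on $\Sigma$. Invoking Lemma~\ref{lemma.Dchianticommutator} to replace $\chi\mathcal{D}^\Sigma_0 s$ by $-\mathcal{D}^\Sigma_0(\chi s) + \sum_j (\gamma(e_j)\otimes \gamma_0^*(dN(e_j)))s$, and then using $\chi s=\varepsilon s$ once more to reduce $\mathcal{D}^\Sigma_0(\chi s) = \varepsilon \mathcal{D}^\Sigma_0 s$, I obtain the pointwise identity
\[
2\langle \mathcal{D}^\Sigma_0 s,s\rangle
=\varepsilon\Bigl\langle \sum_{j=1}^{n-1}(\gamma(e_j)\otimes \gamma_0^*(dN(e_j)))\,s,\,s\Bigr\rangle.
\]
Combined with Lemma~\ref{lemma.estimate.dN} and $|\varepsilon|=1$, this yields $-\int_\Sigma \langle \mathcal{D}^\Sigma_0 s,s\rangle \leq \tfrac12\int_\Sigma \|dN\|_\tr\,|s|^2$, supplying the $\|dN\|_\tr$ contribution to the desired bound.

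For the Clifford multiplication piece, I would decompose $q(\nu)=q(\nu,\nu)\nu+q(\nu)^T$ with $q(\nu)^T\in T\Sigma$. The tangential contribution $\tfrac{i}{2}\langle (\gamma(q(\nu)^T)\otimes 1)s,s\rangle$ vanishes pointwise on $\Sigma$: since $\gamma(\xi)\otimes 1$ anticommutes with $\chi$ for $\xi\in T\Sigma$ (as noted right after \eqref{boundary.chirality}), it sends the $\varepsilon$-eigenspace of $\chi$ into the $-\varepsilon$-eigenspace, which is orthogonal to $s$. Only the normal component survives, and since $i\gamma(\nu)\otimes 1$ is selfadjoint with operator norm $1$, it is bounded pointwise by $\tfrac12|q(\nu,\nu)-\tr(q)||s|^2$. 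Summing the three contributions produces the stated inequality. The main subtlety, which is also the reason the conclusion takes the clean intrinsic form $|q(\nu,\nu)-\tr(q)|$ rather than the a priori larger $|q(\nu)-\tr(q)\nu|$, is the orthogonality argument killing the tangential part of $q(\nu)$; this is the one place in the estimate where the boundary condition genuinely enters beyond its use in the Dirac term.
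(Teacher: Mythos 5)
Your proposal is correct and follows essentially the same route as the paper: start from Corollary~\ref{cor.Weitzenboeck}, handle the $\mathcal{D}^\Sigma_0$ term via the anticommutator in Lemma~\ref{lemma.Dchianticommutator} together with Lemma~\ref{lemma.estimate.dN}, and kill the tangential component of $q(\nu)$ using the fact that $\gamma(X)\otimes 1$ anticommutes with $\chi$. The only cosmetic difference is in the Dirac term, where the paper symmetrizes to $2\langle\mathcal{D}^\Sigma_0 s,s\rangle=\varepsilon\langle(\mathcal{D}^\Sigma_0\chi+\chi\mathcal{D}^\Sigma_0)s,s\rangle$ directly rather than first substituting for $\chi\mathcal{D}^\Sigma_0 s$ and then using $\chi s=\varepsilon s$ a second time; these are the same computation arranged differently.
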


\begin{proof}
We first observe that 
\[2 \, \langle \mathcal{D}^\Sigma s,s \rangle = \varepsilon \, \langle \mathcal{D}^\Sigma \chi s,s \rangle + \varepsilon \, \langle \mathcal{D}^\Sigma s,\chi s \rangle = \varepsilon \, \langle \mathcal{D}^\Sigma \chi s + \chi \mathcal{D}^\Sigma s,s \rangle\] 
at each point on $\Sigma$. Using Lemma~\ref{lemma.Dchianticommutator}, we obtain 
\[2 \, |\langle \mathcal{D}^\Sigma s,s \rangle| = \Big | \Big \langle \sum_{j=1}^{n-1} (\gamma(e_j) \otimes \gamma_0^*(dN(e_j)))s,s \Big \rangle \Big |\] 
at each point on $\Sigma$. 
Lemma~\ref{lemma.estimate.dN} implies 
\begin{equation} 
\label{eq.basic.est1}
2 \, |\langle \mathcal{D}^\Sigma s,s \rangle| \leq \|dN\|_\tr \, |s|^2 
\end{equation}
at each point on $\Sigma$. Moreover, for each $X\in T\Sigma$ we have 
\begin{align*} 
2 \, \langle (\gamma(X) \otimes \id)s,s \rangle 
&= \varepsilon \, \langle (\gamma(X) \otimes \id)\chi s,s \rangle + \varepsilon \, \langle (\gamma(X) \otimes \id)s,\chi s \rangle \\ 
&= \varepsilon \, \langle (\gamma(X) \otimes \id)\chi s + \chi(\gamma(X) \otimes \id)s,s \rangle \\ 
&= 0.
\end{align*} 
Putting $X = q(\nu)^{\text{\rm tan}}$, the part of $q(\nu)$ tangential to $\Sigma$, gives 
\[
\langle (\gamma(q(\nu)) \otimes \id)s,s\rangle = q(\nu,\nu) \, \langle (\gamma(\nu) \otimes \id)s,s\rangle.
\]
We conclude
\begin{align}
\big| \langle (\gamma(q(\nu)-\tr(q)\nu) \otimes \id )s,s \rangle \big|
&=
\big| (q(\nu,\nu)-\tr(q)) \, \langle (\gamma(\nu) \otimes \id )s,s \rangle \big| \notag\\
&\leq 
|q(\nu,\nu)-\tr(q)| \, |s|^2 .
\label{eq.basic.est2}
\end{align}
Using \eqref{eq.basic.est1} and \eqref{eq.basic.est2} together with Corollary~\ref{cor.Weitzenboeck}, the assertion follows.
\end{proof}

\begin{proposition} 
\label{proposition.existence.n.odd}
Suppose that $M$ has odd dimension $n \geq 3$ and the map $N \colon \Sigma \to S^{n-1}$ has positive degree. Then there exists a non-trivial section $s \in C^\infty(M,\mathcal{E})$ such that 
\begin{align*} 
\int_M  | \nabla^{\mathcal{E},q} s |^2 
&\leq 
\tfrac{1}{2} \int_M (|J|-\mu)  |s|^2 
+ \tfrac{1}{2} \int_\Sigma (|\tr(q) - q(\nu,\nu)| + \|dN\|_\tr - H) \, |s|^2. 
\end{align*}
Moreover, $\chi s = s$ at each point on the boundary $\Sigma$.
\end{proposition}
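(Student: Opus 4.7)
The plan is to produce $s$ as a non-trivial element in the kernel of a zeroth-order perturbation of $\mathcal{D}$ subject to the chirality boundary condition, and then to read off the stated inequality from Proposition~\ref{integral.formula}.

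First I would consider the boundary value problem
\[
(\mathcal{D} - \tfrac{i}{2}\tr(q))s = 0 \quad \text{on } M, \qquad \chi s = s \quad \text{on } \Sigma,
\]
that is, $s \in H^1(M,\mathcal{E},\mathcal{F}^+)$ in the kernel of the perturbed operator. The multiplication by $\tfrac{i}{2}\tr(q)$, viewed as a map $H^1(M,\mathcal{E},\mathcal{F}^+) \to L^2(M,\mathcal{E})$, factors through the compact Sobolev embedding $H^1 \hookrightarrow L^2$, hence is compact. By Proposition~\ref{index}, the unperturbed operator $\mathcal{D}$ is Fredholm of index $\deg(N) \geq 1$, so the perturbed operator is Fredholm with the same positive index and therefore has a non-trivial kernel. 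Any nonzero element in this kernel is smooth by elliptic regularity for the chirality boundary condition, and satisfies $\chi s = s$ along $\Sigma$ by construction.

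Next I would apply Proposition~\ref{integral.formula} with $\varepsilon = 1$. The first term on the left-hand side vanishes because $(\mathcal{D} - \tfrac{i}{2}\tr(q))s = 0$, leaving
\[
\int_M |\nabla^{\mathcal{E},q}s|^2 + \tfrac{1}{2}\int_M \langle (\mu + i\gamma(J)\otimes 1)s,s \rangle \leq \tfrac{1}{2}\int_\Sigma \bigl(\|dN\|_\tr - H + |q(\nu,\nu)-\tr(q)|\bigr)|s|^2.
\]
Since $\gamma(J)$ is skew-adjoint, the pointwise endomorphism $i\gamma(J)\otimes 1$ is self-adjoint with operator norm at most $|J|$; in particular $\langle (i\gamma(J)\otimes 1)s,s\rangle \geq -|J|\,|s|^2$ pointwise. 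Inserting this lower bound and rearranging produces the desired inequality.

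The only substantive ingredient is the Fredholm-index computation, which is already carried out in Proposition~\ref{index}; the remainder is bookkeeping, so I do not anticipate a serious obstacle.
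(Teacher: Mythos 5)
Your proposal is correct and follows essentially the same route as the paper: obtain a smooth nonzero kernel element of $\mathcal{D}-\tfrac{i}{2}\tr(q)$ with boundary condition $\chi s=s$ via Proposition~\ref{index} and invariance of the Fredholm index under a zeroth-order (compact) perturbation, then feed it into Proposition~\ref{integral.formula} and bound the $J$-term pointwise. The paper phrases the index-stability step as ``deformation invariance'' rather than compactness of the perturbation, but these are the same standard fact.
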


\begin{proof}
By Proposition~\ref{index}, the operator $\mathcal{D}$ subject to the boundary conditions $\chi s = s$ has positive index.
By deformation invariance of the Fredholm index, the operator $\mathcal{D} - \tfrac{i}{2} \tr(q)$ has positive index.
Hence, we can find a non-trivial section $s$ of $\mathcal{E}$ such that $\mathcal{D} s - \tfrac{i}{2} \, \tr(q) \, s = 0$ on $M$ and $\chi s = s$ along $\Sigma$. 
By elliptic regularity, $s$ is smooth.
Using Proposition~\ref{integral.formula}, we obtain 
\begin{align*} 
\int_M  | \nabla^{\mathcal{E},q} s |^2 
&\leq 
-\tfrac{1}{2} \int_M \langle (\mu+i\gamma(J) \otimes \id)s,s \rangle \\
&\quad
+ \tfrac{1}{2} \int_\Sigma (|\tr(q) - q(\nu,\nu)| + \|dN\|_\tr - H) \, |s|^2. 
\end{align*}
From this, the assertion follows easily.
\qedhere
\end{proof}

In the next step, we establish an analogue of Proposition~\ref{proposition.existence.n.odd} in even dimensions. In the remainder of this section, we will assume that $n \geq 2$ is an even integer. 

\begin{lemma}\label{lemma.existence.of.varphi}
We can find a smooth map $\varphi\colon S^{n-1} \times S^1 \to S^n$ with the following properties: 
\begin{itemize}
\item The map $\varphi$ has degree $1$. 
\item $\varphi^* g_{S^n} \leq g_{S^{n-1}} + 4g_{S^1}$. 
\item For each $t \in S^1$, the map $\varphi(\cdot,t) \colon S^{n-1} \to S^n$ is either Lipschitz continuous with a Lipschitz constant strictly less than $1$, or else the map $\varphi(\cdot,t) \colon S^{n-1} \to S^n$ is the standard inclusion $S^{n-1} \hookrightarrow S^n$.
\end{itemize}
\end{lemma}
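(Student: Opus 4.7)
I would construct $\varphi$ explicitly. Write $S^n$ as the unit sphere in $\R^{n+1}=\R^n\oplus\R$, so the standard inclusion sends $x\mapsto(x,0)$, and parametrize $S^1=\R/2\pi\Z$ by $t$. Fix a distinguished point $t_\star\in S^1$. The overall plan is to arrange that $\varphi(\cdot,t_\star)$ is exactly the standard inclusion, while for every other $t$ the slice $\varphi(\cdot,t)$ is a smooth contracting map $S^{n-1}\to S^n$ whose Lipschitz constant is strictly less than one. The factor $4$ in the metric bound allows the parametric speed in the $t$-direction to be as large as $2$, which is exactly what is needed to traverse a length-$2\pi$ great circle of $S^n$ over one period of $S^1$.

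\textbf{First ansatz.} A natural ansatz is
\[
\varphi(x,t) = \bigl(\cos r(t)\cdot A(t)\,x,\; \sin r(t)\bigr),
\]
with $r\colon S^1\to\R$ a smooth "latitude" function and $A\colon S^1\to SO(n)$ a smooth loop of rotations. At $t=t_\star$ I set $r(t_\star)=0$ and $A(t_\star)=\id$, so the slice is the standard inclusion. At any other $t$ with $r(t)\in\pi\Z$ the slice has the form $(\pm A(t)\,x,0)$; since $n$ is even, $-\id\in SO(n)$ is available, so choosing $A(t)=\mp\id$ at such points makes that slice the standard inclusion as well. At all remaining $t$ the slice Lipschitz constant equals $|\cos r(t)|<1$. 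A direct differentiation shows that $\varphi^*g_{S^n}$ decomposes as $\cos^2 r(t)\,g_{S^{n-1}}$ along the fibre $S^{n-1}\times\{t\}$ plus a term of the form $\bigl((r'(t))^2+O(\sin r\cdot \|A^{-1}A'\|)\bigr)\,dt^2$ in the $t$-direction, so the metric bound $\varphi^*g_{S^n}\leq g_{S^{n-1}}+4g_{S^1}$ follows once $|r'|\leq 2$ and $A^{-1}A'$ is taken uniformly small.

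\textbf{Main obstacle.} The delicate point is arranging $\deg(\varphi)=1$. With the product ansatz above one finds that $\varphi$ has degree zero: a generic $y\in S^n$ has two preimages arising from the two solutions of $\sin r(t)=y_{n+1}$, and the local orientations cancel because of the combined signs of $(\cos r)^{n-1}$ (the "latitude" determinant), $\det(A|_{TS^{n-1}})$, and $\cos r$ (the "longitude" derivative). To escape this, I would replace the product ansatz by a smoothed version of the classical pinch map $S^{n-1}\times S^1\to S^{n-1}\wedge S^1\cong S^n$, which has degree one by construction. The smoothing must be designed so that on a short arc around $t_\star$ the map agrees exactly with $(x,0)$, while outside that arc each slice is a sharply contracting perturbation of a constant map with Lipschitz constant strictly less than one. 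The hypothesis that $n$ is even enters through the need to align orientations in the pinch region using rotations that reduce to $\pm\id$ at the gluing boundaries, which relies on $-\id\in SO(n)$. Verifying that such a smoothing can be carried out while simultaneously respecting the metric bound and the slice condition is where the main technical work lies.
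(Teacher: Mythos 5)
Your overall plan is right in spirit: you correctly observe that a pure product ansatz $\varphi(x,t)=(\cos r(t)\,A(t)x,\sin r(t))$ has degree zero, and you correctly identify that the fix is a smoothed pinch (suspension) map. This is exactly what the paper does. However, your proposal stops precisely where the lemma is actually proved: you defer ``the main technical work'' of carrying out the smoothing consistently with the metric bound and the slice condition, whereas the paper's entire proof is a three-line explicit formula that does all of this at once. Concretely, the paper fixes a smooth $2$-Lipschitz $\beta\colon[-\pi,\pi]\to[-\pi,\pi]$ with $\beta\equiv-\pi$, $0$, $\pi$ near $-\pi$, $0$, $\pi$ respectively, fixes $a\in S^{n-1}$, and sets $\varphi(x,t)=(\sin\beta(t)\,a,\cos\beta(t))$ on $t\in[-\pi,0]$ (collapse to a meridian, Lipschitz constant $0$) and $\varphi(x,t)=(\sin\beta(t)\,x,\cos\beta(t))$ on $t\in[0,\pi]$ (a sweep through all latitudes). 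The metric bound $\varphi^*g_{S^n}\le\sin^2\beta\,g_{S^{n-1}}+(\beta')^2\,dt^2\le g_{S^{n-1}}+4\,dt^2$ and the slice condition (Lipschitz constant $\sin\beta(t)<1$ unless $\beta(t)=\pi/2$, in which case the slice is exactly $x\mapsto(x,0)$) follow immediately, and degree $1$ comes from the second half sweeping $S^n$ exactly once. So what you label the hard remaining part is in fact short and you should write it out.

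A second, substantive issue: your claim that ``the hypothesis that $n$ is even enters through the need to align orientations in the pinch region using rotations that reduce to $\pm\mathrm{id}$'' is incorrect. The paper's construction uses no rotations at all, and the lemma and its proof are valid for every $n\ge 2$; the even-dimensionality assumption in that section of the paper enters elsewhere (in identifying the spinor bundle on $M\times S^1$), not in this lemma. By building rotations $A(t)$ into your ansatz you have introduced a spurious parity constraint that the correct construction never faces, and in fact your ansatz would force you to deal with the orthogonality of the cross term $\langle\partial_x\varphi,\partial_t\varphi\rangle$, which in the paper's formula vanishes automatically because the $t$-derivative is radial in the $x$-slot and tangent vectors to $S^{n-1}$ at $x$ are orthogonal to $x$. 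I would drop the rotations entirely.
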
 

\begin{proof} 
Let us fix a point $a \in S^{n-1}$ and a smooth function $\beta \colon [-\pi,\pi] \to [-\pi,\pi]$ with the following properties: 
\begin{itemize}
\item $\beta(t) = -\pi$ for $t \in [-\pi,-\tfrac{7\pi}{8}]$.
\item $\beta(t) = 0$ for $t \in [-\tfrac{\pi}{8},\tfrac{\pi}{8}]$.
\item $\beta(t) = \pi$ for $t \in [\tfrac{7\pi}{8},\pi]$.
\item $0 < \beta'(t) \leq 2$ for $t \in (-\tfrac{7\pi}{8},-\tfrac{\pi}{8}) \cup (\tfrac{\pi}{8},\tfrac{7\pi}{8})$.
\end{itemize}
We consider the map 
\[\varphi \colon S^{n-1} \times [-\pi,\pi] \to S^n, \, (\xi,t) \mapsto \begin{cases} (\sin \beta(t) \, a,\cos \beta(t)) & \text{\rm for $t \in [-\pi,0]$}, \\ (\sin \beta(t) \, \xi,\cos \beta(t)) & \text{\rm for $t \in [0,\pi]$.} \end{cases}\] 
It is easy to see that $\varphi$ induces a smooth map from $S^{n-1}\times S^1$ to $S^n$ with degree $\pm 1$. We next compute 
\[\varphi^* g_{S^n} = \begin{cases} \beta'(t)^2 \, dt \otimes dt & \text{\rm for $t \in [-\pi,0]$}, \\ \sin^2 \beta(t) \, g_{S^{n-1}} + \beta'(t)^2 \, dt \otimes dt & \text{\rm for $t \in [0,\pi]$.} \end{cases}\] 
Since $0 \leq \beta'(t) \leq 2$ for all $t \in [-\pi,\pi]$, it follows that $\varphi^* g_{S^n} \leq g_{S^{n-1}} + 4 \, dt \otimes dt$. 

If $t \in [-\pi,0]$, then the map $\varphi(\cdot,t) \colon S^{n-1} \to S^n$ is constant. If $t \in [0,\pi]$ and $\beta(t) \neq \tfrac{\pi}{2}$, then the map $\varphi(\cdot,t) \colon S^{n-1} \to S^n$ is Lipschitz continuous with Lipschitz constant $\sin \beta(t) < 1$. 
If $t \in [0,\pi]$ and $\beta(t) = \tfrac{\pi}{2}$, the map $\varphi(\cdot,t) \colon S^{n-1} \to S^n$ is the standard inclusion $S^{n-1} \hookrightarrow S^n, \, \xi \mapsto (\xi,0)$. 
\end{proof}

In the following, we fix a smooth map $\varphi\colon S^{n-1}\times S^1 \to S^n$ as in Lemma~\ref{lemma.existence.of.varphi}. 
Moreover, $r$ will denote a parameter which will be chosen sufficiently large. 
We consider the product $\tilde{M} = M \times S^1$, equipped with the metric $\tilde{g} = g + r^2 \, dt \otimes dt$. 
We denote by $\tilde{q}$ the pull-back $(0,2)$-tensor field of $q$ under the canonical projection $\tilde{M} \to M$.
We denote by $\tilde{\Sigma} = \Sigma \times S^1$ the boundary of $\tilde{M}$. We define a map $\tilde{N} \colon \tilde{\Sigma} \to S^n$ by $\tilde{N}(x,t) = \varphi(N(x),t)$ for $x \in \Sigma$ and $t \in S^1$. 
Since the map $\varphi\colon S^{n-1} \times S^1 \to S^n$ is of degree $1$, the degree of the map $\tilde{N} \colon \tilde{\Sigma} \to S^n$ agrees with the degree of the map $N \colon \Sigma \to S^{n-1}$. 

\begin{lemma}
\label{lemma.trace.norm}
The trace norm of $d\tilde{N}$ with respect to the metric $\tilde{g}$ satisfies 
\[
\|d\tilde{N}\|_\tr \leq \|dN\|_\tr + 2r^{-1}.
\]
\end{lemma}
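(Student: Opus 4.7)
The plan is to argue pointwise. Fix $(x,t) \in \tilde{\Sigma}$. With respect to the $\tilde g$-orthogonal decomposition $T_{(x,t)}\tilde\Sigma = T_x\Sigma \oplus T_tS^1$, the chain rule gives
\[ d\tilde N_{(x,t)}(X, V) = A_1(X) + A_2(V), \]
where $A_1 = d_1\varphi_{(N(x),t)} \circ dN_x \colon T_x\Sigma \to T_{\tilde N(x,t)}S^n$ and $A_2 = d_2\varphi_{(N(x),t)}|_{T_tS^1} \colon T_tS^1 \to T_{\tilde N(x,t)}S^n$ come from the two partial derivatives of $\varphi$. Viewing $d\tilde N_{(x,t)}$ as the column concatenation $[A_1\,|\,A_2]$ of two linear maps with disjoint domains but common target, I would first invoke the subadditivity estimate
\[ \|[A_1\,|\,A_2]\|_\tr \leq \|A_1\|_\tr + \|A_2\|_\tr, \]
which follows from the decomposition $[A_1\,|\,A_2] = [A_1\,|\,0] + [0\,|\,A_2]$ together with the fact that padding with zero columns leaves all singular values unchanged.

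Next I would bound the two pieces separately using the two directions of the metric estimate $\varphi^*g_{S^n} \leq g_{S^{n-1}} + 4g_{S^1}$ supplied by Lemma~\ref{lemma.existence.of.varphi}. Restricting this inequality to vectors tangent to the $S^{n-1}$ factor shows that $d_1\varphi$ is a contraction, i.e.\ $\|d_1\varphi\|_{op} \leq 1$. Since post-composition with a contraction does not increase the trace norm (as $\sigma_i(TS) \leq \|T\|_{op}\,\sigma_i(S)$ for every singular value), this yields $\|A_1\|_\tr \leq \|dN_x\|_\tr$. Restricting the same inequality to vectors tangent to the $S^1$ factor gives $|d_2\varphi(\partial_t)|_{g_{S^n}} \leq 2$. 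Because $T_tS^1$ is one-dimensional and carries the rescaled metric $r^2\,dt^2$, the map $A_2$ has a single nonzero singular value equal to $|d_2\varphi(\partial_t)|_{g_{S^n}}/r \leq 2/r$, so $\|A_2\|_\tr \leq 2r^{-1}$. Summing the two bounds yields the claimed inequality at the chosen point $(x,t)$.

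The argument is essentially linear algebra coupled with the Lipschitz bounds on $\varphi$ encoded in Lemma~\ref{lemma.existence.of.varphi}, and I do not anticipate a substantial obstacle. The only slightly non-obvious ingredient is the block subadditivity of the trace norm, which is immediate from the ordinary triangle inequality once one observes that adjoining zero columns does not change the set of singular values.
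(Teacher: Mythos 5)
Your argument is correct, but it follows a genuinely different route from the paper's. The paper works directly with the pullback bilinear form: starting from the inequality $\varphi^*g_{S^n} \leq g_{S^{n-1}} + 4g_{S^1}$ of Lemma~\ref{lemma.existence.of.varphi} it deduces $\tilde N^* g_{S^n} \leq N^* g_{S^{n-1}} + 4 g_{S^1}$, observes that the right-hand side is block diagonal with respect to the $\tilde g$-orthogonal splitting $T\tilde\Sigma = T\Sigma \oplus TS^1$ with eigenvalues $\lambda_1^2,\dots,\lambda_{n-1}^2,4r^{-2}$ (where $\lambda_j$ are the singular values of $dN$), and then invokes eigenvalue monotonicity under the L\"owner order to bound each singular value of $d\tilde N$ before summing square roots. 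You instead decompose $d\tilde N = [A_1\,|\,A_2]$ at the level of the linear map itself, use block subadditivity of the trace norm, the inequality $\sigma_i(TS) \le \|T\|_{\rm op}\,\sigma_i(S)$ to handle the $\Sigma$-block, and a direct one-dimensional computation for the $S^1$-block. Both are sound; the paper's version gets the bound in one shot via eigenvalue monotonicity of the Gram form, while yours makes the two contributions visibly additive with elementary pointwise linear algebra. One small imprecision in phrasing: adjoining zero columns does not leave the list of singular values literally unchanged (it appends zeros), but since zeros contribute nothing to the trace norm this does not affect your argument.
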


\begin{proof}
By Lemma~\ref{lemma.existence.of.varphi}, we have $\varphi^* g_{S^n} \leq g_{S^{n-1}} + 4g_{S^1}$. 
Using the definition of $\tilde{N}$, we obtain 
\[
\tilde{N}^* g_{S^n} \leq N^* g_{S^{n-1}} + 4g_{S^1}.
\]
Let us denote the singular values of $dN_x \colon T_x \Sigma \to T_{N(x)} S^{n-1}$ by $\lambda_1,\hdots,\lambda_{n-1}$. 
Then the eigenvalues of $N^* g_{S^{n-1}}$ with respect to the metric $g$ are given by $\lambda_1^2,\hdots,\lambda_{n-1}^2$. 
Hence, the eigenvalues of $N^* g_{S^{n-1}} + 4g_{S^1}$ with respect to the metric $\tilde{g}$ are given by $\lambda_1^2,\hdots,\lambda_{n-1}^2,4r^{-2}$. 
Consequently, 
\[
\|d\tilde{N}\|_\tr \leq \sum_{j=1}^{n-1} \lambda_j + 2r^{-1} = \|dN\|_\tr + 2r^{-1}.
\qedhere
\]
\end{proof}

We consider the bundles $\mathcal{S}$ and $\mathcal{E}$ over $M$ defined above. Let $\tilde{\mathcal{S}}$ denote the pull-back of $\mathcal{S}$ under the canonical projection $\tilde{M} \to M$. Similarly, let $\tilde{\mathcal{E}}$ denote the pull-back of $\mathcal{E}$ under the canonical projection $\tilde{M} \to M$. Note that $\tilde{\mathcal{S}}$ and $\tilde{\mathcal{E}}$ are bundles over $\tilde{M}$. Since $n$ is even, the map $\gamma_0 \colon \R^n \to \text{\rm End}(S_0)$ can be extended to a map $\gamma_0 \colon \R^{n+1} \to \text{\rm End}(S_0)$ satisfying the Clifford relations. With this understood, the map $\tilde{N} \colon \tilde{\Sigma} \to S^n$ induces a chirality operator $\tilde{\chi}$. We define $\tilde{\mu}$ and $\tilde{J}$ as in \eqref{def.energy} and \eqref{def.momentum} on $\tilde{M}$ using $\tilde{g}$ and $\tilde{q}$.
Then $\tilde{\mu}$ and $\tilde{J}$ are the pull-backs of $\mu$ and $J$, respectively.

\begin{proposition}
\label{proposition.existence.n.even}
Suppose $M$ has even dimension $n \geq 2$ and the map $N \colon \Sigma \to S^{n-1}$ has positive degree. 
For each $r>0$, we can find a non-trivial section $s \in C^\infty(M,\mathcal{E})$ such that 
\begin{align*} 
\int_M  | \nabla^{\mathcal{E},q} s |^2 
&\leq \tfrac{1}{2} \int_M (-\mu+|J|)  |s|^2 \\ 
&\quad + \tfrac{1}{2} \int_\Sigma (|\tr(q) - q(\nu,\nu)| + \|dN\|_\tr - H) \, |s|^2 + r^{-1} \int_\Sigma |s|^2.  
\end{align*}
Moreover, we can find an element $\hat{t} \in S^1$ with the property that $\gamma(\nu) \circ s = s \circ \gamma_0(\hat{N})$ at each point on $\Sigma$, where $\hat{N} \colon \Sigma \to S^n$ is defined by $\hat{N}(x) = \tilde{N}(x,\hat{t}) = \varphi(N(x),\hat{t})$ for all $x \in \Sigma$. 
\end{proposition}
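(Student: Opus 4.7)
The plan is to reduce to the odd-dimensional case (Proposition~\ref{proposition.existence.n.odd}) by applying it to the auxiliary product $(\tilde{M},\tilde{g},\tilde{q})$ of dimension $n+1\geq 3$, with boundary map $\tilde{N}\colon\tilde{\Sigma}\to S^n$. Since $\tilde{M}$ is compact spin and $\deg\tilde{N}=\deg N>0$, this produces a nontrivial smooth section $\tilde{s}\in C^\infty(\tilde{M},\tilde{\mathcal{E}})$ satisfying $\tilde{\chi}\tilde{s}=\tilde{s}$ along $\tilde{\Sigma}$ together with
\[
\int_{\tilde{M}}|\nabla^{\tilde{\mathcal{E}},\tilde{q}}\tilde{s}|^2\leq \tfrac{1}{2}\int_{\tilde{M}}(-\tilde{\mu}+|\tilde{J}|)|\tilde{s}|^2 + \tfrac{1}{2}\int_{\tilde{\Sigma}}(|\tr(\tilde{q})-\tilde{q}(\tilde{\nu},\tilde{\nu})|+\|d\tilde{N}\|_\tr-\tilde{H})|\tilde{s}|^2.
\]

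Because $(\tilde{g},\tilde{q})$ is a Riemannian product with $\tilde{q}$ pulled back from $M$, the data on $\tilde{M}$ coincide with their counterparts on $M$: one has $\tilde{\mu}=\mu$, $\tilde{J}=J$, $\tilde{H}=H$, $\tilde{\nu}=\nu$, $\tr(\tilde{q})=\tr(q)$, and $\tilde{q}(\tilde{\nu},\tilde{\nu})=q(\nu,\nu)$, while Lemma~\ref{lemma.trace.norm} yields $\|d\tilde{N}\|_\tr\leq\|dN\|_\tr+2r^{-1}$. For each $t\in S^1$, the slice $s_t:=\tilde{s}(\cdot,t)$ is a smooth section of $\mathcal{E}$ over $M$; since $\tilde{\mathcal{E}}$ carries the pullback connection and $\tilde{q}$ has no $\partial_t$-component, the $TM$-directional part of $\nabla^{\tilde{\mathcal{E}},\tilde{q}}\tilde{s}$ equals $\nabla^{\mathcal{E},q}s_t$, giving the pointwise bound $|\nabla^{\tilde{\mathcal{E}},\tilde{q}}\tilde{s}|^2\geq|\nabla^{\mathcal{E},q}s_t|^2$. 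Substituting these identities into the $\tilde{M}$-estimate, using the product volume forms $d\mathrm{vol}_{\tilde{g}}=r\,dt\wedge d\mathrm{vol}_g$ and its analogue on $\tilde{\Sigma}$, and dividing by $r$, converts the bound into an averaged inequality in $t\in S^1$ for precisely the expression in the conclusion, the extra term $r^{-1}\int_\Sigma|s|^2$ arising from the $2r^{-1}$ correction in Lemma~\ref{lemma.trace.norm}.

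A mean-value argument then selects $\hat{t}\in S^1$ at which the $t$-integrand is nonpositive; since $\tilde{s}$ is nontrivial and smooth, the set $\{t:\tilde{s}(\cdot,t)\not\equiv 0\}$ is open and dense in $S^1$, so $\hat{t}$ can be chosen inside it, ensuring $s:=s_{\hat{t}}$ is nontrivial. Finally, the boundary condition $\tilde{\chi}\tilde{s}=\tilde{s}$ restricts along $\Sigma\times\{\hat{t}\}$ to $-\gamma(\nu)\otimes\gamma_0^*(\hat{N})\cdot s=s$; interpreting sections of $\mathcal{E}$ as homomorphisms $S_0\to\mathcal{S}$ via \eqref{eq.Tensorproduct=Hom} so that $(A\otimes B^*)\cdot s=A\circ s\circ B$, and using $\gamma_0(\hat{N})^2=-1$ (as $\hat{N}\in S^n$), this is equivalent to $\gamma(\nu)\circ s=s\circ\gamma_0(\hat{N})$, as required. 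The main delicate point is the simultaneous selection of $\hat{t}$ realizing the integral inequality while ensuring nonvanishing of the slice — this is handled by combining the averaging with the openness and density of $\{t:\tilde{s}(\cdot,t)\not\equiv 0\}$.
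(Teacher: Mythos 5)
Your proposal takes essentially the same route as the paper: apply Proposition~\ref{proposition.existence.n.odd} on the odd-dimensional product $\tilde M = M\times S^1$ with the scaled metric, push forward the modified Lichnerowicz inequality to the slices via Fubini, select a good slice $M\times\{\hat t\}$ by an averaging argument, and translate $\tilde\chi\tilde s=\tilde s$ into $\gamma(\nu)\circ s=s\circ\gamma_0(\hat N)$ using $\gamma_0(\hat N)^2=-1$. The identifications $\tilde\mu=\mu$, $\tilde J=J$, $\tilde H=H$, $\|d\tilde N\|_\tr\le\|dN\|_\tr+2r^{-1}$, and the pointwise inequality $|\nabla^{\tilde{\mathcal E},\tilde q}\tilde s|^2\ge|\nabla^{\mathcal E,q}s_t|^2$ are all correctly used.

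The one place where your justification is not quite right is the assertion that $\{t:\tilde s(\cdot,t)\not\equiv 0\}$ is \emph{dense} ``since $\tilde s$ is nontrivial and smooth.'' Smoothness and nontriviality alone do not give density: a smooth section on a product can vanish on $M\times I$ for a whole arc $I\subset S^1$. What saves the argument here is extra structure, and either of the following fills the gap. (i) The section $\tilde s$ coming from Proposition~\ref{proposition.existence.n.odd} satisfies $\tilde{\mathcal D}\tilde s=\tfrac{i}{2}\tr(\tilde q)\tilde s$, so the strong unique continuation property of Dirac-type operators shows $\tilde s$ cannot vanish on an open subset of $\tilde M$; hence $\{t: s_t\equiv 0\}$ has empty interior and the complement is dense. (ii) More elementarily, if $s_t\equiv 0$ then all three integrals in the slice functional $B(t)$ vanish, so $\{t: s_t\equiv 0\}\subset\{B=0\}$. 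Since $\int_{S^1}B\,dt\le 0$ and $\tilde s\not\equiv 0$, either $B\equiv 0$ (and one just picks any $t$ with $s_t\not\equiv 0$) or $\{B<0\}$ is nonempty and, being disjoint from $\{s_t\equiv 0\}$, already consists of admissible $\hat t$. Either argument finishes the selection of $\hat t$ cleanly; as written, the density claim is a genuine (if small and easily repaired) gap.
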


\begin{proof} 
Since the map $N \colon \Sigma \to S^{n-1}$ has positive degree, it follows that $\tilde{N} \colon \tilde{\Sigma} \to S^n$ has positive degree as well. By Proposition~\ref{proposition.existence.n.odd}, we can find a section $\tilde{s} \in C^\infty(\tilde{M},\tilde{\mathcal{E}})$ such that $\int_{\tilde{M}} |\tilde{s}|^2 > 0$ and 
\begin{align*} 
\int_{\tilde{M}}  | \nabla^{\tilde{\mathcal{E}},\tilde{q}} \tilde{s} |^2 
&\leq 
\tfrac{1}{2} \int_{\tilde{M}} (-\tilde{\mu}+|\tilde{J}|)  |s|^2
+ \tfrac{1}{2} \int_{\tilde{\Sigma}} (|\tr(\tilde{q}) - \tilde{q}(\tilde{\nu},\tilde{\nu})| + \|d\tilde{N}\|_\tr - \tilde{H}) \, |\tilde{s}|^2. 
\end{align*}
Moreover, $\tilde{s}$ satisfies $\tilde{\chi}\tilde{s}=\tilde{s}$ along the boundary $\tilde{\Sigma}$. 
Consequently, there exists an element $\hat{t} \in S^1$ such that $\int_{M \times \{\hat{t}\}} |\tilde{s}|^2 > 0$ and  
\begin{align*} 
\int_{M \times \{\hat{t}\}}  | \nabla^{\tilde{\mathcal{E}},\tilde{q}} \tilde{s} |^2 
&\leq 
\tfrac{1}{2} \int_{M \times \{\hat{t}\}} (-\tilde{\mu}+|\tilde{J}|)  |s|^2 \\ 
&\quad + \tfrac{1}{2} \int_{\Sigma \times \{\hat{t}\}} (|\tr(\tilde{q}) - \tilde{q}(\tilde{\nu},\tilde{\nu})| + \|d\tilde{N}\|_\tr - \tilde{H}) \, |\tilde{s}|^2. 
\end{align*}
It follows from Lemma~\ref{lemma.trace.norm} that $\|d\tilde{N}\|_\tr \leq \|dN\|_\tr + 2r^{-1}$. 
Restricting $\tilde{s}$ to $M \times \{\hat{t}\}$ yields a section $s \in C^\infty(M,\mathcal{E})$ such that $\int_M |s|^2 > 0$ and 
\begin{align*} 
\int_M  | \nabla^{\mathcal{E},q} s |^2 
&\leq \tfrac{1}{2} \int_M (-\mu+|J|)  |s|^2 \\ 
&\quad + \tfrac{1}{2} \int_\Sigma (|\tr(q) - q(\nu,\nu)| + \|dN\|_\tr - H) \, |s|^2 + r^{-1} \int_\Sigma |s|^2.  
\end{align*}
Moreover, since $\tilde{\chi}\tilde{s}=\tilde{s}$ along the boundary $\tilde{\Sigma}$, we conclude that $\gamma(\nu) \circ s = s \circ \gamma_0(\hat{N})$ at each point on $\Sigma$, where the map $\hat{N} \colon \Sigma \to S^n$ is defined by $\hat{N}(x) = \tilde{N}(x,\hat{t}) = \varphi(N(x),\hat{t})$ for all $x \in \Sigma$.
\end{proof}

\begin{corollary}
\label{corollary.existence.n.even}
Suppose that $M$ has even dimension $n \geq 2$ and the map $N \colon \Sigma \to S^{n-1}$ has positive degree. We can find a non-trivial section $s \in H^1(M,\mathcal{E})$ such that 
\begin{align*} 
\int_M  | \nabla^{\mathcal{E},q} s |^2 
&\leq \tfrac{1}{2} \int_M (-\mu+|J|)  |s|^2 
+ \tfrac{1}{2} \int_\Sigma (|\tr(q) - q(\nu,\nu)| + \|dN\|_\tr - H) \, |s|^2.  
\end{align*}
Moreover, we can find an element $\hat{t} \in S^1$ with the property that $\gamma(\nu) \circ s = s \circ \gamma_0(\hat{N})$ at each point on $\Sigma$, where $\hat{N} \colon \Sigma \to S^n$ is defined by $\hat{N}(x) = \tilde{N}(x,\hat{t}) = \varphi(N(x),\hat{t})$ for all $x \in \Sigma$. 
\end{corollary}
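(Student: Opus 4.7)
The plan is to apply Proposition~\ref{proposition.existence.n.even} along a sequence $r_k \to \infty$, rescale the resulting smooth sections to unit $L^2$-norm, and extract a subsequential $H^1$-limit. For each $k$, let $(\hat{t}_k, s_k)$ be the pair supplied by Proposition~\ref{proposition.existence.n.even} with $r = r_k \geq 1$, and rescale so that $\int_M |s_k|^2 = 1$. Since $\mu$, $J$, $q$, $H$, and $\|dN\|_\tr$ are all bounded on $M$ respectively $\Sigma$, the inequality from Proposition~\ref{proposition.existence.n.even} takes the uniform form
\[
\int_M |\nabla^{\mathcal{E},q} s_k|^2 \leq C_1 + C_2 \int_\Sigma |s_k|^2,
\]
with $C_1, C_2$ independent of $k$ (the contribution $r_k^{-1}\int_\Sigma|s_k|^2$ is absorbed into $C_2$ because $r_k \geq 1$).

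The first step is to establish a uniform $H^1$-bound on $s_k$. Since $\nabla^{\mathcal{E},q}$ differs from $\nabla^{\mathcal{E}}$ by a pointwise bounded bundle endomorphism, the inequality above yields $\int_M |\nabla^{\mathcal{E}} s_k|^2 \leq C_1' + C_2 \int_\Sigma |s_k|^2$. Applying the standard trace interpolation inequality
\[
\int_\Sigma |u|^2 \leq \varepsilon \int_M |\nabla^{\mathcal{E}} u|^2 + C_\varepsilon \int_M |u|^2, \qquad u \in H^1(M,\mathcal{E}),
\]
with $\varepsilon$ chosen so that $C_2 \varepsilon \leq 1/2$, lets us absorb the boundary term into the gradient norm and produces a uniform bound $\|s_k\|_{H^1(M,\mathcal{E})} \leq C$.

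The second step is the compactness argument. After passing to a subsequence, $\hat{t}_k \to \hat{t} \in S^1$, so $\hat{N}_k(x) := \varphi(N(x), \hat{t}_k)$ converges uniformly on $\Sigma$ to $\hat{N}(x) := \varphi(N(x), \hat{t})$. The uniform $H^1$-bound together with the Rellich--Kondrachov theorem and compactness of the trace map $H^1(M,\mathcal{E}) \to L^2(\Sigma, \mathcal{E}|_\Sigma)$ produces, along a further subsequence, a section $s \in H^1(M,\mathcal{E})$ with $s_k \rightharpoonup s$ weakly in $H^1$, $s_k \to s$ strongly in $L^2(M)$, and $s_k|_\Sigma \to s|_\Sigma$ strongly in $L^2(\Sigma)$. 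In particular $\int_M |s|^2 = 1$, so $s$ is non-trivial, and passing to the $L^2(\Sigma)$-limit in the pointwise identity $\gamma(\nu) \circ s_k = s_k \circ \gamma_0(\hat{N}_k)$ yields the desired boundary condition $\gamma(\nu) \circ s = s \circ \gamma_0(\hat{N})$. Weak lower semicontinuity of the $L^2$-norm and strong $L^2$-convergence of $s_k$ on $M$ and on $\Sigma$ then pass the integral inequality to the limit, while the error term $r_k^{-1}\int_\Sigma |s_k|^2$ vanishes because $\int_\Sigma |s_k|^2$ is uniformly bounded.

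The main obstacle is the uniform $H^1$-bound in the first step: without control on $\int_\Sigma |s_k|^2$, the boundary term in the inequality from Proposition~\ref{proposition.existence.n.even} could in principle prevent any compactness. The trace interpolation inequality supplies exactly the amount of slack needed to absorb this term into the gradient norm on the left, and everything downstream is a routine compactness and semicontinuity argument.
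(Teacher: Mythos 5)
Your proof is correct and follows essentially the same route as the paper: apply Proposition~\ref{proposition.existence.n.even} along $r_k\to\infty$, normalize to $\int_M|s_k|^2=1$, use a trace interpolation inequality to absorb the boundary term and obtain a uniform $H^1$-bound, then extract a weak $H^1$-limit (with $\hat t_k\to\hat t$ along a subsequence) and pass all inequalities and the boundary identity to the limit via strong $L^2$-convergence on $M$ and $\Sigma$. The only cosmetic difference is that you invoke the $\varepsilon$-form of the trace inequality directly, while the paper uses the multiplicative form $\int_\Sigma|u|^2\le C\bigl(\int_M|u|^2\bigr)^{1/2}\bigl(\int_M|\nabla u|^2\bigr)^{1/2}+C\int_M|u|^2$ followed by Young's inequality; these are equivalent.
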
 

\begin{proof} 
Consider an arbitrary sequence $r_l \to \infty$. 
For each $l$, Proposition~\ref{proposition.existence.n.even} implies the existence of a section $s^{(l)} \in C^\infty(M,\mathcal{E})$ such that $\int_M |s^{(l)}|^2 = 1$ and 
\begin{align*} 
\int_M  | \nabla^{\mathcal{E},q} s^{(l)}|^2 
&\leq \tfrac{1}{2} \int_M (-\mu+|J|)  |s^{(l)}|^2 \\ 
&\quad + \tfrac{1}{2} \int_\Sigma (|\tr(q) - q(\nu,\nu)| + \|dN\|_\tr - H) \, |s^{(l)}|^2 + r_l^{-1} \int_\Sigma |s^{(l)}|^2.  
\end{align*}
Moreover, for each $l$, we can find an element $\hat{t}_l \in S^1$ such that $\gamma(\nu) \circ s^{(l)} = s^{(l)} \circ \gamma_0(\hat{N}_l)$ along $\Sigma$, where the map $\hat{N}_l \colon \Sigma \to S^n$ is defined by $\hat{N}_l(x) = \tilde{N}(x,\hat{t}_l) = \varphi(N(x),\hat{t}_l)$ for all $x \in \Sigma$. This implies 
\[\int_M |\nabla^{\mathcal{E}} s^{(l)}|^2 \leq C \int_M |s^{(l)}|^2 + C \int_\Sigma |s^{(l)}|^2,\] 
where $C$ is a constant that does not depend on $l$. Using a standard interpolation inequality, we can bound 
\[\int_\Sigma |s^{(l)}|^2 \leq C \, \bigg ( \int_M |s^{(l)}|^2 \bigg )^{\frac{1}{2}} \, \bigg ( \int_M |\nabla^{\mathcal{E}} s^{(l)}|^2 \bigg )^{\frac{1}{2}} + C \int_M |s^{(l)}|^2,\] 
where $C$ is a constant that does not depend on $l$. Putting these facts together, we obtain 
\[\int_M |\nabla^{\mathcal{E}} s^{(l)}|^2 \leq C \, \bigg ( \int_M |s^{(l)}|^2 \bigg )^{\frac{1}{2}} \, \bigg ( \int_M |\nabla^{\mathcal{E}} s^{(l)}|^2 \bigg )^{\frac{1}{2}} + C \int_M |s^{(l)}|^2,\] 
where $C$ is a constant that does not depend on $l$. Using Young's inequality, we conclude that 
\[\int_M |\nabla^{\mathcal{E}} s^{(l)}|^2 \leq C \int_M |s^{(l)}|^2,\] 
where $C$ is a constant that does not depend on $l$. Since $\int_M |s^{(l)}|^2 = 1$ for each $l$, it follows that the sequence $s^{(l)}$ is uniformly bounded in $H^1(M,\mathcal{E})$. 

After passing to a subsequence if necessary, we may assume that $\hat{t}_l \to \hat{t}$ for some $\hat{t} \in S^1$. Moreover, we may assume that the sequence $s^{(l)}$ converges weakly in $H^1(M,\mathcal{E})$ to some limit $s \in H^1(M,\mathcal{E})$. 
Then $s_l \to s$ strongly in $L^2(M,\mathcal{E})$. 
Moreover, $s_l|_\Sigma \to s|_\Sigma$ strongly in $L^2(\Sigma,\mathcal{E}|_\Sigma)$. 
Thus, we conclude that $\int_M |s|^2 = 1$ and 
\begin{align*} 
\int_M  | \nabla^{\mathcal{E},q} s |^2 
&\leq \tfrac{1}{2} \int_M (-\mu+|J|)  |s|^2 \\ 
&\quad + \tfrac{1}{2} \int_\Sigma (|\tr(q) - q(\nu,\nu)| + \|dN\|_\tr - H) \, |s|^2.  
\end{align*}
Finally, on the boundary $\Sigma$, we have $\gamma(\nu) \circ s^{(l)} = s^{(l)} \circ \gamma_0(\hat{N}_l)$ for each $l$. Passing to the limit as $l \to \infty$, we conclude that $\gamma(\nu) \circ s = s \circ \gamma_0(\hat{N})$, where the map $\hat{N} \colon \Sigma \to S^n$ is defined by $\hat{N}(x) = \tilde{N}(x,\hat{t}) = \varphi(N(x),\hat{t})$ for all $x \in \Sigma$.
\end{proof}

\section{Proof of Theorem~\ref{rigidity.of.smooth.domains}} \label{section:proofA}

Throughout this section, we consider an initial data set $(M,g,q)$ together with a map $N \colon \Sigma \to S^{n-1}$ satisfying the assumptions in Theorem~\ref{rigidity.of.smooth.domains}.  
In particular, we assume that $\mu \geq |J|$ at each point in $M$ and $H \geq \|dN\|_\tr + |\tr(q) - q(\nu,\nu)|$ at each point on the boundary $\Sigma$. 

We denote the second fundamental form of $\Sigma$ in $M$ (viewed as a (1,1)-tensor) by $h$.
For each $X\in T\Sigma$, we denote by $q(X)^{\text{\rm tan}}$ the part of $q(X)\in TM$ tangential to $\Sigma$.

We begin by proving several preliminary statements.

\begin{proposition} \label{prop:prelim} The following statements hold.
\begin{enumerate}[(a)]
\item There exists a $\nabla^{\mathcal{E},q}$-parallel smooth section $s \in C^{\infty}(M,\mathcal{E})$ satisfying $\chi s = s$ along $\Sigma$.
Considered as a homomorphism field, the section $s$ is invertible at each point. 
\item \label{semidef} At each point $p \in \Sigma$, the eigenvalues of the second fundamental form $h$ agree with the singular values of $dN_p \colon T_p \Sigma \to T_{N(p)} S^{n-1}$. In particular, the eigenvalues of $h$ are nonnegative.
\item We have $q^{\rm tan} = 0$ at each point on $\Sigma$.
\end{enumerate}
\end{proposition}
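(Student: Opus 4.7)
The plan is to force the integral inequality from Section~3 to saturate under the hypotheses of Theorem~\ref{rigidity.of.smooth.domains}, and then to read off (a), (b), and (c) from the resulting pointwise equalities combined with the differentiated boundary condition.

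For (a), I would invoke Proposition~\ref{proposition.existence.n.odd} (odd $n$) or Corollary~\ref{corollary.existence.n.even} (even $n$) to obtain a nontrivial section $s$ satisfying
\[
\int_M |\nabla^{\mathcal{E},q}s|^2 \leq \tfrac{1}{2}\!\int_M (-\mu+|J|)|s|^2 + \tfrac{1}{2}\!\int_\Sigma (|\tr(q)-q(\nu,\nu)|+\|dN\|_\tr - H)|s|^2
\]
together with $\chi s=s$ on $\Sigma$. The assumptions $\mu\geq|J|$ and $H\geq|\tr(q)-q(\nu,\nu)|+\|dN\|_\tr$ make both integrands on the right pointwise nonpositive while the left side is nonnegative; hence $\nabla^{\mathcal{E},q}s\equiv 0$ on $M$, both integrands vanish pointwise, and elliptic regularity for $\mathcal{D}s=\tfrac{i}{2}\tr(q)s$ makes $s$ smooth. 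Parallel transport then shows $s$ is nowhere zero.

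For invertibility, I would set $K_x:=\ker s_x\subset S_0$. Since $s\sigma$ is $\nabla^{\mathcal{S},q}$-parallel for any fixed $\sigma\in S_0$, and since a parallel spinor vanishing at one point vanishes identically, $K_x$ is independent of $x$; call it $K$. The boundary condition $\chi s=s$ rearranges to $(1\otimes\gamma_0^*(N))s=(\gamma(\nu)\otimes 1)s$, which implies that $K$ is invariant under $\gamma_0(N(x))$ at every $x\in\Sigma$. Since $\deg N>0$ and $\partial M$ is closed, $N$ is surjective onto $S^{n-1}$, so by linearity $K$ is invariant under $\gamma_0(v)$ for every $v\in\R^n$, that is, $K$ is a Clifford submodule of the irreducible module $S_0$. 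Hence $K=0$ (nontriviality excludes $K=S_0$) and $s$ is pointwise invertible. In even dimension the same argument is carried out on the product manifold $\tilde M=M\times S^1$, where $\tilde N$ has positive degree and surjects onto $S^n$; since the restriction of the Clifford action of $\C \mathrm{l}(\R^{n+1})$ to any $n$-dimensional subspace still acts irreducibly on $S_0$, invertibility descends to $M$.

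For (b) and (c), I would differentiate $\chi s=s$ in a tangential direction $X\in T\Sigma$. Using
\[
\nabla_X^{\mathcal{E}}\chi=-\gamma(h(X))\otimes\gamma_0^*(N)-\gamma(\nu)\otimes\gamma_0^*(dN(X))
\]
and $\nabla_X^{\mathcal{E}}s=-\tfrac{i}{2}(\gamma(q(X))\otimes 1)s$, together with the commutation rules of $\chi$ against $\gamma(Y)\otimes 1$ for $Y\in T\Sigma$ (anticommutation) and $\gamma(\nu)\otimes 1$ (producing $1\otimes\gamma_0^*(N)$), one derives an algebraic identity of the schematic form
\[
\bigl(\gamma(h(X))\otimes 1-1\otimes\gamma_0^*(dN(X))+i\gamma(q^{\mathrm{tan}}(X))\otimes\gamma_0^*(N)+iq(\nu,X)\bigr)s=0
\]
at every boundary point and every $X\in T\Sigma$. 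Pointwise equality in the estimates of Proposition~\ref{integral.formula} additionally forces the intertwining relations $(\gamma(e_j)\otimes\gamma_0^*(e_j^0))s=\eta\,s$ with a uniform sign $\eta\in\{\pm 1\}$, for every singular direction $e_j$ of $dN$ with singular value $\lambda_j>0$. Substituting these intertwinings into the differentiated identity separates it into independent Clifford components, and invertibility of $s$ then upgrades each component to a pointwise tensorial equation: the part entangled with $1\otimes\gamma_0^*(N)$ isolates $q^{\mathrm{tan}}(X)=0$ and $q(\nu,X)=0$, proving (c), while the remaining part gives $h(e_j)=\lambda_j e_j$ in the eigenbases, proving (b) and its nonnegativity consequence. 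The main technical obstacle is this last separation step: carrying out the Clifford and tensor-factor bookkeeping carefully enough, once the intertwinings of Step~3 are in hand, to justify passing from a single spinorial identity to coefficient-level equations for $h$, $dN$, and $q$.
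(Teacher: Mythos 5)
Your strategy coincides with the paper's: saturate the integral estimate to get a $\nabla^{\mathcal{E},q}$-parallel section satisfying the boundary condition, show the kernel $L\subset S_0$ is a $\C\mathrm{l}(\R^n)$-submodule via invariance under $\gamma_0(N(p))$ and surjectivity of $N$, and extract (b), (c) by differentiating $\chi s=s$ tangentially and using pointwise saturation of the boundary estimate to produce the intertwining relations $\gamma(e_j)\circ s=s\circ\gamma_0(e_j^0)$. Two points, however, need attention.

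First, a small computational slip: in your schematic differentiated identity the term $iq(\nu,X)$ does not actually appear. Differentiating $\gamma(\nu)\circ s=s\circ\gamma_0(N)$ and substituting $\nabla^{\mathcal{E}}_X s=-\tfrac{i}{2}\gamma(q(X))\circ s$ produces the commutator $-\tfrac{i}{2}[\gamma(q(X)),\gamma(\nu)]\circ s$, in which the normal component $q(\nu,X)\gamma(\nu)$ commutes with $\gamma(\nu)$ and therefore drops out; only $-i\gamma(q(X)^{\mathrm{tan}})\circ\gamma(\nu)\circ s$ survives, exactly as the proposition requires. Consequently your claim to derive $q(\nu,X)=0$ is unsupported — and indeed the proposition asserts only $q^{\mathrm{tan}}=0$, not the vanishing of $q(\nu,\cdot)$ along $\Sigma$.

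Second, and more substantively, the even-dimensional case contains a genuine gap. You propose to "carry out the same argument on $\tilde M=M\times S^1$," but for any fixed $r$ the boundary inequality on $\tilde\Sigma$ is $|\tr\tilde q-\tilde q(\tilde\nu,\tilde\nu)|+\|d\tilde N\|_\tr-\tilde H\le 2r^{-1}$, with slack $O(r^{-1})$ that need not be zero; the saturation argument therefore fails on $\tilde M$ for any fixed $r$, and there is no $\nabla^{\tilde{\mathcal{E}},\tilde q}$-parallel section on $\tilde M$ to restrict. One must instead take $r\to\infty$, extract a weak limit $s$ on $M$ together with a limiting $\hat t\in S^1$, and — this is the key missing step — prove that $\varphi(\cdot,\hat t)$ must be the standard inclusion $S^{n-1}\hookrightarrow S^n$. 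That is established in the paper by showing $\|d\hat N\|_\tr=\|dN\|_\tr$ pointwise from the saturated boundary estimate and invoking the strict Lipschitz contraction in Lemma~\ref{lemma.existence.of.varphi} for all other $\hat t$. Without this, the image of $\hat N$ might fail to span $\R^n\subset\R^{n+1}$, and the irreducibility argument for $L=\{0\}$ would not go through.
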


We prove Proposition~\ref{prop:prelim} first in odd and then in even dimensions.

\subsection{Proof of Proposition~\ref{prop:prelim} in odd dimensions} 
Suppose that $n \geq 3$ is an odd integer. 
By assumption, we have $\mu \geq |J|$ on $M$ and $H \geq \|dN\|_\tr + |\tr(q) - q(\nu,\nu)|$ along $\Sigma$. Therefore, Proposition~\ref{proposition.existence.n.odd} yields non-trivial $\nabla^{\mathcal{E},q}$-parallel section $s \in C^\infty(M,\mathcal{E})$ such that $\chi s = s$ along $\Sigma$. 

\begin{lemma} \label{lemma.boundary.behavior.n.odd}
At each point $p \in \Sigma$, the second fundamental form $h$ and the tensor field $q$ satisfy the relation 
\[\gamma(h(X)) \circ s + i \, \gamma(q(X)^{\text{\rm tan}}) \circ \gamma(\nu) \circ s = s \circ \gamma_0(dN(X))\]
for all $X \in T_p\Sigma$.
\end{lemma}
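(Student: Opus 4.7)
The plan is to differentiate the boundary condition $\chi s = s$ tangentially along $\Sigma$ and simplify using the parallel equation $\nabla^{\mathcal{E},q}s = 0$ together with Clifford algebra relations.

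First I would reinterpret the boundary condition in the homomorphism picture of \eqref{eq.Tensorproduct=Hom}. Under that identification one has $(\gamma(X)\otimes 1)s = \gamma(X)\circ s$ and $(1\otimes \gamma_0^*(v))s = s\circ \gamma_0(v)$. From the definition $\chi = -\gamma(\nu)\otimes \gamma_0^*(N)$ this yields $\chi s = -\gamma(\nu)\circ s\circ \gamma_0(N)$, and since $\gamma_0(N)^2 = -1$, the hypothesis $\chi s = s$ along $\Sigma$ takes the convenient form
\[
\gamma(\nu)\circ s = s\circ \gamma_0(N).
\]

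Next I would covariantly differentiate this identity along $\Sigma$. For $X \in T_p\Sigma$, the Leibniz rule applied with the spin connection on $\mathcal{S}$ and the flat connection on $S_0$ yields
\[
\gamma(h(X))\circ s + \gamma(\nu)\circ \nabla_X^{\mathcal{E}} s = \nabla_X^{\mathcal{E}} s \circ \gamma_0(N) + s \circ \gamma_0(dN(X)),
\]
where I used $D_X\nu = h(X)$ and the fact that $N\colon \Sigma\to S^{n-1}\subset \R^n$ is differentiated simply as an $\R^n$-valued map. The parallel condition $\nabla^{\mathcal{E},q}s = 0$, combined with \eqref{modified.connection.on.E} and \eqref{def.Q}, gives $\nabla_X^{\mathcal{E}} s = -\tfrac{i}{2}\gamma(q(X))\circ s$. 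Substituting this, and invoking $s\circ \gamma_0(N) = \gamma(\nu)\circ s$ once more to eliminate the remaining $\gamma_0(N)$ factor, the relation becomes
\[
\gamma(h(X))\circ s + \tfrac{i}{2}\bigl[\gamma(q(X))\gamma(\nu) - \gamma(\nu)\gamma(q(X))\bigr]\circ s = s\circ \gamma_0(dN(X)).
\]

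Finally I would simplify the Clifford commutator by decomposing $q(X) = q(X)^{\text{\rm tan}} + q(X,\nu)\,\nu$. The $\nu$-component commutes with $\gamma(\nu)$, while the tangential component anticommutes with $\gamma(\nu)$ since $q(X)^{\text{\rm tan}} \perp \nu$; hence the commutator collapses to $2\gamma(q(X)^{\text{\rm tan}})\gamma(\nu)$, and one arrives at the stated formula. The argument is direct and I do not anticipate a serious obstacle; the only point that requires care is to correctly track the conventions relating pre- and post-composition under the identification $\mathcal{E} = \mathrm{Hom}(S_0, \mathcal{S})$ and between $\gamma_0$ and its adjoint $\gamma_0^*$.
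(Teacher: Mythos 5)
Your proposal is correct and follows essentially the same route as the paper: reformulate $\chi s = s$ as $\gamma(\nu)\circ s = s\circ\gamma_0(N)$, differentiate tangentially, substitute $\nabla_X^{\mathcal{E}}s = -\tfrac{i}{2}\gamma(q(X))\circ s$, reuse the boundary identity to replace $s\circ\gamma_0(N)$ by $\gamma(\nu)\circ s$, and then split $q(X)$ into normal and tangential parts. The only cosmetic difference is that you package the resulting terms as a Clifford commutator before applying the (anti)commutation with $\gamma(\nu)$, whereas the paper handles the two pieces directly; the content is identical.
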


\begin{proof}
Since $s$ is $\nabla^{\mathcal{E},q}$-parallel, we obtain 
\begin{equation} \label{cov.deriv.s} 
     \nabla_X^{\mathcal{E}} s = - \tfrac{i}{2} Qs(X) = - \tfrac{i}{2} \gamma(q(X)) \circ s
\end{equation}
for every vector field $X$ on $M$. Using the boundary condition $\chi s = s$, we obtain 
\begin{equation} \label{bdry.cond.n.odd} 
   \gamma(\nu) \circ s  =  s \circ \gamma_0(N)  
\end{equation} 
on $\Sigma$. In the next step, we fix a tangential vector field $X$ along $\Sigma$ and differentiate \eqref{bdry.cond.n.odd} along $X$. Using \eqref{cov.deriv.s} we find for the left hand side 
\begin{align*}
\nabla_X^{\mathcal{E}}(\gamma(\nu) \circ s)
&=
\gamma(D_X\nu) \circ s + \gamma(\nu) \circ \nabla_X^{\mathcal{E}} s \\
&=
\gamma(h(X)) \circ s - \tfrac{i}{2} \gamma(\nu) \circ \gamma(q(X)) \circ s
\end{align*}
and for the right hand side
\begin{align*}
\nabla_X^{\mathcal{E}}(s \circ \gamma_0(N))
&=
(\nabla_X^{\mathcal{E}} s) \circ \gamma_0(N) + s \circ \gamma_0(dN(X)) \\
&=
-\tfrac{i}{2} \gamma(q(X)) \circ s\circ \gamma_0(N) + s \circ \gamma_0(dN(X)) .
\end{align*}
Thus \eqref{bdry.cond.n.odd} implies
\begin{align*} 
\gamma(h(X)) & \circ s - s \circ \gamma_0(dN(X))  \\ 
&= 
- \tfrac{i}{2} \big(\gamma(q(X)) \circ s\circ \gamma_0(N)   - \gamma(\nu) \circ \gamma(q(X)) \circ s \big) \\ 
&= 
- \tfrac{i}{2} \big( \gamma(q(X)) \circ \gamma(\nu) \circ s - \gamma(\nu) \circ \gamma(q(X)) \circ s \big) \\ 
&= 
- i \, \gamma(q(X)^{\text{\rm tan}}) \circ \gamma(\nu) \circ s . 
\end{align*}
To justify the last equality, we decompose $q(X)$ into its tangential and normal components and observe that $\gamma(q(X)^{\text{\rm tan}})$ anticommutes with $\gamma(\nu)$.
\end{proof}

\begin{lemma}\label{lemma.parallel.spinors.n.odd}
Let $\sigma \in S_0$. Then $s\sigma$ is a $\nabla^{\mathcal{S},q}$-parallel section of $\mathcal{S}$. In particular, if $s\sigma$ vanishes at some point in $M$, then $s\sigma$ vanishes at each point in $M$.
\end{lemma}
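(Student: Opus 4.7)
The proof should be essentially immediate once one unpacks the definitions, so the plan is just to make the identifications carefully. The key observation is the explicit description of $\nabla^{\mathcal{E},q}$ as the tensor product connection of $\nabla^{\mathcal{S},q}$ with the flat connection on the trivial bundle with fiber $S_0$, combined with the identification $\mathcal{E}|_x = \mathrm{Hom}(S_0,\mathcal{S}|_x)$ from \eqref{eq.Tensorproduct=Hom}.

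First, I would fix $\sigma\in S_0$, regarded as a parallel section of the trivial bundle with fiber $S_0$, and a tangent vector $X$ on $M$. Viewing $s$ as a field of homomorphisms $S_0\to \mathcal{S}$, the tensor product description of $\nabla^{\mathcal{E}}$ yields $(\nabla^{\mathcal{E}}_X s)\sigma = \nabla^{\mathcal{S}}_X(s\sigma)$ because $\sigma$ is constant. For the $Q$-term, the definition \eqref{def.Q} gives $(Qs)(X)\sigma = (\gamma(q(X))\otimes 1)s\,\sigma = \gamma(q(X))(s\sigma)$. Adding these two identities, one obtains
\[
(\nabla^{\mathcal{E},q}_X s)\sigma = \nabla^{\mathcal{S}}_X(s\sigma) + \tfrac{i}{2}\gamma(q(X))(s\sigma) = \nabla^{\mathcal{S},q}_X(s\sigma),
\]
so $\nabla^{\mathcal{E},q}s=0$ forces $\nabla^{\mathcal{S},q}(s\sigma)=0$, which proves the first assertion.

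For the second assertion, I would invoke the standard fact that parallel sections of a vector bundle with connection are determined by their value at a single point along any curve, by uniqueness for the parallel transport ODE. Since $\nabla^{\mathcal{S},q}$ is a well-defined connection on $\mathcal{S}$ and $M$ is connected, vanishing of $s\sigma$ at a single point propagates to all of $M$. There is no real obstacle here; the entire content of the lemma is the observation that the tensor product structure lets one ``plug in'' elements of $S_0$ and recover parallelism on $\mathcal{S}$ with respect to the modified connection.
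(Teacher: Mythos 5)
Your proof is correct and takes the same route as the paper's, which simply cites the fact that $\nabla^{\mathcal{E},q}$ is the tensor product connection of $\nabla^{\mathcal{S},q}$ with the flat connection on the trivial $S_0^*$-bundle; you have merely unwound that statement into an explicit pointwise computation, plus the standard uniqueness argument for parallel transport on a connected base.
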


\begin{proof}
This follows directly from the fact that $\nabla^{\mathcal{E},q}$ is the tensor product connection of $\nabla^{\mathcal{S},q}$ and the flat connection on the trivial bundle over $M$ with fiber $S_0^*$.
\end{proof}

We define a linear subspace $L \subset S_0$ by $L := \{\sigma \in S_0 \colon \text{\rm $s\sigma=0$ at each point in $M$}\}$.

\begin{lemma} \label{lemma.L.invariant.n.odd} 
The subspace $L\subset  S_0 $ is invariant under $\gamma_0(N(p))$ for each point $p \in \Sigma$.
\end{lemma}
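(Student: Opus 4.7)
The plan is to use the boundary identity $\gamma(\nu) \circ s = s \circ \gamma_0(N)$ along $\Sigma$ (which is just the boundary condition $\chi s = s$ rewritten, cf.\ \eqref{bdry.cond.n.odd}) together with the unique continuation provided by Lemma~\ref{lemma.parallel.spinors.n.odd}. Fix $p \in \Sigma$ and $\sigma \in L$. I want to show that $\gamma_0(N(p))\,\sigma \in L$, that is, that the section $x \mapsto s(x)\,\gamma_0(N(p))\,\sigma$ of $\mathcal{S}$ vanishes identically on $M$.

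First I would check vanishing at the single point $p$. Since $\gamma_0(N(p))$ is a fixed endomorphism of $S_0$ and $s$ is a homomorphism-valued field, at the point $p$ I can compute
\[
\bigl(s(p) \circ \gamma_0(N(p))\bigr)\sigma \;=\; \bigl(\gamma(\nu) \circ s\bigr)(p)\,\sigma \;=\; \gamma(\nu(p))\,\bigl(s(p)\sigma\bigr) \;=\; 0,
\]
where the first equality uses the boundary identity \eqref{bdry.cond.n.odd} and the last uses $\sigma \in L$, so that $s\sigma = 0$ everywhere, in particular at $p$.

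Next, since $\gamma_0(N(p))\,\sigma$ is a fixed element of $S_0$, Lemma~\ref{lemma.parallel.spinors.n.odd} applies: the section $x \mapsto s(x)\,\gamma_0(N(p))\,\sigma$ is a $\nabla^{\mathcal{S},q}$-parallel section of $\mathcal{S}$ over $M$. A parallel section that vanishes at one point vanishes identically (by the last assertion of Lemma~\ref{lemma.parallel.spinors.n.odd}, or equivalently by a standard ODE argument along curves from $p$ to arbitrary points of the connected manifold $M$). Combining this with the previous step shows $s(x)\,\gamma_0(N(p))\,\sigma = 0$ for every $x \in M$, i.e.\ $\gamma_0(N(p))\,\sigma \in L$.

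No serious obstacle is expected; the argument is just three lines once the boundary relation $\gamma(\nu) \circ s = s \circ \gamma_0(N)$ is in hand and Lemma~\ref{lemma.parallel.spinors.n.odd} is invoked. The only thing to be careful about is the placement of the base point: $\gamma_0(N(p))$ depends on $p \in \Sigma$, but once $p$ is fixed the element $\gamma_0(N(p))\sigma \in S_0$ is constant, which is exactly what is required to apply Lemma~\ref{lemma.parallel.spinors.n.odd}.
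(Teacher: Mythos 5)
Your argument is correct and is essentially identical to the paper's: both use the boundary relation $\gamma(\nu)\circ s = s\circ\gamma_0(N)$ at $p$ together with $s\sigma=0$ to see that $s\,\gamma_0(N(p))\sigma$ vanishes at $p$, and then invoke Lemma~\ref{lemma.parallel.spinors.n.odd} to propagate the vanishing to all of $M$. The only difference is that you spell out the unique-continuation step in slightly more detail.
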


\begin{proof}
Let us fix a point $p \in \Sigma$ and an element $\sigma \in L$. Using the boundary condition $\chi s = s$, we obtain 
\[\gamma(\nu) \circ s = s \circ \gamma_0(N)\] 
at the point $p$. Since $\sigma \in L$, it follows that 
\[0 = \gamma(\nu) s \sigma = s \gamma_0(N) \sigma\] 
at the point $p$. 
Using Lemma~\ref{lemma.parallel.spinors.n.odd}, we conclude that $\gamma_0(N(p)) \sigma \in L$. 
\end{proof}

\begin{lemma}\label{lemma.maxrank.n.odd}
We have $L=\{0\}$. 
Moreover, the section $s$, considered as a homomorphism field, is invertible at each point in $M$.
\end{lemma}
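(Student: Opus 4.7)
The plan is to combine two ingredients: the surjectivity of $N$ (forced by its positive degree) and the irreducibility of the complex spinor module $S_0$ under the Clifford action.

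First, since $N\colon \Sigma \to S^{n-1}$ has nonzero degree, it must be surjective: if some $v_0 \in S^{n-1}$ were missing from the image, $N$ would factor through $S^{n-1}\setminus\{v_0\}$, which is contractible, making $N$ null-homotopic and hence of degree zero. Therefore, by Lemma~\ref{lemma.L.invariant.n.odd}, the subspace $L \subset S_0$ is invariant under $\gamma_0(v)$ for every $v\in S^{n-1}$. Since such $v$ span $\R^n$, the subspace $L$ is invariant under the whole Clifford action $\gamma_0\colon \C\mathrm{l}(\R^n) \to \mathrm{End}(S_0)$.

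Now $S_0$ is an irreducible module over the complex Clifford algebra $\C\mathrm{l}(\R^n)$, so the invariance from the previous step forces $L=\{0\}$ or $L=S_0$. The latter would mean $s\sigma \equiv 0$ for every $\sigma \in S_0$, i.e. $s \equiv 0$ as a section of $\mathcal{E}$, contradicting the non-triviality of $s$ from Proposition~\ref{proposition.existence.n.odd}. Hence $L=\{0\}$.

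Finally, for invertibility at each point, fix $p\in M$ and suppose $\sigma \in S_0$ satisfies $s(p)\sigma = 0$. By Lemma~\ref{lemma.parallel.spinors.n.odd}, the section $s\sigma \in C^\infty(M,\mathcal{S})$ is $\nabla^{\mathcal{S},q}$-parallel, and since it vanishes at $p$ it vanishes on all of $M$; that is, $\sigma \in L = \{0\}$. Thus $s(p)\colon S_0 \to \mathcal{S}|_p$ is injective, and since both spaces have complex dimension $m$, it is an isomorphism. This shows $s$ is invertible at every point, completing the proof. The argument is essentially purely algebraic once the positive-degree hypothesis is converted into surjectivity; no analytic obstacle arises at this stage.
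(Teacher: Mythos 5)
Your proof is correct and follows essentially the same route as the paper's: positive degree $\Rightarrow$ surjectivity of $N$, hence $L$ is a $\C\mathrm{l}(\R^n)$-submodule, and then irreducibility of $S_0$ (equivalently, as the paper phrases it, surjectivity of the Clifford representation onto $\mathrm{End}(S_0)$) forces $L=\{0\}$; pointwise injectivity and dimension count then give invertibility. The only cosmetic difference is that you invoke irreducibility of $S_0$ while the paper invokes surjectivity of $\gamma_0$, which are interchangeable here.
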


\begin{proof}
It follows from Lemma~\ref{lemma.L.invariant.n.odd} that $L$ is invariant under $\gamma_0(N(p))$ for each point $p \in \Sigma$. Since the map $N \colon \Sigma \to S^{n-1}$ has non-zero degree, we know that the image of $N$ is all of $S^{n-1}$. Consequently, $L$ is invariant under $\gamma_0(\xi)$ for each $\xi \in \R^n$. 

The spinor representation of the Clifford algebra $\C \mathrm{l}(\R^n)\to \mathrm{End}( S_0 )$ is surjective. Therefore, the elements $\gamma_0(\xi)$, $\xi \in \R^n$, generate all of $\mathrm{End}(S_0)$. Putting these facts together, we conclude that $L$ is invariant under $\mathrm{End}(S_0)$. Therefore, $L=\{0\}$ or $L= S_0 $. The latter case is impossible as this would imply that $s$ vanishes identically. To summarize, we have shown that $L=\{0\}$. Using Lemma~\ref{lemma.parallel.spinors.n.odd}, we conclude that $s$ is injective at each point in $M$. For dimensional reasons, $s$ is invertible at each point in $M$.
\end{proof}

\begin{lemma}
\label{lemma.boundary.rigidity.odd.dim}
Let $p$ be a point on the boundary $\Sigma$. 
Let $e_1,\dots,e_{n-1}$ be an orthonormal basis of $T_p\Sigma$ and $e^0_1,\dots,e^0_{n-1}$ an orthonormal basis of $T_{N(p)} S^{n-1}$ such that $dN(e_j)=\lambda_j e^0_j$, where $\lambda_j\geq 0$ denote the singular values of $dN$. Then $h(e_j) = \lambda_j e_j$ and $q(e_j)^{\text{\rm tan}} = 0$ for $j=1,\hdots,n-1$.
\end{lemma}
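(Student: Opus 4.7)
The plan is to combine the equality case of Lemma~\ref{lemma.boundary.behavior.n.odd.2} with the pointwise boundary identity of Lemma~\ref{lemma.boundary.behavior.n.odd}, and then to extract rigidity from the fiberwise invertibility of $s$ furnished by Lemma~\ref{lemma.maxrank.n.odd}. In the diagonalising frames chosen in the statement, Lemma~\ref{lemma.boundary.behavior.n.odd.2} reads
\begin{equation*}
-\sum_{j=1}^{n-1} \lambda_j \, \langle \gamma(e_j) \circ s \circ \gamma_0(e^0_j), s \rangle = \|dN\|_\tr \, |s|^2 .
\end{equation*}
Since $\gamma(e_j)\otimes\gamma_0^*(e^0_j)$ is a fiberwise isometry, one has $|\langle \gamma(e_j)\circ s \circ \gamma_0(e^0_j),s\rangle| \le |s|^2$ at $p$, so the triangle inequality together with Cauchy--Schwarz forces, for each $j$ with $\lambda_j>0$, the pointwise identity $\gamma(e_j)\circ s \circ \gamma_0(e^0_j) = -s$, or equivalently $s\circ \gamma_0(e^0_j) = \gamma(e_j)\circ s$.

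Substituting this into Lemma~\ref{lemma.boundary.behavior.n.odd} at $X = e_j$ gives
\begin{equation*}
\gamma(h(e_j) - \lambda_j e_j) \circ s = -i \, \gamma(q(e_j)^{\text{\rm tan}}) \circ \gamma(\nu) \circ s .
\end{equation*}
If $\lambda_j = 0$, the same relation follows directly from Lemma~\ref{lemma.boundary.behavior.n.odd} (with $h(e_j) - \lambda_j e_j = h(e_j)$), so both cases are handled uniformly. Writing $v := h(e_j) - \lambda_j e_j$ and $w := q(e_j)^{\text{\rm tan}}$, both tangent to $\Sigma$, the lemma reduces to showing that the relation $\gamma(v) \circ s = -i\,\gamma(w)\circ \gamma(\nu)\circ s$ forces $v = 0$ and $w = 0$.

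To close this, I apply $\gamma(v)$ to both sides and simplify using the Clifford relations $\gamma(v)\gamma(w)+\gamma(w)\gamma(v) = -2\langle v,w\rangle$ and $\gamma(v)\gamma(\nu) = -\gamma(\nu)\gamma(v)$ (the latter since $v \perp \nu$), substituting the original relation back into itself once. The result collapses to
\begin{equation*}
\bigl( (|v|^2+|w|^2)\,\id + 2i\,\langle v,w\rangle\,\gamma(\nu) \bigr) \circ s = 0
\end{equation*}
at $p$. Since $s\colon S_0 \to \mathcal{S}|_p$ is an isomorphism by Lemma~\ref{lemma.maxrank.n.odd}, the coefficient endomorphism of $\mathcal{S}|_p$ must itself vanish; decomposing $\mathcal{S}|_p$ into the $\pm i$-eigenspaces of $\gamma(\nu)$ yields the two scalar conditions $|v|^2+|w|^2 \mp 2\langle v,w\rangle = |v\mp w|^2 = 0$, from which $v = w = 0$. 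The main obstacle is the algebraic bookkeeping in the Clifford computation and ensuring that the degenerate directions ($\lambda_j = 0$) and the nondegenerate ones are treated uniformly; the trace-norm identity provides no control of individual summands with $\lambda_j = 0$, but fortunately Lemma~\ref{lemma.boundary.behavior.n.odd} already furnishes the required form of the equation in that case, so the squaring trick handles both cases at once.
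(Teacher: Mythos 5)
Your proof is correct and the opening two steps coincide exactly with the paper's: you extract the pointwise identity $\gamma(e_j)\circ s = s\circ\gamma_0(e^0_j)$ (for $\lambda_j>0$) from the equality case of Lemma~\ref{lemma.boundary.behavior.n.odd.2}, feed it back into Lemma~\ref{lemma.boundary.behavior.n.odd}, and arrive at $\gamma(h(e_j)-\lambda_j e_j)\circ s + i\,\gamma(q(e_j)^{\textup{tan}})\circ\gamma(\nu)\circ s = 0$, using the invertibility of $s$ from Lemma~\ref{lemma.maxrank.n.odd}.

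Where you diverge is in the closing rigidity argument. After passing (via invertibility of $s$) to the endomorphism identity $\gamma(v) + i\,\gamma(w)\gamma(\nu) = 0$ with $v=h(e_j)-\lambda_j e_j$ and $w=q(e_j)^{\textup{tan}}$, the paper simply notes that $\gamma(v)$ is skew-adjoint on $\mathcal{S}|_p$ while $i\,\gamma(w)\gamma(\nu)$ is self-adjoint (because $\gamma(w)\gamma(\nu)$ is skew-adjoint, as $w\perp\nu$). The decomposition of $\End(\mathcal{S}|_p)$ into self-adjoint and skew-adjoint parts then immediately gives $\gamma(v)=0$ and $\gamma(w)\gamma(\nu)=0$, hence $v=w=0$. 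Your Clifford "squaring" trick --- apply $\gamma(v)$, commute, re-substitute, and extract the two scalar constraints $|v\mp w|^2=0$ from the $\pm i$-eigenspaces of $\gamma(\nu)$ --- reaches the same conclusion, and your bookkeeping is correct. The paper's route is shorter and avoids the Clifford computation entirely, but yours has the minor virtue of treating the full identity (including the $\lambda_j e_j$ term already absorbed into $v$) in one stroke rather than matching skew-adjoint terms on both sides. Both are sound; the self-adjoint/skew-adjoint split is the more economical observation.
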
 

\begin{proof}
Applying Lemma~\ref{lemma.boundary.behavior.n.odd} with $X=e_j$, we obtain 
\begin{equation} 
\label{key.identity}
\gamma(h(e_j)) \circ s + i \, \gamma(q(e_j)^{\text{\rm tan}}) \circ \gamma(\nu) \circ s = \lambda_j \, s \circ \gamma_0(e^0_j) 
\end{equation}
for $j=1,\hdots,n-1$. This implies 
\begin{align*}
H \, &|s|^2 + i \, (\text{\rm tr}(q) - q(\nu,\nu)) \, \langle \gamma(\nu) \circ s,s \rangle \\ 
&= -\sum_j \langle \gamma(e_j) \circ \gamma(h(e_j)) \circ s,s \rangle - i \sum_j \, \langle \gamma(e_j) \circ \gamma(q(e_j)^{\text{\rm tan}}) \circ \gamma(\nu) \circ s,s \rangle \\ 
&= -\sum_j \lambda_j \, \langle \gamma(e_j) \circ s \circ \gamma_0(e^0_j),s \rangle. 
\end{align*}
In the next step, we take the absolute value on both sides. Using the triangle inequality, we obtain 
\begin{equation} 
\label{consequence.of.key.identity}
H \, |s|^2 - |\text{\rm tr}(q) - q(\nu,\nu)| \, |s|^2 \leq \|dN\|_\tr \, |s|^2. 
\end{equation}
On the other hand, our assumption implies that $\|dN\|_\tr \leq H - |\text{\rm tr}(q) - q(\nu,\nu)|$. 
Consequently, equality holds in \eqref{consequence.of.key.identity}. 
From this, we deduce that $-\gamma(e_j) \circ s \circ \gamma_0(e^0_j) = s$ whenever $\lambda_j > 0$. 
Substituting this into \eqref{key.identity} gives 
\[\gamma(h(e_j)) \circ s + i \, \gamma(q(e_j)^{\text{\rm tan}}) \circ \gamma(\nu) \circ s = \lambda_j \, s \circ \gamma_0(e^0_j) = \lambda_j \, \gamma(e_j) \circ s\] 
for $j=1,\hdots,n-1$. 
Since $s \in \text{\rm Hom}(S_0,\mathcal{S}|_p)$ is invertible by Lemma~\ref{lemma.maxrank.n.odd}, it follows that 
\[\gamma(h(e_j)) + i \, \gamma(q(e_j)^{\text{\rm tan}}) \circ \gamma(\nu) = \lambda_j \, \gamma(e_j) \in \text{\rm End}(\mathcal{S}|_p)\] 
for $j=1,\hdots,n-1$. 
Note that 
\[
i \, \gamma(q(e_j)^{\text{\rm tan}}) \circ \gamma(\nu) \in \text{\rm End}(\mathcal{S}|_p)
\] 
is self-adjoint, while $\gamma(h(e_j)) \in \text{\rm End}(\mathcal{S}|_p)$ and $\gamma(e_j) \in \text{\rm End}(\mathcal{S}|_p)$ are skew-adjoint. Thus, we conclude that $\gamma(h(e_j)) = \lambda_j \, \gamma(e_j) \in \text{\rm End}(\mathcal{S}|_p)$ and $i \, \gamma(q(e_j)^{\text{\rm tan}}) \circ \gamma(\nu) = 0 \in \text{\rm End}(\mathcal{S}|_p)$ for $j=1,\hdots,n-1$. From this, the assertion follows.
\end{proof}

In summary, we proved all the assertions in Proposition~\ref{prop:prelim} in odd dimensions.

\subsection{Proof of Proposition~\ref{prop:prelim} in even dimensions} \label{subsec:even} 
We now turn to the even-dimensional case of Proposition~\ref{prop:prelim}. 
Suppose that $n \geq 2$ is an even integer. 
Since $\mu \geq |J|$ on $M$ and $H \geq \|dN\|_\tr + |\tr(q) - q(\nu,\nu)|$ along $\Sigma$, Corollary~\ref{corollary.existence.n.even} yields a non-trivial $\nabla^{\mathcal{E},q}$-parallel section  $s \in H^1(M,\mathcal{E})$. 
Clearly, $s$ is smooth. 
Moreover, we can find an element $\hat{t} \in S^1$ such that $\gamma(\nu) \circ s = s \circ \gamma_0(\hat{N})$ along $\Sigma$. 
Here, the map $\hat{N} \colon \Sigma \to S^n$ is defined by 
\[
\hat{N}(x) = \tilde{N}(x,\hat{t}) = \varphi(N(x),\hat{t})
\]
for all $x \in \Sigma$. 

\begin{lemma} \label{lemma.boundary.behavior.n.even}
At each point $p \in \Sigma$, the second fundamental form $h$ and the tensor field $q$ satisfy the relation 
\[\gamma(h(X)) \circ s + i \, \gamma(q(X)^{\text{\rm tan}}) \circ \gamma(\nu) \circ s  = s \circ \gamma_0(d\hat{N}(X))\]
for all $X\in T_p\Sigma$.
\end{lemma}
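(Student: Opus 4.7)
The approach is to mimic the proof of Lemma~\ref{lemma.boundary.behavior.n.odd} essentially verbatim, with $\hat{N}$ in place of $N$. The preamble to this subsection has already recorded everything that is needed from Corollary~\ref{corollary.existence.n.even}: the section $s$ is smooth and $\nabla^{\mathcal{E},q}$-parallel on $M$, and it satisfies the boundary identity $\gamma(\nu)\circ s = s\circ \gamma_0(\hat{N})$ along $\Sigma$. In particular, the parallel condition unpacks to
\[
\nabla_X^{\mathcal{E}} s = -\tfrac{i}{2}\gamma(q(X))\circ s
\]
for every tangent vector $X$ on $M$.

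The main step is to differentiate the boundary identity along an arbitrary $X\in T_p\Sigma$, using $D_X\nu = h(X)$. The left side produces
\[
\gamma(h(X))\circ s - \tfrac{i}{2}\,\gamma(\nu)\circ\gamma(q(X))\circ s,
\]
while the right side, using that $\nabla^{\mathcal{E},q}$ is the tensor product of $\nabla^{\mathcal{S},q}$ with the flat connection on the trivial bundle with fiber $S_0^*$, yields
\[
-\tfrac{i}{2}\,\gamma(q(X))\circ s\circ\gamma_0(\hat{N}) + s\circ \gamma_0(d\hat{N}(X)).
\]
Re-applying the boundary identity to replace $s\circ\gamma_0(\hat{N})$ on the right by $\gamma(\nu)\circ s$ reduces the equation to the commutator form
\[
\gamma(h(X))\circ s - s\circ \gamma_0(d\hat{N}(X)) = \tfrac{i}{2}\,\bigl[\gamma(\nu),\gamma(q(X))\bigr]\circ s.
\]

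The final step is a purely algebraic Clifford computation: decomposing $q(X) = q(X)^{\mathrm{tan}} + \langle q(X),\nu\rangle\,\nu$, the normal component commutes with $\gamma(\nu)$ while $\gamma(q(X)^{\mathrm{tan}})$ anticommutes with $\gamma(\nu)$, so $[\gamma(\nu),\gamma(q(X))] = -2\,\gamma(q(X)^{\mathrm{tan}})\circ\gamma(\nu)$. Substituting this in yields the claimed identity. There is no real obstacle here; the only difference from the odd-dimensional argument is cosmetic, since $\hat{N}$ now takes values in $S^n$ rather than in $S^{n-1}$, but this affects only the target of $d\hat{N}$ and not the algebra.
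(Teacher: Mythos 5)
Your proposal is correct and is exactly the argument the paper has in mind: the paper's proof of Lemma~\ref{lemma.boundary.behavior.n.even} is the single line ``Analogous to the proof of Lemma~\ref{lemma.boundary.behavior.n.odd},'' and your write-up carries out that analogy faithfully, with the same differentiation of the boundary identity, the same substitution $s\circ\gamma_0(\hat{N}) = \gamma(\nu)\circ s$, and the same tangential/normal decomposition of $q(X)$ to evaluate the commutator $[\gamma(\nu),\gamma(q(X))]=-2\,\gamma(q(X)^{\mathrm{tan}})\circ\gamma(\nu)$. The only thing you might make explicit is that $\gamma_0$ here denotes the extension of the Clifford action to $\R^{n+1}$ (via the volume element) so that $\gamma_0(\hat{N})$ is defined for $\hat{N}\in S^n$, but this is part of the setup established earlier in Subsection~\ref{subsec:even} and does not change the computation.
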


\begin{proof}
Analogous to the proof of Lemma~\ref{lemma.boundary.behavior.n.odd}. 
\end{proof}

\begin{lemma} 
\label{trace.norm.of.N.and.hat.N}
We have $\|d\hat{N}\|_\tr = \|dN\|_\tr$ at each point on $\Sigma$.
\end{lemma}

\begin{proof} 
Let us fix a point $p \in \Sigma$. Let $e_1,\dots,e_{n-1}$ be an orthonormal basis of $T_p\Sigma$. 
Using Lemma~\ref{lemma.boundary.behavior.n.even}, we obtain 
\begin{align*}
H \, &|s|^2 + i \, (\text{\rm tr}(q) - q(\nu,\nu)) \, \langle \gamma(\nu) \circ s,s \rangle \\ 
&= -\sum_j \langle \gamma(e_j) \circ \gamma(h(e_j)) \circ s,s \rangle - i \sum_j \, \langle \gamma(e_j) \circ \gamma(q(e_j)^{\text{\rm tan}}) \circ \gamma(\nu) \circ s,s \rangle \\ 
&= -\sum_j \langle \gamma(e_j) \circ s \circ \gamma_0(d\hat{N}(e_j)),s \rangle. 
\end{align*}
In the next step, we take the absolute value on both sides. This gives 
\[H \, |s|^2 - |\text{\rm tr}(q) - q(\nu,\nu)| \, |s|^2 \leq \|d\hat{N}\|_\tr \, |s|^2.\] 
Since $|s|^2 > 0$ at each point in $M$, it follows that $H - |\text{\rm tr}(q) - q(\nu,\nu)| \leq \|d\hat{N}\|_\tr$. On the other hand, $\|d\hat{N}\|_\tr \leq \|dN\|_\tr$ by definition of $\hat{N}$. 
Finally, our assumption implies that 
\[
\|dN\|_\tr \leq H - |\text{\rm tr}(q) - q(\nu,\nu)|.
\]
Putting these facts together, the assertion follows.
\end{proof}

\begin{lemma} \label{lemma.hat.t}
The map $\varphi(\cdot,\hat{t}) \colon S^{n-1} \to S^n$ is the standard inclusion $S^{n-1} \hookrightarrow S^n$.
In particular, the map $\hat{N} \colon \Sigma \to S^n$ is the composition of the map $N \colon \Sigma \to S^{n-1}$ with the standard inclusion $S^{n-1} \hookrightarrow S^n$.
\end{lemma}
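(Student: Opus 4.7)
The plan is to rule out the first alternative in Lemma~\ref{lemma.existence.of.varphi}, namely that $\varphi(\cdot,\hat{t})\colon S^{n-1}\to S^n$ is Lipschitz with a constant strictly less than $1$. Once this is excluded, Lemma~\ref{lemma.existence.of.varphi} forces $\varphi(\cdot,\hat{t})$ to be the standard inclusion $S^{n-1}\hookrightarrow S^n$, and the second assertion about $\hat{N}$ follows immediately from the definition $\hat{N}(x)=\varphi(N(x),\hat{t})$.

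The key input is the pointwise equality $\|d\hat{N}\|_\tr = \|dN\|_\tr$ on $\Sigma$ established in Lemma~\ref{lemma.boundary.behavior.n.even.2}. The idea is to compare this with the factorization $d\hat{N}|_x = d\varphi(\cdot,\hat{t})|_{N(x)}\circ dN|_x$. I would use the standard inequality $\|AB\|_\tr \leq \|A\|_{\mathrm{op}}\,\|B\|_\tr$ for linear maps between inner product spaces (which follows from $\sigma_i(AB)\leq \|A\|_{\mathrm{op}}\,\sigma_i(B)$ applied to each singular value). If $\varphi(\cdot,\hat{t})$ were Lipschitz with constant $L<1$, then $\|d\varphi(\cdot,\hat{t})\|_{\mathrm{op}}\leq L$, and we would obtain the pointwise bound
\[
\|d\hat{N}\|_\tr \;\leq\; L\,\|dN\|_\tr
\]
at every point of $\Sigma$. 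Combined with Lemma~\ref{lemma.boundary.behavior.n.even.2}, this would force $\|dN\|_\tr \equiv 0$ on $\Sigma$.

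To derive a contradiction, I would invoke the hypothesis that $N\colon\Sigma\to S^{n-1}$ has positive degree. This ensures that $N^*\omega_{S^{n-1}}$ is not identically zero on $\Sigma$ (its integral recovers $\deg(N)\cdot\mathrm{vol}(S^{n-1})\neq 0$), so $dN$ has full rank at some point of $\Sigma$. In particular $\|dN\|_\tr>0$ at that point, which is incompatible with $\|dN\|_\tr\equiv 0$. Hence the Lipschitz-$L<1$ alternative cannot occur for $t=\hat{t}$, and by Lemma~\ref{lemma.existence.of.varphi} the remaining possibility is that $\varphi(\cdot,\hat{t})\colon S^{n-1}\to S^n$ is the standard inclusion $x\mapsto (x,0)$. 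The consequence $\hat{N}(x)=(N(x),0)$ then completes the proof. No step looks like a serious obstacle; the only point requiring mild care is verifying the multiplicative inequality for the trace norm, which is a classical singular value estimate.
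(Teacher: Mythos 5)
Your proof is correct and follows essentially the same route as the paper: invoke Lemma~\ref{lemma.boundary.behavior.n.even.2} to get the pointwise equality $\|d\hat{N}\|_\tr = \|dN\|_\tr$, observe that a Lipschitz constant strictly below $1$ would force $\|dN\|_\tr \equiv 0$, and contradict positive degree. The paper phrases the final contradiction slightly differently (concluding $N$ is locally constant and hence of degree zero, rather than locating a point of full rank), but this is a cosmetic variation of the same argument.
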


\begin{proof} 
Suppose that the assertion is false. By Lemma~\ref{lemma.existence.of.varphi}, the map $\varphi(\cdot,\hat{t})$ is Lipschitz continuous with a Lipschitz constant strictly less than $1$. Therefore, at each point on $\Sigma$, we either have $\|d\hat{N}\|_\tr < \|dN\|_\tr$ or $\|dN\|_\tr = 0$. Using Lemma~\ref{trace.norm.of.N.and.hat.N}, we conclude that $\|dN\|_\tr = 0$ at each point on $\Sigma$. This implies that $N$ is locally constant. This contradicts the assumption that $N$ has non-zero degree.
\end{proof}

\begin{lemma}\label{lemma.parallel.spinors.n.even}
Let $\sigma \in S_0$. Then $s\sigma$ is a $\nabla^{\mathcal{S},q}$-parallel section of $\mathcal{S}$. In particular, if $s\sigma$ vanishes at some point in $M$, then $s\sigma$ vanishes at each point in $M$.
\end{lemma}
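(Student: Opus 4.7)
The plan is to mirror the proof of Lemma~\ref{lemma.parallel.spinors.n.odd}, which handled the odd-dimensional case. The one structural input that must be in hand before the algebraic argument is that the section $s$ produced by Corollary~\ref{corollary.existence.n.even} is actually $\nabla^{\mathcal{E},q}$-parallel (and smooth). This follows because the dominant energy assumption $\mu \geq |J|$ on $M$ together with the boundary assumption $H \geq |\tr(q) - q(\nu,\nu)| + \|dN\|_\tr$ forces the right-hand side of the integral estimate in Corollary~\ref{corollary.existence.n.even} to be non-positive, so
\[
\int_M |\nabla^{\mathcal{E},q} s|^2 \leq 0,
\]
and therefore $\nabla^{\mathcal{E},q} s \equiv 0$; smoothness of $s$ is a standard consequence of being a weak solution of a first-order elliptic equation (or, directly, of parallel transport).

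Given this, the lemma reduces to an algebraic observation about the tensor product connection. Recall that $\mathcal{E} = \mathcal{S} \otimes S_0^*$, where $S_0^*$ is the fiber of the trivial bundle over $M$, and by construction $\nabla^{\mathcal{E},q}$ is the tensor product of $\nabla^{\mathcal{S},q}$ on $\mathcal{S}$ with the flat connection on the trivial bundle with fiber $S_0^*$. For a fixed $\sigma \in S_0$, the section $p \mapsto s(p)\sigma$ of $\mathcal{S}$ is obtained by pairing $s$, viewed as a section of $\mathcal{S} \otimes S_0^*$, with the globally constant element $\sigma$ of $S_0$, regarded as a parallel section of the trivial bundle with fiber $S_0$. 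The Leibniz rule for tensor product connections then gives
\[
\nabla^{\mathcal{S},q}(s\sigma) = (\nabla^{\mathcal{E},q} s)\sigma = 0,
\]
so $s\sigma$ is a $\nabla^{\mathcal{S},q}$-parallel section of $\mathcal{S}$.

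The second assertion is a standard ODE uniqueness statement: a parallel section of a vector bundle over a connected base is uniquely determined by its value at any one point via parallel transport along curves, so if $s\sigma$ vanishes at a single point of $M$, then it vanishes at every point. I do not anticipate any obstacle here; the argument is formally identical to the one in the odd-dimensional case, since the only difference in even dimensions is the provenance of $s$ (via the dimension-doubling trick through $\tilde{M} = M \times S^1$), not its parallelism or the tensor product structure of $\nabla^{\mathcal{E},q}$.
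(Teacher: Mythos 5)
Your proof is correct and takes the same route as the paper: the key observation is that $\nabla^{\mathcal{E},q}$ is the tensor product connection of $\nabla^{\mathcal{S},q}$ with the flat connection on the trivial $S_0^*$-bundle, so pairing the parallel section $s$ with a fixed $\sigma\in S_0$ yields a $\nabla^{\mathcal{S},q}$-parallel section, whence vanishing at a point implies vanishing everywhere by parallel transport. Your preliminary paragraph explaining why $s$ is $\nabla^{\mathcal{E},q}$-parallel is accurate but belongs to the surrounding setup of Subsection 4.2 rather than the proof of the lemma itself.
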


\begin{proof} 
Analogous to the proof of Lemma~\ref{lemma.parallel.spinors.n.odd}. 
\end{proof}

As above, we define a linear subspace $L \subset S_0$ by  
\[L := \{\sigma \in S_0 \colon \text{\rm $s\sigma=0$ at each point in $M$}\}.\] 

\begin{lemma} \label{lemma.L.invariant.n.even} 
The subspace $L\subset  S_0 $ is invariant under $\gamma_0(\hat{N}(p))$ for each point $p \in \Sigma$.
\end{lemma}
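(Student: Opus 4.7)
The plan is to mimic the proof of Lemma~\ref{lemma.L.invariant.n.odd} essentially verbatim, using the modified boundary condition furnished by Corollary~\ref{corollary.existence.n.even} in place of the odd-dimensional one. Fix $p \in \Sigma$ and $\sigma \in L$. By definition of $L$, the section $s\sigma$ vanishes identically on $M$, and in particular $s\sigma = 0$ at $p$, so also $\gamma(\nu) \circ s\sigma = 0$ at $p$.

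The boundary condition $\gamma(\nu) \circ s = s \circ \gamma_0(\hat{N})$ along $\Sigma$ then gives
\[
0 = (\gamma(\nu) \circ s)\sigma\bigr|_p = (s \circ \gamma_0(\hat{N}))\sigma\bigr|_p = s\bigl(\gamma_0(\hat{N}(p))\sigma\bigr)\bigr|_p.
\]
By Lemma~\ref{lemma.parallel.spinors.n.even}, the section $x \mapsto s(x)\bigl(\gamma_0(\hat{N}(p))\sigma\bigr)$ is $\nabla^{\mathcal{S},q}$-parallel on $M$, so its vanishing at the single point $p$ forces it to vanish on all of $M$. Hence $\gamma_0(\hat{N}(p))\sigma \in L$, which is the desired invariance.

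The only point of care, compared with the odd-dimensional argument, is that $\hat{N}$ takes values in $S^n \subset \R^{n+1}$ rather than $S^{n-1} \subset \R^n$, so one has to verify that $\gamma_0(\hat{N}(p))$ makes sense as an endomorphism of $S_0$. This is not a real obstacle: since $n$ is even, the complex spinor modules of $\C\mathrm{l}(\R^n)$ and $\C\mathrm{l}(\R^{n+1})$ have the same dimension $m = 2^{n/2}$, and $\gamma_0$ extends canonically from $\R^n$ to $\R^{n+1}$ by sending the extra basis vector to the complex volume element; this is precisely the identification implicit in the statement of Corollary~\ref{corollary.existence.n.even}. With this identification in place, the argument above goes through without modification, so I do not anticipate any genuine difficulty.
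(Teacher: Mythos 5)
Your proof is correct and is exactly the argument the paper has in mind — the paper's proof of this lemma is literally ``Analogous to the proof of Lemma~\ref{lemma.L.invariant.n.odd},'' and you carry out that analogy faithfully, using the boundary relation $\gamma(\nu)\circ s = s\circ\gamma_0(\hat N)$ from Corollary~\ref{corollary.existence.n.even} in place of $\chi s = s$ and invoking Lemma~\ref{lemma.parallel.spinors.n.even} for the propagation of vanishing. Your closing remark on why $\gamma_0(\hat N(p))\in\mathrm{End}(S_0)$ makes sense is a sound clarification of an identification the paper leaves implicit in the construction of $\tilde{\mathcal S}$ and $\tilde{\mathcal E}$ as pullbacks to the odd-dimensional $\tilde M = M\times S^1$ (the one small imprecision is that with the paper's sign convention $\gamma_0(v)^2 = -|v|^2$, the extra basis vector must go to $i$ times the complex volume element of $\C\mathrm{l}(\R^n)$, not the volume element itself).
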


\begin{proof}
Analogous to the proof of Lemma~\ref{lemma.L.invariant.n.odd}.
\end{proof}

\begin{lemma}\label{lemma.maxrank.n.even}
We have $L=\{0\}$. Moreover, the section $s$, considered as a homomorphism field, is invertible on each point in $M$.
\end{lemma}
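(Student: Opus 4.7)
The plan is to imitate the odd-dimensional proof of Lemma~\ref{lemma.maxrank.n.odd} almost line for line, with $N$ replaced by $\hat N$. First, by Lemma~\ref{lemma.L.invariant.n.even}, the subspace $L\subset S_0$ is stable under $\gamma_0(\hat N(p))$ for every $p\in\Sigma$. Next, by Lemma~\ref{lemma.hat.t}, the map $\hat N$ is $N$ composed with the equatorial inclusion $S^{n-1}\hookrightarrow S^n$, so its image lies in $S^{n-1}\subset\R^n\subset\R^{n+1}$ and the operator $\gamma_0\circ\hat N$ coincides with the original Clifford multiplication $\gamma_0\colon\R^n\to\End(S_0)$ evaluated on $N$.

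Since $N$ has positive degree, it is surjective onto $S^{n-1}$, so $L$ is $\gamma_0(v)$-invariant for every $v\in S^{n-1}$ and, by linearity, for every $v\in\R^n$. I would then invoke the surjectivity of the complex spinor representation $\C\mathrm{l}(\R^n)\to\End(S_0)$ to conclude that $L$ is stable under all of $\End(S_0)$. Hence $L=\{0\}$ or $L=S_0$, and the latter possibility is ruled out by the non-triviality of $s$ (recall $\int_M|s|^2=1$ from Corollary~\ref{corollary.existence.n.even}); thus $L=\{0\}$.

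To finish, pick any nonzero $\sigma\in S_0$. By Lemma~\ref{lemma.parallel.spinors.n.even} the section $s\sigma$ of $\mathcal{S}$ is $\nabla^{\mathcal{S},q}$-parallel, and since it does not vanish identically it cannot vanish at any point. This gives pointwise injectivity of $s\colon S_0\to\mathcal{S}|_p$, and the equality of complex dimensions $\dim S_0=m=\dim_\C\mathcal{S}|_p$ promotes injectivity to invertibility.

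I do not expect any essential obstacle here: the real work has already been done in Lemma~\ref{lemma.hat.t}, which arranges $\hat N$ so that the Clifford multiplication used on the boundary is exactly the one appearing in the odd-dimensional case, making the Clifford-algebra generation argument of Lemma~\ref{lemma.maxrank.n.odd} applicable verbatim.
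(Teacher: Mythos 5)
Your proof is correct and follows the paper's argument almost step for step: invariance of $L$ under $\gamma_0(\hat N(p))$ from Lemma~\ref{lemma.L.invariant.n.even}, the observation via Lemma~\ref{lemma.hat.t} that $\hat N$ factors through the equatorial $S^{n-1}$ so that the span of $\{\hat N(p)\}$ is $\R^n$, surjectivity of the Clifford algebra onto $\End(S_0)$ to force $L \in \{0, S_0\}$, and the parallelism argument to promote $L = \{0\}$ to pointwise invertibility of $s$. The only cosmetic difference is that the paper first passes to $\R^{n+1}$ by noting that the volume element of $\C\mathrm{l}(\R^n)$ acts as a scalar (so invariance under $\gamma_0(v)$, $v \in \R^n$, yields invariance under all of $\gamma_0(\R^{n+1})$) and then invokes the odd-dimensional surjectivity of $\C\mathrm{l}(\R^{n+1}) \to \End(S_0)$; you instead directly use the surjectivity of $\C\mathrm{l}(\R^n) \to \End(S_0)$, which is an isomorphism since $n$ is even — a slightly shorter route to the same conclusion.
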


\begin{proof}
Since the map $N \colon \Sigma \to S^{n-1}$ has non-zero degree, we know that the image of $N \colon \Sigma \to S^{n-1}$ is all of $S^{n-1}$. 
Using Lemma~\ref{lemma.hat.t}, we conclude that the image of the map $\hat{N} \colon \Sigma \to S^n$ is the equator in $S^n$. In particular, $\text{\rm span}\{\hat{N}(p) \colon p \in \Sigma\} = \R^n \subset \R^{n+1}$. 
By Lemma~\ref{lemma.L.invariant.n.even}, $L$ is invariant under $\gamma_0(\xi)$ for all $\xi \in \R^n$. 
Since the volume element acts as a scalar multiple of the identity, we deduce that $L$ is invariant under $\gamma_0(\xi)$ for all $\xi \in \R^{n+1}$. 
Arguing as in the proof of Lemma~\ref{lemma.maxrank.n.odd}, the assertion follows. 
\end{proof}

\begin{lemma}
\label{lemma.boundary.rigidity.even.dim}
Let $p$ be a point on the boundary $\Sigma$. Let $e_1,\dots,e_{n-1}$ be an orthonormal basis of $T_p\Sigma$ and let $e^0_1,\dots,e^0_{n-1}$ be a set of orthonormal vectors in $T_{\hat{N}(p)} S^n$ such that $d\hat{N}(e_j)=\lambda_j e^0_j$, where $\lambda_j\geq 0$ denote the singular values of $d\hat{N}$. Then $h(e_j) = \lambda_j e_j$ and $q(e_j)^{\text{\rm tan}} = 0$ for $j=1,\hdots,n-1$.
\end{lemma}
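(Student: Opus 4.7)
The plan is to mirror the proof of the odd-dimensional analogue, Lemma~\ref{lemma.boundary.rigidity.odd.dim}, substituting the even-dimensional ingredients already developed in Subsection~\ref{subsec:even}. Concretely, I begin with the identity from Lemma~\ref{lemma.boundary.behavior.n.even.2}, expanded in the bases $e_1,\dots,e_{n-1}$ and $e^0_1,\dots,e^0_{n-1}$ of $T_p\Sigma$ and $T_{\hat{N}(p)}S^n$:
\[
 -\sum_{j=1}^{n-1}\lambda_j\,\langle \gamma(e_j)\circ s\circ \gamma_0(e^0_j),s\rangle \;=\; \sum_{j=1}^{n-1}\lambda_j\,|s|^2.
\]
Since $\gamma(e_j)$ and $\gamma_0(e^0_j)$ are unitary endomorphisms, each summand on the left is bounded pointwise by $\lambda_j|s|^2$, so equality forces the identity $\gamma(e_j)\circ s = s\circ \gamma_0(e^0_j)$ at $p$ whenever $\lambda_j>0$.

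Next I invoke the boundary relation of Lemma~\ref{lemma.boundary.behavior.n.even} with $X=e_j$ to write
\[
 \gamma(h(e_j))\circ s + i\,\gamma(q(e_j)^{\textup{tan}})\circ \gamma(\nu)\circ s \;=\; \lambda_j\, s\circ \gamma_0(e^0_j),
\]
where the right-hand side equals $\lambda_j\,\gamma(e_j)\circ s$ both when $\lambda_j>0$ (by the previous step) and when $\lambda_j=0$ (trivially). Using that $s$ is invertible at $p$ by Lemma~\ref{lemma.maxrank.n.even}, this upgrades to the endomorphism identity
\[
 \gamma(h(e_j)) + i\,\gamma(q(e_j)^{\textup{tan}})\circ \gamma(\nu) \;=\; \lambda_j\,\gamma(e_j)\quad \text{in }\End(\mathcal{S}|_p).
\]

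Finally, I separate real and imaginary structure. The operators $\gamma(h(e_j))$ and $\gamma(e_j)$ are skew-adjoint, while $i\,\gamma(q(e_j)^{\textup{tan}})\circ \gamma(\nu)$ is selfadjoint because $q(e_j)^{\textup{tan}}\perp\nu$ makes $\gamma(q(e_j)^{\textup{tan}})$ and $\gamma(\nu)$ anticommute. Equating skew-adjoint parts gives $\gamma(h(e_j))=\lambda_j\gamma(e_j)$, hence $h(e_j)=\lambda_je_j$; equating selfadjoint parts gives $\gamma(q(e_j)^{\textup{tan}})\circ \gamma(\nu)=0$ and thus $q(e_j)^{\textup{tan}}=0$. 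The one point requiring a word of care, rather than being a serious obstacle, is checking that the $\lambda_j=0$ components are covered; this is handled automatically because the right-hand side of the boundary relation vanishes there.
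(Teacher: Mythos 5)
Your proof is correct and is exactly the argument the paper has in mind: the paper's own proof consists of the single line ``The proof is analogous to the proof of Lemma~\ref{lemma.boundary.rigidity.odd.dim},'' and your proposal spells out that analogy step for step, correctly substituting Lemmas~\ref{lemma.boundary.behavior.n.even}, \ref{lemma.boundary.behavior.n.even.2}, and~\ref{lemma.maxrank.n.even} for their odd-dimensional counterparts.
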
 

\begin{proof}
The proof is analogous to the proof of Lemma~\ref{lemma.boundary.rigidity.odd.dim}.
\end{proof}

In summary, we proved all the assertions in Proposition~\ref{prop:prelim} in even dimensions.

\subsection{Existence of a local isometric immersion into $\R^{n,1}$}

\label{immersion.into.Minkowski.space}

In this subsection, we construct a local isometric immersion of $(M,g)$ into the Minkowski spacetime $\R^{n,1}$.

\begin{definition}
We define a vector bundle $\mathcal{T}$ over $M$ by $\mathcal{T} = TM \oplus \R$. We define a bundle metric of signature $(n,1)$ on $\mathcal{T}$ by 
\[\langle (Y,\varphi),(Z,\psi) \rangle_{\mathcal{T}} = g(Y,Z) - \varphi \psi\] 
for $(Y,\varphi),(Z,\psi) \in \mathcal{T}$. We define a connection $\nabla^{\mathcal{T},q}$ on $\mathcal{T}$ by 
\[\nabla_X^{\mathcal{T},q} (Y,\varphi) = (D_X Y - \varphi \, q(X),X(\varphi) - q(X,Y))\] 
for every vector field $X$ and every section $(Y,\varphi)$ of $\mathcal{T}$. 
\end{definition}

\begin{lemma}
\label{metric.compatibility}
The connection $\nabla^{\mathcal{T},q}$ is compatible with the bundle metric $\langle \cdot,\cdot \rangle_{\mathcal{T}}$.
\end{lemma}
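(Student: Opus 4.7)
The plan is to verify the Leibniz rule for $\nabla^{\mathcal{T},q}$ with respect to $\langle\cdot,\cdot\rangle_{\mathcal{T}}$ by direct calculation. The key observation is that the off-diagonal terms involving $q$ cancel exactly because of the symmetry $q(X,Y) = g(q(X),Y) = g(Y,q(X))$, where I use the selfadjointness of $q$ established in Section~3.

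Concretely, I would fix a vector field $X$ on $M$ and sections $(Y,\varphi), (Z,\psi)$ of $\mathcal{T}$. On the left hand side, I expand
\[
X\langle(Y,\varphi),(Z,\psi)\rangle_{\mathcal{T}} = X g(Y,Z) - X(\varphi)\,\psi - \varphi\, X(\psi) = g(D_XY,Z)+g(Y,D_XZ)-X(\varphi)\,\psi-\varphi\,X(\psi),
\]
using the compatibility of the Levi-Civita connection $D$ with $g$. On the right hand side, I plug in the definition of $\nabla^{\mathcal{T},q}$ to get
\[
\langle\nabla_X^{\mathcal{T},q}(Y,\varphi),(Z,\psi)\rangle_{\mathcal{T}} = g(D_XY-\varphi\,q(X),Z) - (X(\varphi)-q(X,Y))\,\psi
\]
and the symmetric expression for the other term. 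Expanding the inner products produces the "Levi-Civita part" $g(D_XY,Z)+g(Y,D_XZ)-X(\varphi)\psi-\varphi X(\psi)$, plus four correction terms involving $q$: $-\varphi\, g(q(X),Z)$, $+q(X,Y)\,\psi$, $-\psi\, g(Y,q(X))$, and $+\varphi\, q(X,Z)$.

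The final step is to observe that these four correction terms cancel in pairs: since $q$ is a symmetric $(0,2)$-tensor and corresponds to the selfadjoint endomorphism field $q\colon TM\to TM$ via $q(X,Z)=g(q(X),Z)=g(X,q(Z))$, we have $g(q(X),Z)=q(X,Z)$ and $g(Y,q(X))=q(X,Y)$, so the $q$-contributions sum to zero. This matches the left hand side and proves compatibility. The computation is routine; the only subtlety is keeping track of the identification between $q$ as a $(0,2)$-tensor and as an endomorphism, which is why the symmetry of $q$ is essential and is used without further comment.
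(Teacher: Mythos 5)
Your proof is correct and is essentially the same computation the paper carries out; the paper simply presents the expansion more tersely, leaving the cancellation of the four $q$-terms implicit rather than writing them out. Your explicit tracking of the $(0,2)$-tensor/endomorphism identification via $q(X,Z)=g(q(X),Z)$ and $q(X,Y)=g(Y,q(X))$ makes the cancellation transparent and matches the paper's intent exactly.
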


\begin{proof} 
Let $X$ be a vector field on $M$, and let $(Y,\varphi)$ and $(Z,\psi)$ be two sections of $\mathcal{T}$. Then 
\[\langle \nabla_X^{\mathcal{T},q} (Y,\varphi),(Z,\psi) \rangle_{\mathcal{T}} = g(D_X Y,Z) - \varphi \, q(X,Z) - X(\varphi) \, \psi + q(X,Y) \, \psi\] 
and 
\[\langle (Y,\varphi),\nabla_X^{\mathcal{T},q} (Z,\psi) \rangle_{\mathcal{T}} = g(Y,D_X Z) - q(X,Y) \, \psi - \varphi \, X(\psi) + \varphi \, q(X,Z).\]
This gives 
\begin{align*} 
\langle \nabla_X^{\mathcal{T},q} (Y,\varphi),(Z,\psi) \rangle_{\mathcal{T}}& + \langle (Y,\varphi),\nabla_X^{\mathcal{T},q} (Z,\psi) \rangle_{\mathcal{T}} \\ 
&= X(g(Y,Z) - \varphi \psi) = X(\langle (Y,\varphi),(Z,\psi) \rangle_{\mathcal{T}}). 
\end{align*} 
This completes the proof of Lemma~\ref{metric.compatibility}. 
\end{proof}

\begin{lemma}
\label{parallel.sections}
Suppose that $u,v$ are $\nabla^{\mathcal{S},q}$-parallel sections of $\mathcal{S}$. 
We define a vector field $Y$ on $M$ so that $g(Y,Z) = \text{\rm Re}(i \, \langle \gamma(Z) \, u,v \rangle)$ for all $Z\in TM$. 
Moreover, we define a scalar function $\varphi$ on $M$ by $\varphi = -\text{\rm Re}(\langle u,v \rangle)$. Then $(Y,\varphi)$ is a $\nabla^{\mathcal{T},q}$-parallel section of $\mathcal{T}$.
\end{lemma}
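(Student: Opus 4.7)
The plan is to verify the two components of $\nabla_X^{\mathcal{T},q}(Y,\varphi)$ vanish by direct computation. Unpacking the definition, we need
\[ D_X Y = \varphi\,q(X) \quad\text{and}\quad X(\varphi) = q(X,Y) \]
for every tangent vector $X$. Both identities will follow by differentiating the defining Hermitian expressions for $Y$ and $\varphi$ and substituting the parallelism equations $\nabla_X^{\mathcal{S}} u = -\tfrac{i}{2}\gamma(q(X))u$ and $\nabla_X^{\mathcal{S}} v = -\tfrac{i}{2}\gamma(q(X))v$, together with the Clifford relation $\gamma(Z)\gamma(q(X)) + \gamma(q(X))\gamma(Z) = -2\,q(X,Z)$ and the skew-adjointness of $\gamma(q(X))$.

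For the scalar component I would apply the Leibniz rule to $\varphi = -\mathrm{Re}\langle u,v\rangle$ and plug in the parallelism equations. Using the antilinearity of the Hermitian pairing in the second slot followed by skew-adjointness of $\gamma(q(X))$, the two resulting terms combine into $\mathrm{Re}\bigl(i\langle \gamma(q(X))u,v\rangle\bigr)$. By the defining identity for $Y$, this equals $g(Y,q(X)) = q(X,Y)$, where the last step uses the symmetry of $q$ viewed as an endomorphism.

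For the vector component I would test against an arbitrary vector $Z$, extended to be synchronous at the point under consideration so that $D_X Z = 0$ there. Then $g(D_X Y,Z) = X(g(Y,Z))$, and expanding the right-hand side $X\bigl(\mathrm{Re}(i\langle\gamma(Z)u,v\rangle)\bigr)$ via Leibniz and the parallelism equations produces the two cross terms proportional to $\langle\gamma(Z)\gamma(q(X))u,v\rangle$ and $\langle\gamma(q(X))\gamma(Z)u,v\rangle$ (after applying skew-adjointness to the second term to move $\gamma(q(X))$ onto the left entry). The Clifford anticommutation relation collapses the sum to $-q(X,Z)\langle u,v\rangle$, whose real part is $\varphi\,q(X,Z) = g(\varphi\,q(X),Z)$. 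Since $Z$ was arbitrary, this yields $D_X Y = \varphi\,q(X)$.

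The only real subtlety will be careful bookkeeping of the factors of $i$, of the signs from skew-adjointness of Clifford multiplication, and of the antilinearity of the Hermitian pairing in its second slot. No conceptual obstacle is expected; the lemma is essentially a $q$-twisted version of the classical fact that the spinor square of a parallel spinor produces a parallel vector field in Riemannian spin geometry.
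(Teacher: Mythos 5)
Your proposal is correct and follows essentially the same route as the paper's proof: compute $X(\varphi)$ and $g(D_XY,Z)$ directly via Leibniz, substitute the parallelism equations $\nabla_X^{\mathcal{S}}u=-\tfrac{i}{2}\gamma(q(X))u$ and likewise for $v$, move $\gamma(q(X))$ across the Hermitian pairing using skew-adjointness, and for the vector component invoke the Clifford anticommutation relation to produce $-q(X,Z)\langle u,v\rangle$. The bookkeeping of the $-\tfrac{i}{2}$ factors, the conjugate-linearity in the second slot, and the factor of $\tfrac12$ absorbed by the Clifford relation all work out as you predict, and your identification of $g(Y,q(X))=q(X,Y)$ via symmetry of $q$ is exactly what the paper uses.
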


\begin{proof} 
Let $X$ be a vector field on $M$. By assumption, 
\[\nabla_X^{\mathcal{S}} u + \tfrac{i}{2} \, \gamma(q(X)) \, u = \nabla_X^{\mathcal{S}} v + \tfrac{i}{2} \, \gamma(q(X)) \, v = 0.\] 
This implies 
\begin{align*} 
g(D_X Y,Z)
&= \text{\rm Re}(i \, \langle \gamma(Z) \, \nabla_X^{\mathcal{S}} u,v \rangle) + \text{\rm Re}(i \, \langle \gamma(Z) \, u,\nabla_X^{\mathcal{S}} v \rangle) \\ 
&= \tfrac{1}{2} \, \text{\rm Re}(\langle \gamma(Z) \, \gamma(q(X)) \, u,v \rangle) - \tfrac{1}{2} \, \text{\rm Re}(\langle \gamma(Z) \, u,\gamma(q(X)) \, v \rangle) \\ 
&= \tfrac{1}{2} \, \text{\rm Re}(\langle \gamma(Z) \, \gamma(q(X)) \, u,v \rangle) + \tfrac{1}{2} \, \text{\rm Re}(\langle \gamma(q(X)) \, \gamma(Z) \, u,v \rangle) \\ 
&= -q(X,Z) \, \text{\rm Re}(\langle u,v \rangle) \\ 
&= q(X,Z) \, \varphi
\end{align*} 
and 
\begin{align*} 
X(\varphi) 
&= -\text{\rm Re}(\langle \nabla_X^{\mathcal{S}} u,v \rangle) - \text{\rm Re}(\langle u,\nabla_X^{\mathcal{S}} v \rangle) \\ 
&= \tfrac{1}{2} \, \text{\rm Re}(i \, \langle \gamma(q(X)) \, u,v \rangle) - \tfrac{1}{2} \, \text{\rm Re}(i \, \langle u,\gamma(q(X)) \, v \rangle) \\ 
&= \text{\rm Re}(i \, \langle \gamma(q(X)) \, u,v \rangle) \\ 
&= q(X,Y). 
\end{align*} 
This completes the proof of Lemma~\ref{parallel.sections}. 
\end{proof}

\begin{lemma}
\label{span}
Suppose that $p$ is an arbitrary point in $M$. Consider the set of all pairs $(Y,\varphi) \in \mathcal{T}|_p$ with the property that there exist $u,v \in \mathcal{S}|_p$ such that $g(Y,Z) = \text{\rm Re}(i \, \langle \gamma(Z) \, u,v \rangle)$ for all $Z \in TM$ and $\varphi = -\text{\rm Re}(\langle u,v \rangle)$. This set spans $\mathcal{T}|_p$. 
\end{lemma} 

\begin{proof} 
Suppose that the assertion is false. 
Then there exists a non-zero pair $(Z,\psi) \in \mathcal{T}|_p$ such that 
\[
\text{\rm Re}(i \, \langle \gamma(Z) \, u,v \rangle) + \text{\rm Re}(\psi \, \langle u,v \rangle) = 0
\] 
for all $u,v \in \mathcal{S}|_p$. 
This implies that $i \, \gamma(Z) + \psi \, \id = 0 \in \text{\rm End}(\mathcal{S}|_p)$. 
From this, we deduce that $Z=0$ and $\psi=0$. 
This is a contradiction. 
\end{proof}

\begin{proposition}
\label{extended.tangent.bundle.trivial}
The bundle $\mathcal{T}$ admits a trivialization by $\nabla^{\mathcal{T},q}$-parallel sections.
\end{proposition}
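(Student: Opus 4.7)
The plan is to produce $n+1$ globally defined $\nabla^{\mathcal{T},q}$-parallel sections of $\mathcal{T}$ that are linearly independent at a single point. Because parallel transport is a bundle isomorphism and $M$ is connected, linear independence at one point automatically propagates to every point, yielding the desired global trivialization.

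First, I would invoke Proposition~\ref{prop:prelim}(a), which furnishes a $\nabla^{\mathcal{E},q}$-parallel section $s$ of $\mathcal{E} = \mathcal{S} \otimes S_0^*$ that is pointwise invertible on $M$. Since $\nabla^{\mathcal{E},q}$ is the tensor product of $\nabla^{\mathcal{S},q}$ with the flat connection on the trivial bundle with fiber $S_0^*$, for every $\sigma \in S_0$ the section $x \mapsto s(x)\sigma$ is a $\nabla^{\mathcal{S},q}$-parallel section of $\mathcal{S}$, as already used in Lemmas~\ref{lemma.parallel.spinors.n.odd} and \ref{lemma.parallel.spinors.n.even}. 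Fix any base point $p_0 \in M$. Since $s(p_0) \colon S_0 \to \mathcal{S}|_{p_0}$ is an isomorphism, the values $\{s(p_0)\sigma : \sigma \in S_0\}$ fill out all of $\mathcal{S}|_{p_0}$.

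Next, for each pair $\sigma,\tau \in S_0$, Lemma~\ref{parallel.sections} applied to the parallel spinors $u = s\sigma$ and $v = s\tau$ produces a $\nabla^{\mathcal{T},q}$-parallel section $(Y_{\sigma,\tau},\varphi_{\sigma,\tau})$ of $\mathcal{T}$. Evaluating at $p_0$ and letting $\sigma,\tau$ vary over $S_0$, the values $(Y_{\sigma,\tau}(p_0),\varphi_{\sigma,\tau}(p_0))$ realize precisely the set of elements of $\mathcal{T}|_{p_0}$ described in Lemma~\ref{span}, since every pair $u,v \in \mathcal{S}|_{p_0}$ is of the form $s(p_0)\sigma, s(p_0)\tau$. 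By Lemma~\ref{span} these elements span $\mathcal{T}|_{p_0}$. Thus I may extract $n+1$ parallel sections $\xi_0,\ldots,\xi_n$ of $\mathcal{T}$ whose values at $p_0$ form a basis of $\mathcal{T}|_{p_0}$.

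Finally, parallel transport of $\nabla^{\mathcal{T},q}$ along any path in the connected manifold $M$ is a linear isomorphism of fibers, so the parallel sections $\xi_0(p),\ldots,\xi_n(p)$ remain linearly independent at every $p \in M$. Consequently $(\xi_0,\ldots,\xi_n)$ is a global $\nabla^{\mathcal{T},q}$-parallel frame, and $\mathcal{T}$ is trivialized by parallel sections. The only step that might look subtle is the need to realize arbitrary pointwise spinors at $p_0$ by parallel sections; this is supplied cleanly by the pointwise invertibility of $s$, so no serious obstacle arises once Proposition~\ref{prop:prelim}(a), Lemma~\ref{parallel.sections}, and Lemma~\ref{span} are available.
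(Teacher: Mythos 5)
Your proposal is correct and follows essentially the same route as the paper: invoke Proposition~\ref{prop:prelim}(a) for a pointwise-invertible $\nabla^{\mathcal{E},q}$-parallel section $s$, feed the parallel spinors $s\sigma$ into Lemma~\ref{parallel.sections} to manufacture $\nabla^{\mathcal{T},q}$-parallel sections, and use Lemma~\ref{span} plus invertibility of $s$ to conclude these span the fiber. The only difference is cosmetic: you spell out explicitly the extraction of an $(n+1)$-tuple that is a basis at a base point and the propagation of linear independence by parallel transport, steps the paper leaves implicit.
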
 

\begin{proof} 
According to Proposition~\ref{prop:prelim}, there exists a $\nabla^{\mathcal{E},q}$-parallel section $s$ of the bundle $\text{\rm Hom}(S_0,\mathcal{S})$ with the property that $s$ is invertible at each point in $M$. Consequently, the spinor bundle $\mathcal{S}$ admits a trivialization by $\nabla^{\mathcal{S},q}$-parallel sections. Using Lemma~\ref{parallel.sections}, we obtain a collection of $\nabla^{\mathcal{T},q}$-parallel sections of $\mathcal{T}$. At each point $p \in M$, these $\nabla^{\mathcal{T},q}$-parallel sections span the fiber $\mathcal{T}|_p$ (see Lemma~\ref{span}). This completes the proof of Proposition~\ref{extended.tangent.bundle.trivial}. 
\end{proof}

Let $(E^{(0)},\zeta^{(0)}),(E^{(1)},\zeta^{(1)}),\hdots,(E^{(n)},\zeta^{(n)})$ be a collection of $\nabla^{\mathcal{T},q}$-parallel sections that trivialize $\mathcal{T}$. In view of Lemma~\ref{metric.compatibility}, we may write 
\[\langle (E^{(\alpha)},\zeta^{(\alpha)}),(E^{(\beta)},\zeta^{(\beta)}) \rangle_{\mathcal{T}} = m_{\alpha\beta},\] 
where $m_{\alpha\beta}$ is a constant $(n+1) \times (n+1)$-matrix. The matrix $m_{\alpha\beta}$ is invertible and has signature $(n,1)$. By taking suitable linear combinations, we may arrange that 
\[m_{\alpha\beta} = \begin{cases} -1 & \text{\rm for $\alpha=\beta=0$} \\ 1 & \text{\rm for $\alpha=\beta>0$} \\ 0 & \text{\rm for $\alpha \neq \beta$} \end{cases}\] 
for $\alpha,\beta = 0,1,\hdots,n$. Then 
\[\langle (Y,\varphi),(Z,\psi) \rangle_{\mathcal{T}} = \sum_{\alpha,\beta=0}^n m_{\alpha\beta} \, \langle (E^{(\alpha)},\zeta^{(\alpha)}),(Y,\varphi) \rangle_{\mathcal{T}} \, \langle (E^{(\beta)},\zeta^{(\beta)}),(Z,\psi) \rangle_{\mathcal{T}}\] 
for all sections $(Y,\varphi)$ and $(Z,\psi)$ of $\mathcal{T}$. 
Choosing $\varphi=\psi=0$, this implies 
\begin{equation} 
\label{tan.tan}
g(Y,Z) = \sum_{\alpha,\beta=0}^n m_{\alpha\beta} \, g(E^{(\alpha)},Y) \, g(E^{(\beta)},Z) 
\end{equation}
for all $Y,Z \in TM$. 
Moreover, choosing $\varphi=0$ and $Z=0$, we find
\begin{equation} 
\label{tan.nor}
0 = \sum_{\alpha,\beta=0}^n m_{\alpha\beta} \, g(E^{(\alpha)},Y) \, \zeta^{(\beta)} = 0 
\end{equation} 
for all $Y \in TM$.
Finally, choosing $Y=Z=0$ and $\varphi=\psi=1$ yields
\begin{equation}
\label{nor.nor}
-1 = \sum_{\alpha,\beta=0}^n m_{\alpha\beta} \, \zeta^{(\alpha)} \, \zeta^{(\beta)}. 
\end{equation} 

Since $(E^{(\alpha)},\zeta^{(\alpha)})$ is $\nabla^{\mathcal{T},q}$-parallel, it follows that the covariant derivative of $E^{(\alpha)}$ is given by $\zeta^{(\alpha)}q$ and is hence symmetric. 
Thus, the one-form $g(E^{(\alpha)},\cdot)$ is closed. We may locally write $g(E^{(\alpha)},\cdot) = df^{(\alpha)}$ for some scalar function $f^{(\alpha)}$. It follows from \eqref{tan.tan} that $g = \sum_{\alpha,\beta=0}^n m_{\alpha\beta} \, df^{(\alpha)} \otimes df^{(\beta)}$. In other words, the map $F = (f^{(0)},f^{(1)},\hdots,f^{(n)})$ is a local isometric immersion of $(M,g)$ into the Minkowski spacetime $\R^{n,1}$. For each vector $X \in TM$, we have 
\begin{align*} 
dF(X) 
&= (df^{(0)}(X),df^{(1)}(X),\hdots,df^{(n)}(X)) \\ 
&= (g(E^{(0)},X),g(E^{(1)},X),\hdots,g(E^{(n)},X)) \in \R^{n,1}. 
\end{align*}
It follows from \eqref{tan.nor} and \eqref{nor.nor} that the unit normal vector to the immersion $F$ in $\R^{n,1}$ is given by $(\zeta^{(0)},\zeta^{(1)},\hdots,\zeta^{(n)})$. Since $(E^{(\alpha)},\zeta^{(\alpha)})$ is $\nabla^{\mathcal{T},q}$-parallel, it is easy to see that the second fundamental form associated with the immersion $F$ is equal to $q$.

If $M$ is simply connected, the isometric immersion $F = (f^{(0)},f^{(1)},\hdots,f^{(n)})$ is globally well-defined. If $M$ is not simply connected, then $F$ may not be globally well-defined, but its differential is globally well-defined.

\subsection{Proof of Theorem~\ref{rigidity.of.smooth.domains} in the simply connected case} 

\label{simply.connected}

In this subsection, we prove Theorem~\ref{rigidity.of.smooth.domains} in the special case that $M$ is simply connected. In this case, there exists an isometric immersion $F$ of $M$  into the Minkowski spacetime $\R^{n,1}$ as a spacelike hypersurface. 
Let $T$ denote the normal vector to $M$ in $\R^{n,1}$, normalized so that $\langle T,T \rangle_{\R^{n,1}} = -1$. The second fundamental form is equal to $q$, so that $q(X,Y) = \langle \bar{D}_X T,Y \rangle_{\R^{n,1}}$ for $X,Y \in TM$. Let $\mathcal{S}$ denote the spinor bundle of $M$, and let $\nabla^{\mathcal{S}}$ denote the usual connection on $\mathcal{S}$.

\begin{definition} 
We define a vector bundle $\mathcal{W}$ over $M$ by $\mathcal{W} = \mathcal{S} \oplus \mathcal{S}$. We define a connection $\nabla^{\mathcal{W}}$ on $\mathcal{W}$ by 
\[\nabla_X^{\mathcal{W}} (u,v) = (\nabla_X^{\mathcal{S},q} u,\nabla_X^{\mathcal{S},-q} v),\] 
where $X$ denotes a vector field on $M$, and $u,v$ are sections of $\mathcal{S}$.
\end{definition}

\begin{definition} 
For each point $p \in M$, we define a map $\gamma^{\mathcal{W}} \colon \R^{n,1} \to \text{\rm End}(\mathcal{W}|_p)$ by 
\[\gamma^{\mathcal{W}}(T) \, (u,v) = (-iv,iu)\] 
and
\[\gamma^{\mathcal{W}}(X) \, (u,v) = (\gamma(X) \, v,\gamma(X) \, u),\] 
where $X$ denotes a vector field on $M$ and $u,v \in \mathcal{S}|_p$.
\end{definition}

\begin{lemma} 
\label{lorentzian.clifford.relations}
The map $\gamma^{\mathcal{W}} \colon \R^{n,1} \to \text{\rm End}(\mathcal{W}|_p)$ satisfies the Clifford relations with respect to the Lorentzian inner product on $\R^{n,1}$. 
In other words, 
\[
\gamma^{\mathcal{W}}(a) \, \gamma^{\mathcal{W}}(b) + \gamma^{\mathcal{W}}(b) \, \gamma^{\mathcal{W}}(a) = -2 \, \langle a,b \rangle_{\R^{n,1}} \, \id
\] 
for all $a,b \in \R^{n,1}$.
\end{lemma}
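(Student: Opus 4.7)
The plan is to verify the Clifford relation by bilinearity in $a$ and $b$. Both sides of the desired identity are bilinear, so it suffices to check the three cases $(a,b) = (T,T)$, $(a,b) = (X,Y)$ with $X,Y \in T_pM$, and $(a,b) = (X,T)$ with $X \in T_pM$, using that $\langle T,T\rangle_{\R^{n,1}} = -1$, $\langle X,Y\rangle_{\R^{n,1}} = g(X,Y)$, and $\langle X,T\rangle_{\R^{n,1}} = 0$.

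First I would handle the timelike case. Directly from the definition,
\[
\gamma^{\mathcal{W}}(T)^2(u,v) = \gamma^{\mathcal{W}}(T)(-iv,iu) = (-i(iu),\,i(-iv)) = (u,v),
\]
so $\gamma^{\mathcal{W}}(T)^2 = \mathrm{id}$ and the anticommutator equals $2\,\mathrm{id} = -2\langle T,T\rangle_{\R^{n,1}}\,\mathrm{id}$. Next, for $X,Y \in T_pM$,
\[
\gamma^{\mathcal{W}}(X)\gamma^{\mathcal{W}}(Y)(u,v) = \gamma^{\mathcal{W}}(X)(\gamma(Y)v,\gamma(Y)u) = (\gamma(X)\gamma(Y)u,\,\gamma(X)\gamma(Y)v),
\]
so the anticommutator acts componentwise as $\gamma(X)\gamma(Y)+\gamma(Y)\gamma(X) = -2g(X,Y)\,\mathrm{id}$, which matches $-2\langle X,Y\rangle_{\R^{n,1}}\,\mathrm{id}$ via the Riemannian Clifford relations on $\mathcal{S}$.

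Finally, for the mixed case with $X \in T_pM$,
\[
\gamma^{\mathcal{W}}(X)\gamma^{\mathcal{W}}(T)(u,v) = \gamma^{\mathcal{W}}(X)(-iv,iu) = (i\gamma(X)u,\,-i\gamma(X)v),
\]
while
\[
\gamma^{\mathcal{W}}(T)\gamma^{\mathcal{W}}(X)(u,v) = \gamma^{\mathcal{W}}(T)(\gamma(X)v,\gamma(X)u) = (-i\gamma(X)u,\,i\gamma(X)v),
\]
so the anticommutator is zero, which matches $-2\langle X,T\rangle_{\R^{n,1}}\,\mathrm{id} = 0$. Combining the three cases by bilinearity yields the claim. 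There is no real obstacle here; the entire content of the lemma is an unpacking of the definitions, with the extra factor of $i$ in $\gamma^{\mathcal{W}}(T)$ arranged precisely so that $\gamma^{\mathcal{W}}(T)^2 = +\mathrm{id}$ while $\gamma^{\mathcal{W}}(T)$ anticommutes with the spatial operators $\gamma^{\mathcal{W}}(X)$.
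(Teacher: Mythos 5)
Your proof is correct and follows exactly the same approach as the paper: verify the three cases $(T,T)$, $(X,Y)$, and $(X,T)$ by direct computation and conclude by bilinearity. The paper states the three identities without spelling out the component computations, which you have simply carried out explicitly.
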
 

\begin{proof} 
We compute 
\[
\gamma^{\mathcal{W}}(X) \, \gamma^{\mathcal{W}}(Y) + \gamma^{\mathcal{W}}(Y) \, \gamma^{\mathcal{W}}(X) = -2 \, g(X,Y) \, \id
\] 
for all $X,Y \in TM$. 
Moreover, 
\[
\gamma^{\mathcal{W}}(T) \, \gamma^{\mathcal{W}}(X) + \gamma^{\mathcal{W}}(X) \, \gamma^{\mathcal{W}}(T) = 0\] 
for all $X \in TM$. 
Finally, $(\gamma^{\mathcal{W}}(T))^2 = \id$.
This completes the proof of Lemma~\ref{lorentzian.clifford.relations}.
\end{proof}

\begin{lemma}
\label{compatibility}
Suppose that $a$ is a constant vector in $\R^{n,1}$. Then 
\[\nabla_X^{\mathcal{W}} (\gamma^{\mathcal{W}}(a) \, (u,v)) = \gamma^{\mathcal{W}}(a) \, \nabla_X^{\mathcal{W}} (u,v)\] 
for all $X \in TM$. In particular, $\gamma^{\mathcal{W}}(a)$ maps parallel sections of $\mathcal{W}$ to parallel sections of $\mathcal{W}$.
\end{lemma}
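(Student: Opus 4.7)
The plan is to decompose the constant vector $a \in \R^{n,1}$ along $M$ according to the pointwise splitting $\R^{n,1} = TM \oplus \R T$, and then reduce the commutation claim to Clifford relations on $M$. Write $a = V + \varphi T$ where $V$ is a tangent vector field and $\varphi \in C^\infty(M)$ on any open set where the decomposition is smooth. Since $\bar{D}_X a = 0$ and since the Gauss--Weingarten formulas give $\bar{D}_X V = D_X V + q(X,V)\,T$ and $\bar{D}_X T = q(X)$, the tangential and normal parts of $\bar{D}_X a = 0$ yield
\[
D_X V = -\varphi\, q(X), \qquad X(\varphi) = -q(X,V).
\]
These are precisely the equations saying that $(V,\varphi)$ is a $\nabla^{\mathcal{T},q}$-parallel section of $\mathcal{T}$. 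Thus the constancy of $a$ in the ambient Minkowski space is the geometric source of the twist appearing in $\nabla^{\mathcal{S},q}$.

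Next I would unfold both sides of the desired identity. Using the definition of $\gamma^{\mathcal{W}}$,
\[
\gamma^{\mathcal{W}}(a)(u,v) = \bigl(\gamma(V)v - i\varphi v,\ \gamma(V)u + i\varphi u\bigr),
\]
so $\nabla_X^{\mathcal{W}}\bigl(\gamma^{\mathcal{W}}(a)(u,v)\bigr)$ expands via the Leibniz rule for $\nabla^{\mathcal{S}}$ plus the twist contributions $\tfrac{i}{2}\gamma(q(X))$ in the first slot and $-\tfrac{i}{2}\gamma(q(X))$ in the second. After substituting the parallel equations for $D_XV$ and $X(\varphi)$, the terms proportional to $\varphi\,\gamma(q(X))$ cancel between the two sides and the remaining discrepancy in the first slot reduces to
\[
\tfrac{i}{2}\bigl(\gamma(q(X))\gamma(V) + \gamma(V)\gamma(q(X))\bigr)v + i\, q(X,V)\, v = 0,
\]
which is exactly the Clifford relation $\gamma(q(X))\gamma(V)+\gamma(V)\gamma(q(X)) = -2\, g(q(X),V) = -2\,q(X,V)$. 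The second slot is handled identically after reversing the sign of the twist, which is absorbed precisely because $\gamma^{\mathcal{W}}(T)$ swaps the two slots (with an $i$ and a $-i$).

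The main obstacle is only algebraic bookkeeping: one must carefully track the opposite-sign twist on the second summand of $\mathcal{W}$, the slot-swap built into $\gamma^{\mathcal{W}}(T)(u,v) = (-iv,iu)$ and $\gamma^{\mathcal{W}}(X)(u,v) = (\gamma(X)v,\gamma(X)u)$, and the $i$'s in the Wick-type rotation that converts a Minkowski spinor connection into a pair of Riemannian ones twisted by $\pm q$. Once the identity is checked for a pointwise decomposition $a = V + \varphi T$, linearity and the smooth dependence of the decomposition on the base point yield the identity for every constant $a$. The in-particular statement is then immediate: if $\nabla^{\mathcal{W}}_X(u,v)=0$ for all $X$, then $\nabla^{\mathcal{W}}_X\bigl(\gamma^{\mathcal{W}}(a)(u,v)\bigr) = \gamma^{\mathcal{W}}(a)\cdot 0 = 0$.
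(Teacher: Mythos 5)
Your computation is correct and takes essentially the same route as the paper: decompose the constant vector $a$ along the normal $T$, differentiate via the Gauss--Weingarten equations, and cancel the remainder using the Clifford relation $\gamma(q(X))\gamma(V) + \gamma(V)\gamma(q(X)) = -2\,q(X,V)$; the paper packages this as a Leibniz rule with the ambient derivative $\bar D_X a = 0$ killing the extra term, while you substitute the constancy relations directly, but the two computations are equivalent. One small aside worth fixing: with your convention $a = V + \varphi T$ the relations $D_XV = -\varphi\,q(X)$, $X\varphi = -q(X,V)$ do \emph{not} say that $(V,\varphi)$ is $\nabla^{\mathcal{T},q}$-parallel (the paper's definition gives $\nabla^{\mathcal{T},q}_X(Y,\varphi) = (D_XY - \varphi q(X),\, X\varphi - q(X,Y))$, so $\nabla^{\mathcal{T},q}$-parallelness requires the opposite signs); rather they say $(V,-\varphi)$ is $\nabla^{\mathcal{T},q}$-parallel, consistent with the identification $a \leftrightarrow (V,\varphi)$ via $a = V - \varphi T$. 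Since this remark is not used in the actual verification, the proof itself is unaffected.
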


\begin{proof}
Let $X$ be a section of $TM$. We decompose $a = Y+fT$, where $Y$ is a section of $TM$ and $f$ is a scalar function on $M$. Then $\bar{D}_X Y + \bar{D}_X (fT) = 0$, where $\bar{D}$ denotes the standard flat connection on $\R^{n,1}$. Using the identity $\bar{D}_X Y = D_X Y + q(X,Y) \, T$, we obtain 
\begin{align} 
\label{part.1}
\nabla_X^{\mathcal{W}}& (\gamma^{\mathcal{W}}(Y) \, (u,v)) - \gamma^{\mathcal{W}}(Y) \, \nabla_X^{\mathcal{W}}(u,v) - \gamma^{\mathcal{W}}(\bar{D}_X Y) \, (u,v) \notag \\ 
&= \nabla_X^{\mathcal{W}}(\gamma(Y) \, v,\gamma(Y) \, u) - \gamma^{\mathcal{W}}(Y) \, (\nabla_X^{\mathcal{S},q} u,\nabla_X^{\mathcal{S},-q} v) \notag \\ 
&\quad - \gamma^{\mathcal{W}}(D_X Y) \, (u,v) - q(X,Y) \, \gamma^{\mathcal{W}}(T) \, (u,v) \notag \\ 
&= (\nabla_X^{\mathcal{S},q}(\gamma(Y) \, v),\nabla_X^{\mathcal{S},-q}(\gamma(Y) \, u)) - (\gamma(Y) \, \nabla_X^{\mathcal{S},-q} v,\gamma(Y) \, \nabla_X^{\mathcal{S},q} u) \\ 
&\quad - (\gamma(D_X Y) \, v,\gamma(D_X Y) \, u) - q(X,Y) \, (-iv,iu) \notag \\ 
&= 0. \notag
\end{align}
Using the identity $\bar{D}_X T = q(X)$, we obtain 
\begin{align*} 
&\nabla_X^{\mathcal{W}} (\gamma^{\mathcal{W}}(T) \, (u,v)) - \gamma^{\mathcal{W}}(T) \, \nabla_X^{\mathcal{W}}(u,v) - \gamma^{\mathcal{W}}(\bar{D}_X T) \, (u,v) \\ 
&= \nabla_X^{\mathcal{W}}(-iv,iu) - \gamma^{\mathcal{W}}(T) \, (\nabla_X^{\mathcal{S},q} u,\nabla_X^{\mathcal{S},-q} v) - \gamma^{\mathcal{W}}(q(X)) \, (u,v) \\ 
&= (-i \, \nabla_X^{\mathcal{S},q} v,i \, \nabla_X^{\mathcal{S},-q} u) - (-i \, \nabla_X^{\mathcal{S},-q} v,i \, \nabla_X^{\mathcal{S},q} u) - (\gamma(q(X)) \, v,\gamma(q(X)) \, u) \\ 
&= 0. 
\end{align*} 
This implies 
\begin{equation} 
\label{part.2}
\nabla_X^{\mathcal{W}} (\gamma^{\mathcal{W}}(fT) \, (u,v)) - \gamma^{\mathcal{W}}(fT) \, \nabla_X^{\mathcal{W}}(u,v) - \gamma^{\mathcal{W}}(\bar{D}_X (fT)) \, (u,v) = 0. 
\end{equation}
In the next step, we add \eqref{part.1} and \eqref{part.2}. Using the identity $\bar{D}_X Y + \bar{D}_X (fT) = 0$, we conclude that 
\[\nabla_X^{\mathcal{W}} (\gamma^{\mathcal{W}}(a) \, (u,v)) - \gamma^{\mathcal{W}}(a) \, \nabla_X^{\mathcal{W}} (u,v) = 0.\] 
This completes the proof of Lemma~\ref{compatibility}.
\end{proof}

\begin{definition} 
For each point $p \in M$, we define a map $\gamma^{\mathcal{W}} \colon \Lambda^2 \R^{n,1} \to \text{\rm End}(\mathcal{W}|_p)$ by 
\[\gamma^{\mathcal{W}}(a \wedge b) = \tfrac{1}{2} \, (\gamma^{\mathcal{W}}(a) \, \gamma^{\mathcal{W}}(b) - \gamma^{\mathcal{W}}(b) \, \gamma^{\mathcal{W}}(a)).\] 
Note that $\gamma^{\mathcal{W}}(a \wedge b) = \gamma^{\mathcal{W}}(a) \, \gamma^{\mathcal{W}}(b)$ whenever $\langle a,b \rangle_{\R^{n,1}} = 0$.
\end{definition}

By Proposition~\ref{prop:prelim}, there exists a $\nabla^{\mathcal{E},q}$-parallel section $s$ of the bundle $\text{\rm Hom}(S_0,\mathcal{S})$ with the property that $s$ is invertible at each point in $M$. Moreover, $s$ satisfies the boundary condition $\gamma(\nu) \circ s = s \circ \gamma_0(N)$ at each point on $\Sigma$. 

Replacing $q$ by $-q$, we can find a $\nabla^{\mathcal{E},-q}$-parallel section $t$ of the bundle $\text{\rm Hom}(S_0,\mathcal{S})$ with the property that $t$ is invertible at each point in $M$. Moreover, $t$ satisfies the boundary condition $\gamma(\nu) \circ t = t \circ \gamma_0(N)$ at each point on $\Sigma$. 

In the following, we fix sections $s$ and $t$ of $\text{\rm Hom}(S_0,\mathcal{S})$ satisfying the properties above. For each $\sigma \in S_0$ and each $\tau \in S_0$, the pair $(s\sigma,t\tau)$ defines a $\nabla^{\mathcal{W}}$-parallel section of the vector bundle $\mathcal{W}$.

\begin{definition}
We denote by $\mathcal{Y}$ the set of all pairs $(\xi,\omega) \in \R^n \times \Lambda^2 \R^{n,1}$ with the property that, for all $\sigma,\tau \in S_0$, 
\[(s \gamma_0(\xi)\sigma,-t \gamma_0(\xi) \tau) = i \, \gamma^{\mathcal{W}}(\omega) \, (s\sigma,t\tau)\] 
at each point on $M$. Let $\mathcal{Z} \subset \Lambda^2 \R^{n,1}$ denote the image of $\mathcal{Y}$ under the projection $\mathcal{Y} \to \Lambda^2 \R^{n,1}, \, (\xi,\omega) \mapsto \omega$.
\end{definition}

Clearly, $\mathcal{Y}$ is a linear subspace of $\R^n \times \Lambda^2 \R^{n,1}$, and $\mathcal{Z}$ is a linear subspace of $\Lambda^2 \R^{n,1}$.

\begin{lemma} 
\label{injectivity.1}
Suppose that $\xi \in \R^n$ satisfies $(\xi,0) \in \mathcal{Y}$. Then $\xi=0$.
\end{lemma}

\begin{proof}
Let us fix an arbitrary point $p \in M$. 
Since $(\xi,0) \in \mathcal{Y}$, we obtain 
\[
(s \gamma_0(\xi)\sigma,-t \gamma_0(\xi)\tau) = (0,0)
\] 
for all $\sigma,\tau \in S_0$. Since $s,t \in \text{\rm Hom}(S_0,\mathcal{S}|_p)$ are invertible, it follows that $\gamma_0(\xi) = 0 \in \text{\rm End}(S_0)$. Thus, $\xi=0$.
\end{proof}

\begin{lemma} 
\label{injectivity.2}
Suppose that $\omega \in \Lambda^2 \R^{n,1}$ satisfies $(0,\omega) \in \mathcal{Y}$. 
Then $\omega=0$.
\end{lemma}

\begin{proof}
Fix an arbitrary point $p \in M$. 
We may write $\omega = \rho + T \wedge \eta$, where $\rho \in \Lambda^2 T_p M$ and $\eta \in T_p M$. 
Since $\langle T,\eta \rangle_{\R^{n,1}} = 0$, we obtain $\gamma^{\mathcal{W}}(T \wedge \eta) = \gamma^{\mathcal{W}}(T) \, \gamma^{\mathcal{W}}(\eta)$. 
Since $(0,\omega) \in \mathcal{Y}$, we compute 
\begin{align*} 
(0,0) 
&= \gamma^{\mathcal{W}}(\omega) \, (s\sigma,t\tau) \\ 
&= \gamma^{\mathcal{W}}(\rho) \, (s\sigma,t\tau) + \gamma^{\mathcal{W}}(T) \, \gamma^{\mathcal{W}}(\eta) \, (s\sigma,t\tau) \\ 
&= (\gamma(\rho) s\sigma - i \, \gamma(\eta) s\sigma,\gamma(\rho) t\tau + i \, \gamma(\eta) t\tau) 
\end{align*}
for all $\sigma,\tau \in S_0$. 
Since $s,t \in \text{\rm Hom}(S_0,\mathcal{S}|_p)$ are invertible, it follows that 
\[
\gamma(\rho) - i \, \gamma(\eta) = 0 \in \text{\rm End}(\mathcal{S}|_p) \quad\text{ and }\quad \gamma(\rho) + i \, \gamma(\eta) = 0 \in \text{\rm End}(\mathcal{S}|_p).
\] 
This gives $\gamma(\rho) = 0 \in \text{\rm End}(\mathcal{S}|_p)$ and $\gamma(\eta) = 0 \in \text{\rm End}(\mathcal{S}|_p)$. 
Thus, $\rho=0$ and $\eta=0$. 
This finally implies $\omega=0$.
\end{proof}

\begin{lemma}
\label{y.preparation}
Let $p \in \Sigma$ be a boundary point. Then $(N(p),T(p) \wedge \nu(p)) \in \mathcal{Y}$.
\end{lemma}

\begin{proof}
Let us fix an arbitrary point $p \in \Sigma$, and let us fix elements $\sigma,\tau \in S_0$. The boundary conditions for $s$ and $t$ imply that $\gamma(\nu) \circ s = s \circ \gamma_0(N)$ and $\gamma(\nu) \circ t = t \circ \gamma_0(N)$ at the point $p$. Since $\langle T,\nu \rangle_{\R^{n,1}} = 0$, we obtain $\gamma^{\mathcal{W}}(T \wedge \nu) = \gamma^{\mathcal{W}}(T) \, \gamma^{\mathcal{W}}(\nu)$. This gives 
\begin{align*}
(s \gamma_0(N)\sigma,-t \gamma_0(N)\tau) 
&= (\gamma(\nu) s\sigma,-\gamma(\nu) t\tau) \\ 
&= i \, \gamma^{\mathcal{W}}(T) \, \gamma^{\mathcal{W}}(\nu) \, (s\sigma,t\tau) \\ 
&= i \, \gamma^{\mathcal{W}}(T \wedge \nu) \, (s\sigma,t\tau) 
\end{align*}
at the point $p$. For abbreviation, let $\xi = N(p) \in \R^n$, $a = T(p) \in \R^{n,1}$, and $b = \nu(p) \in \R^{n,1}$. Then 
\[(s \gamma_0(\xi)\sigma,-t \gamma_0(\xi)\tau) = i \, \gamma^{\mathcal{W}}(a \wedge b) \, (s\sigma,t\tau)\] 
at the point $p$. Note that $(s \gamma_0(\xi)\sigma,-t \gamma_0(\xi)\tau)$ is a $\nabla^{\mathcal{W}}$-parallel section of the vector bundle $\mathcal{W}$. Moreover, it follows from Lemma~\ref{compatibility} that $\gamma^{\mathcal{W}}(a \wedge b) \, (s\sigma,t\tau)$ is a $\nabla^{\mathcal{W}}$-parallel section of the vector bundle $\mathcal{W}$. Putting these facts together, we conclude that 
\[(s \gamma_0(\xi)\sigma,-t \gamma_0(\xi)\tau) = i \, \gamma^{\mathcal{W}}(a \wedge b) \, (s\sigma,t\tau)\] 
at each point in $M$. Since $\sigma,\tau \in S_0$ are arbitrary, it follows that $(\xi,a \wedge b) \in \mathcal{Y}$. 
\end{proof}

\begin{proposition} 
\label{y}
We have 
\[
\mathcal{Y} = \{\R \cdot (N(p),T(p) \wedge \nu(p)) \colon p \in \Sigma\}.
\] 
The projection $\mathcal{Y} \to \R^n, \, (\xi,\omega) \mapsto \xi$ is bijective. In particular, $\mathcal{Y}$ has dimension $n$.
\end{proposition}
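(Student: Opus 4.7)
The plan is to show that the projection $\pi\colon \mathcal{Y}\to \R^n$, $(v,\omega)\mapsto v$, is a linear isomorphism, and then read off the description of $\mathcal{Y}$ from the surjectivity direction.

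First I would produce explicit elements of $\mathcal{Y}$ from boundary points. For $p\in\Sigma$, since $T(p)$ is timelike and orthogonal to $\nu(p)\in T_pM$, one has $\gamma^{\mathcal{W}}(T(p)\wedge\nu(p))=\gamma^{\mathcal{W}}(T(p))\gamma^{\mathcal{W}}(\nu(p))$, and a direct calculation from the definitions gives
\[
\gamma^{\mathcal{W}}(T(p)\wedge\nu(p))(u,v)=(-i\gamma(\nu(p))u,\,i\gamma(\nu(p))v).
\]
Applying this to $(s,t)$ at $p$ and using the boundary conditions $\gamma(\nu)\circ s=s\circ\gamma_0(N)$ and $\gamma(\nu)\circ t=t\circ\gamma_0(N)$ yields the pointwise identity $i\gamma^{\mathcal{W}}(T(p)\wedge\nu(p))\circ(s,t)|_p=(s\circ\gamma_0(N(p)),\,-t\circ\gamma_0(N(p)))|_p$. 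To upgrade this to all of $M$, note that for a constant two-form $\omega\in\Lambda^2\R^{n,1}$, Lemma~\ref{compatibility} (applied to each factor in a decomposition $\omega=\sum a_i\wedge b_i$) implies $\gamma^{\mathcal{W}}(\omega)$ commutes with $\nabla^{\mathcal{W}}$; since each $(s\sigma,t\sigma)$ and each $(s\gamma_0(v)\sigma,-t\gamma_0(v)\sigma)$ is $\nabla^{\mathcal{W}}$-parallel on the connected manifold $M$, both sides of the equation defining $\mathcal{Y}$ are $\nabla^{\mathcal{W}}$-parallel sections, and agreement at one point implies agreement everywhere. Hence $(N(p),T(p)\wedge\nu(p))\in\mathcal{Y}$ for every $p\in\Sigma$.

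Next I would prove injectivity of $\pi$. Suppose $(0,\omega)\in\mathcal{Y}$ and fix any $p\in M$. Decomposing $\omega=\mu+T(p)\wedge\eta$ with $\mu\in\Lambda^2 T_pM$ and $\eta\in T_pM$, one computes
\[
\gamma^{\mathcal{W}}(\omega)(u,v)=\bigl((\gamma(\mu)-i\gamma(\eta))u,\,(\gamma(\mu)+i\gamma(\eta))v\bigr).
\]
Vanishing of $\gamma^{\mathcal{W}}(\omega)\circ(s,t)$ at $p$ then forces $(\gamma(\mu)-i\gamma(\eta))\circ s=0$ and $(\gamma(\mu)+i\gamma(\eta))\circ t=0$; since $s$ and $t$ are invertible at $p$ by Proposition~\ref{prop:prelim}(a), we deduce $\gamma(\mu)=\gamma(\eta)=0$ in $\End(\mathcal{S}|_p)$, and injectivity of Clifford multiplication (as in the proof of Lemma~\ref{injectivity}) gives $\mu=\eta=0$, whence $\omega=0$. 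For surjectivity I would invoke the positive-degree assumption: $N$ is then surjective onto $S^{n-1}$, so $\{N(p):p\in\Sigma\}$ spans $\R^n$, and combining this with the first step gives $\pi(\mathcal{Y})=\R^n$. Hence $\pi$ is a linear isomorphism and $\dim\mathcal{Y}=n$. Finally, for any $(v,\omega)\in\mathcal{Y}$, writing $v=\lambda N(p)$ for suitable $\lambda\in\R$ and $p\in\Sigma$, the element $\lambda(N(p),T(p)\wedge\nu(p))\in\mathcal{Y}$ projects to the same $v$, so injectivity forces $(v,\omega)=\lambda(N(p),T(p)\wedge\nu(p))$; this yields $\mathcal{Y}=\{\R\cdot(N(p),T(p)\wedge\nu(p)):p\in\Sigma\}$.

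The main technical hinge of the argument is the parallel-transport step in the first paragraph: without the observation that constant vectors in $\R^{n,1}$ give rise to $\nabla^{\mathcal{W}}$-compatible Clifford endomorphisms (Lemma~\ref{compatibility}), it would be unclear how to turn a boundary computation at a single point into a global statement on $M$. Once this compatibility is in place, the remaining work reduces to pointwise Clifford-algebra manipulations and the surjectivity of $N$ coming from its nonzero degree.
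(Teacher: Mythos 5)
Your proof is correct and follows essentially the same route as the paper: showing that each boundary point $p$ produces the element $(N(p),T(p)\wedge\nu(p))\in\mathcal{Y}$ via the boundary conditions on $s$ and $t$ together with parallel transport, then deducing surjectivity of the projection from the surjectivity of $N$, and injectivity from the invertibility of $s$ and $t$ plus injectivity of the Clifford representation. The only differences are cosmetic: the paper cites Lemma~\ref{injectivity} directly, whereas you unfold that argument inline, and the paper states the parallel-transport step more briefly while you spell out the role of Lemma~\ref{compatibility}.
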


\begin{proof} 
We consider the set 
\[\mathcal{Y}_0 = \{\R \cdot (N(p),T(p) \wedge \nu(p)) \colon p \in \Sigma\} \subset \R^n \times \Lambda^2 \R^{n,1}.\] 
It follows from Lemma~\ref{y.preparation} that $\mathcal{Y}_0 \subset \mathcal{Y}$. By Lemma~\ref{injectivity.2}, the projection $\mathcal{Y} \to \R^n, \, (\xi,\omega) \mapsto \xi$ is injective. On the other hand, since the map $N \colon \Sigma \to S^{n-1}$ is surjective, it follows that the projection $\mathcal{Y}_0 \to \R^n, \, (\xi,\omega) \mapsto \xi$ is surjective. Putting these facts together, we conclude that $\mathcal{Y}_0 = \mathcal{Y}$ and the projection $\mathcal{Y} \to \R^n, \, (\xi,\omega) \mapsto \xi$ is bijective. 
\end{proof}

\begin{corollary} 
\label{z}
We have $\mathcal{Z} = \{\R \cdot (T(p) \wedge \nu(p)) \colon p \in \Sigma\}$. Moreover, $\mathcal{Z}$ has dimension $n$.
\end{corollary}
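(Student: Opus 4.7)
The plan is to deduce Corollary~\ref{z} directly from Proposition~\ref{y} by analyzing the projection $\pi\colon \mathcal{Y} \to \Lambda^2\R^{n,1}$, $(v,\omega)\mapsto \omega$. By definition, $\mathcal{Z}=\pi(\mathcal{Y})$, and Proposition~\ref{y} identifies $\mathcal{Y}$ explicitly as $\{\R\cdot (N(p),T(p)\wedge\nu(p)):p\in\Sigma\}$. Applying $\pi$ to this description immediately gives the set equality $\mathcal{Z}=\{\R\cdot(T(p)\wedge\nu(p)):p\in\Sigma\}$.

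For the dimension claim, I will show that $\pi$ restricted to $\mathcal{Y}$ is injective, which combined with $\dim\mathcal{Y}=n$ (Proposition~\ref{y}) gives $\dim\mathcal{Z}=n$. To prove injectivity, suppose $(v,0)\in\mathcal{Y}$ lies in the kernel of $\pi$. Then by definition of $\mathcal{Y}$,
\[
0 = i\,\gamma^{\mathcal{W}}(0)\circ(s,t) = (s\circ\gamma_0(v),\,-t\circ\gamma_0(v))
\]
at every point of $M$. Since $s$ is invertible as a homomorphism field (Proposition~\ref{prop:prelim}), we deduce $\gamma_0(v)=0\in\mathrm{End}(S_0)$. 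Because the Clifford multiplication $\gamma_0\colon\R^n\to\mathrm{End}(S_0)$ is injective, this forces $v=0$, so $\pi|_{\mathcal{Y}}$ is injective as claimed.

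The argument is essentially formal given Proposition~\ref{y}, so I do not anticipate any real obstacle; the only point requiring care is to invoke injectivity of $\gamma_0$ (equivalently, invertibility of $s$) when trivializing the kernel of $\pi|_{\mathcal{Y}}$. Once this is checked, the two assertions of Corollary~\ref{z} follow at once.
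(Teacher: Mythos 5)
Your proof is correct and follows the same route as the paper: deduce the set equality by applying $\pi$ to the description of $\mathcal{Y}$ from Proposition~\ref{y}, then get $\dim\mathcal{Z}=n$ from $\dim\mathcal{Y}=n$ together with injectivity of $\pi|_{\mathcal{Y}}$. The only difference is that the paper simply asserts that $\pi|_{\mathcal{Y}}$ is injective without explanation, whereas you supply the short justification via invertibility of $s$ and injectivity of $\gamma_0$, which is exactly the intended argument.
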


\begin{proof} 
The first statement follows immediately from Proposition~\ref{y}. The second statement follows from the fact that $\mathcal{Y}$ has dimension $n$ and the projection $\mathcal{Y} \to \Lambda^2 \R^{n,1}, \, (\xi,\omega) \mapsto \omega$ is injective (see Lemma~\ref{injectivity.1}). 
\end{proof}

\begin{proposition} 
\label{basis}
There exists a basis $\{b_0,b_1,\hdots,b_n\}$ of $\R^{n,1}$ such that $b_0$ is timelike and $\mathcal{Z} = \text{\rm span}\{b_0 \wedge b_1,b_0 \wedge b_2,\hdots,b_0 \wedge b_n\}$. 
\end{proposition}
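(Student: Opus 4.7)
The plan is to exploit the derivative identity for the bivector field $p \mapsto T(p) \wedge \nu(p)$ along $\Sigma$, evaluated at a single well-chosen point. First I would compute $\bar{D}_X(T \wedge \nu)$ for $X \in T_p\Sigma$. Using the vanishing $q(X)^{\rm tan} = 0$ from Proposition~\ref{prop:prelim}, one has $\bar{D}_X T = q(X) = q(X,\nu)\,\nu$, and by the standard Gauss relation $\bar{D}_X \nu = D_X\nu + q(X,\nu)\,T = h(X) + q(X,\nu)\,T$. The contributions involving $q(X,\nu)$ cancel because $\nu \wedge \nu = 0 = T \wedge T$, yielding the clean identity
\[
\bar{D}_X(T \wedge \nu) = T \wedge h(X).
\]

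Next, I would select a regular point $p^* \in \Sigma$ of the Gauss map $N$. Since $N$ has positive degree it is surjective, so Sard's theorem provides such a $p^*$ at which $dN_{p^*}$ is a linear bijection. By Proposition~\ref{prop:prelim}(\ref{semidef}), all eigenvalues of $h_{p^*}$ are then strictly positive, making $h_{p^*}\colon T_{p^*}\Sigma \to T_{p^*}\Sigma$ invertible.

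Since $\mathcal{Z}$ is a finite-dimensional linear subspace of $\Lambda^2\R^{n,1}$ containing the values $T(p)\wedge\nu(p)$ for all $p\in\Sigma$, it is preserved under differentiation along $\Sigma$. The identity above then gives $T(p^*) \wedge h(X) \in \mathcal{Z}$ for every $X \in T_{p^*}\Sigma$, and the invertibility of $h_{p^*}$ upgrades this to $T(p^*) \wedge T_{p^*}\Sigma \subset \mathcal{Z}$. Combined with $T(p^*) \wedge \nu(p^*) \in \mathcal{Z}$ and the decomposition $T_{p^*}M = T_{p^*}\Sigma \oplus \R\,\nu$, this yields $T(p^*) \wedge T_{p^*}M \subset \mathcal{Z}$. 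Because $T(p^*) \notin T_{p^*}M$, the left-hand side has dimension $n$, which equals $\dim\mathcal{Z}$ by Corollary~\ref{z}. Hence $\mathcal{Z} = T(p^*) \wedge T_{p^*}M$. Setting $b_0 := T(p^*)$ (timelike of norm $-1$) and choosing any orthonormal basis $b_1,\dots,b_n$ of the spacelike subspace $T_{p^*}M = b_0^\perp \subset \R^{n,1}$ produces the required basis with $\mathcal{Z} = \text{span}\{b_0 \wedge b_1,\dots,b_0 \wedge b_n\}$.

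The main obstacle is securing the clean derivative identity $\bar{D}_X(T\wedge\nu) = T\wedge h(X)$, which depends essentially on the vanishing $q(X)^{\rm tan} = 0$ for $X \in T\Sigma$ established by the spinor analysis in Proposition~\ref{prop:prelim}. Without this, $\bar{D}_X(T\wedge\nu)$ would acquire an additional term of the form $q(X)^{\rm tan} \wedge \nu$ that is not of shape $T \wedge (\cdot)$, and the dimension-counting argument identifying $\mathcal{Z}$ with $T(p^*)\wedge T_{p^*}M$ would fail. Once the identity is in hand, the remainder of the proof is elementary linear algebra carried out at a single regular point of the Gauss map.
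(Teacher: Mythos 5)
Your proof is correct, and it takes a genuinely different route from the paper's. The paper cites a classification theorem for decomposable linear subspaces of $\Lambda^2 V$ (Theorem 6 in the reference \cite{LA}): an $n$-dimensional subspace of $\Lambda^2\R^{n,1}$ all of whose elements are decomposable is either of the form $\{b_0 \wedge v\}$ for a fixed $b_0$, or---when $n=3$---of the form $\Lambda^2 W$ for a fixed $3$-plane $W$. The paper then rules out the exceptional $n=3$ case and the possibility that $b_0$ is spacelike or null by producing, in either bad scenario, a spacelike $2$-plane whose Pl\"ucker bivector would have to equal some $T(p)\wedge\nu(p)$, a contradiction since $\text{span}\{T(p),\nu(p)\}$ contains the timelike vector $T(p)$. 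Your argument instead differentiates the bivector field $p \mapsto T(p)\wedge\nu(p)$ along $\Sigma$: the vanishing of $q^{\rm tan}$ from Proposition~\ref{prop:prelim} gives the clean identity $\bar{D}_X(T\wedge\nu) = T\wedge h(X)$, and at a regular point $p^*$ of the Gauss map (available by Sard's theorem and $\deg N > 0$) the shape operator $h_{p^*}$ is invertible by Proposition~\ref{prop:prelim}(\ref{semidef}), so $T(p^*)\wedge T_{p^*}M \subset \mathcal{Z}$; since both sides have dimension $n$ by Corollary~\ref{z}, equality follows. Your route is more self-contained and more geometric---it avoids the external algebraic classification and the two-case analysis, and it produces $b_0 = T(p^*)$ directly as a concrete timelike vector---at the modest cost of invoking Sard's theorem to secure a regular point, an ingredient the paper's purely algebraic argument does not need. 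Both proofs rely essentially on Proposition~\ref{prop:prelim} and on the dimension count from Corollary~\ref{z}; the difference lies entirely in how one passes from ``$\mathcal{Z}$ is an $n$-dimensional subspace spanned by the decomposables $T(p)\wedge\nu(p)$'' to the conclusion about its structure.
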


\begin{proof} 
It follows from Corollary~\ref{z} that the subspace $\mathcal{Z} \subset \Lambda^2 \R^{n,1}$ is decomposable in the sense of \cite{Ghorpade-Patil-Pillai}. 
(See \cite{Ghorpade-Patil-Pillai}*{p.~55} for the definition.) 
Applying \cite{Ghorpade-Patil-Pillai}*{Theorem 6} with $l=2$ and $r=n$, we conclude that the subspace $\mathcal{Z} \subset \Lambda^2 \R^{n,1}$ is close in the sense of \cite{Ghorpade-Patil-Pillai}. 
(See \cite{Ghorpade-Patil-Pillai}*{p.~57} for the definition.) 
Thus, there are two possibilities.
\medskip

\textit{Case 1:} 
The subspace $\mathcal{Z} \subset \Lambda^2 \R^{n,1}$ is close of Type I (cf. \cite{Ghorpade-Patil-Pillai}, p.~57). 
In this case, there exists a basis $\{b_0,b_1,\hdots,b_n\}$ of $\R^{n,1}$ with the property that 
\[
\mathcal{Z} = \text{\rm span}\{b_0 \wedge b_1,b_0 \wedge b_2,\hdots,b_0 \wedge b_n\}.
\]
We claim that $b_0$ is timelike. Suppose that $b_0$ is spacelike or null. We can find a spacelike vector $\tilde{b} \in \R^{n,1}$ such that $\langle b_0,\tilde{b} \rangle_{\R^{n,1}} = 0$. This implies that $b_0$ and $\tilde{b}$ are linearly independent. 
Moreover, $b_0 \wedge \tilde{b} \in \mathcal{Z}$. 
By Corollary~\ref{z}, we can find a point $p \in \Sigma$ with the property that $b_0 \wedge \tilde{b}$ is a non-zero scalar multiple of $T(p) \wedge \nu(p)$. 
In particular, $\text{\rm span}\{b_0,\tilde{b}\} = \text{\rm span}\{T(p),\nu(p)\}$. 
This contradicts the fact that $\text{\rm span}\{b_0,\tilde{b}\}$ does not contain a timelike vector. 
\medskip

\textit{Case 2:} 
The subspace $\mathcal{Z} \subset \Lambda^2 \R^{n,1}$ is close of Type II, but not close of Type I (cf. \cite{Ghorpade-Patil-Pillai}*{p.~57}). 
In this case, $n=3$. 
Moreover, there exists a collection of three linearly independent vectors $b_0,b_1,b_2 \in \R^{3,1}$ with the property that $\mathcal{Z} = \text{\rm span}\{b_0 \wedge b_1,b_1 \wedge b_2,b_2 \wedge b_0\}$. We can find two linearly independent vectors $\tilde{b}_1,\tilde{b}_2 \in \text{\rm span}\{b_0,b_1,b_2\}$ such that $\text{\rm span}\{\tilde{b}_1,\tilde{b}_2\}$ is spacelike. 
Then $\tilde{b}_1 \wedge \tilde{b}_2 \in \mathcal{Z}$. 
By Corollary~\ref{z}, we can find a point $p \in \Sigma$ with the property that $\tilde{b}_1 \wedge \tilde{b}_2$ is a non-zero scalar multiple of $T(p) \wedge \nu(p)$. 
In particular, 
\[
\text{\rm span}\{\tilde{b}_1,\tilde{b}_2\} = \text{\rm span}\{T(p),\nu(p)\}.
\]
This contradicts the fact that $\text{\rm span}\{\tilde{b}_1,\tilde{b}_2\}$ does not contain a timelike vector. 
This case cannot occur.
\end{proof}

By scaling, we may assume that $\langle b_0,b_0 \rangle_{\R^{n,1}} = -1$. By applying a Lorentz transformation, we may assume that $b_0 = (1,0,\hdots,0) \in \R^{n,1}$. 

For each point $p \in \Sigma$, Lemma~\ref{y.preparation} implies that 
\[
T(p) \wedge \nu(p) \in \mathcal{Z} = \text{\rm span}\{b_0 \wedge b_1,b_0 \wedge b_2,\hdots,b_0 \wedge b_n\};
\] 
hence $b_0 \in \text{\rm span}\{T(p),\nu(p)\}$. 
Consequently, we have $\langle b_0,X \rangle_{\R^{n,1}} = 0$ for every tangent vector $X \in T\Sigma$. 
In particular, each connected component of $\Sigma$ is contained in a hyperplane in $\R^{n,1}$ which is orthogonal to $b_0$. 

For each point $p \in \Sigma$, there is a unique spacelike unit vector $B(p) \in \R^{n,1}$ such that $\langle b_0,B(p) \rangle_{\R^{n,1}} = 0$ and $T(p) \wedge \nu(p) = b_0 \wedge B(p)$. 
This defines a smooth map $B$ from $\Sigma$ to the $(n-1)$-dimensional unit sphere 
\[
S^{n-1} = \{b \in \R^{n,1} \colon \langle b_0,b \rangle_{\R^{n,1}} = 0, \, \langle b,b \rangle_{\R^{n,1}} = 1\}.
\]
The map $B$ can be interpreted as the unit normal to $\Sigma$ within a hyperplane orthogonal to $b_0$. 

In view of Lemma~\ref{injectivity.2}, we can find a linear transformation $A \colon \R^n \to \mathcal{Z}$ such that 
\[
\mathcal{Y} \subset \{(\xi,A\xi) \colon \xi \in \R^n\}.
\] 
Using Lemma~\ref{y.preparation}, it follows that the map $\Sigma \to \mathcal{Z}, \, p \mapsto T(p) \wedge \nu(p)$ is the composition of the map $N \colon \Sigma \to S^{n-1}$ with the linear transformation $A \colon \R^n \to \mathcal{Z}$. 
This implies that the map $B \colon \Sigma \to S^{n-1}$ is the composition of the map $N \colon \Sigma \to S^{n-1}$ with a linear isometry of $\R^n$. 
In particular, $\|dB\|_{\text{\rm tr}} = \|dN\|_{\text{\rm tr}}$ at each point on $\Sigma$.

\begin{proposition}
\label{dichotomy}
Suppose that $p$ is a point on the boundary $\Sigma$ with $\langle b_0,\nu \rangle_{\R^{n,1}} \neq 0$. Then $h=0$, $dN=0$, and $dB=0$ at the point $p$. Moreover, the second fundamental form of $\Sigma$ in $\R^{n,1}$ vanishes at the point $p$.
\end{proposition}

\begin{proof}
Let $\{e_1,\hdots,e_{n-1}\}$ denote an orthonormal basis of $T_p \Sigma$. Differentiating the identity $T \wedge \nu = b_0 \wedge B$ gives 
\[\sum_{j=1}^{n-1} h(e_i,e_j) \, T \wedge e_j + \sum_{j=1}^{n-1} q(e_i,e_j) \, e_j \wedge \nu = b_0 \wedge dB(e_i)\] 
for each $i=1,\hdots,n-1$. Proposition~\ref{prop:prelim} implies that $q(e_i,e_j) = 0$ for $i,j=1,\hdots,n-1$. This gives 
\begin{align*} 
0 
&= \sum_{j=1}^{n-1} h(e_i,e_j) \, \langle T \wedge e_j,e_k \wedge \nu \rangle_{\R^{n,1}} + \sum_{j=1}^{n-1} q(e_i,e_j) \, \langle e_j \wedge \nu,e_k \wedge \nu \rangle_{\R^{n,1}} \\ 
&= \langle b_0 \wedge dB(e_i),e_k \wedge \nu \rangle_{\R^{n,1}} \\ 
&= -\langle b_0,\nu \rangle_{\R^{n,1}} \, \langle dB(e_i),e_k \rangle_{\R^{n,1}} 
\end{align*} 
for $i,k=1,\hdots,n-1$. By assumption, $\langle b_0,\nu \rangle_{\R^{n,1}} \neq 0$ at the point $p$. This implies $dB=0$ at the point $p$. Consequently, $dN=0$ at the point $p$. Using Proposition~\ref{prop:prelim}, we conclude that $h=0$ at the point $p$. Since $q^{\text{\rm tan}} = 0$ by Proposition~\ref{prop:prelim}, it follows that the second fundamental form of $\Sigma$ in $\R^{n,1}$ vanishes at the point $p$.
\end{proof}

\begin{proposition}
\label{diffeo.to.ball}
Let $\pi$ denote the projection from $\R^{n,1}$ to 
\[
\R^n = \{b \in \R^{n,1} \colon \langle b_0,b \rangle_{\R^{n,1}} = 0\}.
\]
The composition of the isometric immersion 
\[
F \colon (M,g) \to \R^{n,1}
\] 
with the projection $\pi \colon \R^{n,1} \to \R^n$ is a diffeomorphism from $M$ onto a  compact convex domain $K \subset \R^n$ with smooth boundary. 
In particular, $F$ is an isometric embedding onto the graph of a smooth real valued function defined on $K$.
\end{proposition}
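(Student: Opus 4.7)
First I would verify that $\pi \circ F$ is a local diffeomorphism on all of $M$. Since $F$ is a spacelike isometric immersion, each tangent space $dF(T_pM) \subset \R^{n,1}$ is spacelike and therefore does not contain the timelike vector $b_0$. Hence $\pi$ restricted to $dF(T_pM)$ is a linear isomorphism onto $\R^n$; combined with $\dim M = n$, this yields that $\pi \circ F$ is a local diffeomorphism.

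Next I would analyze the boundary. For each connected component $\Sigma_j$ of $\Sigma$, the relation $b_0 \in \mathrm{span}\{T(p),\nu(p)\}$ established earlier forces every $X \in T_p\Sigma$ to be $\R^{n,1}$-orthogonal to $b_0$, i.e., $df^{(0)}(X) = 0$. Thus $f^{(0)}$ is constant on $\Sigma_j$, and from $|(\pi \circ F)_*X|^2_{\R^n} = |X|^2_g + (df^{(0)}(X))^2$ we deduce that $(\pi \circ F)|_{\Sigma_j}$ is a Riemannian isometric immersion into $\R^n$. The map $\pi \circ B$ is a unit normal field along $(\pi \circ F)(\Sigma_j)$ and agrees with $N|_{\Sigma_j}$ up to a linear isometry of $\R^n$. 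Using Proposition~\ref{dichotomy} together with $h \geq 0$ from Proposition~\ref{prop:prelim}, I would show that the second fundamental form of $(\pi \circ F)(\Sigma_j)$ with respect to $\pi \circ B$ coincides with $h$: at points where $h \neq 0$ we have $T = b_0$, so $\pi \circ F$ is a local isometry in a neighborhood of such points and the identification is immediate; at points where $h = 0$ we have $dN = 0$, hence $d(\pi \circ B) = 0$, so the second fundamental form also vanishes. Invoking a classical Hadamard-type convexity theorem, each $(\pi \circ F)(\Sigma_j)$ is a smoothly embedded convex hypersurface bounding a compact convex body $\bar K_j \subset \R^n$ with $\deg(N|_{\Sigma_j}) = +1$.

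Finally, I would conclude by a topological argument. The orientation of $\nu$ as the outward unit normal to $M$ combined with $h \geq 0$ forces $(\pi \circ F)(\mathrm{int}\, M)$ to lie locally in the interior of each $\bar K_j$ near $(\pi \circ F)(\Sigma_j)$. Since $\pi \circ F$ is a local diffeomorphism from a smooth manifold with boundary, its image has smooth boundary, which rules out both transversal intersections and proper nesting of distinct $\bar K_j$'s. Hence $\Sigma$ is connected and $K := \bar K_1$ is a single compact convex domain with smooth boundary $\partial K = (\pi \circ F)(\Sigma)$. The proper local diffeomorphism $\pi \circ F \colon M \to K$ between compact connected $n$-manifolds with boundary, taking boundary to boundary, is then a covering map (since $K$ is simply connected), and the boundary degree computation identifies it as a one-sheeted cover, i.e., a diffeomorphism. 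Setting $f := f^{(0)} \circ (\pi \circ F)^{-1}$ realizes $F(M)$ as the graph of the smooth function $f \colon K \to \R$.

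The main obstacle will be the boundary analysis on the flat set $V = \{p \in \Sigma : h_p = 0\}$, where the timelike direction $T(p)$ is not constrained by Proposition~\ref{dichotomy}, together with the topological step ruling out multiple boundary components from the smoothness of $\partial((\pi \circ F)(M))$.
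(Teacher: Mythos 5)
Your proposal takes a genuinely different route from the paper. The paper proceeds \emph{intrinsically}: it defines a flat metric $g_0 = (\pi\circ F)^*g_{\mathrm{Euc}}$ on $M$, shows via Propositions~\ref{dichotomy} and~\ref{prop:prelim} that the second fundamental form of $\partial M$ in $(M,g_0)$ agrees with $h$ and is therefore positive semi-definite and positive definite somewhere, and then appeals to the self-contained metric characterization Theorem~\ref{thm:completeconvex} from the appendix, which identifies $(M,g_0)$ with a compact convex domain $K\subset\R^n$ with smooth boundary; the immersion $\pi\circ F$ then agrees with this isometry up to a rigid motion. You instead work \emph{extrinsically}, analyzing the immersed boundary hypersurfaces $(\pi\circ F)(\Sigma_j)\subset\R^n$ directly, invoking a Hadamard/do Carmo--Lima type theorem for each, and then arguing topologically that there is only one boundary component and that $\pi\circ F$ is a one-sheeted cover.

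Two points deserve scrutiny. First, your claim that at points where $h_p\neq 0$ "$\pi\circ F$ is a local isometry in a neighborhood of such points" is not correct as stated. From Proposition~\ref{dichotomy} one only gets $\langle b_0,\nu(p)\rangle_{\R^{n,1}}=0$, hence $T(p)=\pm b_0$ \emph{at the single point $p$}, not on a neighborhood. The conclusion you need is nonetheless true: at such a $p$ the $1$-form $\eta=\langle b_0,dF(\cdot)\rangle_{\R^{n,1}}$ vanishes on all of $T_pM$, so $g_0-g=\eta\otimes\eta$ vanishes to first order at $p$; consequently the Levi--Civita connections and unit normals of $g_0$ and $g$ agree at $p$, and hence so do the second fundamental forms. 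But this is a $1$-jet argument at $p$, not a neighborhood isometry, and should be phrased that way.

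Second, the topological step is a real gap. The assertion that "its image has smooth boundary, which rules out both transversal intersections and proper nesting of distinct $\bar K_j$'s" does not stand on its own: the image of a non-injective local diffeomorphism from a manifold with boundary can fail to have manifold boundary, and if two convex bodies were properly nested the annular region between them would be a perfectly smooth manifold with boundary. Moreover, for $n\geq 3$ a simply connected compact $M$ can have several $S^{n-1}$ boundary components, so topology alone does not eliminate this. You would need a careful degree/orientation count (each convex $(\pi\circ F)(\Sigma_j)$ contributes $\pm 1$ to $\deg N$ depending on which side of the convex hypersurface the interior maps to, and the total is $+1$), together with an argument that $(\pi\circ F)(M)$ is in fact bounded by these hypersurfaces. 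The paper avoids all of this by passing to the intrinsic statement Theorem~\ref{thm:completeconvex}, whose proof (in the appendix) already encapsulates the do Carmo--Lima/Sacksteder input, the Cheeger--Gromoll splitting to rule out non-compact boundary components, and the elimination of nested components by a convexity argument in the universal cover. In short, your outline captures the right extrinsic picture, but the places you flag as obstacles are indeed obstacles, and the paper's intrinsic reformulation handles them more cleanly.
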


\begin{proof}
We define a Riemannian metric $g_0$ on $M$ by 
\[g_0(X,Y) = g(X,Y) + \langle b_0,X \rangle_{\R^{n,1}} \, \langle b_0,Y \rangle_{\R^{n,1}}\] 
for $X,Y \in TM$. In other words, $g_0$ is the pull-back of the Euclidean metric under the immersion $\pi \circ F \colon M \to \R^n$.
In particular, the metric $g_0$ is flat. It follows from Proposition~\ref{dichotomy} that the second fundamental form of $\partial M$ with respect to the metric $g_0$ coincides with the second fundamental form of $\partial M$ with respect to the metric $g$. Using Proposition~\ref{prop:prelim}, we conclude that the second fundamental form of $\partial M$ with respect to the metric $g_0$ is weakly positive definite at each point on $\partial M$. Moreover, since $N$ has non-zero degree, we can find a point on $\partial M$ where the Jacobian determinant of $N$ is non-zero, and at that point the second fundamental of $\partial M$ with respect to $g_0$ is positive definite by Proposition~\ref{prop:prelim}. Theorem~\ref{thm:completeconvex} now implies that there exists an isometry of $(M,g_0)$ onto a compact convex domain in $\R^n$ with smooth boundary. 
The isometric immersion $\pi \circ F \colon (M,g_0) \to \R^n$ coincides with this isometry up to a rigid motion of $\R^n$. 
This implies the assertion.
\end{proof}

This completes the proof of Theorem~\ref{rigidity.of.smooth.domains} in the special case when $M$ is simply connected.

\subsection{Proof of Theorem~\ref{rigidity.of.smooth.domains} in the general case} 

In this final subsection, we treat the case when $M$ is not simply connected. 
In this case, the isometric immersion $F \colon M \to \R^{n,1}$ is only locally defined, but its differential is globally defined. 
This is sufficient to define the bundle $\mathcal{W}$, the connection $\nabla^{\mathcal{W}}$, and the map $\gamma^{\mathcal{W}} \colon \R^{n,1} \to \text{\rm End}(\mathcal{W}|_p)$. 
The arguments in Section~\ref{simply.connected} now imply that there exists a vector $b_0 \in \R^{n,1}$ such that 
\[
\langle b_0,b_0 \rangle_{\R^{n,1}} = -1 \quad \text{ and } \quad \langle b_0,dF(X) \rangle_{\R^{n,1}} = 0
\]
for every tangent vector $X \in T\Sigma$. 

As above, we define a Riemannian metric $g_0$ on $M$ by 
\[g_0(X,Y) = g(X,Y) + \langle b_0,dF(X) \rangle_{\R^{n,1}} \, \langle b_0,dF(Y) \rangle_{\R^{n,1}}\] 
for $X,Y \in TM$. 
Note that $g_0$ is a globally defined metric on $M$. As in Section~\ref{simply.connected}, we see that $g_0$ is flat. Moreover, the second fundamental form of $\partial M$ with respect to the metric $g_0$ coincides with the second fundamental form of $\partial M$ with respect to the metric $g$. In particular, the second fundamental form of $\partial M$ with respect to the metric $g_0$ is weakly positive definite at each point on $\partial M$, and we can find a point on $\partial M$ where the second fundamental of $\partial M$ with respect to $g_0$ is positive definite. Using Theorem~\ref{thm:completeconvex}, it follows that $M$ is diffeomorphic to a ball. 
This contradicts the assumption that $M$ is not simply connected. 

\section{Approximating a compact convex polytope by smooth domains}

Throughout this section, we fix an integer $n \geq 2$.
We further assume that $\Omega \subset \R^n$ is a compact convex polytope with nonempty interior. 
We write $\Omega = \bigcap_{i \in I} \{u_i \leq 0\}$, where $I$ is a finite set and $u_i$, $i \in I$, is a collection of linear functions on $\R^n$. After eliminating redundant inequalities, we may assume that the following condition is satisfied. 

\begin{assumption}
\label{no.redundant.inequalities}
For each $i_0 \in I$, the following set is nonempty:
\[
\{u_{i_0} > 0\} \cap \bigcap_{i \in I \setminus \{i_0\}} \{u_i \leq 0\}.
\] 
\end{assumption}

Let $g$ be a Riemannian metric which is defined on an open set containing $\Omega$, and let $q$ be a symmetric $(0,2)$-tensor. 
For each $i \in I$, $\nabla u_i$ will denote the gradient of $u_i$ with respect to the metric $g$, $D^2 u_i$ will denote the Hessian of $u_i$ with respect to the metric $g$, $|\nabla u_i|_g$ will denote the norm of the gradient of $u_i$ with respect to the metric $g$, and $\nu_i = {\nabla u_i}/{|\nabla u_i|_g}$ will denote the unit normal vector field, with respect to the metric $g$, to the level sets of $u_i$. 
For each $i \in I$, we denote by $N_i \in S^{n-1}$ the outward-pointing unit normal vector to the halfspace $\{u_i \leq 0\}$ with respect to the Euclidean metric.

For each $\lambda>0$, we define 
\[\Omega_\lambda = \bigg \{ \sum_{i \in I} e^{\lambda u_i} \leq 1 \bigg \} \subset \Omega.\] 
If $\lambda$ is sufficiently large, then $\Omega_\lambda$ is a compact convex domain in $\R^n$ with smooth boundary $\Sigma_\lambda = \partial \Omega_\lambda$. 
The sets $\Omega_\lambda$ form an increasing family of sets. 
Moreover, 
\[\bigcup_{\lambda>\lambda_0} \Omega_\lambda = \bigcap_{i \in I} \{u_i < 0\}.\] 

In the remainder of this section, $|\cdot |_g$ will denote the norm taken with respect to the metric $g$, and $|\cdot |$ will denote the norm taken with respect to the Euclidean metric. 
The outward-pointing unit normal vector to the domain $\Omega_\lambda$ with respect to the metric $g$ is given by 
\[\nu = \frac{\sum_{i \in I} e^{\lambda u_i} \, \nabla u_i}{\big | \sum_{i \in I} e^{\lambda u_i} \, \nabla u_i \big |_g} = \frac{\sum_{i \in I} e^{\lambda u_i} \, |\nabla u_i|_g \, \nu_i}{\big | \sum_{i \in I} e^{\lambda u_i} \, |\nabla u_i|_g \, \nu_i \big |_g}.\] 
We define a map $N \colon \Sigma_\lambda \to S^{n-1}$ by  
\begin{equation} \label{defn:N} 
N = \frac{\sum_{i \in I} e^{\lambda u_i} \, |\nabla u_i|_g \, N_i}{\big | \sum_{i \in I} e^{\lambda u_i} \, |\nabla u_i|_g \, N_i \big |}.
\end{equation} 
The map $N \colon \Sigma_\lambda \to S^{n-1}$ is homotopic to the Gauss map of $\Sigma_\lambda$ with respect to the Euclidean metric. 
A homotopy can be constructed by deforming $g$ to the Euclidean metric. 
In particular, the map $N \colon \Sigma_\lambda \to S^{n-1}$ has degree $1$.

\begin{proposition}[cf. \cite{Brendle}*{Proposition~3.9}]
\label{formula.for.V}
Let $x \in \Sigma_\lambda$. 
Let $\pi\colon T_x \Omega \to T_x \Omega$ denote the orthogonal projection to the orthogonal complement of $\nu$, and let $P\colon \R^n \to \R^n$ denote the orthogonal projection to the orthogonal complement of $N$. Then $H - \|dN\|_\tr \geq V_\lambda$, where the function $V_\lambda\colon \Sigma_\lambda \to \R$ is defined by 
\begin{align*} 
V_\lambda &= \lambda \, \frac{\sum_{i \in I} e^{\lambda u_i} \, |\nabla u_i|_g^2 \, |\pi(\nu_i)|_g^2}{\big | \sum_{i \in I} e^{\lambda u_i} \, |\nabla u_i|_g \, \nu_i \big |_g} - \lambda \, \frac{\sum_{i \in I} e^{\lambda u_i} \, |\nabla u_i|_g^2 \, |\pi(\nu_i)|_g \, |P(N_i)|}{\big | \sum_{i \in I} e^{\lambda u_i} \, |\nabla u_i|_g \, N_i \big |} \\ 
&\quad + \frac{\sum_{i \in I} e^{\lambda u_i} \, (\Delta u_i - (D^2 u_i)(\nu,\nu))}{\big | \sum_{i \in I} e^{\lambda u_i} \, |\nabla u_i|_g \, \nu_i \big |_g} - \frac{\sum_{i \in I} e^{\lambda u_i} \, |\nabla(|\nabla u_i|_g)|_g \, |P(N_i)|}{\big | \sum_{i \in I} e^{\lambda u_i} \, |\nabla u_i|_g \, N_i \big |}.
\end{align*} 
\end{proposition}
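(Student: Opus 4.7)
The strategy is to establish $H - \|dN\|_\tr \geq V_\lambda$ by computing the mean curvature $H$ of $\Sigma_\lambda$ exactly and bounding $\|dN\|_\tr$ from above. The two positive terms of $V_\lambda$ will be produced as the two pieces of $H$, and the two negative terms will arise from the trace-norm estimate on $dN$.

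First I would realize $\Sigma_\lambda$ as the zero level set of the function $F := \sum_i e^{\lambda u_i} - 1$, whose outward unit normal is $\nu = \nabla F/|\nabla F|_g$. The standard level-set identity $H = (\Delta F - (D^2 F)(\nu,\nu))/|\nabla F|_g$ then reduces the computation of $H$ to expanding
\[
\nabla F = \lambda \sum_i e^{\lambda u_i}\nabla u_i, \qquad D^2 F = \lambda^2 \sum_i e^{\lambda u_i}\nabla u_i \otimes \nabla u_i + \lambda \sum_i e^{\lambda u_i} D^2 u_i.
\]
Using $\langle \nabla u_i,\nu\rangle_g = |\nabla u_i|_g \langle \nu_i,\nu\rangle_g$ and the Pythagorean identity $1 - \langle \nu_i,\nu\rangle_g^2 = |\pi(\nu_i)|_g^2$, the $\lambda^2$-piece contributes $\lambda^2 \sum_i e^{\lambda u_i}|\nabla u_i|_g^2|\pi(\nu_i)|_g^2$, while the $\lambda$-piece contributes $\lambda \sum_i e^{\lambda u_i}(\Delta u_i - (D^2 u_i)(\nu,\nu))$. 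After dividing by $|\nabla F|_g = \lambda \bigl|\sum_i e^{\lambda u_i}|\nabla u_i|_g \nu_i\bigr|_g$, the result is exactly the first and third terms of $V_\lambda$.

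For the trace-norm estimate, write $N = G/|G|$ with $G := \sum_i e^{\lambda u_i}|\nabla u_i|_g N_i$, so that $dN(X) = P(dG(X))/|G|$ for $X \in T_x\Sigma_\lambda$. The product rule expresses $dG$ as a sum of rank-one maps of the form $X \mapsto \langle w_i, X\rangle_g\, N_i$: one family with $w_i = \lambda |\nabla u_i|_g^2 \nu_i$ (from differentiating $e^{\lambda u_i}$) and one family with $w_i = \nabla(|\nabla u_i|_g)$ (from differentiating $|\nabla u_i|_g$). Each resulting rank-one map $X \mapsto c \langle w, X\rangle_g P(N_i)$ from $(T_x\Sigma_\lambda, g)$ to $(T_{N(x)}S^{n-1}, \text{Eucl.})$ has a single nonzero singular value $c \cdot |\pi(w)|_g \cdot |P(N_i)|$, since $X \perp \nu$ picks up only the tangential component of $w$. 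In the first family this singular value equals $\frac{\lambda e^{\lambda u_i}|\nabla u_i|_g^2 |\pi(\nu_i)|_g |P(N_i)|}{|G|}$, and in the second it is bounded by $\frac{e^{\lambda u_i}|\nabla(|\nabla u_i|_g)|_g |P(N_i)|}{|G|}$. Subadditivity of the trace norm, $\|A+B\|_\tr \leq \|A\|_\tr + \|B\|_\tr$, then sums these up to give exactly the sum of the absolute values of the second and fourth terms of $V_\lambda$.

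The main obstacle is the trace-norm estimate, which is not tight term-by-term: one must decompose $dN$ into the correct rank-one pieces and exploit the fact that the trace norm is both subadditive and multiplicative on rank-one operators. The computation of $H$ is a routine level-set calculation. Combining the exact formula for $H$ with the upper bound for $\|dN\|_\tr$ yields $H - \|dN\|_\tr \geq V_\lambda$.
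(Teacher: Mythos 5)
Your proof is correct, and since the paper only cites Proposition~3.9 from Brendle's original article without reproducing the argument, let me verify your reasoning directly. The two halves fit together exactly as you claim. For the mean curvature: with $F=\sum_i e^{\lambda u_i}-1$ and $\nu=\nabla F/|\nabla F|_g$, the identity $H=\operatorname{div}_g\nu=\bigl(\Delta F-(D^2F)(\nu,\nu)\bigr)/|\nabla F|_g$ combined with $\langle\nabla u_i,\nu\rangle_g=|\nabla u_i|_g\langle\nu_i,\nu\rangle_g$, $1-\langle\nu_i,\nu\rangle_g^2=|\pi(\nu_i)|_g^2$, and $|\nabla F|_g=\lambda\bigl|\sum_i e^{\lambda u_i}|\nabla u_i|_g\nu_i\bigr|_g$ produces \emph{exactly} the first and third terms of $V_\lambda$, with no slack. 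For the trace norm: writing $N=G/|G|$ with $G=\sum_i e^{\lambda u_i}|\nabla u_i|_g N_i$ gives $dN(X)=P(dG(X))/|G|$ for $X\in T_x\Sigma_\lambda$, and the product rule splits $dG$ into rank-one pieces $X\mapsto c_i\langle w_i,X\rangle_g\,N_i$. The key point you invoke is that the trace norm of a rank-one map $X\mapsto\langle w,X\rangle_g\,v$ from $(T_x\Sigma_\lambda,g)$ to $(N^\perp,\mathrm{Eucl.})$ is exactly $|w|_g|v|$, which is correct; since $X\perp\nu$, this picks up $|\pi(w_i)|_g$, and subadditivity of the trace norm then bounds $\|dN\|_\tr$ by precisely the sum of the absolute values of the second and fourth terms (with the only inequality coming from $|\pi(\nabla|\nabla u_i|_g)|_g\le|\nabla|\nabla u_i|_g|_g$, and from subadditivity itself). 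Combining gives $H-\|dN\|_\tr\ge V_\lambda$. This decomposition-plus-subadditivity scheme is the natural route and is essentially the argument given by Brendle; your proposal is a complete and correct reconstruction of it.
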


We define a function $W_\lambda\colon \Sigma_\lambda \to \R$ by $W_\lambda = V_\lambda - |\tr(q)-q(\nu,\nu)|$.

\begin{proposition}
\label{negative.part.of.W.small.scale}
Suppose that the Matching Angle Hypothesis is satisfied. 
Then 
\[
\sup_{\Sigma_\lambda} \max \{-W_\lambda,0\} \leq o(\lambda) \quad\text{ as } \lambda \to \infty.
\]
\end{proposition}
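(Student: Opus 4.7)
The only new ingredient compared with the purely Riemannian estimate for $V_\lambda$ established in \cite{Brendle} is the term $|\tr(q)-q(\nu,\nu)|$, and this term is harmless. Since $q$ is a continuous tensor on the compact set $\Omega$, it is bounded by a constant $C$ independent of $\lambda$, and hence
\[
\max\{-W_\lambda,0\} \le \max\{-V_\lambda,0\} + C
\]
uniformly on $\Sigma_\lambda$. So the plan is to reduce to the statement $\sup_{\Sigma_\lambda}\max\{-V_\lambda,0\}=o(\lambda)$, which is proved essentially in the strategy of \cite{Brendle}.

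Inspection of the formula for $V_\lambda$ in Proposition~\ref{formula.for.V} shows that the two terms not carrying a factor of $\lambda$ are uniformly bounded: the weights $e^{\lambda u_i}$ appear symmetrically in numerator and denominator as a probability distribution on $I$, and the coefficient functions $\Delta u_i - (D^2 u_i)(\nu,\nu)$ and $|\nabla(|\nabla u_i|_g)|_g$ are bounded on $\Omega$. Thus the task reduces to showing that
\[
A_\lambda - B_\lambda := \frac{\sum_i e^{\lambda u_i}|\nabla u_i|_g^2\,|\pi(\nu_i)|_g^2}{|\sum_i e^{\lambda u_i}|\nabla u_i|_g\nu_i|_g} - \frac{\sum_i e^{\lambda u_i}|\nabla u_i|_g^2\,|\pi(\nu_i)|_g\,|P(N_i)|}{|\sum_i e^{\lambda u_i}|\nabla u_i|_g N_i|}
\]
satisfies $A_\lambda - B_\lambda \ge -o(1)$ uniformly on $\Sigma_\lambda$ as $\lambda\to\infty$. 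Fix $x\in\Sigma_\lambda$ and let $\bar x\in\partial\Omega$ be its closest point; since $\Sigma_\lambda\to\partial\Omega$ uniformly, $|x-\bar x|\to 0$ uniformly in $x$. Let $I(\bar x)=\{i\in I:u_i(\bar x)=0\}$. A direct computation shows that the contribution to the numerator and to the denominator coming from indices $i\notin I(\bar x)$ is exponentially suppressed by a factor $e^{-c\lambda}$ where $c>0$ depends only on the polytope; this exponential decay absorbs the factor $\lambda$ outside $A_\lambda - B_\lambda$ and contributes only $o(1)$.

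On the active indices $i\in I(\bar x)$, the Matching Angle Hypothesis asserts $\langle\nu_i(\bar x),\nu_j(\bar x)\rangle_g=\langle N_i,N_j\rangle$ for all $i,j\in I(\bar x)$, which is precisely the statement that the correspondence $\nu_i(\bar x)\mapsto N_i$ extends to a linear isometry of $\mathrm{span}\{\nu_i(\bar x):i\in I(\bar x)\}$ onto $\mathrm{span}\{N_i:i\in I(\bar x)\}$. Under this isometry, the Gram matrices match exactly, so the two denominators agree up to $o(1)$, the unit normal $\nu(x)$ is mapped to $N(x)+o(1)$, and consequently $|\pi(\nu_i)|_g=|P(N_i)|+o(1)$ for each active $i$. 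Substituting these identifications into the two fractions shows $A_\lambda=B_\lambda+o(1)$, so $\lambda(A_\lambda-B_\lambda)=o(\lambda)$, completing the bound.

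The main obstacle is making these $o(1)$ statements uniform in $x\in\Sigma_\lambda$. Near lower-dimensional strata of $\partial\Omega$ (edges, vertices) several indices are simultaneously active and the relative active weights are not negligible, so one must carefully combine (i) the continuity error from evaluating the Matching Angle Hypothesis at $x$ rather than at $\bar x$, (ii) the exponentially small but nonzero inactive contributions, and (iii) the dependence of the isometric identification of active normals on the combinatorial stratum containing $\bar x$. The delicate point is to keep all three error sources at the level $o(1)$ uniformly so that after multiplication by $\lambda$ the conclusion is genuinely $o(\lambda)$ rather than merely $O(1)$.
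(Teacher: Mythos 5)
Your opening reduction is exactly the paper's proof: write $W_\lambda = V_\lambda - |\tr(q)-q(\nu,\nu)|$, observe that the subtracted term is uniformly bounded on the compact set $\Omega$, and invoke Proposition~3.10 of \cite{Brendle} for $\sup_{\Sigma_\lambda}\max\{-V_\lambda,0\}\leq o(\lambda)$. Everything after your third sentence is an (admittedly incomplete, and by your own account gappy) attempt to reprove that proposition of Brendle from scratch; this is unnecessary here, since the paper treats it as a black box, and as written your sketch does not resolve the uniformity issues you yourself flag in the final paragraph.
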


\begin{proof}
It follows from \cite{Brendle}*{Proposition~3.10} that $\sup_{\Sigma_\lambda} \max \{-V_\lambda,0\} \leq o(\lambda)$ as $\lambda \to \infty$. Since $W_\lambda - V_\lambda$ is uniformly bounded, the assertion follows.
\end{proof}

In the remaining part of this section, we will estimate the $L^\sigma$-norm $\max \{-W_\lambda,0\}$ on $\Sigma_\lambda \cap B_r(p)$, where $\sigma \in [1,\tfrac{3}{2})$ is a fixed exponent and $B_r(p)$ denotes a Euclidean ball of radius $r$.

\begin{definition}
Consider three pairwise distinct elements $i_1,i_2,i_3 \in I$. We denote by $G_\lambda^{(i_1,i_2,i_3)}$ the set of all points $x \in \Sigma_\lambda$ with the property that $u_{i_1}(x) \geq u_{i_2}(x)\geq u_{i_3}(x)$ and $u_{i_3}(x) \geq u_i(x)$ for each $i \in I \setminus \{i_1,i_2,i_3\}$. 
\end{definition}

\begin{lemma}
\label{step.1}
For each $i \in I$, we assume that $H - |\tr(q) - q(\nu_i,\nu_i)| \geq 0$ at each point in $\Omega \cap \{u_i=0\}$, where $H$ denotes the mean curvature of the hypersurface $\{u_i=0\}$ with respect to the metric $g$. Let us fix an exponent $\sigma \in [1,\tfrac{3}{2})$, and let $B_r(p)$ denote a Euclidean ball of radius $r \leq 1$. If $\lambda r$ is sufficiently large, then 
\[\bigg ( r^{\sigma+1-n} \int_{G_\lambda^{(i_1,i_2,i_3)} \cap \{u_{i_2} \leq -\lambda^{-\frac{7}{8}} r^{\frac{1}{8}}\} \cap B_r(p)} \max \{-W_\lambda,0\}^\sigma \bigg )^{\frac{1}{\sigma}} \leq C\lambda r \, e^{-(\lambda r)^{\frac{1}{8}}}\] 
for all pairwise distinct elements $i_1,i_2,i_3 \in I$. 
\end{lemma}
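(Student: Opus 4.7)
My plan is to adapt the corresponding argument in \cite{Brendle} to the spacetime setting, the new term $|\tr(q)-q(\nu,\nu)|$ being absorbed because $\nu$ is close to $\nu_{i_1}$ in the region we consider. The first step is to establish exponential domination of the index $i_1$: on $G_\lambda^{(i_1,i_2,i_3)}$ one has $u_j \leq u_{i_2}$ for every $j \neq i_1$, and $\sum_j e^{\lambda u_j} = 1$ on $\Sigma_\lambda$ forces $u_{i_1} \geq -\lambda^{-1}\log|I|$. Combined with $u_{i_2} \leq -\lambda^{-7/8}r^{1/8}$, this yields
\[
\frac{e^{\lambda u_j(x)}}{e^{\lambda u_{i_1}(x)}} \leq C e^{-(\lambda r)^{1/8}} =: \eta \qquad (j \neq i_1),
\]
provided $\lambda r$ is large. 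It follows that $\nu = \nu_{i_1} + O(\eta)$ and $N = N_{i_1} + O(\eta)$, so $|\pi(\nu_{i_1})|_g = O(\eta)$ and $|P(N_{i_1})| = O(\eta)$ throughout the region.

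Next I would expand the four summands of Proposition~\ref{formula.for.V} using these approximations. The third summand reproduces the mean curvature $H_{u_{i_1}}(x)$ of the level set of $u_{i_1}$ through $x$ up to an $O(\eta)$ error. The remaining three collapse to $O(\lambda\eta)$, even the two carrying an explicit factor $\lambda$: the $j = i_1$ contribution is damped by $|\pi(\nu_{i_1})|_g$ or $|P(N_{i_1})|$, and each $j \neq i_1$ contribution picks up an extra factor $\eta$. Invoking the face-wise hypothesis $H_{u_{i_1}} \geq |\tr(q) - q(\nu_{i_1},\nu_{i_1})|$ on $\Omega \cap \{u_{i_1}=0\}$ and extending off the face by smoothness, at the cost of an $O(\lambda^{-1})$ error, yields the pointwise bound
\[
\max\{-W_\lambda(x),0\} \leq C\lambda\, e^{-(\lambda r)^{1/8}}
\]
throughout the region, once $\lambda r$ is large enough that $\lambda^{-1}$ is dominated by $\lambda\eta$.

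Finally I would integrate. The convexity of $\Omega_\lambda$ gives $\mathrm{area}(\Sigma_\lambda \cap B_r(p)) \leq Cr^{n-1}$, so
\[
\int \max\{-W_\lambda,0\}^\sigma \leq C\lambda^\sigma e^{-\sigma(\lambda r)^{1/8}}\, r^{n-1},
\]
and multiplying by $r^{\sigma+1-n}$ and taking the $\sigma$-th root produces the claim.

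The main obstacle will be the bookkeeping in the expansion step, specifically verifying that the two $\lambda$-weighted summands in $V_\lambda$ genuinely collapse to $O(\lambda\eta)$ rather than $O(\lambda)$, and that the off-face extension of the face-wise positivity hypothesis costs only an $O(\lambda^{-1})$ defect that is then swallowed by the exponential term. Once this is handled, the spacetime correction $|\tr(q) - q(\nu,\nu)|$ enters only through the continuity of $q$ in $\nu$ and produces no new difficulty beyond the scalar case in \cite{Brendle}.
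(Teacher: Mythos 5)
The overall strategy matches the paper (exponential domination of $i_1$, approximation $\nu\approx\nu_{i_1}$, off-face extension of the boundary hypothesis, $L^\sigma$ bound via the area estimate), but your quantitative estimate for $u_{i_1}$ is too weak, and this creates a genuine gap in the absorption of the off-face error.

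You derive $u_{i_1}\geq -\lambda^{-1}\log|I|$ from $\sum_j e^{\lambda u_j}=1$, which only uses $e^{\lambda u_{i_1}}\geq 1/|I|$. This gives a distance to the face $\{u_{i_1}=0\}$ of order $\lambda^{-1}$, and hence an off-face defect of order $\lambda^{-1}$. You then claim this is ``swallowed by the exponential term'' because $\lambda^{-1}$ is dominated by $\lambda\eta=C\lambda e^{-(\lambda r)^{1/8}}$. That is false in the regime the lemma must cover: for fixed $r\leq 1$ and $\lambda\to\infty$, the ratio $\lambda^{-1}/(\lambda e^{-(\lambda r)^{1/8}})=\lambda^{-2}e^{(\lambda r)^{1/8}}\to\infty$. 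The target bound $C\lambda r\,e^{-(\lambda r)^{1/8}}$ tends to $0$ as $\lambda r\to\infty$, so an $O(\lambda^{-1})$ error cannot be hidden inside it.

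The fix is to use the full strength of the region you are on. Since $u_j\leq u_{i_2}\leq -\lambda^{-7/8}r^{1/8}$ for all $j\neq i_1$, each $e^{\lambda u_j}\leq e^{-(\lambda r)^{1/8}}$, so
\[
e^{\lambda u_{i_1}}=1-\sum_{j\neq i_1}e^{\lambda u_j}\geq 1-C\,e^{-(\lambda r)^{1/8}},
\]
which forces $u_{i_1}\geq -C\lambda^{-1}e^{-(\lambda r)^{1/8}}$ once $\lambda r$ is large. With this sharper bound, the point $x$ is at distance $O(\lambda^{-1}e^{-(\lambda r)^{1/8}})$ from a point $y$ on $\{u_{i_1}=0\}$, so the off-face extension error is $O(\lambda^{-1}e^{-(\lambda r)^{1/8}})$, which is genuinely dominated by $\lambda e^{-(\lambda r)^{1/8}}$. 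Apart from this, your argument follows the paper's proof: the paper handles the $V_\lambda$ part by citing Brendle directly rather than re-expanding the four summands, and then adds the perturbative estimate $|\tr(q)-q(\nu,\nu)|\leq |\tr(q)-q(\nu_{i_1},\nu_{i_1})|+Ce^{-(\lambda r)^{1/8}}$ coming from $|\nu-\nu_{i_1}|_g\leq Ce^{-(\lambda r)^{1/8}}$, exactly as you envisioned.
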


\begin{proof}
We adapt the proof of \cite{Brendle}*{Lemma~3.13}. 
Let us consider an arbitrary point 
\[
x \in G_\lambda^{(i_1,i_2,i_3)} \quad\text{ with } u_{i_2}(x) \leq -\lambda^{-\frac{7}{8}} r^{\frac{1}{8}}.
\]
By definition of $G_\lambda^{(i_1,i_2,i_3)}$, it follows that $u_i(x) \leq -\lambda^{-\frac{7}{8}} r^{\frac{1}{8}}$ for all $i \in I \setminus \{i_1\}$. 
Using the identity 
\[
\sum_{i \in I} e^{\lambda u_i(x)} = 1,
\]
we obtain $e^{\lambda u_{i_1}(x)} \geq 1 - C \, e^{-(\lambda r)^{\frac{1}{8}}}$, hence $u_{i_1}(x) \geq -C\lambda^{-1} \, e^{-(\lambda r)^{\frac{1}{8}}}$. 
Arguing as in \cite{Brendle}, we obtain 
\[V_\lambda \geq \frac{\Delta u_{i_1} - (D^2 u_{i_1})(\nu_{i_1},\nu_{i_1})}{|\nabla u_{i_1}|_g} - C \lambda \, e^{-(\lambda r)^{\frac{1}{8}}}\] 
at the point $x$. 
Moreover, $|\nu-\nu_{i_1}|_g \leq C \, e^{-(\lambda r)^{\frac{1}{8}}}$ at the point $x$. 
This implies 
\[|\tr(q) - q(\nu,\nu)| \leq |\tr(q) - q(\nu_{i_1},\nu_{i_1})| + C \, e^{-(\lambda r)^{\frac{1}{8}}}\] 
at the point $x$. 
Putting these facts together, we obtain 
\begin{align*}
W_\lambda 
&= V_\lambda - |\tr(q) - q(\nu,\nu)| \\ 
&\geq \frac{\Delta u_{i_1} - (D^2 u_{i_1})(\nu_{i_1},\nu_{i_1})}{|\nabla u_{i_1}|_g} - |\tr(q) - q(\nu_{i_1},\nu_{i_1})| - C \lambda \, e^{-(\lambda r)^{\frac{1}{8}}} 
\end{align*}
at the point $x$. 
Since $u_{i_1}(x) \geq -C\lambda^{-1} \, e^{-(\lambda r)^{\frac{1}{8}}}$ and $u_i(x) \leq -\lambda^{-\frac{7}{8}} r^{\frac{1}{8}}$ for all $i \in I \setminus \{i_1\}$, we can find a point $y \in \Omega$ such that 
\[
u_{i_1}(y) = 0 \quad \text{ and } \quad d(x,y) \leq C\lambda^{-1} \, e^{-(\lambda r)^{\frac{1}{8}}}.
\]
By assumption, $H - |\tr(q) - q(\nu_{i_1},\nu_{i_1})| \geq 0$ at the point $y$, where $H$ denotes the mean curvature of the hypersurface $\{u_{i_1}=0\}$ with respect to the metric $g$. 
This implies 
\[\frac{\Delta u_{i_1} - (D^2 u_{i_1})(\nu_{i_1},\nu_{i_1})}{|\nabla u_{i_1}|_g} - |\tr(q) - q(\nu_{i_1},\nu_{i_1})| \geq 0\] 
at the point $y$. 
Consequently, 
\[\frac{\Delta u_{i_1} - (D^2 u_{i_1})(\nu_{i_1},\nu_{i_1})}{|\nabla u_{i_1}|_g} - |\tr(q) - q(\nu_{i_1},\nu_{i_1})| \geq -C \, d(x,y)\] 
at the point $x$. 
Putting these facts together, we conclude that 
\[W_\lambda(x) \geq -C\lambda \, e^{-(\lambda r)^{\frac{1}{8}}}\] 
for each point 
\[
x \in G_\lambda^{(i_1,i_2,i_3)} \cap \{u_{i_2} \leq -\lambda^{-\frac{7}{8}} r^{\frac{1}{8}}\}.
\]
On the other hand, the intersection $\Sigma_\lambda \cap B_r(p)$ has area at most $C r^{n-1}$ (see \cite{Brendle}*{Lemma~3.11}). Consequently, 
\[\bigg ( r^{\sigma+1-n} \int_{G_\lambda^{(i_1,i_2,i_3)} \cap \{u_{i_2} \leq -\lambda^{-\frac{7}{8}} r^{\frac{1}{8}}\} \cap B_r(p)} \max \{-W_\lambda,0\}^\sigma \bigg )^{\frac{1}{\sigma}} \leq C\lambda r \, e^{-(\lambda r)^{\frac{1}{8}}}.\] 
This completes the proof of Lemma~\ref{step.1}.
\end{proof}

\begin{lemma} 
\label{step.2}
Assume that the Matching Angle Hypothesis holds.  Let us fix an exponent $\sigma \in [1,\tfrac{3}{2})$, and let $B_r(p)$ denote a Euclidean ball of radius $r \leq 1$. If $\lambda r$ is sufficiently large, then 
\begin{align*} 
\bigg ( r^{\sigma+1-n} &\int_{G_\lambda^{(i_1,i_2,i_3)} \cap \{u_{i_2} \geq -\lambda^{-\frac{7}{8}} r^{\frac{1}{8}}\} \cap \{u_{i_3} \leq -\lambda^{-\frac{3}{4}} r^{\frac{1}{4}}\} \cap B_r(p)} \max \{-W_\lambda,0\}^\sigma \bigg )^{\frac{1}{\sigma}} 
\leq C \, (\lambda r)^{\frac{1}{8}-\frac{7}{8\sigma}} 
\end{align*}
for all pairwise distinct elements $i_1,i_2,i_3 \in I$. 
\end{lemma}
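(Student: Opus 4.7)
The plan is to adapt the argument of Lemma~3.14 in \cite{Brendle} by incorporating the $q$-contribution as in Lemma~\ref{step.1} above. First I would show that throughout the region of integration only the two faces $i_1, i_2$ matter: for every $i \in I \setminus \{i_1, i_2\}$ one has $u_i(x) \leq u_{i_3}(x) \leq -\lambda^{-3/4} r^{1/4}$, hence $e^{\lambda u_i(x)} \leq e^{-(\lambda r)^{1/4}}$. Substituting this into the formula of Proposition~\ref{formula.for.V} shows that $V_\lambda$ equals its effective two-face truncation up to an error of size $C \lambda e^{-(\lambda r)^{1/4}}$, which is harmless for $\lambda r$ large. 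Writing $\alpha_j = e^{\lambda u_{i_j}}$, $a_j = |\nabla u_{i_j}|_g$, $c_g(x) = \langle \nu_{i_1}(x), \nu_{i_2}(x)\rangle_g$, and $c_0 = \langle N_{i_1}, N_{i_2}\rangle$, a direct computation (cf.\ Lemma~3.14 in \cite{Brendle}) expresses the $O(\lambda)$ part of the two-face $V_\lambda$ as a uniformly bounded prefactor times a quantity that vanishes identically when $c_g = c_0$.

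Next I would invoke the Matching Angle Hypothesis: at every point $y$ on the edge $\{u_{i_1} = u_{i_2} = 0\}$, MAH gives $c_g(y) = c_0$, and Lipschitz continuity of $\nu_{i_1}, \nu_{i_2}$ yields $|c_g(x) - c_0| \leq C \cdot \mathrm{dist}(x, \text{edge})$ for $x$ near the edge. In the band $\{u_{i_2} \geq -\lambda^{-7/8} r^{1/8}\}$ one has $|u_{i_1}(x)|, |u_{i_2}(x)| \leq C \lambda^{-7/8} r^{1/8}$, which bounds the distance from $x$ to the edge by $C \lambda^{-7/8} r^{1/8}$. Combining these, the $O(\lambda)$ part of $V_\lambda$ satisfies the pointwise bound $C \lambda \cdot \lambda^{-7/8} r^{1/8} = C (\lambda r)^{1/8}$; together with the uniformly bounded lower-order terms and the uniformly bounded $|\tr(q) - q(\nu, \nu)|$, this yields $\max\{-W_\lambda, 0\} \leq C (\lambda r)^{1/8}$ pointwise on the region.

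For the area, the region $G_\lambda^{(i_1,i_2,i_3)} \cap \{u_{i_2} \geq -\lambda^{-7/8} r^{1/8}\} \cap B_r(p)$ on $\Sigma_\lambda$ is contained in a transverse band of width $\sim \lambda^{-7/8} r^{1/8}$ around the codimension-two edge, so its $(n-1)$-dimensional area is at most $C \lambda^{-7/8} r^{n - 15/8}$. Combined with the pointwise bound,
\[
\int_{\text{region}} \max\{-W_\lambda, 0\}^\sigma \, dA \leq C (\lambda r)^{\sigma/8} \, \lambda^{-7/8} r^{n - 15/8},
\]
and multiplying by $r^{\sigma + 1 - n}$ and taking the $\sigma$-th root yields $C(\lambda r)^{1/8 - 7/(8\sigma)} \cdot r \leq C(\lambda r)^{1/8 - 7/(8\sigma)}$ since $r \leq 1$.

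The main obstacle is making the cancellation of the $O(\lambda)$ part of $V_\lambda$ under MAH completely explicit with a prefactor that stays uniformly bounded as $x$ ranges over the region. The crucial input is a uniform lower bound on $1 - c_g^2$ (nondegeneracy of the dihedral angle at the edge), which follows from Assumption~\ref{no.redundant.inequalities} together with MAH---the corresponding Euclidean edge is a genuine codimension-two feature---combined with compactness. By contrast the handling of the $q$-term is essentially automatic: we lose no factor of $\lambda r$ there, so the crude continuity bound on $|\tr(q) - q(\nu, \nu)|$ suffices.
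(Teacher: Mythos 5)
Your proposal is correct but takes a substantially longer route than the paper. The paper's proof is a one-liner: it invokes Brendle's Lemma~3.14 as a black box for the $L^\sigma$ estimate on $\max\{-V_\lambda,0\}$ over this region, notes the trivial pointwise bound $|W_\lambda - V_\lambda| = |\tr(q) - q(\nu,\nu)| \leq C \leq C\lambda^{1/8}r^{-7/8}$, and combines this with the area bound $\leq C(\lambda r)^{-7/8}r^{n-1}$ (also cited from \cite{Brendle}) to absorb the $q$-correction. You instead re-derive the content of Brendle's Lemma~3.14 from scratch: localizing to the two dominant faces up to an $O(\lambda e^{-(\lambda r)^{1/4}})$ error, exhibiting the cancellation of the $O(\lambda)$ part of $V_\lambda$ under the Matching Angle Hypothesis, using Lipschitz continuity of the dihedral angle together with the constraint $|u_{i_1}|, |u_{i_2}| \leq \lambda^{-7/8}r^{1/8}$ to get the pointwise bound $C(\lambda r)^{1/8}$, and then integrating against the band's area. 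Both approaches hinge on the same observation that the $q$-term is of lower order and is handled by a uniform constant, so your version buys self-containedness at the cost of having to carry out the denominator nondegeneracy argument (uniform lower bound on $|\alpha_1 a_1 \nu_{i_1} + \alpha_2 a_2 \nu_{i_2}|_g$, which you correctly flag as the main obstacle), whereas the paper buys brevity by outsourcing exactly that part to \cite{Brendle}. One small remark: you can shortcut your own last step by noting, as the paper does, that the constant $C$ bounding $|W_\lambda - V_\lambda|$ is automatically $\leq C(\lambda r)^{1/8}$ once $\lambda r$ is large, so the $q$-correction feeds into the pointwise bound with no extra work.
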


\begin{proof} 
The corresponding estimate for $V_\lambda$ follows from \cite{Brendle}*{Lemma~3.14}. Moreover, it is shown in \cite{Brendle} that the set $G_\lambda^{(i_1,i_2,i_3)} \cap \{u_{i_2} \geq -\lambda^{-\frac{7}{8}} r^{\frac{1}{8}}\} \cap \{u_{i_3} \leq -\lambda^{-\frac{3}{4}} r^{\frac{1}{4}}\}$ has area at most $C \, (\lambda r)^{-\frac{7}{8}} \, r^{n-1}$. Since $|W_\lambda - V_\lambda| \leq C \leq C \lambda^{\frac{1}{8}} r^{-\frac{7}{8}}$, the assertion follows.
\end{proof}

\begin{lemma} 
\label{step.3}
Let us fix an exponent $\sigma \in [1,\tfrac{3}{2})$, and let $B_r(p)$ denote a Euclidean ball of radius $r \leq 1$. 
If $\lambda r$ is sufficiently large, then 
\[
\bigg ( r^{\sigma+1-n} \int_{G_\lambda^{(i_1,i_2,i_3)} \cap \{u_{i_3} \geq -\lambda^{-\frac{3}{4}} r^{\frac{1}{4}}\} \cap B_r(p)} \max \{-W_\lambda,0\}^\sigma \bigg )^{\frac{1}{\sigma}}  \leq C \, (\lambda r)^{1-\frac{3}{2\sigma}}
\] 
for all pairwise distinct elements $i_1,i_2,i_3 \in I$. 
\end{lemma}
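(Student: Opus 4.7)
The plan is to follow the same template used in Lemmas~\ref{step.1} and \ref{step.2}, combining a crude pointwise bound on $\max\{-W_\lambda,0\}$ with an area estimate for the relevant portion of $\Sigma_\lambda$, and then adapting the analogous estimate for $V_\lambda$ established in Lemma~3.15 of \cite{Brendle}. The key observation is that $|W_\lambda - V_\lambda| = |\tr(q)-q(\nu,\nu)|$ is uniformly bounded by a constant depending only on $g$ and $q$ (independent of $\lambda$ and $r$), so all estimates proved for $V_\lambda$ transfer to $W_\lambda$ up to harmless lower-order errors.

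First, I would establish the pointwise bound $\max\{-W_\lambda,0\} \leq C\lambda$ on the region of integration. Inspecting the explicit formula for $V_\lambda$ in Proposition~\ref{formula.for.V}, one sees that each term is bounded by $C\lambda$, so $|V_\lambda| \leq C\lambda$, and hence $|W_\lambda| \leq C\lambda$ for $\lambda$ sufficiently large. (One could alternatively invoke the sharper bound $\max\{-W_\lambda,0\} \leq o(\lambda)$ from Proposition~\ref{negative.part.of.W.small.scale}, but for the exponent we need, only $C\lambda$ is required.)

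Second, I would invoke the area bound from the proof of Lemma~3.15 in \cite{Brendle}: the set $G_\lambda^{(i_1,i_2,i_3)} \cap \{u_{i_3} \geq -\lambda^{-3/4} r^{1/4}\} \cap B_r(p)$ has $(n-1)$-dimensional Hausdorff measure at most $C\,(\lambda r)^{-3/2}\, r^{n-1}$ once $\lambda r$ is sufficiently large. This estimate is purely geometric — it depends only on the configuration of the hyperplanes $\{u_i=0\}$ and on the Euclidean measure of $\Sigma_\lambda \cap B_r(p)$ — and so applies verbatim in our setting. Intuitively, the region where three linear forms $u_{i_1}, u_{i_2}, u_{i_3}$ simultaneously stay within $\lambda^{-3/4} r^{1/4}$ of zero is a tubular neighborhood of a codimension-two stratum, which accounts for the factor $(\lambda^{-3/4} r^{1/4})^2 = (\lambda r)^{-3/2} r$.

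Combining these two ingredients and computing:
\[
r^{\sigma+1-n} \int_{G_\lambda^{(i_1,i_2,i_3)} \cap \{u_{i_3} \geq -\lambda^{-3/4} r^{1/4}\} \cap B_r(p)} \max\{-W_\lambda,0\}^\sigma \,\leq\, r^{\sigma+1-n} \cdot (C\lambda)^\sigma \cdot C\,(\lambda r)^{-3/2} r^{n-1} \,=\, C\,(\lambda r)^{\sigma-3/2}.
\]
Taking $\sigma$-th roots yields the claimed bound $C\,(\lambda r)^{1-3/(2\sigma)}$. The assumption $\sigma < \frac{3}{2}$ ensures this quantity tends to zero as $\lambda r \to \infty$, which is what will be needed to conclude that the limiting $W$ satisfies the hypotheses of Theorem~\ref{rigidity.of.smooth.domains} on an approximating smooth domain. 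The only non-routine step is the area estimate, but this is exactly the content of the corresponding lemma in \cite{Brendle} and requires no modification in our more general setting where $q \neq 0$.
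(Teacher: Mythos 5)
Your proof is correct and follows essentially the same route as the paper: the only two ingredients are the crude pointwise bound $\max\{-W_\lambda,0\}\le C\lambda$ (which the paper obtains equivalently by combining the $V_\lambda$ estimate from Brendle's Lemma~3.15 with $|W_\lambda-V_\lambda|\le C$) and the area bound $C(\lambda r)^{-3/2}r^{n-1}$ from Brendle, and your exponent arithmetic checks out. (One small slip in your heuristic aside: $(\lambda^{-3/4}r^{1/4})^2=\lambda^{-3/2}r^{1/2}$, not $(\lambda r)^{-3/2}r$, but since you correctly invoke Brendle's lemma for the actual area bound this has no bearing on the argument.)
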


\begin{proof}
The corresponding estimate for $V_\lambda$ follows from \cite{Brendle}*{Lemma~3.15}. 
Moreover, it is shown in \cite{Brendle} that the set $G_\lambda^{(i_1,i_2,i_3)} \cap \{u_{i_3} \geq -\lambda^{-\frac{3}{4}} r^{\frac{1}{4}}\} \cap B_r(p)$ has area at most $C \, (\lambda r)^{-\frac{3}{2}} \, r^{n-1}$. Since $|W_\lambda - V_\lambda| \leq C \leq C \lambda$, the assertion follows.
\end{proof}

\begin{proposition}
\label{negative.part.of.W.large.scale}
For each $i \in I$, we assume that $H - |\tr(q) - q(\nu_i,\nu_i)| \geq 0$ at each point in $\Omega \cap \{u_i=0\}$, where $H$ denotes the mean curvature of the hypersurface $\{u_i=0\}$ with respect to the metric $g$. Moreover, we assume that the Matching Angle Hypothesis is satisfied. 
Let us fix an exponent $\sigma \in [1,\tfrac{3}{2})$, and let $B_r(p)$ denote a Euclidean ball of radius $r \leq 1$. 
If $\lambda r$ is sufficiently large, then 
\begin{align*} 
\bigg ( r^{\sigma+1-n}& \int_{\Sigma_\lambda \cap B_r(p)} \max \{-W_\lambda,0\}^\sigma \bigg )^{\frac{1}{\sigma}} 
\leq C\lambda r \, e^{-(\lambda r)^{\frac{1}{8}}} + C \, (\lambda r)^{\frac{1}{8}-\frac{7}{8\sigma}} + C \, (\lambda r)^{1-\frac{3}{2\sigma}}. 
\end{align*}
\end{proposition}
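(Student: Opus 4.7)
The plan is to reduce the integral over $\Sigma_\lambda \cap B_r(p)$ to a finite sum of integrals over subregions, each directly controlled by one of Lemmas~\ref{step.1}, \ref{step.2}, \ref{step.3}, and then apply the triangle inequality for the weighted $L^\sigma$-norm.

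First, I would observe that $\Sigma_\lambda$ is covered, up to a set of measure zero, by the finitely many pieces $G_\lambda^{(i_1,i_2,i_3)}$ where $(i_1,i_2,i_3)$ runs over ordered triples of pairwise distinct elements of $I$; indeed, at almost every point of $\Sigma_\lambda$ the values $u_{i_1},\dots,u_{i_k}$ admit a strict ordering, and the only overlaps between the sets $G_\lambda^{(i_1,i_2,i_3)}$ occur where two such values coincide, a null set. For each such triple, I then partition the corresponding piece according to the size of $u_{i_2}$ and $u_{i_3}$:
\[
A_1 = \{u_{i_2} \le -\lambda^{-7/8} r^{1/8}\}, \quad A_2 = \{u_{i_2} \ge -\lambda^{-7/8} r^{1/8}\} \cap \{u_{i_3} \le -\lambda^{-3/4} r^{1/4}\}, \quad A_3 = \{u_{i_3} \ge -\lambda^{-3/4} r^{1/4}\}.
\]

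Second, since $\max\{-W_\lambda, 0\}^\sigma \mathbf{1}_{\Sigma_\lambda \cap B_r(p)}$ splits as the sum over triples and over $k \in \{1,2,3\}$ of its restrictions to $G_\lambda^{(i_1,i_2,i_3)} \cap A_k \cap B_r(p)$, the triangle inequality for $L^\sigma$-norms gives
\[
\bigg( r^{\sigma+1-n} \int_{\Sigma_\lambda \cap B_r(p)} \max\{-W_\lambda, 0\}^\sigma \bigg)^{1/\sigma}
\le \sum_{(i_1,i_2,i_3)} \sum_{k=1}^{3} \bigg( r^{\sigma+1-n} \int_{G_\lambda^{(i_1,i_2,i_3)} \cap A_k \cap B_r(p)} \max\{-W_\lambda, 0\}^\sigma \bigg)^{1/\sigma}.
\]

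Third, I would apply the three preceding lemmas term by term. Lemma~\ref{step.1} (whose hypothesis is precisely the assumed mean curvature inequality on each face $\{u_i=0\}$) bounds each $k=1$ summand by $C \lambda r\, e^{-(\lambda r)^{1/8}}$. Lemma~\ref{step.2} (using the Matching Angle Hypothesis) bounds each $k=2$ summand by $C(\lambda r)^{1/8 - 7/(8\sigma)}$. Lemma~\ref{step.3} bounds each $k=3$ summand by $C(\lambda r)^{1 - 3/(2\sigma)}$. Since the set $I$ is finite, the number of triples is bounded independently of $\lambda$ and $r$, so we may absorb this combinatorial factor into the constant $C$ to obtain the stated estimate.

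There is no real obstacle here: the argument is a straightforward partition-and-sum, and the only genuine content has been transferred into the three preceding lemmas. One minor point to verify is that every point of $\Sigma_\lambda$ does belong to some $G_\lambda^{(i_1,i_2,i_3)}$, which requires $|I| \ge 3$; this holds because $\Omega$ is a compact convex polytope in $\R^n$ with $n \ge 2$ and non-empty interior, so at least three facets (indeed at least $n+1$) are present under Assumption~\ref{no.redundant.inequalities}.
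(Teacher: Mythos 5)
Your proposal is correct and follows essentially the same approach as the paper: cover $\Sigma_\lambda$ by the finitely many pieces $G_\lambda^{(i_1,i_2,i_3)}$, split each piece according to the size of $u_{i_2}$ and $u_{i_3}$, apply Lemmas~\ref{step.1}, \ref{step.2}, \ref{step.3} respectively, and sum using the $L^\sigma$ triangle inequality. (One tiny simplification: since the integrand is nonnegative, a cover of $\Sigma_\lambda$ by the $G_\lambda^{(i_1,i_2,i_3)}$ suffices and one need not verify that the overlaps have measure zero.)
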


\begin{proof}
Combining Lemma~\ref{step.1}, Lemma~\ref{step.2}, and Lemma~\ref{step.3}, we conclude that 
\begin{align*} 
\bigg ( r^{\sigma+1-n} &\int_{G_\lambda^{(i_1,i_2,i_3)} \cap B_r(p)} \max \{-W_\lambda,0\}^\sigma \bigg )^{\frac{1}{\sigma}} 
\leq C\lambda r \, e^{-(\lambda r)^{\frac{1}{8}}} + C \, (\lambda r)^{\frac{1}{8}-\frac{7}{8\sigma}} + C \, (\lambda r)^{1-\frac{3}{2\sigma}} 
\end{align*}
for all pairwise distinct elements $i_1,i_2,i_3 \in I$. 
On the other hand, $\Sigma_\lambda = \bigcup_{i_1,i_2,i_3} G_\lambda^{(i_1,i_2,i_3)}$, where the union is taken over all pairwise distinct elements $i_1,i_2,i_3 \in I$. 
Hence, the assertion follows by summation over $i_1,i_2,i_3$. 
This completes the proof of Proposition~\ref{negative.part.of.W.large.scale}. 
\end{proof}

\begin{corollary}
\label{negative.part.of.W}
For each $i \in I$, we assume that $H - |\tr(q) - q(\nu_i,\nu_i)| \geq 0$ at each point in $\Omega \cap \{u_i=0\}$, where $H$ denotes the mean curvature of the hypersurface $\{u_i=0\}$ with respect to the metric $g$. Moreover, we assume that the Matching Angle Hypothesis is satisfied. 
Let us fix an exponent $\sigma \in [1,\tfrac{3}{2})$. Then 
\[
\sup_{p \in \R^n} \sup_{r \leq 1} \bigg ( r^{\sigma+1-n} \int_{\Sigma_\lambda \cap B_r(p)} \max \{-W_\lambda,0\}^\sigma \bigg )^{\frac{1}{\sigma}} \to 0 \quad\text{ as }\lambda \to \infty.
\] 
\end{corollary}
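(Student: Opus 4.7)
The plan is to split the supremum into a large-scale regime $\lambda r \geq T$, in which Proposition~\ref{negative.part.of.W.large.scale} applies directly, and a complementary small-scale regime $\lambda r < T$, in which the pointwise estimate of Proposition~\ref{negative.part.of.W.small.scale} is sharp enough. Both hypotheses required (the one-sided inequality on each face and the Matching Angle Hypothesis) are already part of the assumptions of the corollary, so those two earlier propositions are immediately available.

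Given $\varepsilon > 0$, first choose $T$ so large that
\[
C\lambda r\,e^{-(\lambda r)^{1/8}} + C(\lambda r)^{1/8 - 7/(8\sigma)} + C(\lambda r)^{1-3/(2\sigma)} < \varepsilon
\]
whenever $\lambda r \geq T$. This is possible because each of the three summands tends to zero as $\lambda r \to \infty$: the first decays super-exponentially, and for the other two the exponents are strictly negative for $\sigma \in [1, 3/2)$ (indeed $7/(8\sigma) \geq 7/12 > 1/8$, and $3/(2\sigma) > 1$). With $T$ fixed, Proposition~\ref{negative.part.of.W.large.scale} directly gives the desired bound $< \varepsilon$ in the regime $\lambda r \geq T$, uniformly in $p \in \R^n$ and $r \leq 1$.

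For the small-scale regime, Proposition~\ref{negative.part.of.W.small.scale} provides a function $\eta(\lambda) \to 0$ with $\sup_{\Sigma_\lambda} \max\{-W_\lambda, 0\} \leq \eta(\lambda)\,\lambda$. Combining this pointwise bound with the standard area estimate $\mathcal{H}^{n-1}(\Sigma_\lambda \cap B_r(p)) \leq C r^{n-1}$, which holds uniformly in $\lambda$ because each $\Sigma_\lambda$ is a convex hypersurface and $g$ is uniformly equivalent to the Euclidean metric on a neighborhood of $\Omega$, yields
\[
\bigg(r^{\sigma+1-n}\int_{\Sigma_\lambda \cap B_r(p)} \max\{-W_\lambda,0\}^\sigma\bigg)^{1/\sigma} \leq \big(r^{\sigma+1-n}(\eta(\lambda)\lambda)^\sigma\,C r^{n-1}\big)^{1/\sigma} = C r\,\eta(\lambda)\,\lambda.
\]
In the regime $\lambda r < T$ this upper bound is at most $C T\,\eta(\lambda)$, which is $< \varepsilon$ for all sufficiently large $\lambda$.

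Combining the two cases, the supremum over $p \in \R^n$ and $r \leq 1$ in the left-hand side of the corollary is $< \varepsilon$ for all $\lambda$ sufficiently large, and since $\varepsilon$ was arbitrary this establishes convergence to $0$. The real work has already been done in the earlier propositions; the present step is essentially a clean interpolation between an $L^\infty$ estimate and a rescaled $L^\sigma$ estimate, and the only subtlety is the order of quantifiers --- one must fix the threshold $T$ first (depending on $\varepsilon$ and on $\sigma$) and only then send $\lambda \to \infty$.
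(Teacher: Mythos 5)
Your proof is correct and follows essentially the same two-regime strategy as the paper: split the supremum at a threshold on $\lambda r$, use Proposition~\ref{negative.part.of.W.large.scale} on the large-scale side and the pointwise estimate from Proposition~\ref{negative.part.of.W.small.scale} (combined with the area bound for convex hypersurfaces) on the small-scale side. The only cosmetic difference is that the paper uses a $\lambda$-dependent cutoff $\lambda r = \delta_l^{-1}$ with $\delta_l \to 0$, whereas you fix a threshold $T = T(\varepsilon)$ and then let $\lambda \to \infty$; these are interchangeable bookkeeping choices.
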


\begin{proof}
Let us consider an arbitrary sequence $\lambda_l \to \infty$. 
By Proposition~\ref{negative.part.of.W.small.scale}, we can find a sequence of positive real numbers $\delta_l \to 0$ such that 
\[
(\delta_l \lambda_l)^{-1} \sup_{\Sigma_{\lambda_l}} \max \{-W_{\lambda_l},0\} \to 0 \quad\text{ as } l \to \infty.
\] 
This implies 
\begin{align*} 
\sup_{p \in \R^n} \sup_{r \leq (\delta_l \lambda_l)^{-1}} \bigg ( r^{\sigma+1-n} &\int_{\Sigma_{\lambda_l} \cap B_r(p)} \max \{-W_{\lambda_l},0\}^\sigma \bigg )^{\frac{1}{\sigma}} \\ 
&\leq (\delta_l \lambda_l)^{-1} \sup_{\Sigma_{\lambda_l}} \max \{-W_{\lambda_l},0\} \to 0 \quad\text{ as } l \to \infty.
\end{align*} 
On the other hand, it follows from Proposition~\ref{negative.part.of.W.large.scale} that 
\[
\sup_{p \in \R^n} \sup_{(\delta_l \lambda_l)^{-1} \leq r \leq 1} \bigg ( r^{\sigma+1-n} \int_{\Sigma_{\lambda_l} \cap B_r(p)} \max \{-W_{\lambda_l},0\}^\sigma \bigg )^{\frac{1}{\sigma}} \to 0 \quad \text{ as } l \to \infty.
\] 
Putting these facts together, the assertion follows.
\end{proof}

\section{Proof of Theorem~\ref{rigidity.of.polytopes}}

Throughout this section, we assume that $\Omega$, $g$, $q$ satisfy the assumptions in Theorem~\ref{rigidity.of.polytopes}.
In particular, we assume that $\Omega$ satisfies the Matching Angle hypothesis. 
We consider the spinor bundle $\mathcal{S}$ and the twisted spinor bundle $\mathcal{E} = \mathcal{S} \otimes S_0^*$ over an open set containing $\Omega$. 

Let $U$ denote a Euclidean ball such that the closure of $U$ is contained in the interior of $\Omega$. 
Consider a sequence $\lambda_l\to\infty$. Given  $l$, let $N^{(l)} \colon \Sigma_{\lambda_l} \to S^{n-1}$ denote the map defined in \eqref{defn:N}. Recall that $N^{(l)}$ has degree $1$. Note that $U \subset \Omega_{\lambda_l}$ if $l$ is sufficiently large. 

\begin{proposition}\label{sobolev.type.inequality}
Suppose that $K>0$ is a given positive real number. Then there exists a uniform constant $C$ (independent of $l$) such that
\[\int_{\Omega_{\lambda_l}} F^2 \leq C \int_{\Omega_{\lambda_l}} (|\nabla F| -KF)_+^2 + C\int_U F^2 \]
for every nonnegative smooth function $F$ on $\Omega_{\lambda_l}$.
\end{proposition}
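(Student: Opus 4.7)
The plan is to prove the inequality directly, without contradiction or compactness, via a pointwise Gronwall estimate along straight-line segments, which is made available by the convexity of $\Omega_{\lambda_l}$. Set $g := (|\nabla F|-KF)_+$, so $|\nabla F| \leq KF + g$ pointwise. For any curve $\gamma$ in $\Omega_{\lambda_l}$ parametrized by arc length, $h(s):=F^2(\gamma(s))$ satisfies
\[
h'(s) \leq 2F|\nabla F| \leq 2KF^2 + 2Fg \leq (2K+1)h(s)+g^2(\gamma(s)),
\]
and Gronwall's inequality gives $h(L) \leq e^{(2K+1)L}\bigl(h(0) + \int_0^L g^2(\gamma)\,ds\bigr)$. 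Since $\Omega_{\lambda_l}$ is convex and contained in $\Omega$, for any $y\in U$ and $x\in \Omega_{\lambda_l}$ the segment $\gamma_{y,x}$ from $y$ to $x$ lies in $\Omega_{\lambda_l}$ and has length $\leq D := \operatorname{diam}\Omega$, giving the pointwise bound
\[
F^2(x) \leq C\,F^2(y) + C\int_0^{|x-y|} g^2(\gamma_{y,x}(s))\,ds, \qquad C := e^{(2K+1)D},
\]
with $C$ depending only on $K$ and $D$, in particular independent of $l$.

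I then average in $y\in U$ and integrate in $x\in \Omega_{\lambda_l}$, obtaining
\[
|U|\!\int_{\Omega_{\lambda_l}}\! F^2 \leq C|\Omega|\!\int_U\! F^2 + C\!\int_{\Omega_{\lambda_l}}\!\int_U\!\int_0^{|x-y|}\! g^2(\gamma_{y,x}(s))\,ds\,dy\,dx.
\]
The triple integral is handled by Fubini and polar coordinates centered at $y$. Writing $x = y+R\omega$ with $R\in(0,D]$, $\omega\in S^{n-1}$, so that $\gamma_{y,x}(s) = y+s\omega$ for $s\in[0,R]$, interchanging the $s$ and $R$ integrations absorbs the weight $R^{n-1}$ into a factor bounded by $D^n/n$, and the remaining angular integral is converted via $z = y+s\omega$ into
\[
\int_{\Omega_{\lambda_l}}\!\int_0^{|x-y|} g^2(\gamma_{y,x}(s))\,ds\,dx \leq \tfrac{D^n}{n}\int_{\Omega_{\lambda_l}} g^2(z)\,|z-y|^{-(n-1)}\,dz.
\]

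The last step is to integrate in $y\in U$. The weight $z \mapsto \int_U |z-y|^{-(n-1)}\,dy$ is bounded uniformly in $z\in \overline{\Omega}$ because $|v|^{-(n-1)}$ is locally integrable on $\R^n$ and $U$ is a fixed bounded set. This reduces the triple integral to $C'\!\int_{\Omega_{\lambda_l}} g^2$ and yields the desired inequality with a constant depending only on $n$, $K$, $\operatorname{diam}\Omega$, $|\Omega|$, and $|U|$, hence independent of $l$. The only object depending on $l$ is the domain of integration $\Omega_{\lambda_l}$; the Gronwall factor $C$ and the geometric weight bound are fixed. The main step to verify carefully is the Fubini/polar-coordinates manipulation and the uniform integrability of the kernel $|z-y|^{-(n-1)}$ over $y\in U$, but there is no serious analytic obstacle and no compactness or subsequence extraction is needed.
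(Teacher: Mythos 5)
Your proof is correct, and it takes a genuinely different route from the paper's. The paper reduces to a fixed reference domain by observing that each $\Omega_{\lambda_l}$ is bi-Lipschitz equivalent to the Euclidean unit ball with constants uniform in $l$ (via the radial graph description of $\Sigma_{\lambda_l}$), and then invokes Proposition~\ref{sobolev.type.ineq.fixed.domain}, which is proved by a compactness/contradiction argument combined with mollification and an ODE unique-continuation step. Your argument instead exploits the convexity of $\Omega_{\lambda_l} = \{\sum_i e^{\lambda_l u_i} \le 1\}$ (a sublevel set of a convex function) to run a one-dimensional Gronwall estimate along the straight segment from $y\in U$ to $x\in\Omega_{\lambda_l}$, then averages over $y$ and handles the resulting triple integral by Fubini, polar coordinates, and the uniform local integrability of $|z-y|^{-(n-1)}$. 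The verification steps you flag all go through: the Gronwall bound $F^2(x) \le e^{(2K+1)D}\bigl(F^2(y) + \int_0^{|x-y|} g^2\bigr)$ is correct for nonnegative $C^1$ functions $F$, the $s$-$R$ interchange after polar coordinates yields the factor $D^n/n$, and $\sup_{z\in\overline\Omega}\int_U |z-y|^{-(n-1)}\,dy \le |S^{n-1}|\,\mathrm{diam}(\Omega)$. The one hypothesis you use implicitly that is worth making explicit is $U\subset\Omega_{\lambda_l}$, which the paper notes holds once $l$ is large enough; you need $y\in U$ to lie in $\Omega_{\lambda_l}$ for the segment to stay inside the convex domain. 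What your approach buys is a direct, quantitative proof with an explicit constant depending only on $n$, $K$, $\mathrm{diam}(\Omega)$, $|\Omega|$, and $|U|$, with no weak compactness or subsequence extraction; what it gives up is generality, since the paper's Proposition~\ref{sobolev.type.ineq.fixed.domain} applies to an arbitrary compact connected Lipschitz domain (no convexity or star-shapedness required), whereas your segment argument relies essentially on the convexity of $\Omega_{\lambda_l}$.
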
 

\begin{proof}
Note that the hypersurface $\Sigma_{\lambda_l} = \partial\Omega_{\lambda_l}$ can be written as a radial graph with bounded slope. From this, it is easy to see that $\Omega_{\lambda_l}$ is bi-Lipschitz equivalent to the Euclidean unit ball, with constants that are independent of $l$. The assertion now follows from Proposition~\ref{sobolev.type.ineq.fixed.domain}.  
\end{proof}

\begin{proposition}\label{sobolev.trace}
There exists a uniform constant $C$ (independent of $l$) such that
\[\int_{\Sigma_{\lambda_l}} F^2 \leq C \int_{\Omega_{\lambda_l}} |\nabla F|^2 + C\int_{\Omega_{\lambda_l}} F^2\]
for every smooth function $F$ on $\Omega_{\lambda_l}$.
\end{proposition}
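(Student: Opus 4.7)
The plan is to reduce the claim to the standard Sobolev trace theorem on the Euclidean unit ball via a uniform bi-Lipschitz change of variables, exactly in the spirit of the argument used to prove Proposition~\ref{sobolev.type.inequality}. The key observation, already noted in that proof, is that $\Sigma_{\lambda_l}$ is a radial graph (say, over $S^{n-1}$ centered at an interior point of $\Omega$) with slope bounded independently of $l$, and hence there exist bi-Lipschitz maps $\Phi_l \colon \overline{B_1} \to \Omega_{\lambda_l}$ whose bi-Lipschitz constants, together with the constants comparing $g$ on $\Omega_{\lambda_l}$ to the Euclidean metric, are uniform in $l$.

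First, I would fix an interior point $p_0 \in \mathrm{int}(\Omega)$ at positive distance from every facet of $\Omega$, so that the rays from $p_0$ meet each $\Sigma_{\lambda_l}$ transversely and with a uniform lower bound on the cosine of the angle between the ray direction and the outward normal. This yields the desired uniform bi-Lipschitz homeomorphisms $\Phi_l$ from the unit ball to $\Omega_{\lambda_l}$ (with respect to the Euclidean metric on $\Omega_{\lambda_l}$), which are also bi-Lipschitz for the metric $g$ since $g$ is comparable to the Euclidean metric on a neighborhood of $\Omega$. In particular, the volume and boundary-area measures transform with uniformly bounded Jacobians, and the gradient transforms with a uniformly bounded linear distortion.

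Next, I would apply the classical Sobolev trace theorem on $B_1$ to the composition $F \circ \Phi_l \in H^1(B_1)$:
\[
\int_{\partial B_1} (F \circ \Phi_l)^2 \, d\mathcal{H}^{n-1} \leq C_0 \int_{B_1} |\nabla(F \circ \Phi_l)|^2 + C_0 \int_{B_1} (F \circ \Phi_l)^2,
\]
where $C_0$ is the universal trace constant on $B_1$. Substituting $x = \Phi_l(y)$ on both sides and using the uniform comparison of measures and of $|\nabla(F \circ \Phi_l)|(y)$ with $|\nabla F|(\Phi_l(y))$ produces the desired inequality
\[
\int_{\Sigma_{\lambda_l}} F^2 \leq C \int_{\Omega_{\lambda_l}} |\nabla F|^2 + C \int_{\Omega_{\lambda_l}} F^2,
\]
with $C$ independent of $l$.

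The only real obstacle is verifying that the bi-Lipschitz constants of $\Phi_l$ are uniform as $\lambda_l \to \infty$, i.e.\ that the domains $\Omega_{\lambda_l}$ do not degenerate near the corners and edges of $\Omega$; but this is exactly the radial graph observation invoked in Proposition~\ref{sobolev.type.inequality}, and a minor adaptation of that argument (tracking the boundary measure in addition to the volume measure) closes the proof.
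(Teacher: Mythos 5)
Your argument is essentially the paper's: both note that $\Sigma_{\lambda_l}$ is a radial graph of uniformly bounded slope, deduce that $\Omega_{\lambda_l}$ is bi-Lipschitz equivalent to the Euclidean unit ball with constants independent of $l$ (and $g$ is uniformly comparable to the Euclidean metric on a neighborhood of $\Omega$), and then transport the classical Sobolev trace inequality on $B_1$. The paper states this in one line while you spell out the change of variables, but the content is identical.
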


\begin{proof}
Note that the hypersurface $\Sigma_{\lambda_l} = \partial\Omega_{\lambda_l}$ can be written as a radial graph with bounded slope. From this, it is easy to see that $\Omega_{\lambda_l}$ is bi-Lipschitz equivalent to the Euclidean unit ball, with constants that are independent of $l$. Hence, the assertion follows from the Sobolev trace theorem. 
\end{proof}

\begin{proposition}\label{est.W}
We have
\[\int_{\Sigma_{\lambda_l}} \max \{-W_{\lambda_l},0\} F^2 \leq o(1)\int_{\Omega_{\lambda_l}} |\nabla F|^2 + o(1)\int_{\Omega_{\lambda_l}} F^2\]
for every smooth function $F$ on $\Omega_{\lambda_l}$.
\end{proposition}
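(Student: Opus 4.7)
The plan is to recast the desired inequality as a weighted trace estimate for the positive boundary measure $\mu_l := \max\{-W_{\lambda_l},0\}\,dS$ on $\Sigma_{\lambda_l}$, and then to supply the required smallness by feeding the Morrey-type decay of $\max\{-W_{\lambda_l},0\}$ provided by Corollary~\ref{negative.part.of.W} into a Maz'ya trace inequality for measures of small $(n-2)$-dimensional Morrey norm.

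Assume first that $n\geq 3$. Applying Corollary~\ref{negative.part.of.W} with $\sigma=1$ yields, for every $\epsilon>0$ and every $l$ sufficiently large, the uniform estimate
\[
  \mu_l(B_r(p)) \leq \epsilon\,r^{n-2} \qquad \text{for all } p\in\R^n,\ r\leq 1.
\]
I would then extend $F$ from $\Omega_{\lambda_l}$ to $\tilde F\in H^1(\R^n)$ with $\|\tilde F\|_{H^1(\R^n)}\leq C\|F\|_{H^1(\Omega_{\lambda_l})}$ and $C$ independent of $l$; this is available because each $\Sigma_{\lambda_l}$ is a radial graph of bounded slope, so $\Omega_{\lambda_l}$ is uniformly bi-Lipschitz to the Euclidean unit ball (the same uniformity already used in Propositions~\ref{sobolev.type.inequality} and \ref{sobolev.trace}). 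The classical Maz'ya trace inequality states that any positive Radon measure $\mu$ on $\R^n$ with $\mu(B_r(p))\leq K\,r^{n-2}$ obeys
\[
  \int_{\R^n} G^2\,d\mu \;\leq\; C_n\,K\,\bigl(\|\nabla G\|_{L^2(\R^n)}^2+\|G\|_{L^2(\R^n)}^2\bigr)
\]
for all $G\in H^1(\R^n)$. Specializing to $\mu=\mu_l$, $K=\epsilon$, $G=\tilde F$ delivers the desired $o(1)$ bound.

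In dimension $n=2$ the exponent $n-2=0$ is borderline for Maz'ya's inequality, so I would instead fix $\sigma\in(1,\tfrac{3}{2})$ and apply Corollary~\ref{negative.part.of.W} on a fixed finite cover of $\Sigma_{\lambda_l}$ by unit balls to obtain the global estimate $\|\max\{-W_{\lambda_l},0\}\|_{L^\sigma(\Sigma_{\lambda_l})}=o(1)$. The planar Sobolev trace $H^1(\Omega_{\lambda_l})\hookrightarrow L^{2\sigma'}(\Sigma_{\lambda_l})$, valid for every finite exponent when $n=2$ and with constants uniform in $l$, combined with H\"older's inequality, then yields the same conclusion.

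The main obstacle is supplying Maz'ya's trace inequality with a quantitative, linear dependence of the constant on the Morrey parameter $K$. Although this is classical, it either has to be invoked with a precise reference or reproved via a Whitney covering argument that exploits the capacitary scaling $\mathrm{cap}_{1,2}(B_r)\sim r^{n-2}$ for $n\geq 3$. The remaining ingredients---uniform Lipschitz regularity of $\Omega_{\lambda_l}$ and the existence of uniform extension and Sobolev trace operators---are by now standard and have been used implicitly in the preceding propositions.
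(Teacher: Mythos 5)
Your general strategy is the one the paper uses: feed the Morrey-type decay from Corollary~\ref{negative.part.of.W} into a weighted trace/Sobolev inequality. The paper's proof is a one-liner that cites the Fefferman--Phong estimate of Appendix~A in \cite{Brendle}, which applies uniformly for all $n\geq 2$ once one takes the exponent $\sigma$ strictly greater than $1$. Your proposal is close in spirit but has a genuine gap in the $n\geq 3$ branch.

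The issue is with the statement you attribute to Maz'ya. You take $\sigma=1$ and claim that the ball (Morrey) condition $\mu(B_r(p))\leq K\,r^{n-2}$ alone implies $\int G^2\,d\mu\leq C_nK\,\|G\|_{H^1}^2$. This is not true. The exponent $p=q=2$ is the diagonal endpoint of the Maz'ya--Adams trace theory, and there the correct necessary-and-sufficient condition is a capacitary condition (equivalently, a Sawyer-type testing condition), not the ball growth condition. The ball condition $\mu(B_r)\leq Kr^{n-2}$ is necessary for the $H^1\hookrightarrow L^2(\mu)$ trace inequality but is known \emph{not} to be sufficient at this endpoint; the additional lower-order $\|G\|_{L^2}$ term on the right-hand side does not repair this. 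Your own worry at the end of the proposal (``supplying Maz'ya's trace inequality with a quantitative, linear dependence \dots'') is pointing at exactly this obstruction, but the problem is worse than a constant-tracking issue: the inequality simply fails for general measures satisfying only the ball condition. Your $n=2$ branch, by contrast, is fine, because there you work with $\sigma\in(1,\tfrac32)$, a standard H\"older argument, and a subcritical trace embedding.

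The repair is easy and also eliminates your case split: Corollary~\ref{negative.part.of.W} holds for every $\sigma\in[1,\tfrac32)$, so fix some $\sigma\in(1,\tfrac32)$ in all dimensions $n\geq 2$. The resulting $L^\sigma$-Morrey smallness of $\max\{-W_{\lambda_l},0\}$ on $\Sigma_{\lambda_l}$ is exactly the hypothesis of the Fefferman--Phong estimate (which, unlike the $\sigma=1$ Maz'ya statement, requires $\sigma>1$ and then does give the linear dependence on the Morrey constant), and that is the route the paper takes via \cite{Brendle}, Appendix~A. The auxiliary uniform-in-$l$ bi-Lipschitz/extension facts you invoke are correct and already implicit in Propositions~\ref{sobolev.type.inequality} and~\ref{sobolev.trace}.
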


\begin{proof} 
The statement follows by combining \cite{Brendle}*{Corollary~A.7} (see also \cite{Fefferman-Phong}) with Corollary~\ref{negative.part.of.W}. 
\end{proof}

\begin{corollary}\label{sobolev.type.ineq.spinor.field}
For each section $s \in C^\infty(\Omega_{\lambda_l},\mathcal{E})$, we have 
\[\int_{\Omega_{\lambda_l}} |s|^2 \leq C \int_{\Omega_{\lambda_l}}  | \nabla^{\mathcal{E},q} s |^2 + C \int_U |s|^2,\] 
where $C$ is a constant that does not depend on $l$.
\end{corollary}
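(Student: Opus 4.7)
The plan is to reduce the spinor inequality to the scalar Sobolev-type inequality of Proposition~\ref{sobolev.type.inequality} by applying it to the pointwise norm $|s|$, using a Kato-type inequality to control $|\nabla |s||$ and a bounded zero-order perturbation to pass from $\nabla^{\mathcal{E}}$ to $\nabla^{\mathcal{E},q}$.

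First, because the metric $g$ and the tensor $q$ are defined on an open set containing the compact set $\Omega$, the quantity $K := \tfrac{1}{2}\,\sup_\Omega |q|_g$ is finite, and it depends only on the data on $\Omega$, not on $l$. From \eqref{modified.connection.on.E} and \eqref{def.Q} we have $\nabla^{\mathcal{E},q} s = \nabla^{\mathcal{E}} s + \tfrac{i}{2} Q s$, and the pointwise estimate $|Qs|\leq |q|_g\,|s|$ is immediate. Consequently $|\nabla^{\mathcal{E}}s|\leq |\nabla^{\mathcal{E},q}s| + K\,|s|$ at every point of $\Omega_{\lambda_l}$.

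Next, to get at $|\nabla |s||$ I would regularize by $F_\varepsilon := \sqrt{|s|^2+\varepsilon^2}$, which is smooth and strictly positive. A direct computation gives $F_\varepsilon\,\nabla F_\varepsilon = \mathrm{Re}\,\langle \nabla^{\mathcal{E}} s,\,s\rangle$, and Cauchy--Schwarz yields the Kato-type bound $|\nabla F_\varepsilon|\leq |\nabla^{\mathcal{E}}s|$ pointwise. Combining with the previous step,
\[
|\nabla F_\varepsilon| - K F_\varepsilon \leq |\nabla^{\mathcal{E}}s| - K|s| \leq |\nabla^{\mathcal{E},q}s|,
\]
so $(|\nabla F_\varepsilon| - K F_\varepsilon)_+ \leq |\nabla^{\mathcal{E},q}s|$. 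Applying Proposition~\ref{sobolev.type.inequality} to $F_\varepsilon$ with the constant $K$ gives
\[
\int_{\Omega_{\lambda_l}} F_\varepsilon^2 \leq C\int_{\Omega_{\lambda_l}} (|\nabla F_\varepsilon|-KF_\varepsilon)_+^2 + C\int_U F_\varepsilon^2 \leq C\int_{\Omega_{\lambda_l}} |\nabla^{\mathcal{E},q}s|^2 + C\int_U F_\varepsilon^2,
\]
with $C$ independent of $l$. Letting $\varepsilon\to 0$ and using dominated convergence (on both sides the integrands are dominated by $|s|^2 + \varepsilon_0^2$ for a fixed $\varepsilon_0$) completes the proof.

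There is no real obstacle here: the only mildly delicate point is the nonsmoothness of $|s|$ at its zero set, which is handled by the $\varepsilon$-regularization above, and the only bookkeeping item is to verify that $K$ and the constant $C$ in Proposition~\ref{sobolev.type.inequality} can be chosen uniformly in $l$. The former is immediate from the smoothness of $q$ on a neighborhood of $\Omega$, and the latter is exactly what Proposition~\ref{sobolev.type.inequality} provides.
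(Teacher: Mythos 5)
Your proof is correct and follows essentially the same route as the paper: both choose $K$ proportional to $\sup_\Omega|q|$ so that $(|\nabla^{\mathcal{E}}s|-K|s|)_+\leq|\nabla^{\mathcal{E},q}s|$, apply Proposition~\ref{sobolev.type.inequality} to the regularization $(\varepsilon^2+|s|^2)^{1/2}$ via the Kato-type bound $|\nabla F_\varepsilon|\leq|\nabla^{\mathcal{E}}s|$, and pass to the limit $\varepsilon\to0$. Your write-up merely makes explicit the sharp value of $K$ and the dominated-convergence step that the paper leaves implicit.
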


\begin{proof} 
By the triangle inequality, we can bound $|\nabla^{\mathcal{E}} s| \leq |\nabla^{\mathcal{E},q} s| + K \, |s|$, where $K$ is a constant that does not depend on $l$. This implies $(|\nabla^{\mathcal{E}} s| - K \, |s|)_+ \leq |\nabla^{\mathcal{E},q} s|$.

We now apply Proposition~\ref{sobolev.type.inequality} to the function $F = (\delta^2 + |s|^2)^{\frac{1}{2}}$, and send $\delta\to 0$. 
This gives 
\[\int_{\Omega_{\lambda_l}} |s|^2 \leq C \int_{\Omega_{\lambda_l}} (|\nabla^{\mathcal{E}} s| - K \, |s|)_+^2 + C \int_U |s|^2 .\] 
Putting these facts together, the assertion follows.
\end{proof}

\begin{corollary}\label{sobolev.trace.spinor.field}
For each section $s \in C^\infty(\Omega_{\lambda_l},\mathcal{E})$, we have 
\[\int_{\Sigma_{\lambda_l}} |s|^2 \leq C \int_{\Omega_{\lambda_l}}  | \nabla^{\mathcal{E},q} s |^2 + C \int_U |s|^2,\] 
where $C$ is a constant that does not depend on $l$.
\end{corollary}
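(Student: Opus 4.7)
The plan is to imitate the proof of Corollary~\ref{sobolev.type.ineq.spinor.field}, but with the scalar Sobolev trace inequality Proposition~\ref{sobolev.trace} playing the role of Proposition~\ref{sobolev.type.inequality}. First I would apply Proposition~\ref{sobolev.trace} to the function $F = (\delta^2 + |s|^2)^{1/2}$ and send $\delta \to 0$; combined with the pointwise Kato-type inequality $|\nabla|s|| \leq |\nabla^{\mathcal{E}} s|$ this gives
\[
\int_{\Sigma_{\lambda_l}} |s|^2 \leq C \int_{\Omega_{\lambda_l}} |\nabla^{\mathcal{E}} s|^2 + C \int_{\Omega_{\lambda_l}} |s|^2,
\]
with a constant $C$ that does not depend on $l$.

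The second step is to replace $\nabla^{\mathcal{E}}$ by $\nabla^{\mathcal{E},q}$ on the right-hand side. Since $\nabla^{\mathcal{E},q} = \nabla^{\mathcal{E}} + \tfrac{i}{2}Q$ and $\|Q\|$ is uniformly bounded in terms of $\sup_\Omega |q|$, the triangle inequality yields $|\nabla^{\mathcal{E}} s|^2 \leq 2|\nabla^{\mathcal{E},q} s|^2 + C'|s|^2$ pointwise on $\Omega_{\lambda_l}$, for a constant $C'$ independent of $l$. Inserting this into the previous inequality gives
\[
\int_{\Sigma_{\lambda_l}} |s|^2 \leq C \int_{\Omega_{\lambda_l}} |\nabla^{\mathcal{E},q} s|^2 + C \int_{\Omega_{\lambda_l}} |s|^2,
\]
again with $C$ independent of $l$.

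Finally, I would absorb the interior $L^2$-norm of $s$ using Corollary~\ref{sobolev.type.ineq.spinor.field}, which bounds $\int_{\Omega_{\lambda_l}} |s|^2$ by $C\int_{\Omega_{\lambda_l}} |\nabla^{\mathcal{E},q} s|^2 + C\int_U |s|^2$ with a uniform constant. Combining the two estimates produces the desired bound. There is no genuine obstacle here since every ingredient is already in place; the only mild point to be careful about is the limit $\delta \to 0$ in the Kato step (to justify the inequality $|\nabla|s||^2 \le |\nabla^{\mathcal{E}}s|^2$ almost everywhere on the zero set of $s$), but this is standard.
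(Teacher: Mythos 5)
Your proposal is correct and follows the paper's argument: apply Proposition~\ref{sobolev.trace} to $F = (\delta^2+|s|^2)^{1/2}$ and send $\delta\to0$ to get the trace inequality with $|\nabla^{\mathcal{E}}s|$, convert $\nabla^{\mathcal{E}}$ to $\nabla^{\mathcal{E},q}$ by the triangle inequality using the uniform bound on $q$, then absorb $\int_{\Omega_{\lambda_l}}|s|^2$ via Corollary~\ref{sobolev.type.ineq.spinor.field}. The paper's published proof compresses the middle step (it invokes Corollary~\ref{sobolev.type.ineq.spinor.field} directly after the trace inequality, leaving the $\nabla^{\mathcal{E}}\leadsto\nabla^{\mathcal{E},q}$ conversion implicit), but your spelled-out version is exactly what is meant.
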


\begin{proof} 
We apply Proposition~\ref{sobolev.trace} to the function $F = (\delta^2 + |s|^2)^{\frac{1}{2}}$, and send $\delta\to 0$. This gives 
\[\int_{\Sigma_{\lambda_l}} |s|^2 \leq C \int_{\Omega_{\lambda_l}} |\nabla^{\mathcal{E}} s|^2 + C \int_{\Omega_{\lambda_l}} |s|^2.\]
Hence, the assertion follows from Corollary~\ref{sobolev.type.ineq.spinor.field}.
\end{proof}

\begin{corollary}\label{est.W.spinor.field}
For each section $s \in C^\infty(\Omega_{\lambda_l},\mathcal{E})$, we have 
\[\int_{\Sigma_{\lambda_l}} \max \{-W_{\lambda_l},0\} |s|^2 \leq o(1) \int_{\Omega_{\lambda_l}}  | \nabla^{\mathcal{E},q} s |^2 + o(1) \int_U |s|^2 .\] 
\end{corollary}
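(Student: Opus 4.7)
The plan is to imitate the structure of the proofs of Corollaries~\ref{sobolev.type.ineq.spinor.field} and \ref{sobolev.trace.spinor.field}: reduce the spinor-field inequality to the scalar inequality from Proposition~\ref{est.W} via the Kato trick, and then absorb the interior $L^2$-term using Corollary~\ref{sobolev.type.ineq.spinor.field}.

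First, I would apply Proposition~\ref{est.W} to the regularized pointwise norm $F = (\delta^2 + |s|^2)^{1/2}$ and let $\delta \to 0$. On $\{s \neq 0\}$ we have $|\nabla F| \to |\nabla |s||$, so Kato's inequality gives the pointwise bound $|\nabla F| \leq |\nabla^{\mathcal{E}} s|$. Combined with the elementary estimate $|\nabla^{\mathcal{E}} s| \leq |\nabla^{\mathcal{E},q} s| + K|s|$ (which holds provided $K$ is chosen as a sufficiently large multiple of $\sup_\Omega |q|$, using the definition $\nabla^{\mathcal{E},q} = \nabla^{\mathcal{E}} + \tfrac{i}{2} Q$), this yields in the limit
\[
\int_{\Sigma_{\lambda_l}} \max\{-W_{\lambda_l},0\} \, |s|^2
\leq o(1) \int_{\Omega_{\lambda_l}} |\nabla^{\mathcal{E},q} s|^2 + o(1) \int_{\Omega_{\lambda_l}} |s|^2,
\]
where I have used $(a+b)^2 \leq 2a^2 + 2b^2$ to split the squared sum into a $|\nabla^{\mathcal{E},q} s|^2$-term and an $|s|^2$-term, the latter being absorbed into the second $o(1)$-term.

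Next, I would invoke Corollary~\ref{sobolev.type.ineq.spinor.field} to replace the global interior $L^2$-norm by a combination of the gradient and the fixed reference ball $U$:
\[
\int_{\Omega_{\lambda_l}} |s|^2 \leq C \int_{\Omega_{\lambda_l}} |\nabla^{\mathcal{E},q} s|^2 + C \int_U |s|^2.
\]
Multiplying this by the $o(1)$ factor and adding yields the claimed inequality.

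I do not anticipate any serious obstacle; the only subtlety is the passage from the scalar inequality of Proposition~\ref{est.W} to the spinorial version, which requires justifying the $\delta \to 0$ limit and the Kato-type bound $|\nabla F|^2 \leq 2|\nabla^{\mathcal{E},q} s|^2 + 2K^2 |s|^2$ pointwise a.e. The rest of the argument is essentially bookkeeping with the $o(1)$ factors, exactly parallel to how Corollaries~\ref{sobolev.type.ineq.spinor.field} and \ref{sobolev.trace.spinor.field} were derived from their scalar counterparts.
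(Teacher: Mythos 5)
Your proposal matches the paper's proof: both apply Proposition~\ref{est.W} to $F=(\delta^2+|s|^2)^{1/2}$, send $\delta\to 0$ via Kato's inequality to obtain the bound in terms of $|\nabla^{\mathcal{E}}s|^2$ and $|s|^2$, convert $\nabla^{\mathcal{E}}$ to $\nabla^{\mathcal{E},q}$ at the cost of a bounded multiple of $|s|^2$, and then absorb $\int_{\Omega_{\lambda_l}}|s|^2$ using Corollary~\ref{sobolev.type.ineq.spinor.field}. The route is the same; the paper merely compresses these steps into two sentences.
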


\begin{proof} 
We apply Proposition~\ref{est.W} to the function $F = (\delta^2 + |s|^2)^{\frac{1}{2}}$, and send $\delta\to 0$. This gives 
\[\int_{\Sigma_{\lambda_l}} \max \{-W_{\lambda_l},0\} |s|^2 \leq o(1) \int_{\Omega_{\lambda_l}} |\nabla^{\mathcal{E}} s|^2 + o(1) \int_{\Omega_{\lambda_l}} |s|^2.\]
Hence, the assertion follows from Corollary~\ref{sobolev.type.ineq.spinor.field}.
\end{proof}

\begin{proposition} \label{prop:polytope} The following statements hold.
\begin{enumerate}[(a)]
\item There exists a $\nabla^{\mathcal{E},q}$-parallel section $s$ defined over $\Omega$. Considered as a homomorphism field, the section $s$ is invertible at each point in $\Omega$. 
\item Each of the boundary faces is totally geodesic with respect to $g$. 
\item Along each of the boundary faces, we have $q^{\rm tan} = 0$.
\end{enumerate}
\end{proposition}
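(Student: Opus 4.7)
I will produce the parallel section $s$ on $\Omega$ by approximation using the smooth domains $\Omega_{\lambda_l}$, and then extract the boundary rigidity face by face, mirroring the smooth case of Proposition~\ref{prop:prelim}.

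For each $l$, apply Proposition~\ref{proposition.existence.n.odd} (if $n$ is odd) or Corollary~\ref{corollary.existence.n.even} (if $n$ is even) to $\Omega_{\lambda_l}$ with its degree-one map $N^{(l)}$. This yields a non-trivial section $s^{(l)} \in H^1(\Omega_{\lambda_l},\mathcal{E})$, which I normalize by $\int_U |s^{(l)}|^2 = 1$, satisfying
\[
\int_{\Omega_{\lambda_l}} |\nabla^{\mathcal{E},q} s^{(l)}|^2 \le \tfrac{1}{2}\int_{\Omega_{\lambda_l}} (-\mu+|J|) \, |s^{(l)}|^2 + \tfrac{1}{2}\int_{\Sigma_{\lambda_l}} (|\tr(q)-q(\nu,\nu)| + \|dN^{(l)}\|_\tr - H) \, |s^{(l)}|^2.
\]
The dominant energy condition makes the bulk integrand non-positive, and Proposition~\ref{formula.for.V} bounds the boundary integrand above by $\max\{-W_{\lambda_l},0\}$. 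Combining with Corollary~\ref{est.W.spinor.field} to absorb the boundary term, and Corollary~\ref{sobolev.type.ineq.spinor.field} for the $L^2$ bound, I expect $\int_{\Omega_{\lambda_l}}|\nabla^{\mathcal{E},q} s^{(l)}|^2 \to 0$ and $\int_{\Omega_{\lambda_l}}|s^{(l)}|^2 = O(1)$, giving uniform $H^1_{\rm loc}$ bounds on the interior of $\Omega$.

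After extracting a subsequential weak limit $s$ on the interior, weak lower semicontinuity forces $\nabla^{\mathcal{E},q} s = 0$, while strong $L^2$ convergence on $U$ preserves the normalization $\int_U |s|^2 = 1$, so $s$ is non-trivial. The identity $\nabla^{\mathcal{E}} s = -\tfrac{i}{2}\gamma(q)\circ s$ and elliptic regularity extend $s$ smoothly to $\Omega$. To recover the face-wise boundary condition $\gamma(\nu_i)\circ s = s \circ \gamma_0(N_i)$ on the relative interior $F_i^\circ$ of each face $F_i = \Omega \cap \{u_i = 0\}$, I will use that on compact subsets of $F_i^\circ$ the hypersurfaces $\Sigma_{\lambda_l}$ converge smoothly to $F_i^\circ$ with $\nu^{(l)} \to \nu_i$ and $N^{(l)} \to N_i$ uniformly; after straightening the boundary via a local diffeomorphism converging to the identity, the pointwise identity $\gamma(\nu^{(l)})\circ s^{(l)} = s^{(l)}\circ\gamma_0(N^{(l)})$ on $\Sigma_{\lambda_l}$ passes to the limit through the trace theorem. (In even dimensions, the Lemma~\ref{lemma.hat.t} argument of Subsection~\ref{subsec:even} reduces the boundary condition involving $\hat{N}$ to the standard one.) Invertibility then follows as in Lemma~\ref{lemma.maxrank.n.odd}: the subspace $L = \{\sigma \in S_0 : s\sigma \equiv 0\}$ is invariant under each $\gamma_0(N_i)$, and since the outward Euclidean normals of a bounded convex polytope with non-empty interior span $\R^n$, the operators $\gamma_0(N_i)$ generate $\End(S_0)$; hence $L = \{0\}$, giving pointwise injectivity and thus invertibility for dimensional reasons. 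This proves (a).

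For (b) and (c), fix a face $F_i$ and differentiate the boundary identity along a tangent $X \in TF_i^\circ$. Using $\nabla^{\mathcal{E},q} s = 0$ and the fact that $N_i \in \R^n$ is Euclidean-constant on $F_i$ (so $dN_i(X) = 0$), the computation of Lemma~\ref{lemma.boundary.behavior.n.odd} simplifies to
\[
\gamma(h_i(X)) \circ s + i\,\gamma(q(X)^{\text{\rm tan}})\circ\gamma(\nu_i) \circ s = 0,
\]
where $h_i$ is the second fundamental form of $F_i$ in $(\Omega,g)$. Invertibility of $s$ then yields $\gamma(h_i(X)) = -i\,\gamma(q(X)^{\text{\rm tan}})\circ\gamma(\nu_i)$ in $\End(\mathcal{S}|_p)$; the left-hand side is skew-adjoint while the right-hand side is self-adjoint (since $q(X)^{\text{\rm tan}}\perp\nu_i$ anticommute under Clifford multiplication), so both vanish, giving $h_i(X) = 0$ and $q(X)^{\text{\rm tan}} = 0$ on $F_i^\circ$. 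Continuity extends these identities to all of $F_i$. The main technical obstacle is making rigorous the passage of the pointwise boundary condition from the varying smooth approximations $\Sigma_{\lambda_l}$ to the face interiors $F_i^\circ$; once this is established, the remaining steps closely follow the smooth case treated in Proposition~\ref{prop:prelim}.
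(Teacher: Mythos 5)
Your proposal follows the paper's strategy faithfully for most of the argument: apply the existence result on the smooth exhaustion $\Omega_{\lambda_l}$, normalize so that $\int_U |s^{(l)}|^2 = 1$, combine the dominant energy condition, Proposition~\ref{formula.for.V}, Corollary~\ref{est.W.spinor.field}, and Corollary~\ref{sobolev.type.ineq.spinor.field} to conclude $\int_{\Omega_{\lambda_l}} |\nabla^{\mathcal{E},q}s^{(l)}|^2 \to 0$ with uniform $L^2$ bounds, extract a weak $H^1_{\rm loc}$ limit $s$ on the interior, extend by parallel transport, transfer the boundary condition face by face, and finally differentiate along the face to get $(b)$ and $(c)$. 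Your transfer of the pointwise boundary identity via trace estimates is essentially the same mechanism the paper uses in Proposition~\ref{proposition.convergence.of.s_l.n.odd} and Corollary~\ref{convergence.boundary.condition.n.odd} (a uniform Sobolev trace inequality on $\Sigma_{\lambda_l}$, Corollary~\ref{sobolev.trace.spinor.field}), so that step is fine, modulo filling in details.

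There is, however, one genuine gap in the even-dimensional case. You write that the argument of Lemma~\ref{lemma.hat.t} from Subsection~\ref{subsec:even} reduces the boundary condition involving $\hat{N}$ to the standard one. That argument rests on showing that a strict contraction $\varphi(\cdot,\hat{t})$ would force $\|dN\|_\tr \equiv 0$, hence $N$ locally constant, contradicting $\deg N \neq 0$. But on the polytope faces, $N$ \emph{is} locally constant: the limiting face normal $N_{i}$ is a fixed vector, so $dN_i \equiv 0$ there and the smooth-case argument produces no contradiction. The paper instead invokes the Matching Angle Hypothesis at this precise point: from the boundary condition $\gamma(\nu_{i}) \circ s = s \circ \gamma_0(\hat{N}_{i})$ on each face and the invertibility of $s$, one deduces $\langle \nu_{i_1},\nu_{i_2}\rangle_g = \langle \hat{N}_{i_1},\hat{N}_{i_2}\rangle$ at any point where two faces meet (Lemma~\ref{lemma.comparison.of.N.and.hat.N.poly}), and the Matching Angle Hypothesis gives $\langle \nu_{i_1},\nu_{i_2}\rangle_g = \langle N_{i_1},N_{i_2}\rangle$; together these rule out $\varphi(\cdot,\hat{t})$ being a strict contraction (Lemma~\ref{lemma.hat.t.poly}). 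Without this step your argument does not establish that $\hat{N}_i = (N_i,0)$, and then the span of the $\hat{N}_i$'s need not equal $\R^n \subset \R^{n+1}$, so the invertibility argument via $L = \{0\}$ breaks. You should replace the reference to Lemma~\ref{lemma.hat.t} with the Matching Angle Hypothesis argument; note in particular that this is where the hypothesis enters the proof of Proposition~\ref{prop:polytope} for even $n$.
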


Having established Proposition~\ref{prop:polytope}, the arguments in Section~\ref{immersion.into.Minkowski.space} imply that $(\Omega,g,q)$ can be isometrically immersed as a spacelike hypersurface in the Minkowski spacetime $\R^{n,1}$, with second fundamental form equal to $q$. 

In the remainder of this section, we will give the proof of Proposition~\ref{prop:polytope}. We first treat the odd-dimensional case, and then the even-dimensional case.

\subsection{Proof of Proposition~\ref{prop:polytope} in odd dimensions} 
Suppose that $n \geq 3$ is an odd integer. For each $l$, Proposition~\ref{proposition.existence.n.odd} implies the existence of a non-trivial section $s^{(l)} \in C^\infty(\Omega_{\lambda_l},\mathcal{E})$ such that 
\begin{align*} 
\int_{\Omega_{\lambda_l}}  | \nabla^{\mathcal{E},q} s^{(l)}|^2 
&\leq 
\tfrac{1}{2} \int_{\Omega_{\lambda_l}} (-\mu+|J|)  |s^{(l)}|^2 \\ 
&+ \tfrac{1}{2} \int_{\Sigma_{\lambda_l}} (|\tr(q) - q(\nu,\nu)| + \|dN^{(l)}\|_\tr - H) \, |s^{(l)}|^2. 
\end{align*}
Moreover, $\chi^{(l)} s^{(l)} = s^{(l)}$ at each point on the boundary $\Sigma_{\lambda_l}$. By assumption, $\mu-|J| \geq 0$. Using Corollary~\ref{est.W.spinor.field}, we obtain 
\begin{align*} 
\int_{\Sigma_{\lambda_l}} (|\tr(q) - q(\nu,\nu)| + & \|dN^{(l)}\|_\tr - H) \, |s^{(l)}|^2 \\ 
&\leq o(1) \int_{\Omega_{\lambda_l}}  | \nabla^{\mathcal{E},q} s^{(l)}|^2 + o(1) \int_U |s^{(l)}|^2. 
\end{align*}
Putting these facts together, we conclude that 
\[\int_{\Omega_{\lambda_l}}  | \nabla^{\mathcal{E},q} s^{(l)}|^2 \leq o(1) \int_U |s^{(l)}|^2\] 
if $l$ is sufficiently large. Using this estimate together with Corollary~\ref{sobolev.type.ineq.spinor.field}, we obtain 
\[\int_{\Omega_{\lambda_l}} |s^{(l)}|^2 \leq C \int_U |s^{(l)}|^2\] 
if $l$ is sufficiently large. Since $\int_{\Omega_{\lambda_l}} |s^{(l)}|^2 > 0$, it follows that $\int_U |s^{(l)}|^2 > 0$ if $l$ is sufficiently large. 

By scaling, we may arrange that $\int_U |s^{(l)}|^2 = 1$ if $l$ is sufficiently large. Then 
\begin{equation} \label{s_l.nearly.parallel.n.odd}
\int_{\Omega_{\lambda_l}}  | \nabla^{\mathcal{E},q} s^{(l)}|^2 \leq o(1) 
\end{equation}
and 
\[\int_{\Omega_{\lambda_l}} |s^{(l)}|^2 \leq C.\] 
Passing to a subsequence if necessary, the sequence $s^{(l)} $ converges weakly in $H_{\text{\rm loc}}^1(\Omega \setminus \partial \Omega)$. The limit $s$ is defined on the interior of $\Omega$ and satisfies $\int_U |s|^2 = 1$. Using \eqref{s_l.nearly.parallel.n.odd}, we obtain $\nabla^{\mathcal{E},q} s  = 0$. In particular, $s$ is smooth. 
Using $\nabla^{\mathcal{E},q}$-parallel transport along radial lines emanating from some point in the interior of $\Omega \subset \R^n$, we extend $s$ to a smooth section which is defined on an open set containing $\Omega$ and satisfies $\nabla^{\mathcal{E},q} s = 0$ at each point in $\Omega$. 

\begin{proposition}\label{proposition.convergence.of.s_l.n.odd}
We have
\[
\int_{\Sigma_{\lambda_l}} |s^{(l)} - s|^2 \to 0 \quad \text{ as } l \to \infty.
\]
\end{proposition}

\begin{proof} 
Using Corollary~\ref{sobolev.trace.spinor.field}, we obtain
\[\int_{\Sigma_{\lambda_l}} |s^{(l)} - s|^2 \leq C \int_{\Omega_{\lambda_l}}  |\nabla^{\mathcal{E},q} (s^{(l)}-s)|^2 + C \int_U  |s^{(l)}-s|^2.\] 
Since $\nabla^{\mathcal{E},q} s = 0$, it follows that 
\[\int_{\Sigma_{\lambda_l}}  |s^{(l)} - s|^2 \leq C \int_{\Omega_{\lambda_l}}   |\nabla^{\mathcal{E},q} s^{(l)} |^2 + C \int_U |s^{(l)}-s|^2.\] 
The assertion now follows from \eqref{s_l.nearly.parallel.n.odd} together with the fact that $s^{(l)} \to s$ in $L_{\text{\rm loc}}^2(\Omega \setminus \partial \Omega)$.
\end{proof}

\begin{corollary}\label{convergence.boundary.condition.n.odd}
We have 
\[
\int_{\Sigma_{\lambda_l}} |\chi^{(l)} s - s|^2 \to 0 \quad \text{ as }  l \to \infty.
\] 
Here, $\chi^{(l)}$ denotes the boundary operator on $\Sigma_{\lambda_l}$.
\end{corollary}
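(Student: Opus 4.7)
The plan is to exploit the boundary condition $\chi^{(l)} s^{(l)} = s^{(l)}$ satisfied by the approximating sequence, together with the isometry property of $\chi^{(l)}$. First I would write
\[
\chi^{(l)} s - s = \chi^{(l)}(s - s^{(l)}) + (\chi^{(l)} s^{(l)} - s^{(l)}) + (s^{(l)} - s) = \chi^{(l)}(s - s^{(l)}) + (s^{(l)} - s),
\]
using $\chi^{(l)} s^{(l)} = s^{(l)}$ on $\Sigma_{\lambda_l}$. Recall from the discussion following \eqref{boundary.chirality} that $\chi^{(l)}$ is a pointwise selfadjoint involution on $\mathcal{E}|_{\Sigma_{\lambda_l}}$, hence an isometry of the fibers. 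Therefore $|\chi^{(l)}(s - s^{(l)})| = |s - s^{(l)}|$ pointwise on $\Sigma_{\lambda_l}$.

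Applying the triangle inequality gives $|\chi^{(l)} s - s| \leq 2\,|s^{(l)} - s|$ pointwise on $\Sigma_{\lambda_l}$, so
\[
\int_{\Sigma_{\lambda_l}} |\chi^{(l)} s - s|^2 \leq 4 \int_{\Sigma_{\lambda_l}} |s^{(l)} - s|^2.
\]
The right-hand side tends to zero by Proposition~\ref{proposition.convergence.of.s_l.n.odd}, which yields the claim.

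There is no substantive obstacle here: the statement is an immediate consequence of the boundary condition satisfied by $s^{(l)}$ and the $L^2$-convergence of $s^{(l)}$ to $s$ along $\Sigma_{\lambda_l}$ already established. The only point worth noting is that $s$ itself need not satisfy any particular boundary relation with respect to $\chi^{(l)}$ a priori --- the conclusion is that in the limit, $s$ asymptotically satisfies $\chi^{(l)} s = s$ along the sequence of smooth approximating boundaries $\Sigma_{\lambda_l}$.
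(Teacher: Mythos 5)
Your argument is correct and follows the same route the paper intends: the identity $\chi^{(l)} s^{(l)} = s^{(l)}$ plus the fact that $\chi^{(l)}$ is a fiberwise isometry reduces everything to the $L^2$-convergence from Proposition~\ref{proposition.convergence.of.s_l.n.odd}. The paper states the proof in one sentence, but the details you supply --- the decomposition $\chi^{(l)}s - s = \chi^{(l)}(s-s^{(l)}) + (s^{(l)}-s)$ and the norm-preservation of the selfadjoint involution $\chi^{(l)}$ --- are exactly the reasoning being invoked.
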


\begin{proof}
Recall that $s^{(l)}$ satisfies the boundary condition $\chi^{(l)} s^{(l)} = s^{(l)}$ at each point on $\Sigma_{\lambda_l}$. 
Hence, the assertion follows from Proposition~\ref{proposition.convergence.of.s_l.n.odd}. 
\end{proof}

For each $i_0 \in I$, Assumption~\ref{no.redundant.inequalities} implies that the set $\{u_{i_0} = 0\} \cap \bigcap_{i \in I \setminus \{i_0\}} \{u_i < 0\}$ is nonempty (see \cite{Brendle}*{Lemma~3.2}). 
Moreover, it follows from Corollary~\ref{convergence.boundary.condition.n.odd} that $s$ satisfies the boundary condition $\gamma(\nu_{i_0}) \circ s = s \circ \gamma_0(N_{i_0})$ on $\{u_{i_0} = 0\} \cap \bigcap_{i \in I \setminus \{i_0\}} \{u_i < 0\}$. 
 
\begin{lemma}
Let $\sigma \in S_0$. If $s\sigma$ vanishes at some point of $\Omega$, then it vanishes everywhere on  $\Omega$. 
\end{lemma}

\begin{proof}
The proof is analogous to the proof of Lemma~\ref{lemma.parallel.spinors.n.odd}. 
\end{proof}

As above, we define $L := \{\sigma \in S_0 \colon \text{\rm $s\sigma=0$ at each point in $\Omega$}\}$.

\begin{lemma}\label{lemma.L.invariant.poly.odd.dim}
The subspace $L \subset S_0$ is invariant under $\gamma_0(N_i)$ for each $i \in I$. 
\end{lemma}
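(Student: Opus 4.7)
The plan is to imitate the argument of Lemma~\ref{lemma.L.invariant.n.odd}, but using the boundary condition only at interior points of each boundary face rather than on all of a smooth boundary. The key input is that we have already established, using Corollary~\ref{convergence.boundary.condition.n.odd}, that the limiting parallel section $s$ satisfies
\[
\gamma(\nu_{i_0}) \circ s = s \circ \gamma_0(N_{i_0})
\]
on the relatively open set $F_{i_0} := \{u_{i_0} = 0\} \cap \bigcap_{i \in I \setminus \{i_0\}} \{u_i < 0\}$, and by Assumption~\ref{no.redundant.inequalities} this set $F_{i_0}$ is non-empty.

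First, I would fix $i_0 \in I$ and $\sigma \in L$, and pick any point $p \in F_{i_0}$. Since $\sigma \in L$ we have $s(p)\sigma = 0$, and applying $\gamma(\nu_{i_0}(p))$ and then using the boundary identity above gives
\[
0 = \gamma(\nu_{i_0}(p)) \cdot s(p)\sigma = s(p) \cdot \gamma_0(N_{i_0})\sigma .
\]
Thus the section $x \mapsto s(x)\,\gamma_0(N_{i_0})\sigma$ of $\mathcal{S}$ vanishes at the single point $p$.

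Next, I would invoke the analogue (for the polytope case) of Lemma~\ref{lemma.parallel.spinors.n.odd} just proved: since $\nabla^{\mathcal{E},q}$ is a tensor product connection with the trivial connection on the $S_0^*$ factor, the section $s(\cdot)\gamma_0(N_{i_0})\sigma$ is $\nabla^{\mathcal{S},q}$-parallel on $\Omega$. A parallel section of a vector bundle with a connection that vanishes at one point must vanish on the connected component containing that point; $\Omega$ is connected (being convex), so $s(x)\,\gamma_0(N_{i_0})\sigma = 0$ for every $x \in \Omega$. By definition of $L$, this means $\gamma_0(N_{i_0})\sigma \in L$, establishing the claimed invariance. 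Since $i_0 \in I$ was arbitrary, this completes the proof. There is no real obstacle here — the only subtlety is that, unlike in the smooth case, the boundary relation only holds on the relative interior of each face, but Assumption~\ref{no.redundant.inequalities} supplies exactly the one point needed to trigger the parallel-transport argument.
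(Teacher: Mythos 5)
Your proposal is correct and matches the paper's intended argument: the paper simply says the proof is ``analogous to the proof of Lemma~\ref{lemma.L.invariant.n.odd}'', and your write-up spells out exactly that adaptation — apply the boundary relation $\gamma(\nu_{i_0}) \circ s = s \circ \gamma_0(N_{i_0})$ at a point of the non-empty relatively open face (Assumption~\ref{no.redundant.inequalities}), then propagate the resulting vanishing of the parallel section $s\,\gamma_0(N_{i_0})\sigma$ over all of $\Omega$. No gap.
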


\begin{proof}
The proof is analogous to the proof of Lemma~\ref{lemma.L.invariant.n.odd}.
\end{proof}

\begin{lemma} \label{maxrank.poly.odd.dim} 
We have $L=\{0\}$. 
Moreover, the section $s$, considered as a homomorphism field, is invertible on each point in $\Omega$. 
\end{lemma}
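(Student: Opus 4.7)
The plan is to follow the same strategy as in Lemma~\ref{lemma.maxrank.n.odd}, with the only modification being how we pass from invariance of $L$ under $\gamma_0(N_i)$ for the finitely many face normals $N_i$ to invariance of $L$ under $\gamma_0(v)$ for every $v \in \R^n$. In the smooth case this step used surjectivity of $N \colon \Sigma \to S^{n-1}$, which followed from nontriviality of its degree; here we exploit instead the fact that $\Omega$ is a compact convex polytope with nonempty interior.

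First I would observe that since $\Omega$ is bounded and $\Omega = \bigcap_{i \in I}\{u_i \leq 0\}$ has nonempty interior, the outward face normals $\{N_i\}_{i \in I}$ must positively span $\R^n$; in particular they span $\R^n$ as a real vector space. (If they all lay in a hyperplane with normal $w \in \R^n$, then translation in the $w$-direction would preserve every halfspace $\{u_i \leq 0\}$, contradicting boundedness of $\Omega$.) Combined with Lemma~\ref{lemma.L.invariant.poly.odd.dim} and the linearity of $\gamma_0$, this immediately yields that $L$ is invariant under $\gamma_0(v)$ for every $v \in \R^n$: if $\sigma \in L$ and $v = \sum_i c_i N_i$, then $\gamma_0(v)\sigma = \sum_i c_i \gamma_0(N_i)\sigma \in L$ since $L$ is a linear subspace closed under each $\gamma_0(N_i)$.

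Next, since the spinor representation $\C\mathrm{l}(\R^n) \to \End(S_0)$ is surjective, the operators $\gamma_0(v)$ for $v \in \R^n$ generate all of $\End(S_0)$. Therefore $L$ is an $\End(S_0)$-invariant subspace of $S_0$, so $L = \{0\}$ or $L = S_0$. The latter is impossible: if $L = S_0$, then $s$ would vanish identically on $\Omega$, contradicting $\int_U |s|^2 = 1$ (which was established in the construction of $s$ as a weak limit of the sections $s^{(l)}$). Hence $L = \{0\}$.

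Finally, since $\nabla^{\mathcal{E},q} s = 0$ on $\Omega$, the previous lemma shows that for any $\sigma \in S_0$ the section $s\sigma$ of $\mathcal{S}$ is $\nabla^{\mathcal{S},q}$-parallel, so it vanishes identically if it vanishes at any point. Thus $L = \{0\}$ means $s\sigma \neq 0$ everywhere for every nonzero $\sigma \in S_0$, i.e.\ $s \colon S_0 \to \mathcal{S}|_p$ is injective at each $p \in \Omega$. Since $\dim S_0 = \dim \mathcal{S}|_p = m$, injectivity gives invertibility at every point. The only step requiring genuine thought is the spanning argument for $\{N_i\}$, and once that is in hand the proof is essentially a verbatim repetition of Lemma~\ref{lemma.maxrank.n.odd}.
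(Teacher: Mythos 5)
Your argument is correct and is exactly the route the paper takes: the paper's proof is a one-line citation of Lemma~\ref{lemma.L.invariant.poly.odd.dim} together with the fact that $\mathrm{span}\{N_i : i \in I\} = \R^n$, and you supply precisely the details that this shorthand compresses, mirroring the structure of Lemma~\ref{lemma.maxrank.n.odd}. Your supplementary justification that the outward face normals of a bounded polytope with nonempty interior must span $\R^n$ is a correct and standard observation that the paper leaves implicit.
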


\begin{proof}
This follows from Lemma~\ref{lemma.L.invariant.poly.odd.dim} together with the fact that 
\begin{equation*}
\text{\rm span}\{N_i \colon i \in I\} = \R^n.
\qedhere
\end{equation*}
\end{proof}

\begin{lemma} \label{boundary.faces.odd.dim}
Let $i_0 \in I$. Along the boundary face $\{u_{i_0} = 0\} \cap \bigcap_{i \in I \setminus \{i_0\}} \{u_i < 0\}$, we have $h(X) = q(X)^{\text{\rm tan}} = 0$ whenever $X$ is a tangent vector to that boundary face.
\end{lemma}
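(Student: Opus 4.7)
The plan is to adapt the argument of Lemma~\ref{lemma.boundary.rigidity.odd.dim} to the present setting, exploiting the crucial simplification that on the interior of a boundary face the Euclidean normal $N_{i_0}$ is constant, so $dN_{i_0}=0$. Fix a point $p$ in the open face $\{u_{i_0}=0\}\cap\bigcap_{i\neq i_0}\{u_i<0\}$. Near $p$, the unit outward normal of this face with respect to $g$ is exactly $\nu_{i_0}$, and the face's second fundamental form is $h(X)=D_X\nu_{i_0}$ (which is automatically tangent to the face since $|\nu_{i_0}|_g^2=1$). By the parallel-transport argument described above the statement of the lemma, $s$ satisfies the boundary relation $\gamma(\nu_{i_0})\circ s = s\circ \gamma_0(N_{i_0})$ on this face.

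I would next differentiate this boundary relation along a tangent vector $X$ to the face, exactly reproducing the computation in the proof of Lemma~\ref{lemma.boundary.behavior.n.odd}. Using $\nabla^{\mathcal{E},q}s=0$, i.e.\ $\nabla_X^{\mathcal{E}} s=-\tfrac{i}{2}\gamma(q(X))\circ s$, and the fact that $\gamma(\nu_{i_0})$ anticommutes with $\gamma(q(X)^{\text{tan}})$ but commutes with $\gamma(q(X)^{\text{nor}})$, one arrives at
\[
\gamma(h(X))\circ s + i\,\gamma(q(X)^{\text{tan}})\circ \gamma(\nu_{i_0})\circ s \;=\; s\circ\gamma_0(dN_{i_0}(X)).
\]
The right-hand side vanishes since the Euclidean normal $N_{i_0}$ to the hyperplane $\{u_{i_0}=0\}$ is a fixed element of $S^{n-1}$.

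Since $s$ is invertible at each point of $\Omega$ by Lemma~\ref{maxrank.poly.odd.dim}, this yields the identity
\[
\gamma(h(X)) + i\,\gamma(q(X)^{\text{tan}})\circ\gamma(\nu_{i_0}) \;=\; 0 \quad\in\ \mathrm{End}(\mathcal{S}|_p).
\]
The endomorphism $\gamma(h(X))$ is skew-adjoint, while the composition $\gamma(q(X)^{\text{tan}})\circ\gamma(\nu_{i_0})$ is a product of two anticommuting skew-adjoint operators (because $q(X)^{\text{tan}}$ is orthogonal to $\nu_{i_0}$), hence itself skew-adjoint, so multiplication by $i$ makes it self-adjoint. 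Splitting the identity into its self-adjoint and skew-adjoint parts forces each summand to vanish: $\gamma(h(X))=0$ gives $h(X)=0$, and $\gamma(q(X)^{\text{tan}})\circ\gamma(\nu_{i_0})=0$ combined with invertibility of $\gamma(\nu_{i_0})$ gives $q(X)^{\text{tan}}=0$. Since $p$ and $X$ were arbitrary in the open face, this completes the proof. No new obstacle appears beyond what was handled in the smooth boundary case—the only new ingredient is the observation that flatness of the face makes $dN_{i_0}$ vanish identically.
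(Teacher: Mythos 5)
Your proof is correct and follows essentially the same route as the paper: differentiate the boundary relation $\gamma(\nu_{i_0})\circ s = s\circ\gamma_0(N_{i_0})$ along the face as in Lemma~\ref{lemma.boundary.behavior.n.odd}, observe that $dN_{i_0}=0$ kills the right-hand side, invoke invertibility of $s$ (Lemma~\ref{maxrank.poly.odd.dim}), and split the resulting endomorphism identity into self-adjoint and skew-adjoint parts. The only cosmetic difference is that you spell out explicitly why $i\,\gamma(q(X)^{\text{tan}})\circ\gamma(\nu_{i_0})$ is self-adjoint, which the paper states without elaboration.
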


\begin{proof} 
Recall that 
\[
\gamma(\nu_{i_0}) \circ s = s \circ \gamma_0(N_{i_0})
\] 
on the boundary face 
\[
\{u_{i_0} = 0\} \cap \bigcap_{i \in I \setminus \{i_0\}} \{u_i < 0\}.
\]
In the next step, we differentiate this identity in tangential direction. Let $p$ be a point on the boundary face $\{u_{i_0} = 0\} \cap \bigcap_{i \in I \setminus \{i_0\}} \{u_i < 0\}$, and let $X$ be a tangent vector to that boundary face. 
Arguing as in the proof of Lemma~\ref{lemma.boundary.behavior.n.odd}, we obtain 
\[\gamma(h(X)) \circ s + i \, \gamma(q(X)^{\text{\rm tan}}) \circ \gamma(\nu) \circ s = 0.\] 
Since $s \in \text{\rm Hom}(S_0,\mathcal{S}|_p)$ is invertible by Lemma~\ref{maxrank.poly.odd.dim}, it follows that 
\[\gamma(h(X)) + i \, \gamma(q(X)^{\text{\rm tan}}) \circ \gamma(\nu) = 0 \in \text{\rm End}(\mathcal{S}|_p).\] 
Note that $i \, \gamma(q(X)^{\text{\rm tan}}) \circ \gamma(\nu) \in \text{\rm End}(\mathcal{S}|_p)$ is self-adjoint, while $\gamma(h(X)) \in \text{\rm End}(\mathcal{S}|_p)$ is skew-adjoint. 
Thus, we conclude that 
\[
\gamma(h(X)) = 0 \in \text{\rm End}(\mathcal{S}|_p) \quad \text{ and } \quad \gamma(q(X)^{\text{\rm tan}}) \circ \gamma(\nu) = 0 \in \text{\rm End}(\mathcal{S}|_p).
\]
From this, the assertion follows.
\end{proof}

\subsection{Proof of Proposition~\ref{prop:polytope} in even dimensions} 
Suppose that $n \geq 2$ is an even integer. For each $l$, Corollary~\ref{corollary.existence.n.even} implies the existence of a non-trivial section $s^{(l)} \in H^1(\Omega_{\lambda_l},\mathcal{E})$ such that 
\begin{align*} 
\int_{\Omega_{\lambda_l}}  | \nabla^{\mathcal{E},q} s^{(l)}|^2 
&\leq 
\tfrac{1}{2} \int_{\Omega_{\lambda_l}} (-\mu+|J|)  |s^{(l)}|^2 \\ 
&\quad + \tfrac{1}{2} \int_{\Sigma_{\lambda_l}} (|\tr(q) - q(\nu,\nu)| + \|dN^{(l)}\|_\tr - H) \, |s^{(l)}|^2. 
\end{align*}
Moreover, for each $l$, we can find an element $\hat{t}_l \in S^1$ such that $\gamma(\nu) \circ s^{(l)} = s^{(l)} \circ \gamma_0(\hat{N}^{(l)})$ on $\Sigma_{\lambda_l}$, where the map $\hat{N}^{(l)} \colon \Sigma_{\lambda_l} \to S^n$ is defined by $\hat{N}^{(l)}(x) = \varphi(N^{(l)}(x),\hat{t}_l)$ for all $x \in \Sigma_{\lambda_l}$. By assumption, $\mu-|J| \geq 0$. Using Corollary~\ref{est.W.spinor.field}, we obtain 
\begin{align*} 
\int_{\Sigma_{\lambda_l}} (|\tr(q) - q(\nu,\nu)| & + \|dN^{(l)}\|_\tr - H) \, |s^{(l)}|^2 
\leq o(1) \int_{\Omega_{\lambda_l}}  | \nabla^{\mathcal{E},q} s^{(l)}|^2 + o(1) \int_U |s^{(l)}|^2. 
\end{align*}
Putting these facts together, we conclude that 
\[\int_{\Omega_{\lambda_l}}  | \nabla^{\mathcal{E},q} s^{(l)}|^2 \leq o(1) \int_U |s^{(l)}|^2\] 
if $l$ is sufficiently large. Using this estimate together with Corollary~\ref{sobolev.type.ineq.spinor.field}, we obtain 
\[\int_{\Omega_{\lambda_l}} |s^{(l)}|^2 \leq C \int_U |s^{(l)}|^2\] 
if $l$ is sufficiently large. Since $\int_{\Omega_{\lambda_l}} |s^{(l)}|^2 > 0$, it follows that $\int_U |s^{(l)}|^2 > 0$ if $l$ is sufficiently large. 

By scaling, we may arrange that $\int_U |s^{(l)}|^2 = 1$ if $l$ is sufficiently large. Then 
\begin{equation} \label{s_l.nearly.parallel.n.even}
\int_{\Omega_{\lambda_l}}  | \nabla^{\mathcal{E},q} s^{(l)}|^2 \leq o(1) 
\end{equation}
and $\int_{\Omega_{\lambda_l}} |s^{(l)}|^2 \leq C$.
After passing to a subsequence if necessary, we may assume that $\hat{t}_l \to \hat{t}$ for some $\hat{t} \in S^1$. 
Moreover, the sequence $s^{(l)} $ converges weakly in $H_{\text{\rm loc}}^1(\Omega \setminus \partial \Omega)$. 
The limit $s$ is defined on the interior of $\Omega$ and satisfies $\int_U |s|^2 = 1$. 
Using \eqref{s_l.nearly.parallel.n.even}, we obtain $\nabla^{\mathcal{E},q} s  = 0$. 
In particular, $s$ is smooth. 
Using $\nabla^{\mathcal{E},q}$-parallel transport along radial lines emanating from some point in the interior of $\Omega \subset \R^n$, we extend $s$ to a smooth section which is defined on an open set containing $\Omega$ and satisfies $\nabla^{\mathcal{E},q} s = 0$ at each point in $\Omega$. 

\begin{proposition}\label{proposition.convergence.of.s_l.n.even}
We have
\[
\int_{\Sigma_{\lambda_l}} |s^{(l)} - s|^2 \to 0\quad\text{ as } l\to\infty.
\]
\end{proposition}

\begin{proof} 
The proof is analogous to the proof of Proposition~\ref{proposition.convergence.of.s_l.n.odd}. 
\end{proof}

\begin{corollary}\label{convergence.boundary.condition.n.even}
We have 
\[
\int_{\Sigma_{\lambda_l}} |\gamma(\nu) \circ s - s \circ \gamma_0(\hat{N}^{(l)})|^2 \to 0 \quad\text{ as }  l\to\infty.
\]
\end{corollary}

\begin{proof}
Recall that $s^{(l)}$ satisfies the boundary condition $\gamma(\nu) \circ s^{(l)} = s^{(l)} \circ \gamma_0(\hat{N}^{(l)})$ on $\Sigma_{\lambda_l}$ at each point on $\Sigma_{\lambda_l}$. 
Hence, the assertion follows from Proposition~\ref{proposition.convergence.of.s_l.n.even}. 
\end{proof}

For each $i \in I$, we define $\hat{N}_i = \varphi(N_i,\hat{t}) \in S^n$. 
For each $i_0 \in I$, Assumption~\ref{no.redundant.inequalities} implies that the set $\{u_{i_0} = 0\} \cap \bigcap_{i \in I \setminus \{i_0\}} \{u_i < 0\}$ is nonempty (see \cite{Brendle}*{Lemma~3.2}). 
Moreover, it follows from Corollary~\ref{convergence.boundary.condition.n.even} that $s$ satisfies the boundary condition 
\[
\gamma(\nu_{i_0}) \circ s = s \circ \gamma_0(\hat{N}_{i_0})
\] 
on $\{u_{i_0} = 0\} \cap \bigcap_{i \in I \setminus \{i_0\}} \{u_i < 0\}$. 
By \cite{Brendle}*{Lemma~3.2}, the set $\{u_{i_0} = 0\} \cap \bigcap_{i \in I \setminus \{i_0\}} \{u_i < 0\}$ is a dense subset of $\Omega \cap \{u_{i_0}=0\}$. 
Thus, we conclude that 
\[
\gamma(\nu_{i_0}) \circ s = s \circ \gamma_0(\hat{N}_{i_0})
\] 
on $\Omega \cap \{u_{i_0}=0\}$. 

\begin{lemma} \label{lemma.comparison.of.N.and.hat.N.poly}
Suppose that $i_1,i_2 \in I$ satisfy $\Omega \cap \{u_{i_1}=0\} \cap \{u_{i_2}=0\} \neq \emptyset$. Then $\langle N_{i_1},N_{i_2} \rangle = \langle \hat{N}_{i_1},\hat{N}_{i_2} \rangle$.
\end{lemma}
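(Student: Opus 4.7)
The plan is to pick any point $p \in \Omega \cap \{u_{i_1}=0\} \cap \{u_{i_2}=0\}$ and combine the two boundary identities at $p$ with the Clifford relations to convert them into a scalar equation. By the density argument quoted just before the lemma, the boundary condition $\gamma(\nu_i) \circ s = s \circ \gamma_0(\hat N_i)$ holds on all of $\Omega \cap \{u_i=0\}$ for each $i$, so both identities
\[
\gamma(\nu_{i_1}) \circ s = s \circ \gamma_0(\hat N_{i_1}), \qquad \gamma(\nu_{i_2}) \circ s = s \circ \gamma_0(\hat N_{i_2})
\]
are available at the chosen point $p$.

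Next I would compose. Applying the first identity, then the second, yields
\[
\gamma(\nu_{i_1})\gamma(\nu_{i_2}) \circ s = \gamma(\nu_{i_1}) \circ s \circ \gamma_0(\hat N_{i_2}) = s \circ \gamma_0(\hat N_{i_1})\gamma_0(\hat N_{i_2}),
\]
and similarly with $i_1$ and $i_2$ interchanged. Adding the two and invoking the Clifford relations in $\mathrm{End}(\mathcal{S}|_p)$ and in $\mathrm{End}(S_0)$ collapses both sides to scalars, giving
\[
-2 \langle \nu_{i_1}, \nu_{i_2}\rangle_g \, s = -2 \langle \hat N_{i_1}, \hat N_{i_2}\rangle \, s
\]
at $p$.

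Finally, I would cancel $s$. Since $s$ is $\nabla^{\mathcal{E},q}$-parallel and satisfies $\int_U |s|^2 = 1$, the argument in Lemma~\ref{lemma.parallel.spinors.n.even} (applied to $s\sigma$ for arbitrary $\sigma \in S_0$) shows $s(p) \neq 0$: otherwise every $\nabla^{\mathcal{S},q}$-parallel section $s\sigma$ would vanish at $p$, hence everywhere, contradicting nontriviality. Thus $\langle \nu_{i_1}, \nu_{i_2}\rangle_g = \langle \hat N_{i_1}, \hat N_{i_2}\rangle$ at $p$, and the Matching Angle Hypothesis identifies the left-hand side with $\langle N_{i_1}, N_{i_2}\rangle$, giving the claim.

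No step is a genuine obstacle; the lemma is a clean pointwise Clifford-algebra computation, and the only subtlety is the bookkeeping observation that the two boundary conditions available on disjoint open faces can be simultaneously evaluated at a point of their common closure, which is exactly what the density remark preceding the lemma provides.
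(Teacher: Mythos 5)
Your proposal is correct and follows essentially the same route as the paper. The paper symmetrizes at the level of the Hermitian inner product, computing $\langle\nu_{i_1},\nu_{i_2}\rangle_g\,|s|^2 = \tfrac12\langle\gamma(\nu_{i_1})\circ s,\gamma(\nu_{i_2})\circ s\rangle + \tfrac12\langle\gamma(\nu_{i_2})\circ s,\gamma(\nu_{i_1})\circ s\rangle$ and transporting via the two boundary identities, whereas you symmetrize at the operator level to obtain $-2\langle\nu_{i_1},\nu_{i_2}\rangle_g\,s = -2\langle\hat N_{i_1},\hat N_{i_2}\rangle\,s$ directly; both are the same Clifford-relation computation followed by cancelling $s$ using $s(p)\neq 0$, which in turn follows from parallelism and $\int_U|s|^2=1>0$.

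One small slip in bookkeeping: you cite Lemma~\ref{lemma.parallel.spinors.n.even} (from Section~4, the smooth-boundary case) for the vanishing-locus argument, but the applicable statement here is the unnamed lemma stated just before Lemma~\ref{lemma.L.invariant.poly.even.dim} in the polytope section, which asserts the same thing for $\Omega$. In fact you don't even need the $s\sigma$ detour: $s$ itself is $\nabla^{\mathcal{E},q}$-parallel, so $s(p)=0$ would force $s\equiv 0$ on the connected set $\Omega$, contradicting $\int_U|s|^2=1$.
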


\begin{proof} 
By assumption, we can find a point $p \in \Omega \cap \{u_{i_1}=0\} \cap \{u_{i_2}=0\}$. Then 
\[\gamma(\nu_{i_1}) \circ s = s \circ \gamma_0(\hat{N}_{i_1})\] 
and 
\[\gamma(\nu_{i_2}) \circ s = s \circ \gamma_0(\hat{N}_{i_2})\] 
at the point $p$. This implies 
\begin{align*} 
\langle \nu_{i_1},\nu_{i_2} \rangle_g \, |s|^2 
&= \tfrac{1}{2} \, \langle \gamma(\nu_{i_1}) \circ s,\gamma(\nu_{i_2}) \circ s \rangle + \tfrac{1}{2} \, \langle \gamma(\nu_{i_2}) \circ s,\gamma(\nu_{i_1}) \circ s \rangle \\ 
&= \tfrac{1}{2} \, \langle s \circ \gamma_0(\hat{N}_{i_1}),s \circ \gamma_0(\hat{N}_{i_2}) \rangle + \tfrac{1}{2} \, \langle s \circ \gamma_0(\hat{N}_{i_2}),s \circ \gamma_0(\hat{N}_{i_1}) \rangle \\ 
&= \langle \hat{N}_{i_1},\hat{N}_{i_2} \rangle \, |s|^2 
\end{align*} 
at the point $p$. 
On the other hand, the Matching Angle Hypothesis implies that 
\[
\langle \nu_{i_1},\nu_{i_2} \rangle_g \, |s|^2 = \langle N_{i_1},N_{i_2} \rangle \, |s|^2
\]
at the point $p$. Putting these facts together, we conclude that $\langle N_{i_1},N_{i_2} \rangle \, |s|^2 = \langle \hat{N}_{i_1},\hat{N}_{i_2} \rangle \, |s|^2$ at the point $p$. Since $|s|^2 > 0$ at each point in $\Omega$, the assertion follows.
\end{proof}

\begin{lemma} \label{lemma.hat.t.poly}
The map $\varphi(\cdot,\hat{t}) \colon S^{n-1} \to S^n$ is the standard inclusion $S^{n-1} \hookrightarrow S^n$. 
\end{lemma}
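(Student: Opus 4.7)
The plan is to argue by contradiction. Suppose that $\varphi(\cdot,\hat{t})\colon S^{n-1}\to S^n$ is not the standard inclusion. Then by Lemma~\ref{lemma.existence.of.varphi}, $\varphi(\cdot,\hat{t})$ is Lipschitz continuous with some Lipschitz constant $L<1$. Applied to the pair $N_{i_1},N_{i_2}\in S^{n-1}$ for any $i_1,i_2\in I$, this yields
\[|\hat{N}_{i_1}-\hat{N}_{i_2}| = |\varphi(N_{i_1},\hat{t})-\varphi(N_{i_2},\hat{t})| \leq L\,|N_{i_1}-N_{i_2}|.\]

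Next, I would combine this with Lemma~\ref{lemma.comparison.of.N.and.hat.N.poly}. Consider any pair $i_1,i_2\in I$ with $i_1\neq i_2$ and $\Omega\cap\{u_{i_1}=0\}\cap\{u_{i_2}=0\}\neq\emptyset$. Since $N_{i_j}$ and $\hat{N}_{i_j}$ are all unit vectors, the identity $\langle N_{i_1},N_{i_2}\rangle = \langle \hat{N}_{i_1},\hat{N}_{i_2}\rangle$ given by Lemma~\ref{lemma.comparison.of.N.and.hat.N.poly} can be rewritten as $|N_{i_1}-N_{i_2}|^2 = |\hat{N}_{i_1}-\hat{N}_{i_2}|^2$. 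Combined with the Lipschitz bound, this forces $|N_{i_1}-N_{i_2}|^2\le L^2\,|N_{i_1}-N_{i_2}|^2$, and hence $N_{i_1}=N_{i_2}$ (since $L<1$).

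Finally, I would argue that such a relation cannot hold for all adjacent pairs. Since $\Omega$ is a compact, convex polytope with non-empty interior in $\R^n$ and $n\ge 2$, the facet-adjacency graph on $I$ is connected, so the conclusion $N_{i_1}=N_{i_2}$ for all adjacent pairs would force $N_i$ to be independent of $i\in I$. But then all half-spaces $\{u_i\le 0\}$ would share a common outward Euclidean normal; the most restrictive one would imply all the others, making every other inequality redundant, contradicting Assumption~\ref{no.redundant.inequalities} when $|I|\ge 2$, while $|I|=1$ would render $\Omega$ an unbounded half-space, contradicting compactness. This contradiction forces $\varphi(\cdot,\hat{t})$ to be the standard inclusion.

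The main obstacle has already been overcome in establishing Lemma~\ref{lemma.comparison.of.N.and.hat.N.poly}, which translates the Matching Angle Hypothesis into an exact identity for the angles between the $\hat{N}_i$; the argument above is essentially a rigidity statement for the Lipschitz map $\varphi(\cdot,\hat{t})$, extracted by confronting strict contraction with an isometry on the finite set $\{N_i\}_{i\in I}$.
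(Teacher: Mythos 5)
Your proof is correct and follows essentially the same route as the paper: assume $\varphi(\cdot,\hat t)$ is not the standard inclusion, invoke Lemma~\ref{lemma.existence.of.varphi} to get a strict Lipschitz contraction, and derive a contradiction with Lemma~\ref{lemma.comparison.of.N.and.hat.N.poly}. The paper's proof is terse (it simply states that the contraction ``contradicts'' Lemma~\ref{lemma.comparison.of.N.and.hat.N.poly}), while you make explicit the intermediate step that, for any pair of facets sharing a point, $\langle N_{i_1},N_{i_2}\rangle=\langle\hat N_{i_1},\hat N_{i_2}\rangle$ plus $L<1$ forces $N_{i_1}=N_{i_2}$. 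Your final paragraph is more elaborate than necessary: you do not need connectivity of the facet-adjacency graph, since already for a single ridge the two incident facets of a compact convex polytope with non-redundant defining inequalities have distinct Euclidean normals (two parallel supporting hyperplanes cannot meet $\Omega$ at a common point without one being redundant), and $n\ge 2$ together with compactness guarantees such a ridge exists. But the reasoning you give, while round-about, is also valid.
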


\begin{proof}
Suppose that the assertion is false. By Lemma~\ref{lemma.existence.of.varphi}, the map $\varphi(\cdot,\hat{t})$ is Lipschitz continuous with a Lipschitz constant strictly less than $1$. Moreover, it follows from Assumption~\ref{no.redundant.inequalities} that $N_{i_1} \neq N_{i_2}$ for $i_1 \neq i_2$. This implies 
\[|N_{i_1}-N_{i_2}|^2 > |\varphi(N_{i_1},\hat{t}) - \varphi(N_{i_2},\hat{t})|^2 = |\hat{N}_{i_1}-\hat{N}_{i_2}|^2\] 
for $i_1 \neq i_2$. This contradicts Lemma~\ref{lemma.comparison.of.N.and.hat.N.poly}.
\end{proof}

\begin{lemma}
Let $\sigma \in S_0$. If $s\sigma$ vanishes at some point of $\Omega$, then it vanishes everywhere on  $\Omega$. 
\end{lemma}

\begin{proof}
The proof is analogous to the proof of Lemma~\ref{lemma.parallel.spinors.n.odd}. 
\end{proof}

We again define $L := \{\sigma \in S_0 \colon \text{\rm $s\sigma=0$ at each point in $\Omega$}\}$.

\begin{lemma}\label{lemma.L.invariant.poly.even.dim}
The subspace $L \subset S_0$ is invariant under $\gamma_0(\hat{N}_i)$ for each $i \in I$. 
\end{lemma}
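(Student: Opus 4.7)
The plan is to imitate the argument of Lemma~\ref{lemma.L.invariant.n.odd} and its polytope analogue Lemma~\ref{lemma.L.invariant.poly.odd.dim}, replacing $N_i$ with $\hat{N}_i = \varphi(N_i,\hat{t})$ and using the boundary condition
\[
\gamma(\nu_{i}) \circ s = s \circ \gamma_0(\hat{N}_{i})
\]
which, as established just before Lemma~\ref{lemma.comparison.of.N.and.hat.N.poly}, holds at each point of $\Omega \cap \{u_{i}=0\}$ for every $i \in I$. By Assumption~\ref{no.redundant.inequalities}, this face is nonempty, so we have access to at least one point $p$ at which the boundary identity is valid.

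I would then proceed as follows. Fix $i \in I$ and $\sigma \in L$. Since $s\sigma = 0$ on all of $\Omega$, we get, at the point $p \in \Omega \cap \{u_i = 0\}$,
\[
0 = \gamma(\nu_i)\, s\sigma = s\, \gamma_0(\hat{N}_i)\,\sigma.
\]
In other words, the section $x \mapsto s(x)\,\gamma_0(\hat{N}_i)\,\sigma$ of $\mathcal{S}$ vanishes at $p$. But this section is $\nabla^{\mathcal{S},q}$-parallel: indeed, $s$ is $\nabla^{\mathcal{E},q}$-parallel and $\gamma_0(\hat{N}_i)\,\sigma$ is a fixed element of $S_0$, so the same argument given in the unnumbered ``parallel spinor'' lemma preceding this statement applies verbatim. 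A $\nabla^{\mathcal{S},q}$-parallel section that vanishes at one point of $\Omega$ vanishes on all of $\Omega$, hence $\gamma_0(\hat{N}_i)\,\sigma \in L$, proving the invariance.

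The proof is essentially identical to that of Lemma~\ref{lemma.L.invariant.poly.odd.dim}; the only substantive change is replacing the unit vector $N_{i}\in S^{n-1}$ with its image $\hat{N}_{i}\in S^{n}$ under $\varphi(\cdot,\hat{t})$. There is no real obstacle, since the three ingredients we need, namely (i) the modified boundary condition at the interior of each face, (ii) nonemptiness of each face, and (iii) the propagation principle for $\nabla^{\mathcal{S},q}$-parallel spinors, have all been proved earlier. If anything, the only point requiring a sentence of care is the invocation of (i), whose proof required Corollary~\ref{convergence.boundary.condition.n.even} together with the density statement from Lemma~3.2 of \cite{Brendle}; but that has already been carried out above, so here we simply cite it.
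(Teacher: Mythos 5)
Your argument is correct and is exactly the argument the paper intends, since the paper's proof consists only of the remark that it is analogous to Lemma~\ref{lemma.L.invariant.n.odd}; you have correctly unpacked that analogy, using the boundary identity $\gamma(\nu_{i})\circ s = s\circ\gamma_0(\hat{N}_{i})$ on $\Omega\cap\{u_i=0\}$, the nonemptiness of each face, and the propagation principle for $\nabla^{\mathcal{S},q}$-parallel spinors. No gaps.
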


\begin{proof}
The proof is analogous to the proof of Lemma~\ref{lemma.L.invariant.n.odd}.
\end{proof}

\begin{lemma} \label{maxrank.poly.even.dim} 
We have $L=\{0\}$. Moreover, the section $s$, considered as a homomorphism field, is invertible on each point in $\Omega$. 
\end{lemma}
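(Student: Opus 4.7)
The plan is to mimic the strategy of Lemma~\ref{lemma.maxrank.n.even} (the smooth-boundary, even-dimensional analogue), replacing the role of $\hat{N}\colon \Sigma \to S^n$ coming from a degree argument with the collection of constant vectors $\hat{N}_i \in S^n$ associated with the faces of the polytope. The first step is to show that the Euclidean outer unit normals $\{N_i\}_{i \in I}$ span all of $\R^n$. This is a consequence of Assumption~\ref{no.redundant.inequalities} and the fact that $\Omega$ is bounded with non-empty interior: if the $N_i$ lay in a proper subspace, there would be a nonzero vector $v$ orthogonal to every $N_i$, and then $\Omega$ would be invariant under translation by $tv$ for all $t \in \R$, contradicting compactness.

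Next I invoke Lemma~\ref{lemma.hat.t.poly}, which identifies $\varphi(\cdot,\hat{t})$ with the standard inclusion $S^{n-1} \hookrightarrow S^n$. Consequently $\hat{N}_i = (N_i,0) \in \R^n \times \{0\} \subset \R^{n+1}$ for each $i \in I$, and therefore
\[
\mathrm{span}\{\hat{N}_i : i \in I\} = \R^n \times \{0\} \subset \R^{n+1}.
\]
By Lemma~\ref{lemma.L.invariant.poly.even.dim}, $L$ is invariant under $\gamma_0(\hat{N}_i)$ for every $i \in I$, hence under $\gamma_0(v)$ for every $v$ in this $n$-dimensional subspace of $\R^{n+1}$. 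Since the complex volume element of $\mathbb{C}\mathrm{l}(\R^{n+1})$ acts as a nonzero scalar on $S_0$ (this is the place where we use that $n+1$ is odd, exactly as in the proof of Lemma~\ref{lemma.maxrank.n.even}), Clifford multiplication by the remaining direction $e_{n+1}$ can be written as a scalar multiple of the product $\gamma_0(e_1)\cdots\gamma_0(e_n)$, so $L$ is in fact invariant under $\gamma_0(v)$ for all $v \in \R^{n+1}$.

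From this point the argument is identical to the one used for Lemma~\ref{lemma.maxrank.n.odd} and Lemma~\ref{lemma.maxrank.n.even}: the complex spinor representation $\mathbb{C}\mathrm{l}(\R^{n+1}) \to \mathrm{End}(S_0)$ is surjective, so $L$ is invariant under all of $\mathrm{End}(S_0)$, forcing $L = \{0\}$ or $L = S_0$. The second alternative is excluded because $\int_U |s|^2 = 1 > 0$ from the construction of $s$, so $L = \{0\}$. The preceding parallel-spinor lemma (the even-dimensional analogue of Lemma~\ref{lemma.parallel.spinors.n.odd}) then shows that, considered as a homomorphism $S_0 \to \mathcal{S}|_p$, the map $s$ is injective at every $p \in \Omega$; a dimension count ($\dim S_0 = \dim \mathcal{S}|_p = m$) upgrades this to invertibility everywhere in $\Omega$.

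There is no serious obstacle here — the only point that differs from the smooth-boundary case is the replacement of ``$N(\Sigma)$ fills $S^{n-1}$'' by the polytope-geometric fact $\mathrm{span}\{N_i\} = \R^n$, which is elementary. Every other ingredient (invariance of $L$, the volume-element trick, surjectivity of the Clifford representation, and the extension from injectivity to invertibility) has already been set up in the preceding lemmas and can be quoted verbatim.
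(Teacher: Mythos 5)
Your proposal is correct and follows essentially the same route as the paper's proof: note $\mathrm{span}\{N_i\}=\R^n$, use Lemma~\ref{lemma.hat.t.poly} to upgrade this to $\mathrm{span}\{\hat{N}_i\}=\R^n\subset\R^{n+1}$, invoke the invariance of $L$ from Lemma~\ref{lemma.L.invariant.poly.even.dim}, use the scalar action of the volume element to get invariance under all of $\gamma_0(\R^{n+1})$, and then argue as in Lemma~\ref{lemma.maxrank.n.odd}. The only difference is that you spell out why $\mathrm{span}\{N_i\}=\R^n$ (compactness of $\Omega$ would fail otherwise) and unpack the volume-element trick, both of which the paper takes for granted; these are sound elementary additions, not deviations.
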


\begin{proof}
Note that $\text{\rm span}\{N_i \colon i \in I\} = \R^n$. 
Using Lemma~\ref{lemma.hat.t.poly}, it follows that 
\[
\text{\rm span}\{\hat{N}_i \colon i \in I\} = \R^n \subset \R^{n+1}.
\]
By Lemma~\ref{lemma.L.invariant.poly.even.dim}, $L$ is invariant under $\gamma_0(\xi)$ for all $\xi \in \R^n$. Since the volume element acts as a scalar multiple of the identity, we deduce that $L$ is invariant under $\gamma_0(\xi)$ for all $\xi \in \R^{n+1}$. Arguing as in the proof of Lemma~\ref{lemma.maxrank.n.odd}, the assertion follows.
\end{proof}

\begin{lemma} \label{boundary.faces.even.dim}
Let $i_0 \in I$. Along the boundary face $\{u_{i_0} = 0\} \cap \bigcap_{i \in I \setminus \{i_0\}} \{u_i < 0\}$, we have $h(X) = q(X)^{\text{\rm tan}} = 0$ whenever $X$ is a tangent vector to that boundary face.
\end{lemma}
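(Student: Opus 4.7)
The plan is to mirror the proof of Lemma~\ref{boundary.faces.odd.dim}, adapting it via the even-dimensional boundary condition
\[\gamma(\nu_{i_0}) \circ s = s \circ \gamma_0(\hat{N}_{i_0})\]
on $\Omega \cap \{u_{i_0} = 0\}$, which was established just before the statement using Corollary~\ref{convergence.boundary.condition.n.even} and the density of the relatively open face inside $\Omega \cap \{u_{i_0}=0\}$ (Lemma 3.2 in \cite{Brendle}).

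The crucial observation is that along the Euclidean hyperplane $\{u_{i_0}=0\}$ the unit vector $N_{i_0} \in S^{n-1}$ is \emph{constant}, hence $\hat{N}_{i_0} = \varphi(N_{i_0},\hat{t}) \in S^n$ is constant as well. Consequently, if $X \in T_p\Sigma$ is tangent to the face at $p$, the tangential derivative of the right-hand side of the boundary relation only sees $\nabla^{\mathcal{S},q}$ acting on $s$, with no contribution from a derivative of $\hat{N}_{i_0}$. Using $\nabla^{\mathcal{E},q} s = 0$ (so that $\nabla_X^{\mathcal{E}} s = -\tfrac{i}{2}(\gamma(q(X))\otimes 1) s$) and differentiating the boundary identity in the $X$-direction, exactly the same calculation as in the proof of Lemma~\ref{lemma.boundary.behavior.n.odd} yields
\[\gamma(h(X)) \circ s + i \, \gamma(q(X)^{\text{\rm tan}}) \circ \gamma(\nu_{i_0}) \circ s = 0.\]

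By Lemma~\ref{maxrank.poly.even.dim}, $s$ is invertible as a homomorphism field at $p$, so this identity descends to
\[\gamma(h(X)) + i \, \gamma(q(X)^{\text{\rm tan}}) \circ \gamma(\nu_{i_0}) = 0 \in \textup{End}(\mathcal{S}|_p).\]
Since $q(X)^{\text{\rm tan}}$ is orthogonal to $\nu_{i_0}$, the operators $\gamma(q(X)^{\text{\rm tan}})$ and $\gamma(\nu_{i_0})$ anticommute, which makes $i\,\gamma(q(X)^{\text{\rm tan}}) \circ \gamma(\nu_{i_0})$ selfadjoint, whereas $\gamma(h(X))$ is skewadjoint. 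Splitting into selfadjoint and skewadjoint parts forces both summands to vanish, and the injectivity of $\gamma$ on $T_pM$ gives $h(X) = 0$ and $q(X)^{\text{\rm tan}} = 0$.

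The proof has essentially no novel obstacle relative to the odd-dimensional case; the only subtlety is ensuring that the extra $S^1$ factor introduced in Section~4 does not spoil the differentiation argument, and this is precisely what Lemma~\ref{lemma.hat.t.poly} guarantees by reducing $\hat{N}_{i_0}$ to the image of the constant $N_{i_0}$ under the standard inclusion $S^{n-1} \hookrightarrow S^n$, thereby keeping $\hat{N}_{i_0}$ constant along the face.
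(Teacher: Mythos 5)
Your proof is correct and follows essentially the same route as the paper, which simply declares the even-dimensional case analogous to Lemma~\ref{boundary.faces.odd.dim}: differentiate the boundary relation $\gamma(\nu_{i_0})\circ s = s\circ\gamma_0(\hat{N}_{i_0})$ in tangential directions using $\nabla^{\mathcal{E},q}s=0$, use invertibility of $s$ (Lemma~\ref{maxrank.poly.even.dim}), and split into selfadjoint and skewadjoint parts. One small remark: the constancy of $\hat{N}_{i_0}=\varphi(N_{i_0},\hat{t})$ along the face is automatic from the constancy of $N_{i_0}$ and $\hat{t}$ and does not actually require Lemma~\ref{lemma.hat.t.poly}; that lemma is used instead (via Lemma~\ref{maxrank.poly.even.dim}) to guarantee $s$ is invertible, which is what you really need at the final step.
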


\begin{proof} 
The proof is analogous to the proof of Lemma~\ref{boundary.faces.odd.dim}.
\end{proof}

\appendix

\section{A metric characterization of the ball} \label{sec:metric_char}

\begin{theorem} \label{thm:completeconvex} 
Let $(M,g)$ be a compact connected Riemannian manifold of dimension~$n\ge2$ with boundary such that the following conditions are satisfied: 
\begin{itemize}
  \item $g$ is flat.
  \item The second fundamental form of $\partial M$ is weakly positive definite.
  \item There exists a point in $\partial M$, where the second fundamental form is strictly positive definite.
\end{itemize}
Then $(M,g)$ is isometric to a compact convex domain in $\R^n$ with smooth boundary.
In particular, $M$ is diffeomorphic to a ball.
\end{theorem}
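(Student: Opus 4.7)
My plan is to use the developing map from the universal cover. Since $g$ is flat, the Riemannian universal cover $(\tilde M, \tilde g)$ admits a local isometry $D\colon \tilde M \to \R^n$ (the developing map) together with a holonomy representation $\rho\colon \pi_1(M) \to \mathrm{Isom}(\R^n)$ for which $D$ is $\rho$-equivariant. The goal is to show that $D$ restricts to a diffeomorphism of $\tilde M$ onto a compact convex domain $K \subset \R^n$; since $K$ is simply connected, this then forces $\pi_1(M) = 1$, and $M$ is isometric to $K$.

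The main input is the classical theorem of van Heijenoort, extended by Sacksteder: for $n \geq 3$, any complete, connected, $C^2$-immersed hypersurface in $\R^n$ whose second fundamental form is positive semi-definite everywhere and strictly positive at at least one point is embedded as the boundary of a compact convex body. (For $n = 2$, the analogue for plane curves is elementary: a closed plane curve with nonnegative geodesic curvature, strictly positive somewhere, is a convex Jordan curve.) I would let $\Sigma_0$ be a connected component of $\partial M$ containing the strictly convex point, and fix a lift $\tilde \Sigma_0 \subset \partial \tilde M$. Since $D$ is a local isometry, $D|_{\tilde \Sigma_0}$ is a complete, locally convex, $C^2$-immersed hypersurface in $\R^n$ with at least one strictly convex point; van Heijenoort--Sacksteder then yields an embedding of $\tilde \Sigma_0$ onto the boundary of a compact convex body $K \subset \R^n$, and in particular $\tilde \Sigma_0 = \Sigma_0$ is simply connected.

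Next I would propagate this conclusion into the interior. A one-sided collar of $\Sigma_0$ in $\tilde M$ develops into an inward collar of $\partial K$ inside $K$, by the outward-normal sign convention. Using that $D$ is a proper local diffeomorphism onto its image, that $\tilde M$ is connected, and that $K$ is simply connected, a continuation/degree argument (equivalently, one shows $D^{-1}(K)$ is open and closed in $\tilde M$) yields that $D$ restricts to a diffeomorphism $\tilde M \to K$. This forces $\pi_1(M) = 1$, so $M \cong \tilde M \cong K$, which is the desired isometry onto a compact convex domain with smooth boundary.

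The hardest part is ruling out additional boundary components of $\partial M$. Any other component $\Sigma_1$ would, under the development, either satisfy the hypotheses of van Heijenoort--Sacksteder itself (and hence bound another convex body, whose placement inside $K$ then contradicts the outward-normal sign convention of the second fundamental form on $\Sigma_1$), or else be totally geodesic with $h \equiv 0$ on $\Sigma_1$. In the latter case $\Sigma_1$ develops into a hyperplane in $\R^n$, and a propagation argument using the flat structure and the already-established embedding $\tilde \Sigma_0 \hookrightarrow \partial K$ leads to a contradiction with the compactness of $M$. A secondary technical point is the separate treatment of $n = 2$, where the planar-curve statement replaces van Heijenoort--Sacksteder and the continuation argument simplifies.
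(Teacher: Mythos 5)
Your approach is close in spirit to the paper's: both pass to a cover, invoke the classification of complete locally convex hypersurfaces in $\R^n$ (the paper cites do Carmo--Lima; you cite van Heijenoort--Sacksteder, which is essentially the same result), and then show the manifold fills the resulting convex body. There is, however, a genuine gap, and also a place where the paper's setup makes your sketch substantially cleaner.

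The gap is your claim that van Heijenoort--Sacksteder hands you a \emph{compact} convex body $K$. It does not: the theorem produces an embedding onto the boundary of a convex body that may well be unbounded, in which case the hypersurface is diffeomorphic to $\R^{n-1}$ (think of a paraboloid). Since $\tilde \Sigma_0$ is a priori a nontrivial cover of the compact boundary component $\Sigma_0$, it can be noncompact, so you cannot rule out the unbounded case by hand. Your subsequent deduction that $\tilde \Sigma_0 \cong S^{n-1}$ and hence $\tilde\Sigma_0 = \Sigma_0$ is therefore circular. The paper closes exactly this hole with a separate argument: if $\tilde \Sigma_0 \approx \R^{n-1}$, then (being a complete manifold of nonnegative sectional curvature that covers the compact $\Sigma_0$, hence cocompact and containing a line) the Cheeger--Gromoll splitting theorem forces the induced metric on $\tilde\Sigma_0$ to be flat, contradicting the existence of a strictly convex point. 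You would need to insert some version of this step.

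Separately, your ``continuation/degree argument'' assumes the developing map $D\colon \tilde M \to \R^n$ is proper, but for a complete manifold \emph{with boundary}, the developing map of its universal cover is a local isometry, not a priori proper, and the usual Cartan--Ambrose--Hicks covering argument does not apply directly. The paper sidesteps this entirely by first attaching an outward cylindrical end to $M$ using the normal exponential map and the Riccati form $g_t(X,Y) = g_0((\id + th)^2 X, Y)$ (here positive semi-definiteness of $h$ is what makes the extension smooth and nondegenerate for all $t \ge 0$). The resulting $M'$ is a complete flat manifold \emph{without} boundary, so $\tilde M' \cong \R^n$ globally and $\tilde M$ sits inside $\R^n$ as a closed subset with no propagation argument needed. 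Your sketch of ruling out additional boundary components is roughly in line with what the paper does, once the embedding $\tilde M \subset \R^n$ is in place. Finally, note the paper handles $n=2$ by a direct Gauss--Bonnet argument (positive Euler characteristic forces a disk), which is simpler than invoking the planar-curve analogue plus a continuation.
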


\begin{proof} 
For $\varepsilon >0$ small enough, we consider the smooth manifold 
\[M' := M \cup_{\sim}  (\partial M \times (-\varepsilon,\infty) ),\]
where $\sim$ identifies $\partial M \times (-\varepsilon,0] \subset \partial M \times (-\varepsilon,\infty)$ with an open neighborhood of $\partial M \subset M$ by the normal exponential map along $\partial M$ with respect to $g$.
We denote the metric on $\partial M$ induced by $g$ by $g_0$ and the second fundamental form of $\partial M$ (considered as a $(1,1)$-tensor field) by $h$.
The Gauss lemma, the Riccati equation for the second fundamental form and the vanishing of the curvature of $M$ imply that on $\partial M\times (-\varepsilon,0]$ the metric $g$ takes the form $g=dt \otimes dt+g_t$ where $g_t$ is the metric on $\partial M$ given by $g_t(X,Y) = g_0((\id+t h)^2X,Y)$.
Here $t\in (-\varepsilon,\infty)$ is the standard parameter.
Since $h$ is weakly positive definite, we can use this formula to extend $g$ to $\partial M\times [0,\infty)$ and hence to a smooth metric $g'$ on all of $M'$.
By \cite{Baer-Gauduchon-Moroianu}*{Theorem~7.2}, the manifold $(M',g')$ is flat.
It is easy to see that $(M',g')$ is complete.

Let $\pi: \tilde{M}' \to M'$ denote the universal covering of $M'$. We equip $\tilde{M}'$ with the metric $\tilde{g}' = \pi^* g'$. 
Since $(M',g')$ is complete and flat, it follows that $(\tilde{M}',\tilde{g}')$ is isometric to the Euclidean space $\R^n$. 

We define $\tilde{M} = \pi^{-1}(M)$. Then $\tilde{M}$ is a closed domain in $\tilde{M}'$ with smooth boundary. Since the embedding $M \hookrightarrow M'$ is a homotopy equivalence, it follows that $\tilde{M}'$ is connected. We distinguish two cases:
\medskip 

\textit{Case 1:} 
Suppose that $n \geq 3$. By assumption, the second fundamental form of $\partial M$ is weakly positive definite at each point on $\partial M$, and is strictly positive definite at some point $p \in \partial M$. We denote by $\Sigma$ the connected component of $\partial M$ that contains the point $p$. Since $n \geq 3$, the Gauss equations imply that the sectional curvature of $\Sigma$ is nonnegative at each point on $\Sigma$, and is strictly positive at the point $p \in \Sigma$. Using the Cheeger-Gromoll splitting theorem \cite{Cheeger-Gromoll}*{Theorem~3}, we conclude that every covering of $\Sigma$ is compact.

In the next step, we fix a point $\tilde{p} \in \partial \tilde{M}$ with $\pi(\tilde{p}) = p$. We denote by $\tilde{\Sigma}$ the connected component of $\partial \tilde{M}$ that contains the point $\tilde{p}$. Then $\tilde{\Sigma}$ is a covering of $\Sigma$. In view of the discussion above, $\tilde{\Sigma}$ is compact.

In the following, we identify $\tilde{M}'$ with $\R^n$. With this understood, we may view $\tilde{\Sigma}$ as a compact connected hypersurface in $\R^n$. By the main theorem in \cite{DoCarmo-Lima} (see also \cite{Sacksteder}), we may write $\tilde{\Sigma} = \partial K$, where $K \subset \R^n$ is a compact convex domain in $\R^n$ with smooth boundary. Moreover, $\tilde{\Sigma}$ is diffeomorphic to $S^{n-1}$ and $K$ is diffeomorphic to a ball $B^n$. At the point $\tilde{p}$, the outward-pointing unit normal vector to $K$ coincides with the outward-pointing unit normal vector to $\tilde{M}$. This implies $\text{\rm int}(K) \cap \text{\rm int}(\tilde{M}) \neq \emptyset$. Since $\tilde{M}$ is connected, it follows that $\tilde{M} \subset K$. If $K \setminus \tilde{M}$ is nonempty, we can find a point $\tilde{x}$ in the closure of $K \setminus \tilde{M}$ which has maximal distance from $\tilde{p}$. It is easy to see that $\tilde{x} \in \text{\rm int}(K) \cap \partial \tilde{M}$ and the second fundamental form of $\partial \tilde{M}$ at the point $\tilde{x}$ is negative definite, contrary to our assumption. Thus, we conclude that $\tilde{M} = K$. In particular, $\tilde{M}$ is diffeomorphic to a ball $B^n$.

If $M$ is not simply connected, then there exists an isometry of $\tilde{M}'$ which maps $\tilde{M}$ to itself and has no fixed points. 
This contradicts the Brouwer fixed point theorem. 
Thus, $M$ is simply connected, and so is $M'$. 
From this, the assertion follows.
\medskip

\textit{Case 2:} 
Suppose that $n=2$. 
In this case, the Euler characteristic of $M$ is strictly positive by the Gauss-Bonnet theorem. 
Since $M$ is connected, it follows that $M$ is diffeomorphic to a disk $B^2$. 
In particular, $M$ is simply connected, and so is $M'$. 
Consequently, $(M',g')$ is isometric to $\R^2$ and $(M,g)$ can be identified with a compact convex domain in $R^2$ with smooth boundary.
\end{proof}

\section{An auxiliary estimate}

\begin{proposition}\label{sobolev.type.ineq.fixed.domain}
Let $\Omega\subset\R^n$ be a compact connected Lipschitz domain. Assume that $U$ is a nonempty open subset of $\Omega$. Suppose that $K>0$ is a given positive real number. Then 
\[\int_\Omega F^2 \leq C\int_\Omega (|\nabla F| -KF)_+^2 + C\int_U F^2\] 
for every nonnegative function $F\in H^1(\Omega)$. Here, $C$ is a positive constant that depends on $\Omega$, $U$, and $K$, but not on $F$.
\end{proposition}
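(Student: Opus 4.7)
The plan is to argue by contradiction and compactness. Suppose the stated estimate fails. Then there exists a sequence of nonnegative functions $F_l \in H^1(\Omega)$ with $\int_\Omega F_l^2 = 1$, $\int_\Omega (|\nabla F_l| - K F_l)_+^2 \to 0$, and $\int_U F_l^2 \to 0$. Writing $g_l := (|\nabla F_l| - K F_l)_+$, the pointwise bound $|\nabla F_l| \leq K F_l + g_l$ together with $\|g_l\|_{L^2(\Omega)} \to 0$ shows that $\{F_l\}$ is uniformly bounded in $H^1(\Omega)$. By Rellich-Kondrachov, after passing to a subsequence $F_l \to F$ strongly in $L^2(\Omega)$ and weakly in $H^1(\Omega)$, where $F \geq 0$, $\int_\Omega F^2 = 1$, and $F = 0$ a.e.\ on $U$.

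The next step is to upgrade this to the pointwise almost-everywhere bound $|\nabla F| \leq K F$ on $\Omega$. For any nonnegative $\phi \in C_c^\infty(\Omega)$, weak lower semicontinuity of the weighted $L^2$-norm gives
\[
\int_\Omega \phi\, |\nabla F|^2 \leq \liminf_{l\to\infty} \int_\Omega \phi\, |\nabla F_l|^2 \leq \liminf_{l\to\infty} \int_\Omega \phi\, (K F_l + g_l)^2 = K^2 \int_\Omega \phi\, F^2,
\]
where the last equality uses the strong $L^2$ convergence of $F_l$ and the fact that $g_l \to 0$ in $L^2$. Since $\phi$ is arbitrary, $|\nabla F|^2 \leq K^2 F^2$ almost everywhere.

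The crux is then a unique continuation argument: a nonnegative $F \in H^1(\Omega)$ satisfying $|\nabla F| \leq K F$ a.e.\ and vanishing on a non-empty open subset $U$ of the connected Lipschitz domain $\Omega$ must vanish identically, contradicting $\int_\Omega F^2 = 1$. To see this, fix $\epsilon > 0$ and consider $w_\epsilon := \log(F + \epsilon) \in H^1(\Omega)$; the chain rule gives $|\nabla w_\epsilon| = |\nabla F|/(F+\epsilon) \leq K$ a.e. Since $\Omega$ is a compact connected Lipschitz domain, it is quasi-convex, so $w_\epsilon$ admits a Lipschitz representative with Lipschitz constant bounded by $C_\Omega K$ for some $C_\Omega$ depending only on $\Omega$. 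This representative agrees with $\log(F+\epsilon)$ almost everywhere, hence equals $\log \epsilon$ a.e.\ on $U$ and, by continuity and openness of $U$, on all of $U$. Comparing values yields $w_\epsilon(x) \leq \log \epsilon + C_\Omega K$ for a.e.\ $x \in \Omega$, whence $F(x) \leq \epsilon(e^{C_\Omega K} - 1)$ a.e.; letting $\epsilon \to 0$ gives $F \equiv 0$.

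The main obstacle I anticipate is the unique continuation step, in particular the fact that $H^1$ functions on a Lipschitz domain with essentially bounded gradient admit Lipschitz representatives. This is the only place where the Lipschitz regularity of $\Omega$ is used in an essential way; it is standard, but it is the analytic core of the estimate.
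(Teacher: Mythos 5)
Your proof is correct and opens the same way as the paper's: contradiction, a uniform $H^1$ bound, compactness giving a limit $F$ with $\int_\Omega F^2 = 1$, $F = 0$ a.e.\ on $U$, $F \geq 0$. Where you diverge is in the unique continuation step. The paper never derives the almost-everywhere inequality $|\nabla F| \leq KF$ directly; instead it mollifies the sequence, observes that $|\nabla F_j^\varepsilon| - K F_j^\varepsilon \leq \eta_\varepsilon * (|\nabla F_j| - K F_j)_+$ on the shrunken domain $\Omega_\varepsilon$, passes to the limit $j\to\infty$ (which is painless because the convolution is a continuous linear functional), and thereby obtains a \emph{smooth} function $F^\varepsilon$ with $|\nabla F^\varepsilon| \leq K F^\varepsilon$ pointwise and $F^\varepsilon(p) = 0$; a classical ODE/Gronwall argument on the connected set $\Omega_\varepsilon$ then gives $F^\varepsilon \equiv 0$, and $\varepsilon\to 0$ finishes. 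You instead get the pointwise-a.e.\ inequality on $F$ itself via convex weak lower semicontinuity, then pass to $w_\epsilon = \log(F+\epsilon)$ and invoke the fact that on a quasi-convex domain a Sobolev function with $|\nabla w_\epsilon| \leq K$ a.e.\ has a Lipschitz representative. Both are valid. What the paper's route buys is that it stays entirely at the level of mollification and pointwise smooth ODE theory, with no appeal to quasi-convexity of Lipschitz domains or the Sobolev-to-Lipschitz embedding. Your route is arguably slicker once those structural facts are granted, but the claim that $w_\epsilon$ has a global Lipschitz representative is doing real work: one needs to show $w_\epsilon$ is locally Lipschitz (e.g.\ by mollification plus Arzelà–Ascoli, since $w_\epsilon \in H^1$ alone does not give boundedness a priori) and then use quasi-convexity to globalize; your one-sentence appeal to ``standard'' compresses a nontrivial step that the paper's mollification argument sidesteps entirely.
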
 

\begin{proof}
Suppose the assertion is false. Then we can find a sequence of nonnegative functions $F_j\in H^1(\Omega)$ such that $\int_\Omega F_j^2 = 1$, $\int_U F_j^2 \to 0$, and
\[\int_\Omega (|\nabla F_j| -KF_j)_+^2 \to 0.\]
Using the triangle inequality, we obtain 
\[\int_\Omega |\nabla F_j|^2 \leq C.\]
After passing to a subsequence, the sequence $F_j$ converges in $L^2(\Omega)$ to a nonnegative function $F$. The function $F$ satisfies $\int_\Omega F^2 = 1$. Moreover, $F = 0$ almost everywhere in $U$. 
	
Let $\eta \colon \R^n \to [0,\infty)$ denote a smooth function which is supported in the unit ball and satisfies $\int_{\R^n} \eta = 1$. For each $\varepsilon > 0$, we define a smooth function $\eta_\varepsilon \colon \R^n \to [0,\infty)$ by 
\[\eta_{\varepsilon}(x) = \varepsilon^{-n} \, \eta(\varepsilon^{-1} x)\]
for all $x \in \R^n$. The function $\eta_\varepsilon$ is supported in a ball of radius $\varepsilon$ and $\int_{\R^n} \eta_\varepsilon = 1$. 

Let us fix an arbitrary point $p \in U$. 
Let $\varepsilon$ be a real number such that $0 < \varepsilon < d(p,\partial U)$. Then $d(p,\partial \Omega) > \varepsilon$. 
We denote by $\Omega_\varepsilon$ the connected component of the set 
\[
\{x \in \Omega \colon d(x,\partial \Omega) > \varepsilon\}
\]
containing $p$. 
Let $F_j^\varepsilon \colon \Omega_\varepsilon \to \R$ denote the convolution of $F_j$ with $\eta_\varepsilon$, and let $F^\varepsilon \colon \Omega_\varepsilon \to \R$ denote the convolution of $F$ with $\eta_\varepsilon$. Then $|\nabla F_j^\varepsilon| \leq \eta_\varepsilon * |\nabla F_j|$ at each point in $\Omega_\varepsilon$. This implies 
\[|\nabla F_j^\varepsilon| - K F_j^\varepsilon \leq (\eta_\varepsilon * |\nabla F_j|) - K \, (\eta_\varepsilon * F_j) \leq \eta_\varepsilon * (|\nabla F_j| - KF_j)_+\]
at each point in $\Omega_\varepsilon$. 
We now pass to the limit as $j \to \infty$, keeping $\varepsilon$ fixed. 
Since $F_j \to F$ in $L^2(\Omega)$, we know that $F_j^\varepsilon \to F^\varepsilon$ and $|\nabla F_j^\varepsilon| \to |\nabla F^\varepsilon|$ at each point in $\Omega_\varepsilon$. 
Moreover, since $(|\nabla F_j| - KF_j \big )_+ \to 0$ in $L^2(\Omega)$, it follows that $\eta_\varepsilon * (|\nabla F_j| - KF_j)_+ \to 0$ at each point in $\Omega_\varepsilon$. 
Putting these facts together, we conclude that $|\nabla F^\varepsilon| - K F^\varepsilon \leq 0$ at each point in $\Omega_\varepsilon$. 
On the other hand, we know that $F=0$ almost everywhere in $U$. Since $d(p,\partial U) > \varepsilon$, it follows that $F^\varepsilon$ vanishes at the point $p$. Since $\Omega_\varepsilon$ is connected, standard ODE arguments imply that $F^\varepsilon = 0$ at each point in $\Omega_\varepsilon$. 

Finally, we send $\varepsilon \to 0$. Since $\Omega$ is connected, we conclude that $F=0$ almost everywhere in $\Omega$. 
This contradicts the fact that $\int_\Omega F^2 = 1$.
\end{proof}

%

\begin{bibdiv}
\begin{biblist}

\bib{Baer96}{article}{
   author={B\"{a}r, C.},
   title={Metrics with harmonic spinors},
   journal={Geom. Funct. Anal.},
   volume={6},
   date={1996},
   number={6},
   pages={899--942},
   issn={1016-443X},
   doi={10.1007/BF02246994},
}

\bib{Baer-Ballmann}{article}{
   author={B\"{a}r, Christian},
   author={Ballmann, Werner},
   title={Boundary value problems for elliptic differential operators of
   first order},
   conference={
      title={Surveys in differential geometry. Vol. XVII},
   },
   book={
      series={Surv. Differ. Geom.},
      volume={17},
      publisher={Int. Press, Boston, MA},
   },
   isbn={978-1-57146-237-4},
   date={2012},
   pages={1--78},
   doi={10.4310/SDG.2012.v17.n1.a1},
}

\bib{Baer-Ballmann13}{article}{
     author={B\"{a}r, Christian},
     author={Ballmann, Werner},
     title={Guide to elliptic boundary value problems for Dirac-type operators},
     conference={
         title={Arbeitstagung Bonn 2013},
         },
     book={
         series={Progr. Math.},
         volume={319},
         publisher={Birkh\"{a}user/Springer, Cham},
         },
     date={2016},
     pages={43--80},
     review={\MR{3618047}},
     doi={10.1007/978-3-319-43648-7_3},
    }

\bib{Baer-Brendle-Hanke-Wang}{article}{
      title={Scalar curvature rigidity of warped product metrics}, 
      author={B\"ar, Christian},
      author={Brendle, Simon},
      author={Hanke, Bernhard},
      author={Wang, Yipeng},
      journal={SIGMA},
      volume={20},
      year={2024},
      pages={26 pages},
      note={article no~036},
      doi={10.3842/SIGMA.2024.035},
}

\bib{Baer-Gauduchon-Moroianu}{article}{
   author={B\"{a}r, Christian},
   author={Gauduchon, Paul},
   author={Moroianu, Andrei},
   title={Generalized cylinders in semi-Riemannian and Spin geometry},
   journal={Math. Z.},
   volume={249},
   date={2005},
   number={3},
   pages={545--580},
   issn={0025-5874},
   doi={10.1007/s00209-004-0718-0},
}

\bib{Beig-Chrusciel}{article}{ 
    AUTHOR = {Beig, Robert}, 
    author={Chru\'sciel, Piotr T.},
     TITLE = {Killing vectors in asymptotically flat space-times. {I}.
              {A}symptotically translational {K}illing vectors and the rigid
              positive energy theorem},
   JOURNAL = {J. Math. Phys.},
    VOLUME = {37},
      YEAR = {1996},
    NUMBER = {4},
     PAGES = {1939--1961},
      ISSN = {0022-2488,1089-7658},
 
       DOI = {10.1063/1.531497},
       URL = {https://doi.org/10.1063/1.531497},
}

\bib{Brendle}{article}{
   author={Brendle, Simon},
   title={Scalar curvature rigidity of convex polytopes},
   journal={Invent. Math.},
   volume={235},
   date={2024},
   number={2},
   pages={669--708},
   issn={0020-9910},
   doi={10.1007/s00222-023-01229-x},
}

\bib{BW23}{arxiv}{
 author={Brendle, S.},
 author={Wang, Y.},
 url={https://arxiv.org/abs/2308.08000},
 title={On Gromov's rigidity theorem for polytopes with acute angles},
 year={2023},
 note={to appear in J. Reine Angew. Math.},
}

\bib{Cheeger-Gromoll}{article}{
   author={Cheeger, Jeff},
   author={Gromoll, Detlef},
   title={The splitting theorem for manifolds of nonnegative Ricci curvature},
   journal={J. Diff. Geom.},
   volume={6},
   date={1971/72},
   pages={119--128},
   issn={0022-040X},
}

\bib{Chrusciel-Maerten}{article}{
    AUTHOR = {Chru\'sciel, Piotr T.}, 
    author = {Maerten, Daniel},
     TITLE = {Killing vectors in asymptotically flat space-times. {II}.
              {A}symptotically translational {K}illing vectors and the rigid
              positive energy theorem in higher dimensions},
   JOURNAL = {J. Math. Phys.},
   VOLUME = {47},
      YEAR = {2006},
    NUMBER = {2},
     PAGES = {022502, 10},
      ISSN = {0022-2488,1089-7658},
      DOI = {10.1063/1.2167809},
       URL = {https://doi.org/10.1063/1.2167809},
}

\bib{DoCarmo-Lima}{article}{
   author={Do Carmo, M.},
   author={Lima, E.},
   title={Immersions of manifolds with non-negative sectional curvatures},
   journal={Bol. Soc. Brasil. Mat.},
   volume={2},
   date={1971},
   number={2},
   pages={9--22},
   issn={0100-3569},
   doi={10.1007/BF02584681},
}

\bib{Eichmair-Galloway-Mendes}{article}{
 author={Eichmair, Michael},
 author={Galloway, Gregory J.},
 author={Mendes, Abra{\~a}o},
 issn={0010-3616},
 issn={1432-0916},
 doi={10.1007/s00220-021-04033-x},
 title={Initial data rigidity results},
 journal={Commun. Math. Phys.},
 volume={386},
 number={1},
 pages={253--268},
 date={2021},
 publisher={Springer, Berlin/Heidelberg},
}

\bib{Eichmair-Huang-Lee-Schoen}{article}{
    AUTHOR = {Eichmair, Michael}, 
    author={Huang, Lan-Hsuan}, 
    author={Lee, Dan A.}, 
    author= {Schoen, Richard},
     TITLE = {The spacetime positive mass theorem in dimensions less than
              eight},
   JOURNAL = {J. Eur. Math. Soc. (JEMS)},
 VOLUME = {18},
      YEAR = {2016},
    NUMBER = {1},
     PAGES = {83--121},
      ISSN = {1435-9855,1435-9863},
      DOI = {10.4171/JEMS/584},
       URL = {https://doi.org/10.4171/JEMS/584},
}

\bib{Fefferman-Phong}{incollection}{
    AUTHOR = {Fefferman, C.}, 
    author = {Phong, D. H.},
     TITLE = {Lower bounds for {S}chr\"{o}dinger equations},
 BOOKTITLE = {Conference on {P}artial {D}ifferential {E}quations ({S}aint
              {J}ean de {M}onts, 1982)},
     PAGES = {Conf. No. 7, 7},
 PUBLISHER = {Soc. Math. France, Paris},
      YEAR = {1982},
      ISBN = {2-7302-0036-3},
}

\bib{Ghorpade-Patil-Pillai}{article}{
    AUTHOR = {Ghorpade, S.},
    author = {Patil, A.}, 
    author = {Pillai, H.}, 
    title = {Decomposable subspaces, linear sections of Grassmann varieties, and higher weights of Grassmann codes}, 
   JOURNAL = {Finite Fields Appl.},
    VOLUME = {15},
      YEAR = {2009}, 
     PAGES = {54--68},
}

\bib{Gloe}{arxiv}{
   author={Gl{\"o}ckle, Jonathan},
   url={https://arxiv.org/abs/2304.02331},
   title={Initial data rigidity via Dirac-Witten operators},
   year={2023},
}

\bib{Gromov14}{article}{
    AUTHOR = {Gromov, Misha},
     TITLE = {Dirac and {P}lateau billiards in domains with corners},
   JOURNAL = {Cent. Eur. J. Math.},
    VOLUME = {12},
      YEAR = {2014},
    NUMBER = {8},
     PAGES = {1109--1156},
      ISSN = {1895-1074,1644-3616},
  
       DOI = {10.2478/s11533-013-0399-1},
       URL = {https://doi.org/10.2478/s11533-013-0399-1},
}

\bib{Gromov19}{incollection}{
    AUTHOR = {Gromov, Misha},
     TITLE = {Four lectures on scalar curvature},
 BOOKTITLE = {Perspectives in scalar curvature. {V}ol. 1},
     PAGES = {1--514},
 PUBLISHER = {World Sci. Publ., Hackensack, NJ},
      YEAR = {2023},
      ISBN = {978-981-124-998-3; 978-981-124-935-8; 978-981-124-936-5},
}

\bib{Gromov22}{article}{
   author={Gromov, Misha},
   issn={0179-5376},
   issn={1432-0444},
   title={Convex polytopes, dihedral angles, mean curvature and scalar curvature},
   journal={Discr. Comp. Geom.},
   volume={72},
   number={2},
   pages={849--875},
   date={2024},
   publisher={Springer US, New York, NY},
}

\bib{Hormander}{book}{ 
    AUTHOR = {H\"{o}rmander, Lars},
     TITLE = {The analysis of linear partial differential operators. {III}},
    SERIES = {Grundlehren der mathematischen Wissenschaften [Fundamental
              Principles of Mathematical Sciences]},
    VOLUME = {274},
      NOTE = {Pseudo-differential operators,
              Corrected reprint of the 1985 original},
 PUBLISHER = {Springer-Verlag, Berlin},
      YEAR = {1994},
     PAGES = {viii+525},
      ISBN = {3-540-13828-5},
}

\bib{Huang-Lee}{article}{ 
    AUTHOR = {Huang, Lan-Hsuan}, 
    author={Lee, Dan A.},
     TITLE = {Equality in the spacetime positive mass theorem},
   JOURNAL = {Commun. Math. Phys.},
  VOLUME = {376},
      YEAR = {2020},
    NUMBER = {3},
     PAGES = {2379--2407},
      ISSN = {0010-3616,1432-0916},
       DOI = {10.1007/s00220-019-03619-w},
       URL = {https://doi.org/10.1007/s00220-019-03619-w},
}

\bib{Lee}{book}{
    AUTHOR = {Lee, Dan A.},
     TITLE = {Geometric relativity},
    SERIES = {Graduate Studies in Mathematics},
    VOLUME = {201},
 PUBLISHER = {American Mathematical Society, Providence, RI},
      YEAR = {2019},
     PAGES = {xii+361},
      ISBN = {978-1-4704-5081-6},
       DOI = {10.1090/gsm/201},
       URL = {https://doi.org/10.1090/gsm/201},
}

\bib{Li17}{article}{
    AUTHOR = {Li, Chao},
     TITLE = {A polyhedron comparison theorem for 3-manifolds with positive
              scalar curvature},
   JOURNAL = {Invent. Math.},
     VOLUME = {219},
      YEAR = {2020},
    NUMBER = {1},
     PAGES = {1--37},
      ISSN = {0020-9910,1432-1297},
     DOI = {10.1007/s00222-019-00895-0},
       URL = {https://doi.org/10.1007/s00222-019-00895-0},
}

\bib{Li17Err}{article}{
 author={Li, Chao},
 issn={0020-9910},
 issn={1432-1297},
 doi={10.1007/s00222-022-01104-1},
 title={Correction to: ``A polyhedron comparison theorem for 3-manifolds with positive scalar curvature''},
 journal={Invent. Math.},
 volume={228},
 number={1},
 pages={535--538},
 date={2022},
 publisher={Springer, Berlin/Heidelberg},
}

\bib{Li19}{article}{ 
    AUTHOR = {Li, Chao},
     TITLE = {The dihedral rigidity conjecture for {$n$}-prisms},
   JOURNAL = {J. Diff. Geom.},
    VOLUME = {126},
      YEAR = {2024},
    NUMBER = {1},
     PAGES = {329--361},
      ISSN = {0022-040X,1945-743X},
     DOI = {10.4310/jdg/1707767340},
       URL = {https://doi.org/10.4310/jdg/1707767340},
}

\bib{Li20}{article}{
    AUTHOR = {Li, Chao},
     TITLE = {Dihedral rigidity of parabolic polyhedrons in hyperbolic
              spaces},
   JOURNAL = {SIGMA},
    VOLUME = {16},
      YEAR = {2020},
     PAGES = {8 pages, article no 099},
      ISSN = {1815-0659},
       DOI = {10.3842/SIGMA.2020.099},
       URL = {https://doi.org/10.3842/SIGMA.2020.099},
}

\bib{Parker-Taubes}{article}{
    AUTHOR = {Parker, Thomas}
    author = {Taubes, Clifford Henry},
     TITLE = {On Witten's proof of the positive energy theorem},
   JOURNAL = {Commun. Math. Phys.},
     VOLUME = {84},
      YEAR = {1982},
    NUMBER = {2},
     PAGES = {223--238},
      ISSN = {0010-3616,1432-0916},
     URL = {http://projecteuclid.org/euclid.cmp/1103921154},
}

\bib{Sacksteder}{article}{
    AUTHOR = {Sacksteder, R.},
    title = {On hypersurfaces with no negative sectional curvatures}, 
   JOURNAL = {Amer. J. of Math.},
    VOLUME = {82},
      YEAR = {1960}, 
     PAGES = {609--630},
}

\bib{Schoen-Yau}{article}{
    AUTHOR = {Schoen, Richard}, 
    author =  {Yau, Shing Tung},
     TITLE = {Proof of the positive mass theorem. {II}},
   JOURNAL = {Commun. Math. Phys.},
   VOLUME = {79},
      YEAR = {1981},
    NUMBER = {2},
     PAGES = {231--260},
      ISSN = {0010-3616,1432-0916},
       URL = {http://projecteuclid.org/euclid.cmp/1103908964},
}

\bib{Tsang}{arxiv}{
author={Tsang, T.-Y.}, 
title={Dihedral rigidity for cubic initial data sets},
url={https://arxiv.org/abs/2108.08942},
year={2021},
}

\bib{Wang-Xie-Yu}{arxiv}{
author={Wang, J.},
author={Xie, Z.},
author={Yu, G.},
author={On Gromov's dihedral extremality and rigidity conjectures},
url={https://arxiv.org/abs/2112.01510},
year={2021},
}

\bib{Witten}{article}{
    AUTHOR = {Witten, Edward},
     TITLE = {A new proof of the positive energy theorem},
   JOURNAL = {Commun. Math. Phys.},
    VOLUME = {80},
      YEAR = {1981},
    NUMBER = {3},
     PAGES = {381--402},
      ISSN = {0010-3616,1432-0916},
     URL = {http://projecteuclid.org/euclid.cmp/1103919981},
}

\end{biblist}
\end{bibdiv}

\end{document}